\newtheorem{thm}{Theorem}[section]
\newtheorem{pro}[thm]{Proposition}
\newtheorem{cor}[thm]{Corollary}
\newtheorem{lemma}[thm]{Lemma}
\newtheorem{rmk}[thm]{Remark}
\newtheorem{rmks}[thm]{Remarks}
\newtheorem{dfn}[thm]{Definition}
\newtheorem{ex}[thm]{Example}
\newtheorem{exs}[thm]{Examples}
\newenvironment{proof}[1][\proofname]{\par
  \normalfont
  \topsep6\p@\@plus6\p@ \trivlist
  \item[\hskip\labelsep{\textit{\mdseries #1}\@addpunct{\mdseries.}}]\ignorespaces
}{%
  \QED \endtrivlist
}
\newcommand{\proofname}{\normalfont{\textit{Proof.}}}
\def\BOXSYMBOL{\RIfM@\bgroup\else$\bgroup\aftergroup$\fi
  \vcenter{\hrule\hbox{\vrule height.85em\kern.6em\vrule}\hrule}\egroup}
\newcommand{\BOX}{%
  \ifmmode\else\leavevmode\unskip\penalty9999\hbox{}\nobreak\hfill\fi
  \quad\hbox{\BOXSYMBOL}}
\newcommand\QED{\BOX}
\newcommand{\bb}[1]{\mathbb#1}
\newcommand{\fra}[1]{\mathfrak#1}
\newcommand{\ca}[1]{\mathcal#1}
\newcommand{\innpro}[3]{\langle#1,#2\rangle_{#3}}
\newcommand{\Innpro}[3]{\langle\langle#1,#2\rangle\rangle_{#3}}
\newcommand{\sine}[1]{\frac{\sin#1\theta}{\sqrt{#1\pi}}}
\newcommand{\cosi}[1]{\frac{\cos#1\theta}{\sqrt{#1\pi}}}
\newcommand{\rot}{{\rm rot}}
\newcommand{\WZW}{{\rm WZW}}
\newcommand{\alg}{{\rm alg}}
\newcommand{\ind}{{\rm ind}}
\newcommand{\id}{{\rm id}}
\newcommand{\Hom}{{\rm Hom}}
\newcommand{\op}{{\rm op}}
\newcommand{\Lie}{{\rm Lie}}
\newcommand{\fin}{{\rm fin}}
\newcommand{\ad}{{\rm ad}}
\begin{document}
\title{An Analytic $LT$-equivariant Index and Noncommutative Geometry}
\author{Doman Takata\thanks{d.takata@math.kyoto-u.ac.jp}}
\affil{Kyoto University}

\maketitle
\begin{abstract}
Let $T$ be a circle and $LT$ be its loop group. Let $\ca{M}$ be an infinite dimensional manifold equipped with a nice $LT$-action.
We construct an analytic $LT$-equivariant index for some infinite dimensional manifolds, and justify it in terms of noncommutative geometry. More precisely, we construct a Hilbert space $\ca{H}$ consisting of ``$L^2$-sections of a Clifford module bundle'' and a ``Dirac operator'' $\ca{D}$ which acts on $\ca{H}$. Then, we define an analytic index of $\ca{D}$ valued in the representation group of $LT$, so called Verlinde ring. We also define a ``twisted crossed product $LT\ltimes_\tau C_0(\ca{M})$,'' although we cannot define each concept ``function algebra for $\ca{M}$ vanishing at infinity,'' ``function from $LT$ to a $C^*$-algebra vanishing at infinity,'' and a Haar measure on $LT$. Moreover we combine all of them in terms of spectral triples and verify that the triple has an infinite spectral dimension. Lastly, we add some applications including Borel-Weil theory for $LT$.
\end{abstract}

\tableofcontents
\section{Introduction}\label{Section Introduction}
Atiyah-Singer
 index theorem is one of the greatest theorems in mathematics (\cite{Fur}). Our dream is to establish an infinite dimensional version of it. Although the project is very hard, we succeed a construction of equivariant analytic index in some special cases. We show the main result of this paper without any definitions.
\begin{thm}\label{Thm index Sect.1}
Let $T$ be a circle and $LT$ be its loop group $C^\infty(S^1,T)$,
$\ca{M}$ be an infinite dimensional manifold which $LT$ acts on ``nicely,'' $\ca{L}$ be a $\tau$-twisted $LT$-equivariant line bundle on $\ca{M}$, and $\ca{S}$ be a Spinor bundle.
Then, we can define a Hilbert space $\ca{H}:=$``$L^2(\ca{M},\ca{L}\otimes\ca{S})$'' and a ``Dirac operator'' $\ca{D}$ acting on it. Moreover, $\ca{D}$ has a well-defined analytic index valued in $R^\tau(LT)$.
\end{thm}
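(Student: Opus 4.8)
The plan is to realize $\ca{H}$ and $\ca{D}$ from a finite-dimensional ``model'' together with an infinite-dimensional Fock-space factor carrying a projective, level-$\tau$ action of $LT$, and then to establish an equivariant Fredholm-type estimate that confines the index to the Verlinde ring $R^\tau(LT)$. First I would use the hypothesis that the $LT$-action is ``nice'' to normalize the geometry of $\ca{M}$: that $\ca{M}$ is, $LT$-equivariantly, modeled on a product of a (cocompact) finite-dimensional $T$-manifold $M_0$ — carrying the residual action of the constant loops — with a fixed topological vector space $V$ on which $LT$ acts and on which $\rot$ acts with finite-dimensional weight spaces bounded below in energy. Along the $V$-directions the Spinor bundle $\ca{S}$ becomes the Fock space $\ca{F}(V)$ of a compatible complex structure, and the $\tau$-twisted equivariant line bundle $\ca{L}$ restricts to an honest equivariant line bundle $\ca{L}_0\to M_0$. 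Then $\ca{H}$ is defined as the completed tensor product ``$L^2(M_0,\ca{L}_0\otimes\ca{S}_0)\,\widehat{\otimes}\,\ca{F}(V)$'', with $\ca{S}_0$ the finite-dimensional Spinor bundle of $M_0$; the ill-defined integration over the infinite directions is replaced by the Fock inner product, and the projective action of $LT$ on $\ca{F}(V)$ produces the central extension $\widetilde{LT}$ at level $\tau$ acting on $\ca{H}$.

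Next, $\ca{D}$ is assembled as $\ca{D}=D_0\,\widehat{\otimes}\,1+1\,\widehat{\otimes}\,\ca{D}_V$ up to a bounded curvature-type term coming from $\ca{L}$, where $D_0$ is the twisted Dirac operator on $M_0$ and $\ca{D}_V$ is the Dirac-type operator on $\ca{F}(V)$ built from the metric on $V$, chosen so that $\ca{D}_V^2$ has discrete nonnegative spectrum with a positive spectral gap above $0$ and with eigenspaces that are finite-dimensional once the $\rot$-weight is fixed. One checks that $\ca{D}$ is odd, formally self-adjoint, essentially self-adjoint on a natural core of finite-energy sections, and commutes with the $\widetilde{LT}$- and $\rot$-actions. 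The crucial step is then the equivariant Fredholm property. Squaring, $\ca{D}^2=D_0^2\,\widehat{\otimes}\,1+1\,\widehat{\otimes}\,\ca{D}_V^2+(\text{bounded})$ is bounded below away from $0$ off the subspace $L^2(M_0,\ca{L}_0\otimes\ca{S}_0)\,\widehat{\otimes}\,\ker\ca{D}_V$ (after rescaling to absorb the bounded term), so $\ker\ca{D}$ and $\mathrm{coker}\,\ca{D}$ live in that subspace, where $\ca{D}$ reduces to $D_0$ coupled to the level-$\tau$ representation $\ker\ca{D}_V$ of $\widetilde{LT}$. Now $\ker D_0$ and $\mathrm{coker}\,D_0$ are finite-dimensional $T$-representations by ordinary elliptic theory on the cocompact $M_0$, while $\ker\ca{D}_V$ is, by an explicit Fock-space computation, a positive-energy representation of $\widetilde{LT}$ at level $\tau$ of finite multiplicity; combining them under the compatible actions of $T$ on $M_0$ and of the central circle of $\widetilde{LT}$, and using that at the fixed level $\tau$ there are only finitely many positive-energy irreducibles of $LT$, we get that $\ker\ca{D}$ and $\mathrm{coker}\,\ca{D}$ are positive-energy $\widetilde{LT}$-representations of level $\tau$ and finite multiplicity. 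Hence $\ind\ca{D}:=[\ker\ca{D}]-[\mathrm{coker}\,\ca{D}]\in R^\tau(LT)$, and independence of this class from the model $M_0$, the metric, and the complex structure on $V$ is checked by an explicit homotopy and excision argument.

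I expect the genuine obstacle to lie precisely in making ``$L^2(\ca{M})$'' and $\ca{D}$ into a bona fide, choice-independent analytic object for which the spectral-gap estimate above actually holds: ordinary elliptic machinery is available neither on the infinite-dimensional $\ca{M}$ nor on the noncompact group $LT$, so the discreteness of $\mathrm{spec}(\ca{D}_V^2)$, the finite-dimensionality of its low eigenspaces, and the finite-dimensional elliptic estimate on $M_0$ have to be assembled by hand, and the three intertwined structures — the rotation action needed for the energy grading, the central extension needed even to let $LT$ act on the Fock space, and the twist $\tau$ — must be tracked simultaneously throughout. A secondary but nontrivial point will be to verify that the reduction to the product model $M_0\times V$ faithfully captures the $LT$-equivariant analysis on a general ``nice'' $\ca{M}$, so that no information is lost in passing to the model.
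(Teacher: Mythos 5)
Your construction captures the $\Omega T_0$-side of the paper's argument faithfully (Fock space, Dirac-type operator with discrete spectrum and one-dimensional kernel, tensor-decomposition of $\ca{D}$), but it contains a genuine gap on the finite-dimensional side: you collapse $\ca{M}$ to a \emph{compact} $T$-manifold $M_0$ and apply ordinary elliptic theory to $D_0$, concluding that $\ker D_0$ is a finite-dimensional $T$-representation. That output cannot produce the claimed element of $R^\tau(LT)$. The paper decomposes $LT = T\times\Pi_T\times\Omega T_0$ and, crucially, retains the infinite discrete factor $\Pi_T$ on the ``finite-dimensional'' side: its model $\widetilde{M}=\ca{M}/\Omega T_0$ is \emph{noncompact}, carrying a proper cocompact action of $T\times\Pi_T$, and the index $\ind(\cancel{D})$ lives in $K_0(C^*(T\times\Pi_T,\tau))$ via Kasparov's $C^*(\Gamma)$-Fredholm machinery, not via ordinary elliptic theory. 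This is the step that produces the infinite direct sum $\bigoplus_{n\in\Pi_T}\bb{C}_{\lambda+\kappa^\tau(n)}$ intrinsic to a $\tau$-twisted representation of $T\times\Pi_T$. A finite-dimensional $T$-representation tensored with the Fock-space representation of $\Omega T_0$ is \emph{not} a positive energy representation of $LT$ at level $\tau$ — the $\Pi_T$-shift, governed by $\kappa^\tau:\Pi_T\to\Lambda_T$, is what makes the cocycle nontrivial on $T\times\Pi_T$ and what forces infinite multiplicities on the $T$-side. Your sentence ``combining them under the compatible actions of $T$ on $M_0$ and of the central circle of $\widetilde{LT}$'' papers over exactly this missing ingredient: the Fock space $\ca{F}(V)$ only carries the $\Omega T_0$-part of the projective action, so there is nowhere in your $\ca{H}=L^2(M_0,\ca{L}_0\otimes\ca{S}_0)\widehat{\otimes}\ca{F}(V)$ for $\Pi_T$ to act.

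Concretely, the repair is to work with the locally compact model $\widetilde{M}=\ca{M}/\Omega T_0$ rather than the compact model $M_0=\ca{M}/\Omega T$, define $\ca{H}=L^2(\widetilde{M},\ca{L}\otimes S(T\widetilde{M}))\widehat{\otimes}\ca{H}_{\WZW}\widehat{\otimes}S$, and replace ``ordinary elliptic theory'' by Kasparov's index for proper cocompact actions (Theorem \ref{Thm Kas Sect.5}), together with the computation $K_0(C^*(T\times\Pi_T,\tau))\cong R^\tau(T\times\Pi_T)\cong\bigoplus_{\Lambda_T/\kappa^\tau(\Pi_T)}\bb{Z}$. Only after identifying this $K$-group with the twisted representation ring can one tensor with the positive energy representation $\ind(\cancel{\partial})$ of $\Omega T_0$ to land in $R^\tau(LT)$. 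Your spectral-gap reduction to $\ker\ca{D}_V$ is a legitimate (and essentially equivalent) alternative to the paper's explicit computation of $\cancel{d}^2 = 4\,\id\otimes N - (4/i)\,d\rho^*(d)$, but it does not address, and cannot compensate for, the absence of $\Pi_T$.
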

The readers might feel the construction is just a formal discussion. We present some circumstantial evidences that our construction may know some geometrical or analytical information of $\ca{M}$ and $\ca{L}$. The following is the biggest evidence which we know so far.
\begin{thm}\label{Thm algebra Sect.1}
Although neither a function algebra $C_0(\ca{M})$ nor a measure on $LT$ has not been defined, we can define a $C^*$-algebra $A$ which we can regard as ``$LT\ltimes_\tau C_0(\ca{M})$.'' Moreover, the pair $(\ca{H},\ca{D})$ determines a smooth algebra $\ca{A}$ of $A$, the triple $(\ca{A},\ca{H},\ca{D})$ is a spectral triple. Lastly, it has an infinite spectral dimension.
\end{thm}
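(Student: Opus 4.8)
The plan is to build the $C^*$-algebra $A$ by hand as a completion of an algebraic "twisted convolution" object, then produce the smooth subalgebra and the spectral-triple axioms from the already-constructed pair $(\ca{H},\ca{D})$, and finally compute the growth of the singular values of $(1+\ca{D}^2)^{-1}$ to read off the spectral dimension. First I would fix a suitable finite-dimensional approximation scheme: choose an increasing family of finite-dimensional subgroups $T_n\subset LT$ (or rather a family of finite-rank Lie subalgebras of $L\fra{t}$) exhausting $LT$ in the appropriate topology, together with $LT$-invariant finite-dimensional submanifolds $\ca{M}_n\subset\ca{M}$, so that on each level the ordinary twisted crossed product $T_n\ltimes_\tau C_0(\ca{M}_n)$ makes classical sense as a $C^*$-algebra acting on an approximating Hilbert space $\ca{H}_n\subset\ca{H}$. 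I would then define the algebraic model $\ca{A}_\alg$ as the union (or a suitable colimit) of these, with the twisted-convolution product dictated by the cocycle $\tau$ and the $LT$-action on $\ca{L}$, and let $A$ be the closure of the image of $\ca{A}_\alg$ in $\ca{B}(\ca{H})$ under the representation coming from the construction in Theorem \ref{Thm index Sect.1}. The point of phrasing it through $\ca{H}$ is precisely that it sidesteps the three objects the abstract says cannot be defined — $C_0(\ca{M})$, $C_0$-functions $LT\to C^*$, and the Haar measure — because only the representation on $\ca{H}$, not the ambient measure theory, is needed.

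Next I would verify the spectral-triple axioms. Self-adjointness of $\ca{D}$ with compact resolvent should already be available from the index construction (the index valued in $R^\tau(LT)$ presupposes a Fredholm-type setup), so the work is: (i) show that $\ca{A}:=\{a\in A:[\ca{D},a]\text{ extends to a bounded operator on }\ca{H}\}$, or a convenient dense subalgebra of it built from the smooth/compactly-supported elements of $\ca{A}_\alg$, is norm-dense in $A$ and closed under holomorphic functional calculus; and (ii) check the boundedness of $[\ca{D},a]$ for $a$ in this subalgebra. Step (ii) reduces to a Leibniz computation: on each approximation level $[\ca{D},a]$ is the sum of a Clifford-multiplication term (bounded because $a$ has "compact support" in the $\ca{M}_n$ direction and is smooth) and a term coming from the $LT$-translation part (bounded by smoothness of $a$ as a function on the group direction and control of the cocycle $\tau$); the only subtlety is uniformity in $n$, which I would get from the compatibility of the Dirac operators $\ca{D}_n$ under the inclusions $\ca{H}_n\hookrightarrow\ca{H}_{n+1}$.

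Finally, for the infinite spectral dimension I would analyze the spectrum of $\ca{D}^2$. Morally $\ca{D}^2$ decomposes as a sum over the infinitely many "oscillator modes" of $LT$ of operators each contributing like a harmonic oscillator / finite-dimensional Dirac Laplacian, so the eigenvalue counting function $N(\lambda)=\#\{\text{eigenvalues of }\ca{D}^2\le\lambda\}$ grows faster than any power of $\lambda$; equivalently, $\operatorname{Tr}\bigl((1+\ca{D}^2)^{-s/2}\bigr)=\infty$ for every $s>0$, so $\inf\{s>0:(1+\ca{D}^2)^{-s/2}\in\ca{L}^1\}=\infty$. Concretely I would exhibit, for each $p$, a sequence of eigenvalues whose counting function beats $\lambda^{p}$, using the product structure over the Fourier modes. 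The main obstacle I anticipate is not the dimension computation but establishing rigorously that $A$ is genuinely a $C^*$-algebra independent of the choice of exhaustion — i.e., that the twisted-convolution product is well-defined and the completion does not depend on the approximation scheme — because this is exactly where the missing Haar measure would normally do the work; I expect to replace it by a careful bookkeeping of the projective system of the $T_n\ltimes_\tau C_0(\ca{M}_n)$ and an argument that the connecting maps are (completely) isometric on the relevant dense subalgebras, so that $A$ is their $C^*$-inductive limit and the representation on $\ca{H}$ is faithful on it.
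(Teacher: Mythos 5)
Your overall scaffolding — build $A$ as a $C^*$-inductive limit over a finite-dimensional exhaustion acting on $\ca{H}$, take the bounded-commutator subalgebra as $\ca{A}$, verify holomorphic stability, then show non-summability — matches the paper's structure, but there are two substantive gaps.

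First, you assert that ``self-adjointness of $\ca{D}$ with compact resolvent should already be available from the index construction.'' This is false, and the error propagates into your spectral-dimension argument. The operator $\ca{D}=\cancel{D}\otimes\id+\id\otimes\cancel{\partial}$ does \emph{not} have compact resolvent: on the $\Omega T_0$-part one has $\cancel{\partial}=\id\otimes\cancel{d}$ on $L^2(\bb{R}^\infty)\otimes L^2(\bb{R}^\infty)^*\otimes S$, so an infinite-dimensional identity is tensored in, and $(1+\ca{D}^2)^{-1}$ is not compact. This is precisely why the paper works in the non-unital spectral triple framework (condition (f$'$): only $a(1+\ca{D}^2)^{-1}$ is compact for $a\in A$), and why the index lives in a group $K$-theory rather than being a naive Fredholm index. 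Consequently your proposed computation $\mathrm{Tr}\bigl((1+\ca{D}^2)^{-s/2}\bigr)=\infty$ is trivially true for every $s$ and says nothing about the spectral dimension. The correct, non-trivial statement is local: one must show that $a(1+\ca{D}^2)^{-s}$ is not trace class for some fixed $a\in A$ and every $s>0$. The paper does this by taking $a$ to be the rank-one lowest-weight projection $P$ and reducing to the combinatorial estimate ${\rm Tr}\bigl(P(1+\cancel{\partial}^2)^{-s}\bigr)\geq\sum C(1+n)^{-s}q(n)$, where $q(n)$ is the distinct partition function (appearing because $S$ is the fermionic Fock space $\bigwedge\bigoplus\bb{C}\overline{z_k}$), and then invokes the Hardy--Ramanujan bound $q(n)\geq C\exp(\tfrac12 n^{1/4})$. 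Your ``oscillator modes'' intuition points in the right direction, but without the localization by $a$ and the specific combinatorics of the eigenvalue multiplicities, the argument does not close.

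Second, your exhaustion scheme (``$LT$-invariant finite-dimensional submanifolds $\ca{M}_n\subset\ca{M}$'') is not available: $LT$-orbits in $\ca{M}$ are infinite dimensional. The paper instead exploits the structural decomposition $\ca{M}\cong\widetilde{M}\times\Omega T_0$ with $\widetilde{M}$ already finite dimensional, approximates only the $\Omega T_0$ factor by Fourier-mode subspaces $U_N$, and crucially uses that the admissible cocycle $\tau$ decouples $T\times\Pi_T$ from $\Omega T_0$ (Proposition \ref{Prop cocycle Sect.2}) to factor the crossed product as $(T\times\Pi_T)\ltimes_\tau C_0(\widetilde{M})\otimes \bigl(U_N\ltimes_\tau C_0(U_N)\bigr)$, then identifies $U_N\ltimes_\tau C_0(U_N)\cong\ca{K}(L^2(U_N))$ and uses tensoring by the lowest-weight projection as the connecting map. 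This decoupling is the key input that makes the inductive system tractable; without it, the well-definedness issue you flag at the end (independence of the twisted convolution from the missing Haar measure) has no obvious resolution.
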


The followings are other evidences.
\begin{cor}
Any irreducible positive energy representation of $LT$ can be obtained from an analytic index of the flag manifold $LT/T$. It is an infinite dimensional version of Borel-Weil's theorem.
\end{cor}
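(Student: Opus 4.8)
The plan is to apply the construction behind Theorem~\ref{Thm index Sect.1} to the homogeneous space $\ca{M}=LT/T$, where $T$ is embedded in $LT$ as the subgroup of constant loops. First I would verify that $LT/T$, with its left $LT$-action, satisfies the ``niceness'' hypotheses of that theorem. Since $T$ is abelian, $LT$ is abelian, $T$ is normal, and $LT\cong T\times\Omega T$, so $LT/T$ is identified (as a group and as a manifold) with the based loop group $\Omega T$, which for a circle $T$ is a countable disjoint union, indexed by the winding number $n\in\bb{Z}$, of affine spaces modelled on a fixed topological vector space $V$ of smooth real functions on $S^1$. Thus $\ca{M}$ is of the locally affine, infinite-dimensional type treated in the paper, its stabilizer subgroup is everywhere $T$, and for each admissible weight $\lambda$ at the given level $\tau$ there is a $\tau$-twisted $LT$-equivariant line bundle $\ca{L}_\lambda$ on $LT/T$, built from $\lambda$ in the presence of the twist. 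The machinery then produces $\ca{H}_\lambda=$``$L^2(LT/T,\ca{L}_\lambda\otimes\ca{S})$'', a Dirac operator $\ca{D}_\lambda$, and a well-defined index $\ind\ca{D}_\lambda\in R^\tau(LT)$.

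The heart of the argument is to identify $\ind\ca{D}_\lambda$. Because $LT/T$ is homogeneous with stabilizer $T$, one expects $\ca{D}_\lambda$ to realize an analytic, $\tau$-twisted version of \emph{Dirac induction} from $T$ to $LT$. Concretely, after the decomposition $LT/T\cong\bb{Z}\times V$ one should write $\ca{D}_\lambda$ as a sum of a finite-dimensional Dirac part along each affine slice, a Clifford term encoding the $\bb{Z}$-grading, and an oscillator-type operator in the $V$-directions regularised by the twist, and then prove an infinite-dimensional Borel--Weil--Bott-type vanishing statement: $\ker\ca{D}_\lambda$ (or $\ker\ca{D}_\lambda^*$) is concentrated. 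Matching the resulting explicit Fock-space model of $\ca{H}_\lambda$ with the standard Fock-space realization of positive energy representations --- in particular checking that the rotation/energy operator acts with the correct lowest eigenvalue --- would then identify this kernel, as an $LT$-module, with the irreducible positive energy representation $H_\lambda$ attached to $\lambda$ at level $\tau$. Hence $\ind\ca{D}_\lambda=\pm H_\lambda$, the overall sign being a renormalised orientation convention; since for a circle $T$ there is no Weyl-group length to correct for, for admissible $\lambda$ this is genuinely a ``holomorphic sections'' statement, i.e. the literal Borel--Weil form.

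It then remains to invoke the classification of positive energy representations of $LT$: at level $\tau$ the irreducibles $H_\lambda$ are finite in number, parametrised exactly by the admissible weights $\lambda$, and form a $\bb{Z}$-basis of $R^\tau(LT)$. Therefore, letting $\lambda$ range over this set, every irreducible positive energy representation of $LT$ is obtained, up to sign, as the analytic index $\ind\ca{D}_\lambda$ of a Dirac operator on the flag manifold $LT/T$ twisted by $\ca{L}_\lambda$, which is the asserted infinite-dimensional Borel--Weil theorem.

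The main obstacle is the index computation in the second step. One cannot appeal to compactness, Hodge theory, or finite-dimensional elliptic regularity, so the concentration of $\ker\ca{D}_\lambda$ and the identification of its $LT$-module structure must be extracted from the explicit analytic model constructed in the body of the paper: the Sobolev/Fock completion of the space of sections together with the above splitting of $\ca{D}_\lambda$. The delicate point is controlling the noncompact directions --- the vector space $V$ and the $\bb{Z}$-component --- so that only a finite amount survives at each energy level; this is exactly where the properness hypothesis and the twist $\tau$ enter. The remaining ingredients (checking the niceness hypotheses, bookkeeping the sign, and quoting the classification of positive energy representations) are comparatively routine.
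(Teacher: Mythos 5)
Your overall strategy is correct in outline --- take $\ca{M}=LT/T\cong\Omega T$, decompose $\Omega T\cong\Pi_T\times\Omega T_0$ (your ``$\bb{Z}\times V$''), build the $\tau$-twisted line bundle $\ca{L}_\lambda$ from the character $\lambda$, feed this package into the index machinery of the paper, and then run through the admissible $\lambda$ and quote the classification of positive energy representations to conclude. This matches the proof in the paper. However, you have misjudged where the work lies, and the step you flag as ``the main obstacle'' is not actually an obstacle here.

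You treat the index computation as a hard open analytic problem, proposing to split $\ca{D}_\lambda$ into a ``finite-dimensional Dirac part along each affine slice, a Clifford term encoding the $\bb{Z}$-grading, and an oscillator-type operator,'' and to prove a Borel--Weil--Bott-type concentration statement from scratch. In the paper's framework, none of that is needed, for two reasons that you should have noticed. First, the paper always writes $\ca{D}=\cancel{D}\otimes\id+\id\otimes\cancel{\partial}$, where $\cancel{D}$ lives on the locally compact model $\widetilde{M}=\ca{M}/\Omega T_0$ and $\cancel{\partial}$ is the algebraic Dirac operator on $\Omega T_0$. For $\ca{M}=\Omega T$, the locally compact model is $\widetilde{M}=\Pi_T$, which is discrete and hence $0$-dimensional, so $\cancel{D}=0$ and its ``index'' is just the module $L^2(\Pi_T)$ --- no finite-dimensional Dirac analysis, no Clifford term for the $\bb{Z}$-grading, no concentration to prove. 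Second, the vanishing you want on the $\Omega T_0$-directions has \emph{already} been established earlier in the paper: the theorem computing $\cancel{d}^2=4\,\id\otimes N-\tfrac{4}{i}d\rho^*(d)$ shows that $\ker(\cancel{d})=\bb{C}\Omega\widehat{\oplus}0$, giving $\ker(\cancel{\partial})\cong L^2(\bb{R}^\infty)$, the unique irreducible P.E.R.\ of $\Omega T_0$. So by the time the corollary is stated, $\ind(\ca{D}_\lambda)=L^2(\Pi_T)\otimes V_{\ca H}$ is already in hand.

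What actually remains --- and what you leave vague by saying ``matching the resulting explicit Fock-space model with the standard Fock-space realization'' --- is the identification of the $(T\times\Pi_T)^\tau$-module structure on $L^2(\Pi_T,\ca{L}_\lambda|_{\Pi_T})$. The paper does this by a short direct computation: trivialize the restricted bundle $(T\times\Pi_T)^\tau\times_{T^\tau}\bb{C}_{-\lambda}\cong\Pi_T\times\bb{C}$, compute the actions $t.[(1,n_0),v]=\lambda(t)[(1,n_0),v]$ and $n.[(1,n_0),v]=[(1,n+n_0),v]$, and observe that the induced action on functions makes $L^2(\Pi_T,\ca{L}_\lambda|_{\Pi_T})$ exactly the generator $V_{[\lambda]}$ of $R^\tau(T\times\Pi_T)$. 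That concrete bookkeeping is the substance of the proof; your sketch never supplies it. There is also no sign ambiguity to worry about: since $\cancel{D}=0$, one simply \emph{gets} $L^2(\Pi_T)$ rather than a virtual difference, so no ``renormalised orientation convention'' is invoked.
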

\begin{cor}
If $\ca{M}$ is an $LT$-Hamiltonian space at level $\tau$, it has a quantization valued in $R^\tau(LT)$. It is an infinite dimensional version of Bott's quantization.
\end{cor}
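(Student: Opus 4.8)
Here is how I would attack this corollary. The strategy is to manufacture, out of the Hamiltonian data, exactly the input that Theorem~\ref{Thm index Sect.1} requires, and then to \emph{define} the quantization to be the resulting analytic index. Recall that an $LT$-Hamiltonian space $\ca{M}$ at level $\tau$ comes, by definition, with an $LT$-invariant (weakly) symplectic form $\omega$ together with a moment map $\mu$ into the affine dual of $\Lie(LT)$ at level $\tau$; the compatibility of $\omega$ with the central extension at level $\tau$ is precisely the assertion that a Kostant--Souriau prequantum line bundle $\ca{L}$ exists \emph{as a $\tau$-twisted $LT$-equivariant line bundle} on $\ca{M}$, carrying an invariant connection of curvature $\omega$ from which $\mu$ is recovered by the equivariant moment formula. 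Thus $\ca{L}$ is already the object named in Theorem~\ref{Thm index Sect.1}, and no extra hypothesis is needed to produce it.

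The next step is to build the Spinor bundle $\ca{S}$. I would choose an $LT$-invariant $\omega$-compatible almost complex structure $J$ on $\ca{M}$: such $J$ form a nonempty convex set once the ``nice'' structure of $\ca{M}$ is used to reduce the fiberwise structure group, and over a circle $T$ the required polarization of the tangent bundle is governed by the standard splitting of $L\fra{t}$ into positive- and negative-frequency modes, which is exactly what links the construction to positive energy representations. The pair $(\omega,J)$ gives an invariant Hermitian metric and a $\mathrm{Spin}^c$-structure whose Spinor bundle is $\ca{S}=\Lambda^{\bullet}T^{0,1}\ca{M}$ in the formal sense of the paper, and the associated operator is the Dolbeault--Dirac operator $\sqrt{2}(\bar\partial+\bar\partial^{*})$ coupled to $\ca{L}$. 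Assuming $(\ca{M},\ca{L},\ca{S})$ meets all hypotheses of Theorem~\ref{Thm index Sect.1}, that theorem then produces $\ca{H}=$``$L^2(\ca{M},\ca{L}\otimes\ca{S})$'' and the Dirac operator $\ca{D}$, and we set
\[
Q(\ca{M}):=\ind\ca{D}\in R^\tau(LT).
\]
Independence of this definition from the auxiliary metric and from $J$ should follow from a homotopy-invariance property of the analytic index along the convex family of such choices.

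The main obstacle is the verification that an arbitrary prequantized Hamiltonian $LT$-space at level $\tau$ actually satisfies the ``niceness'' hypotheses of Theorem~\ref{Thm index Sect.1}: one needs the $LT$-action to admit the prescribed local (slice-type) model so that the relevant operators are Fredholm in the $LT$-equivariant sense, the tangent bundle to carry the polarization making ``$L^2$-sections'' meaningful, and the properness and level-$\tau$ compatibility of $\mu$ that guarantee the index lands in the Verlinde ring $R^\tau(LT)$ rather than in some completion of it. I expect this to force a restriction to a reasonable subclass of Hamiltonian $LT$-spaces, for instance those with proper moment map and with the finite-codimension features that the ``nice'' hypothesis encodes, just as in finite dimensions one works with compact (or proper) Hamiltonian manifolds for a Bott-type quantization to be defined at all. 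Once that check is in place, the conclusion $Q(\ca{M})\in R^\tau(LT)$ is immediate, and the final sentence of the corollary is the remark that, under the passage from a finite-dimensional Hamiltonian $T$-space to its loop-space analogue, this construction reduces to Bott's definition of the quantization of a symplectic manifold as the index of a $\mathrm{Spin}^c$-Dirac operator twisted by a prequantum line bundle — so it is genuinely an infinite-dimensional avatar of Bott's quantization.
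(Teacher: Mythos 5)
Your overall strategy matches the paper's: produce the $\tau$-twisted prequantum line bundle $\ca{L}$ from the symplectic form at level $\tau$, equip $\ca{M}$ with a Spinor bundle $\ca{S}$, invoke Theorem~\ref{Thm index Sect.1} to build $\ca{H}$ and $\ca{D}$, and \emph{define} the quantization as $\ind(\ca{D})\in R^\tau(LT)$. The final paragraph of your proposal, however, treats the verification that $\ca{M}$ satisfies the ``niceness'' hypotheses as an open question, speculating that this may force a restriction to a subclass of Hamiltonian $LT$-spaces. This is where a real gap sits in your write-up, because the verification is not optional and is in fact already carried out in the paper's Example~(3) of Section~\ref{Section simplification}: in the definition of a Hamiltonian $LG$-space adopted there (following \cite{Son}) the momentum map $\Phi$ is required to be proper, and from this one directly reads off all three conditions of a \emph{proper $LT$-space} --- freeness of the restricted $\Omega T$-action (from equivariance of $\Phi$, since $\Omega G$ acts freely on $L\fra{g}^*$), compactness of $\ca{M}/\Omega T$ (from properness of $\Phi$), and triviality of $\Omega T_0\to\ca{M}\to\widetilde{M}$ (pulled back from the standard trivialization of $L\fra{t}^*\to\fra{t}$ via parallel connections). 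So no further restriction of the class of spaces is needed: properness of $\Phi$ is already part of the definition. You should replace your speculative paragraph with this concrete check.

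A smaller point: you build $\ca{S}$ from an invariant $\omega$-compatible almost complex structure $J$. That is the route the paper takes for the \emph{compact} quasi-Hamiltonian model in its background section on geometric quantization, but the actual Spinor bundle used in Theorem~\ref{Thm index Sect.1} is the external tensor product $\ca{S}=S(T\widetilde{M})\boxtimes S(\Omega T_0)$, built from the $T$-equivariant $\mathrm{Spin}^c$-structure on the compact model $M=\ca{M}/\Omega T$ (guaranteed by the invariant compatible $J$ there) and the fixed FHT Spinor for the $\Omega T_0$-direction. Your construction is compatible with this, but since the analytic machinery in Sections~\ref{Section Hilbert}--\ref{Section Index} is set up relative to the decomposition $\ca{M}=\widetilde{M}\times\Omega T_0$, you should phrase the choice of $\ca{S}$ in those terms to plug directly into the theorem. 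Once both points are fixed, the rest of your argument --- $Q(\ca{M}):=\ind\ca{D}\in R^\tau(LT)$, with the parallel to Bott's finite-dimensional $\mathrm{Spin}^c$ quantization --- is exactly what the paper intends.
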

Let us move to the explanations of the details, ideas and techniques of the above.


Let us start from the explanation that Theorem \ref{Thm algebra Sect.1} justifies Theorem \ref{Thm index Sect.1}. For this purpose, we try to rewrite differential geometry as algebraically as possible. Since we can imagine noncommutative version of such a framework, we call it a noncommutative differential geometry here.
In other words, we desire an algebraic setting which knows much information of compact Riemannian manifold $M$. One can consult \cite{Con94} and \cite{Con08}. If the readers are familiar with noncommutative geometry, they may skip to the following of Definition \ref{Dfn n.u.spectral triple Sect.1}.

The answer is as follows;
{\it differential manifold $M$ is reproduced from the $C^*$-algebra $C(M)$, the representation space $L^2(M)$ of $C(M)$, and the elliptic operator $\triangle$ of second order.}
According to well-known Gelfand's theorem, we can reproduce the {\bf topological space} $M$ from the algebra $C(M)$ consisting of continuous functions as the set of maximal ideals. It might remind the readers Hilbert's Nullstellensatz. We are not satisfied it because $C(M)$ does not know smooth structure of $M$ and hence we suppose that we are given a representation of $L^2(M)$ and $\triangle$ which acts on $L^2(M)$. Since $1\in L^2(M)$, $C(M)$ is contained in $L^2(M)$. Considering the domain of $\triangle^{2k}$, we obtain the Sobolev spaces $L^2_{2k}(M)$ and hence $L^2_{\infty}(M)$. Thanks to Sobolev's embedding theorem, $L^2_{\infty}(M)=C^\infty(M)$. Therefore we can reproduce $C^\infty(M)$. According to Parsell-Shanks' theorem, two compact manifolds $M_1$ and $M_2$ are diffeomorphic to one another if and only if $C^\infty(M_1)$ and $C^\infty(M_2)$ are isomorphic to one another as algebras and hence it is an algebraic formulation of differential topology. One can find a proof in \cite{Pra}.

Let us continue the algebraization. We desire more information about $M$. For simplicity, we suppose that $M$ is a $Spin^c$ manifold and $S$ is the Spinor bundle, and we are given a representation space $L^2(M,S)$ and the Dirac operator $D$ acting on it. Since $D$ is elliptic, we can reproduce $C^\infty(M)$. We obtain not only it, but also information about metric of $M$ as follows.
The commutator $[D,f]$ for $f\in C^\infty(M)$ is almost the multiplication by the exterior derivative $df$, and hence $\|{\rm grad}(f)\|=\|[D,f]\|_{{\rm op}}$. 
It tells us that if $f$ satisfies $\|[D,f]\|_{{\rm op}}\leq 1$, $|f(p)-f(q)|$ cannot be greater that $d(p,q)$. More precisely, 
$$d(p,q)=\sup\{|f(p)-f(q)|\mid \|[\ca{D},f]\|\leq1\}.$$
The reader can find the proof in \cite{Con94}.
Therefore, the pair $L^2(M,S)$ and $D$ knows a part of information of Riemannian metric of $M$.

This is how we reach the following noncommutative differential geometry: {\it a $C^*$-algebra $A$ is regarded as a continuous function algebra, a subalgebra $\ca{A}$ of $A$ is regarded as a smooth structure, and a pair $(\ca{H},\ca{D})$ is regarded as a $L^2$-space for Spinor bundle and a Dirac operator}. More precisely we require the following. See \cite{Con94}, \cite{Con08} and \cite{CGRS} for details.
\begin{dfn}
$(1)$ A smooth algebra $\ca{A}$ in $A$ is a $*$-closed Fr\'{e}chet algebra closed under the holomorphic functional calculus.\\
$(2)$ A triple $(\ca{A},\ca{H},\ca{D})$ is called a spectral triple for $A$ if it satisfies the followings:

(a) $\ca{A}$ is smooth in $A$.

(b) $\ca{H}$ is a $*$-representation of $A$.

(c) $\ca{D}$ is a densely defined self-adjoint operator acting on $\ca{H}$.

(d) $\ca{A}$ preserves ${\rm dom}(\ca{D})$.

(e) $[\ca{D},a]$ is bounded for $a\in \ca{A}$.

(f) $(1+\ca{D}^2)^{-1}$ is a compact operator.
\end{dfn}
Needless to say, $\ca{D}$ is regarded as a Dirac operator on a noncommutative object. (e) and (f) are crucial.
(e) means that $\ca{D}$ is ``first order''. (f) means that $\ca{D}$ is ``elliptic''.

The above tells us the following.
Imagine that one try to analyze a certain crazy object, for example a foliation. Suppose that a spectral triple can be defined from the object, and some invariants are obtained from the triple. It is a noncommutative geometrical invariant of that.

Since our manifold is noncompact, we should explain the necessary modification when we replace $M$ with a locally compact manifold $X$. Remember that our manifold is not locally compact and the following is not enough.

The first difficulty is that the spectrum of elliptic operators can be {\bf continuous} not like the case of compact manifold. For example, the spectrum of Laplacian on the real line is $\{\lambda\in\bb{R}\mid \lambda\geq0\}$.

Another difficulty is about function algebras. Although we do not require any additional conditions to define $C(M)$, it has a $C^*$-norm. This is because $M$ is compact and hence in order to move to the case of noncompact manifolds, we must require some additional condition such that the function algebra has a $C^*$-norm. The most natural idea is to consider the algebra $C_b(X)$ consisting of  bounded continuous functions. However, this idea is not good because it is known that $C_b(X)$ is isomorphic to $C(\beta X)$, where $\beta X$ is the Stone-C\v{e}ch compactification of $X$ (\cite{Bla}). Now, we consider the completion algebra $C_0(X)$ of $C_c(X)$ consisting of functions compactly supported. An element of $C_0(X)$ is said to be vanishing at infinity. $C_0(X)$ is a suitable $C^*$-algebra for our purpose.

This is why, we modify the above definition for non-unital $A$.
\begin{dfn}\label{Dfn n.u.spectral triple Sect.1}
$(1)$ A subalgebra $\ca{A}$ in $A$ is a smooth algebra if the unitalization of $\ca{A}$ is smooth in the unitalization of $A$. ``Unitalization'' will be defined in Definition \ref{unitalization}.

$(2)$ A triple $(\ca{A},\ca{H},\ca{D})$ is a spectral triple if it satisfies from (a) to (e) in the above and the following:

(f$'$) $a(1+\ca{D}^2)^{-1}$ is a compact operator for $a\in A$.
\end{dfn}
$a$ in the condition (f$'$) plays a role of cut off function to $supp(a)$ if $a$ is truly compactly supported.

We explain how far our setting is from the above assumption, that is, we explain how bad the ``spectral triple $(C^\infty_0(\ca{M}),L^2(\ca{M},\ca{L}\otimes\ca{S}),\ca{D})$ for the function algebra $C_0(\ca{M})$.''

Firstly, since infinite dimensional space does not have a nice measure, $L^2$-space does not make sense. The readers can understand how bad a measure on an infinite dimensional space is if they recall Bochner-Minlos' theorem in provability theory; a positive definite function on an infinite dimensional vector space $V$ is a characteristic function of a measure on the {\bf algebraic dual} $V^*$.

Secondly, a function on $\ca{M}$ vanishing at infinity is automatically $0$. It can be verified as follows. Suppose that a function $f$ satisfies that $|f(m)|>\varepsilon$ at $m\in\ca{M}$, then there exists an open neighborhood $U$ such that $|f(x)|>\varepsilon/2$ on $U$. However, from the assumption that $f$ vanishes at infinity, $|f(x)|<\varepsilon/2$ in the outside of some compact set. It is a contradiction since $\ca{M}$ is not locally compact. It is a similar situation with foliations. For example, a continuous function on the leaf space of Kronecker's foliation must be a constant. The fact that the classical $C_0(\ca{M})$ must be $\{0\}$ is the reason why we are interested in noncommutative geometry.

Before going further, we explain analytic indices of spectral triples in special cases. Let us notice that spectral triples themselves do not have indices. The index map for a locally compact manifold is a pairing between the set of spectral triples with $K$-group. An element of $K$-group of a locally compact space $X$ is a pair of vector bundles $E_1$ and $E_2$ on X such that they are isomorphic to one another in the outside of some compact set $K$. The pair is regarded as canceled on $X-K$, and hence we can consider the element $E_1-E_2$ to be compactly supported. This situation is parallel to the pairing between Borel-Moore homology and cohomology with compact support. 

Nevertheless, a non-unital spectral triple itself defines an analytic index in some special situation, that is, when a noncompact group acts on a noncompact manifold nicely, and a spectral triple which is compatible with the action defines an equivariant analytic index valued in a $K$-group of a $C^*$-algebra defined by the group acting on the space. Let us explain the result about it due to Kasparov precisely.

Let $X$ be a noncompact $Spin^c$ manifold with a Spinor bundle $S$, $\Gamma$ be a noncompact Lie group, and it acts on $X$ properly and cocompactly. Let us suppose that the action lifts to $S$ and the Dirac operator $D$ is $\Gamma$-equivariant. In this situation, although $D$ is not Fredholm in the classical sense, it is $C^*(\Gamma)$-Fredholm and hence
 $\ker(D)$ and ${\rm coker}(D)$ are finitely generated modules of $C^*(\Gamma)$.  Therefore we can define an analytic index $[\ker(D)]-[{\rm coker}(D)]$ valued in $K_0(C^*(\Gamma))$. 

Our aim is an infinite dimensional version of the above result. In order to approach it, let us focus on more abstract side. It is natural to hope to combine $\Gamma$-equivariant data on $X$ with data on $X/\Gamma$. However, a quotient space can be quite wild. The Borel construction $X\times_\Gamma E\Gamma$ is a modified ``quotient space.'' For example, if the action is free, $X\times_\Gamma E\Gamma$ is homotopy equivalent to $X/\Gamma$. Let us call such an object a ``better quotient.'' The following is a better quotient in the noncommutative world.
\begin{dfn}
When a topological group $\Gamma$ with a Haar measure $\mu$ acts on a $C^*$-algebra $A$ preserving addition, multiplication, norm, scalar multiplication and the involution $*$, we can define a product on $C_c(\Gamma,A)$ by the formula
$$f_1*f_2(\gamma):=\int f_1(\gamma')\gamma'.\Bigl(f_2(\gamma'^{-1}\gamma)\Bigr)d\mu(\gamma').$$
The crossed product $\Gamma\ltimes A$ is defined by a certain completion of $C_c(\Gamma,A)$.
\end{dfn}
Equivariant cohomology $H_\Gamma^*(X):=H^*(X\times_\Gamma E\Gamma)$ controls equivariant data on $X$. Like this, the crossed product $\Gamma\ltimes A$ controls equivariant data on $A$. Using the crossed product, we can rewrite the result by Kasparov.
\begin{pro}\label{Prop Kas Sect.1}
In the situation described above, we can define a spectral triple for $\Gamma\ltimes C_0(X)$ and its analytic index valued in $K_0(C^*(\Gamma))$.
\end{pro}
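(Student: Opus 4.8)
The plan is to reproduce Kasparov's construction of the equivariant Dirac class, push it through his descent functor, and then repackage the result as an (unbounded, Hilbert-module) spectral triple. First I would fix a $\Gamma$-invariant complete Riemannian metric on $X$ --- completeness is automatic since $X/\Gamma$ is compact --- together with a $\Gamma$-invariant smooth measure, obtained by averaging any density against a $\Gamma$-invariant partition of unity subordinate to a locally finite relatively compact cover; properness of the action is what makes such averaging possible. This turns $\ca{H}_0:=L^2(X,S)$ into a $\Gamma$-Hilbert space on which $C_0(X)$ acts by multiplication operators and the Dirac operator $D$ is essentially self-adjoint and $\Gamma$-equivariant, so that the bounded transform $F:=D(1+D^2)^{-1/2}$ makes $(\ca{H}_0,F)$ a $\Gamma$-equivariant Kasparov $(C_0(X),\bb{C})$-module, i.e. a class $[D]\in KK^\Gamma(C_0(X),\bb{C})$.

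Next I would apply Kasparov's descent $j_\Gamma\colon KK^\Gamma(C_0(X),\bb{C})\to KK\bigl(\Gamma\ltimes C_0(X),\,C^*(\Gamma)\bigr)$. Unwinding the construction, the underlying Hilbert $C^*(\Gamma)$-module $\ca{H}$ is the completion of $C_c(\Gamma,\ca{H}_0)$ for the $C^*(\Gamma)$-valued inner product $\langle\xi,\eta\rangle(\gamma)=\int_\Gamma\langle\xi(\gamma'),\eta(\gamma'\gamma)\rangle_{\ca{H}_0}\,d\gamma'$, the algebra $A:=\Gamma\ltimes C_0(X)$ acts on $\ca{H}$ by the natural combination of convolution in the group variable with pointwise multiplication on $\ca{H}_0$, and $D$ induces a densely defined operator $\ca{D}$ on $\ca{H}$. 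By a Baaj--Julg type argument one checks that $\ca{D}$ may be taken to be the closure of the fibrewise action of $D$ on $C_c(\Gamma,\mathrm{dom}\,D)$, that it is regular and self-adjoint, and that its bounded transform represents $j_\Gamma[D]$; throughout, ``compact operator on $\ca{H}$'' means a member of $\ca{K}(\ca{H})$, the $C^*(\Gamma)$-compact operators.

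Then I would verify the spectral-triple axioms, taking for $\ca{A}$ a dense smooth subalgebra of $A$ containing the smooth compactly supported convolution functions $C_c^\infty(\Gamma,C_c^\infty(X))$ --- such an $\ca{A}$ exists by the usual constructions, e.g. by closing under holomorphic functional calculus. Axioms (a)--(d) are formal. For (e), on a generator $f\in C_c^\infty(X)$ the commutator $[\ca{D},f]$ is Clifford multiplication by $df$, hence bounded, while on the group directions the equivariance of $D$ makes the commutator vanish, so $[\ca{D},a]$ is bounded for every $a\in\ca{A}$. Axiom (f$'$) is the substantive point: for $f\in C_c(X)$ the operator $f(1+D^2)^{-1}$ is compact on $\ca{H}_0$ by Rellich's lemma, and one has to propagate this local compactness through the descent, i.e. show $a(1+\ca{D}^2)^{-1}\in\ca{K}(\ca{H})$ for all $a\in A$. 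This is Kasparov's technical estimate, and it is the step I expect to be the main obstacle: it requires controlling the resolvent of $\ca{D}$ uniformly along $\Gamma$-orbits, which is exactly where properness of the action is indispensable.

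Finally I would extract the index. Cocompactness furnishes a cutoff function $c\in C_c(X)$ with $\int_\Gamma|c(\gamma^{-1}x)|^2\,d\gamma=1$ for all $x\in X$, and the standard formula $p_c(\gamma)=\overline{c}\cdot(\gamma.c)\cdot\Delta(\gamma)^{-1/2}$, with $\Delta$ the modular function of $\Gamma$, defines a projection $p_c\in A$ whose class $[p_c]\in K_0(A)$ is the analogue of the Mishchenko line bundle. The analytic index of the spectral triple is then the Kasparov product $\ind(\ca{D}):=[p_c]\otimes_A j_\Gamma[D]\in KK(\bb{C},C^*(\Gamma))=K_0(C^*(\Gamma))$; concretely, this class is represented by the compression of $\ca{D}$ (or of its bounded transform) to the submodule $p_c\ca{H}$, which by (f$'$) is a regular self-adjoint $C^*(\Gamma)$-Fredholm operator with finitely generated projective kernel and cokernel, so that $\ind(\ca{D})=[\ker]-[{\rm coker}]$ in the sense of the excerpt. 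Independence of all choices --- metric, measure, cutoff, smooth subalgebra --- follows from homotopy invariance of $KK$-classes, which recovers Kasparov's theorem in the announced spectral-triple language.
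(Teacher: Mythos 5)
Your argument is mathematically sound, but it follows a genuinely different route from the one the paper implicitly has in mind — and the object you call a ``spectral triple'' is not a spectral triple in the sense this paper uses.

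You pass through Kasparov descent: you build the Hilbert $C^*(\Gamma)$-module $\ca{E}=\overline{C_c(\Gamma,L^2(X,S))}$, push $D$ onto $\ca{E}$, and interpret ``compact'' in condition (f$'$) as membership in $\ca{K}(\ca{E})$, the $C^*(\Gamma)$-module compacts. That produces an unbounded Kasparov $(\Gamma\ltimes C_0(X),\,C^*(\Gamma))$-module, and the index is then the Kasparov product with the Mishchenko projection $[p_c]$. This is correct. However, Definitions \ref{Dfn n.u.spectral triple Sect.1} and its unital antecedent require $\ca{H}$ to be a genuine Hilbert space, $A\to\ca{B}(\ca{H})$ a $*$-representation, and $(1+\ca{D}^2)^{-1}$ (or $a(1+\ca{D}^2)^{-1}$) compact in the ordinary operator-theoretic sense. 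What the paper means — and what it actually does in the infinite-dimensional analogue in Section \ref{Section Algebra} — is take $\ca{H}=L^2(X,S)$ itself, let $\Gamma\ltimes C_0(X)$ act by the covariant representation (translation on the $\Gamma$-directions, multiplication for $C_0(X)$), take $\ca{D}=D$ unchanged, and check (f$'$) via the ordinary Rellich lemma for $f(1+D^2)^{-1}$ with $f\in C_c(X)$, the group part being merely bounded. In that picture the spectral triple lands in $KK(\Gamma\ltimes C_0(X),\bb{C})$, and the $C^*(\Gamma)$-valued index is a separate assertion, obtained from Kasparov's theorem (Theorem \ref{Thm Kas Sect.5}) that $D$ is $C^*(\Gamma)$-Fredholm on $L^2(X,S)$, so $\ker D$ and ${\rm coker}\,D$ are finitely generated projective $C^*(\Gamma)$-modules. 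Your descent picture buys you the $C^*(\Gamma)$-valued target and the Mishchenko pairing in a single structured object, which is the cleanest way to formulate Kasparov's theorem in $KK$-language; the paper's Hilbert-space picture is what actually fits its own definition of spectral triple and what it generalizes to $LT\ltimes_\tau C_0(\ca{M})$, where the inductive-limit construction only produces a Hilbert space, not a Hilbert $C^*$-module. Two minor remarks: the paper restricts to unimodular $\Gamma$ (see the beginning of Subsection \ref{Subsection Cstar algebra}), so your $\Delta(\gamma)^{-1/2}$ is identically $1$; and the properness of the action enters your argument in two places (existence of invariant data and local compactness of resolvents), which is worth separating since only the second survives to the infinite-dimensional setting.
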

This is the correct form which we generalize in this paper. 

Before presenting the main result precisely, we must explain representations of $LT$.

Since $LT$ is commutative, any unitary representation is $1$ dimensional. However, it is unbelievable that such representations know some information of analysis or geometry of $LT$. If a representation of $LT$ knows any such information, it will satisfy the followings:
\begin{itemize}
\item it is infinite dimensional.
\item it knows $S^1$ symmetry in some sense.
\item it satisfies some finiteness in some sense.
\end{itemize}
Let us call such a representation a positive energy representation (P.E.R. in short). Needless to say, such a representation cannot exist without any artifice. In fact, a projective representation might satisfy the above condition. A projective representation of $LT$ on a Hilbert space $\ca{H}$ is a map $\rho:LT\to U(\ca{H})$ satisfying that $\rho(l_1)\rho(l_2)=\tau(l_1,l_2)\rho(l_1l_2)$, where $\tau:LT\times LT\to U(1)$ is a group $2$-cocycle, and the cohomology class of it is called level of the representation. Such a $\tau$ determines a central extension of $LT$ and we write the total space of the central extension as $LT^\tau$. $R^\tau(LT)$ denotes the free module generated by isomorphism classes of irreducible P.E.R.s at level $\tau$.

Since $R^\tau(LT)$ has been studied by various people, we seek to define an analytic index valued in $R^\tau(LT)$. In order to do so, we should deal with only $LT^\tau$-equivariant bundle which the added center to $LT^\tau$ acts on by scalar multiplication. Moreover, the crossed product should be twisted, that is, what to define is not ``$LT\ltimes C_0(\ca{M})$'' but ``$LT\ltimes_\tau C_0(\ca{M})$'' considering the cocycle $\tau$. The precise definition of crossed product is in the body.

Let us state the main result precisely. Before that, we notice a canonical decomposition $LT=T\times\Omega T=T\times\Pi_T\times\Omega T_0$, where $\Omega T$ is the based loop group, $\Omega T_0$ is the identity component of it, and $\Pi_T$ is the group consisting of the connected components of $\Omega T$. The decomposition plays a fundamental role in this paper.

\begin{thm}
$(1)$ Let $LT$ act on an infinite dimensional manifold $\ca{M}$ satisfying that:
\begin{itemize}
\item the restriction of the action to $\Omega T$ is free.
\item $\ca{M}/\Omega T$ is a compact $Spin^c$ manifold.
\item $\Omega T_0\to \ca{M} \to \ca{M}/\Omega T_0$ is a trivial bundle.
\end{itemize}
Then, we can define a $C^*$-algebra $LT\ltimes_\tau C_0(\ca{M})$ by using the inductive limit, and $LT\ltimes_\tau C_0(\ca{M})\cong (T\times\Pi_T)\ltimes_\tau C_0(\widetilde{M})\bigotimes \ca{K}(L^2(\Omega T_0))$, where $L^2(\Omega T_0)$ is a Hilbert space obtained by the inductive limit of true $L^2$-spaces of finite dimensional vector spaces, and $\ca{K}(L^2(\Omega T_0))$ is the $C^*$-algebra consisting of all compact operators on $L^2(\Omega T_0)$.

$(2)$ Let $\ca{L}$ be an $LT^\tau$-equivariant line bundle which the added center to $LT^\tau$ acts on by scalar multiplication. We can construct a Spinor bundle $\ca{S}$ on $\ca{M}$, Hilbert space $\ca{H}$ which we regard as ``$L^2(\ca{M},\ca{L}\otimes\ca{S})$,'' and a self-adjoint operator $\ca{D}$ which we regard as a Dirac operator. Moreover, we can define an analytic index $\ind(\ca{D})$ as an element of $R^\tau(LT)$.

$(3)$ We can combine $(1)$ and $(2)$ in terms of spectral triples.
\end{thm}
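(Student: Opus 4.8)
The plan is to dispose of the three parts in order, using part~(1) to produce the $C^*$-algebra that part~(3) tests and part~(2) to produce the analytic data. Throughout write $\widetilde{M}:=\ca{M}/\Omega T_0$: the hypotheses say $\widetilde{M}$ is a (generally non-compact) $Spin^c$ manifold whose $\Pi_T$-quotient $\ca{M}/\Omega T$ is compact, that $T\times\Pi_T=LT/\Omega T_0$ acts on it, and that $\ca{M}\cong\widetilde{M}\times\Omega T_0$ $\Omega T_0$-equivariantly. For part~(1) I would fix an increasing sequence of finite-dimensional subspaces $V_1\subset V_2\subset\cdots\subset\Omega T_0$ spanned by finitely many Fourier modes with dense union, write $V_{n+1}=V_n\oplus W_n$ orthogonally, and form the honest crossed products $A_n:=(T\times\Pi_T)\ltimes_\tau\bigl(C_0(\widetilde{M})\otimes(V_n\ltimes C_0(V_n))\bigr)$, using $V_n\ltimes C_0(V_n)\cong\ca{K}(L^2(V_n))$ (Stone--von Neumann). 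The connecting homomorphism $A_n\to A_{n+1}$ is induced by the inclusion $\ca{K}(L^2(V_n))\hookrightarrow\ca{K}(L^2(V_n))\otimes\ca{K}(L^2(W_n))\cong\ca{K}(L^2(V_{n+1}))$ obtained by tensoring with the rank-one projection onto the Gaussian ground state of the $W_n$-modes; this is $T\times\Pi_T$-equivariant and $\tau$-compatible. Define $LT\ltimes_\tau C_0(\ca{M}):=\varinjlim A_n$. Two things must be checked: independence of the choice of $\{V_n\}$, by the standard cofinality/intertwining argument for inductive limits, and that the Gaussian-based limit $L^2(\Omega T_0):=\varinjlim L^2(V_n)$ factors off; the latter is bookkeeping once one notes that the tensor factor $\ca{K}(L^2(V_n))$ commutes past both the crossed product and the direct limit, yielding $LT\ltimes_\tau C_0(\ca{M})\cong(T\times\Pi_T)\ltimes_\tau C_0(\widetilde{M})\otimes\ca{K}(L^2(\Omega T_0))$.

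For part~(2) I would set $\ca{S}:=\ca{S}_{\widetilde{M}}\,\widehat{\otimes}\,\ca{S}_{\Omega T_0}$, where $\ca{S}_{\widetilde{M}}$ is the finite-dimensional Spinor bundle of $\widetilde{M}$ (pulled back from $\ca{M}/\Omega T$) and $\ca{S}_{\Omega T_0}$ the Fock-space Spinor bundle attached to the polarization of $\Omega T_0$ coming from the Hilbert transform on $S^1$, and check that the $\tau$-twisted $LT$-action lifts to $\ca{L}\otimes\ca{S}$ with the central circle acting by scalars. Then $\ca{H}:=\varinjlim\ca{H}_n$ with $\ca{H}_n:=L^2(\widetilde{M},\ca{L}\otimes\ca{S}_{\widetilde{M}})\,\widehat{\otimes}\,\ca{F}(V_n)$ ($\ca{F}$ the Bargmann--Fock space), again with vacuum connecting maps; this is the rigorous meaning of ``$L^2(\ca{M},\ca{L}\otimes\ca{S})$'' and it carries a level-$\tau$ projective $LT$-representation. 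For the operator take $\ca{D}:=D_{\widetilde{M}}\,\widehat{\otimes}\,1+1\,\widehat{\otimes}\,D_{\Omega T_0}$, the $Spin^c$ Dirac operator of $\widetilde{M}$ plus the number-operator-type Dirac operator on $\ca{F}(\Omega T_0)$, and prove essential self-adjointness on the algebraic limit: the $D_{\Omega T_0}$-summand passes through the connecting maps and has explicit discrete spectrum, so this reduces to a finite-dimensional statement layer by layer. Finally, because the $\Omega T$-action is free, proper and cocompact, $\ca{D}$ is Fredholm over $LT$ in the twisted sense of Proposition~\ref{Prop Kas Sect.1}; one then checks that the rotation $S^1$ grades $\ker\ca{D}$ and $\mathrm{coker}\,\ca{D}$ with energy bounded below and finite multiplicities, so that they are genuine positive-energy representations at level $\tau$, and defines $\ind(\ca{D}):=[\ker\ca{D}]-[\mathrm{coker}\,\ca{D}]\in R^\tau(LT)$.

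For part~(3), with $A=LT\ltimes_\tau C_0(\ca{M})$ acting on $\ca{H}$, I would take $\ca{A}$ to be a Fréchet completion of $C_c^\infty(T\times\Pi_T,\ca{A}_{\widetilde{M}})\otimes\ca{K}^\infty(L^2(\Omega T_0))$, where $\ca{A}_{\widetilde{M}}$ is a smooth subalgebra of $C_0(\widetilde{M})$ and $\ca{K}^\infty$ a finite-rank/Schwartz smooth subalgebra of $\ca{K}(L^2(\Omega T_0))$, and verify spectral invariance so that $\ca{A}$ is smooth in the sense of Definition~\ref{Dfn n.u.spectral triple Sect.1}. Conditions (a)--(e) are then routine: the generators of $\ca{A}$ are smooth and compactly supported, hence preserve $\mathrm{dom}(\ca{D})$ and have bounded commutator with the first-order $\ca{D}$. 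Condition (f$'$), that $a(1+\ca{D}^2)^{-1}$ is compact for $a\in A$, is where the $\ca{K}(L^2(\Omega T_0))$-factor does the work in the $\Omega T_0$-directions while $C_0(\widetilde{M})$ cuts off to a compact piece of $\widetilde{M}$ on which elliptic regularity of $D_{\widetilde{M}}$ applies; the infinite spectral dimension follows by estimating the eigenvalue growth of $D_{\Omega T_0}^2$ over infinitely many modes, which prevents $(1+\ca{D}^2)^{-s}$ from being trace class for any $s$.

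The main obstacle is the pair of intertwined analytic points underlying parts (1) and (2): making the inductive-limit constructions genuinely canonical — independent of the exhausting subspaces $V_n$ and of the vacuum projections, and simultaneously $T\times\Pi_T$-equivariant and $\tau$-compatible — and, above all, constructing $\ca{D}$ as a bona fide self-adjoint operator whose spectrum is controlled sharply enough that $\ker\ca{D}$ and $\mathrm{coker}\,\ca{D}$ are \emph{admissible} positive-energy representations, i.e.\ that the joint spectrum of the energy generator and $\ca{D}^2$ has exactly the bounded-below, finite-multiplicity structure that characterises classes in $R^\tau(LT)$. The formal picture is transparent; the difficulty is entirely in making the infinite-dimensional limits and the essential self-adjointness rigorous.
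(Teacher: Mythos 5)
Your overall architecture matches the paper's: finite-dimensional Fourier approximations $U_N$ of $\Omega T_0$ with vacuum/Gaussian connecting maps, the identification $U_N\ltimes_\tau C_0(U_N)\cong\ca{K}(L^2(U_N))$, the tensor factorization of the crossed product and of the Hilbert space, the split Dirac operator $\ca{D}=\cancel{D}\otimes\id+\id\otimes\cancel{\partial}$, and the infinite spectral dimension argument via growth of eigenvalue multiplicities. Parts (1) and (3) track the paper closely — the only cosmetic difference in (1) is that you write each stage already in tensor-product form rather than as $(T\times\Pi_T\times U_N)\ltimes_\tau C_0(\widetilde{M}\times U_N)$ and then factoring; the paper isolates that factorization as a lemma (using amenability and nuclearity to identify reduced and maximal completions).

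The genuine gap is in part (2). You propose to apply the twisted Kasparov Fredholm machinery \emph{directly to the full operator $\ca{D}$ over the group $LT$} and then check that $\ker\ca{D}$ and $\mathrm{coker}\,\ca{D}$ are admissible positive-energy representations. But Kasparov's theorem (and Proposition~\ref{Prop Kas Sect.1}) requires a \emph{locally compact} group acting properly and cocompactly; $LT$ is not locally compact, so this step does not go through — there is no $C^*(LT)$-Fredholm theory available off the shelf, and the paper explicitly avoids invoking it. The paper instead \emph{defines} $\ind(\ca{D}):=\ind(\cancel{D})\otimes\ind(\cancel{\partial})$, where $\ind(\cancel{\partial})$ is computed by hand (the formula $\cancel{d}^{\,2}=4\,\mathrm{id}\otimes N-\tfrac{4}{i}d\rho^*(d)$ shows the kernel is the vacuum line), and $\ind(\cancel{D})$ is produced by Kasparov's theorem applied only to the locally compact group $T\times\Pi_T$ acting properly and cocompactly on $\widetilde{M}$. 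This yields a class in $K_0(C^*((T\times\Pi_T)^\tau))$, and then a nontrivial intermediate step is required that your sketch omits entirely: the decomposition
$$C^*((T\times\Pi_T)^\tau)\cong\bigoplus_{n\in\bb{Z}}C^*(T\times\Pi_T,n\tau),\qquad C^*(T\times\Pi_T,\tau)\cong\bigoplus_{[\lambda]}\ca{K}(V_{[\lambda]}),$$
so that the $\tau$-isotypic summand of the $K$-group identifies with $R^\tau(T\times\Pi_T)$ and the tensor product of the two indices lands in $R^\tau(LT)$. In short: your formal picture of $\ca{D}$ as a single twisted-Fredholm operator is not what the paper proves (and would need a substitute for local compactness), and the twisted group $C^*$-algebra computation that actually makes the index take values in $R^\tau(LT)$ is absent from your plan.
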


Let us show a previous study to define a function algebra of a certain infinite dimensional space by using inductive limits in \cite{HKT}.

They considered a sequence of Euclidean spaces $\bb{R}\xrightarrow{i_1}\bb{R}^2\xrightarrow{i_2}\cdots$. In order to define a function $i_{j*}(f)$ for $f\in C_0(\bb{R}^j)$, it is natural to define $i_{j*}(f)$ as a function of the form $f\otimes g$, where $g$ is a function on the orthogonal complement. We desire a ring homomorphism $i_{j*}$ and for it, $g$ must be a projection, that is, $g(x)$ must be $0$ or $1$ at any $x$. It is impossible that $g$ is non zero if we require that $g$ is continuous and vanishes at infinity.

According to their idea, we should give up several properties. Firstly, we should consider not $C_0(\bb{R}^n)$ itself, but an algebra $\mathscr{S}\widehat{\otimes} C_0(\bb{R}^n,\mathit{Cliff}(\bb{R}^n))$, where $\mathscr{S}$ here is $C_0(\bb{R})$ with even/odd grading, and $\mathit{Cliff}(\bb{R}^n)$ is the Clifford algebra. In short, we define the data on the orhogonal completion by $\ca{S}$. More precisely, for $\phi\in C_0(\bb{R}^n,\mathit{Cliff}(\bb{R}^n))$, $f\in \ca{S}$, $x\in \bb{R}^n$ and $y\in \bb{R}$, 
$$i_{n*}(\phi)(x,y):=\phi(x)[f_{{\rm even}}(y)+\text{something}].$$
We must deal with Clifford algebra so that ``something'' in the above makes sense.

Let us return to our case. For the study of $\ca{M}$, it is essential to deal with $\Omega T_0$. It is isomorphic to $C^\infty(S^1,\fra{t})/\fra{t}$ and hence it has a nice basis due to Fourier series theory. By using it, we can define a sequence consisting of sub groups
$$U_1\xrightarrow{i_1} U_2\xrightarrow{i_2}U_3\xrightarrow{i_3}\cdots.$$
Recall that $\ca{M}=\widetilde{M}\times \Omega T_0$. We can define inductive systems of $C^*$-algebras and Hilbert spaces
$$(T\times \Pi_T\times U_1)\ltimes_\tau C_0(\widetilde{M}\times U_1)\to (T\times \Pi_T\times U_2)\ltimes_\tau C_0(\widetilde{M}\times U_2)\to \cdots$$
$$L^2(\widetilde{M}\times U_1)\to L^2(\widetilde{M}\times U_2)\to L^2(\widetilde{M}\times U_3)\to \cdots.$$
$LT\ltimes_\tau C_0(\ca{M})$ and $L^2(\ca{M})$ denote the inductive limit of the aboves respectively. 
In similar ways, we can define $L^2(\ca{M},\ca{L}\otimes\ca{S})$, $L^2(\Omega T_0)$, $L^2(LT)$, $\Omega T_0\ltimes_\tau C_0(\Omega T_0)$, and so on.

We have mentioned better quotients in the noncommutative world in the above. Considering it, our algebra $LT\ltimes_\tau C_0(\ca{M})$ must relate to $(T\times\Pi_T)\ltimes_\tau C_0(\widetilde{M})$, because the normal subgroup $\Omega T_0\triangleleft LT$ acts on $\ca{M}$ freely and the quotient is $\widetilde{M}$.
Our algebra actually satisfies such a property.
\begin{pro}The isomorphism
$$LT\ltimes_\tau C_0(\ca{M})\cong (T\times \Pi_T)\ltimes_\tau C_0(\widetilde{M})\bigotimes \ca{K}(L^2(\Omega T_0))$$
holds. In short, $LT\ltimes_\tau C_0(\ca{M})$ is Morita equivalent to $ (T\times \Pi_T)\ltimes_\tau C_0(\widetilde{M})$.
\end{pro}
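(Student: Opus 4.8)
The plan is to exploit the product decomposition $\ca{M}=\widetilde{M}\times\Omega T_0$ together with the fact that, at each finite stage of the inductive system, the crossed product by the finite-dimensional piece $U_n$ of $\Omega T_0$ is a genuine crossed product of a locally compact group acting \emph{freely} on a locally compact space. First I would recall that for a free and proper action of a locally compact abelian group $G$ on a locally compact space $Y$ with quotient $Z$, the Green--Julg / Rieffel machinery gives $G\ltimes C_0(Y)\cong C_0(Z)\otimes\ca{K}(L^2(G))$ (Morita equivalence with the trivial action on $C_0(Z)$), and that this isomorphism is natural in the data. Here $G=U_n$, $Y=\widetilde{M}\times U_n$ with the action being translation on the second factor (the action on $\widetilde{M}$ having been ``absorbed'' into the $T\times\Pi_T$ part), so the quotient is $\widetilde{M}$ and we get at stage $n$
$$(T\times\Pi_T\times U_n)\ltimes_\tau C_0(\widetilde{M}\times U_n)\;\cong\;\bigl((T\times\Pi_T)\ltimes_\tau C_0(\widetilde{M})\bigr)\otimes\ca{K}(L^2(U_n)).$$
One must check that the twisting cocycle $\tau$, when restricted, only sees the $T\times\Pi_T$ directions (the restriction of $\tau$ to $\Omega T_0\times\Omega T_0$ is trivial up to a coboundary because $\Omega T_0$ is a topological vector space, hence its group cohomology with smooth cochains vanishes), so the $U_n$-crossed product may be untwisted; this is the first technical point to nail down.

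Next I would verify that these stagewise isomorphisms are compatible with the connecting maps $i_{n*}$ of the two inductive systems. On the algebra side the connecting map, following the \cite{HKT}-style construction recalled in the introduction, is $\phi\mapsto\phi\otimes(\text{rank-one projection onto the appropriate Clifford/vacuum vector})$ in the $\ca{K}(L^2(U_n))\hookrightarrow\ca{K}(L^2(U_{n+1}))$ factor, and it is the identity on the $(T\times\Pi_T)\ltimes_\tau C_0(\widetilde{M})$ factor. The inclusions $\ca{K}(L^2(U_n))\hookrightarrow\ca{K}(L^2(U_{n+1}))$ induced by $L^2(U_n)\hookrightarrow L^2(U_{n+1})$ have as inductive limit precisely $\ca{K}(L^2(\Omega T_0))$ by definition of the latter Hilbert space. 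Since the inductive limit is a continuous functor on $C^*$-algebras and $-\otimes B$ commutes with inductive limits for fixed $B$ (and here the ``fixed'' factor $(T\times\Pi_T)\ltimes_\tau C_0(\widetilde{M})$ is genuinely constant along the system), taking $\varinjlim$ of the stagewise isomorphisms yields
$$LT\ltimes_\tau C_0(\ca{M})\;=\;\varinjlim_n (T\times\Pi_T\times U_n)\ltimes_\tau C_0(\widetilde{M}\times U_n)\;\cong\;(T\times\Pi_T)\ltimes_\tau C_0(\widetilde{M})\otimes\ca{K}(L^2(\Omega T_0)),$$
which is the claimed isomorphism; the Morita-equivalence statement is then immediate since tensoring by the compacts on a separable Hilbert space is a Morita equivalence.

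The main obstacle I anticipate is not the stagewise Green--Julg isomorphism, which is standard, but the \textbf{compatibility with the connecting maps}: one has to check that the two inductive systems really are intertwined, i.e.\ that under the stagewise isomorphism the map ``tensor with the vacuum projection on the orthogonal-complement coordinates'' on the left corresponds exactly to the inclusion $\ca{K}(L^2(U_n))\hookrightarrow\ca{K}(L^2(U_{n+1}))$ on the right, with nothing extra happening in the $(T\times\Pi_T)\ltimes_\tau C_0(\widetilde{M})$ factor. This requires being careful about how the Green--Julg imprimitivity bimodule $L^2(U_n)\otimes C_0(\widetilde{M})$ is constructed at each stage and checking that the chosen bimodules themselves form a compatible inductive system of bimodules — i.e.\ one wants a directed system in the relevant correspondence category, not just objectwise isomorphisms. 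Once the bimodules are chosen coherently (which the Fourier-basis description of $\Omega T_0$ makes possible, since $U_{n+1}=U_n\times(\text{one more Fourier coordinate})$ splits the imprimitivity bimodule as an external tensor product), the limit argument goes through formally. A secondary point worth stating explicitly is that all the $C^*$-algebras in sight are separable (each finite stage is), so the inductive limits and the stabilization $\ca{K}(L^2(\Omega T_0))$ are well-behaved and the final Morita equivalence is the genuine (Rieffel) one.
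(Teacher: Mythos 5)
Your overall strategy — splitting off the $\Omega T_0$-direction at each finite stage $U_N$, identifying the $U_N$-crossed product with $\ca{K}(L^2(U_N))$, and then taking inductive limits along connecting maps that tensor with the vacuum projection — is exactly the paper's route (Proposition~\ref{Prop division Sect.3} and the theorem in Section~\ref{Section Algebra}). But there is a genuine error in the technical point you flag.

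You justify removing the twist on the $U_N$-part by asserting that the restriction of $\tau$ to $\Omega T_0\times\Omega T_0$ is a coboundary because $\Omega T_0$ is a topological vector space. This is false. By Proposition~\ref{Prop cocycle Sect.2}, $\tau(l_1,l_2)=\exp(i\Innpro{l_1}{dl_2/d\theta}{\tau})$ on $\Omega T_0$, i.e.\ the cocycle attached to the symplectic form $\omega$, and this is the Heisenberg cocycle. It defines a \emph{nontrivial} central extension (the Heisenberg group of $(\Lie(U_N),\omega)$ is nonabelian, so the extension cannot split), and indeed $H^2(\bb{R}^{2n},U(1))$ with continuous (or smooth, or Borel) cochains is $\wedge^2(\bb{R}^{2n})^*\neq 0$. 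If $\tau|_{\Omega T_0}$ were trivial, there would be no Heisenberg representation $L^2(\bb{R}^\infty)$ and no positive energy representations of $\Omega T_0$ at level $\tau$ — the entire framework of Sections~\ref{Section Hilbert}--\ref{Section Algebra} would collapse. So you may \emph{not} simply untwist and invoke the ordinary Green--Julg isomorphism for the $U_N$-factor.

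The good news is that the conclusion you need at each finite stage, $U_N\ltimes_\tau C_0(U_N)\cong\ca{K}(L^2(U_N))$, is true even with the nontrivial twist — but the reason is a Stone--von Neumann / Mackey-imprimitivity argument rather than the untwisted Green--Julg theorem: $U_N$ acts on itself transitively, so the $\tau$-twisted regular covariant representation of the pair $(C_0(U_N),U_N)$ on $L^2(U_N)$ is irreducible with image precisely $\ca{K}(L^2(U_N))$, and the twist is absorbed into the choice of covariant representation. The paper carries this out explicitly in Section~\ref{Section Algebra} (the theorem establishing $U_N\ltimes_\tau C_0(U_N)\cong\ca{K}(L^2(U_N))$): it checks that the covariant representation is faithful, compactly valued, and has dense range, and it identifies the connecting map $k\mapsto k\otimes P$ with $P$ the rank-one projection onto the lowest weight vector. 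Your discussion of compatibility of the imprimitivity bimodules along the system, and of $\varinjlim$ commuting with $-\otimes B$, is otherwise sound, and the rest of the proposal matches the paper's argument once this step is corrected.
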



This paper is organized as follows.

In Section \ref{Section Preliminaries}, we explain some objects. In order to construct $LT$-equivariant analytic index, we combine noncommutative geometry with representation theory by using a functional analytic technique motivated by a geometric quantization problem, and hence we need many tools and concepts from several areas. Even if the readers are not familiar with all the objects, we believe they understand our work by consulting this section.

In Section \ref{Section simplification}, we present the problem and divide it into two parts. 

In Section \ref{Section Hilbert} and Section \ref{Section Index}, we construct a Hilbert space which we regard as $L^2$-sections of $\tau$-twisted $LT$-equivariant Clifford module bundle, and an operator which we regard a Dirac operator for the bundle. Moreover we define an $LT$-equivariant analytic index valued in $R^\tau(LT)$ for which we study a group $C^*$-algebra of $(T\times\Pi_T)^\tau$. We will define a $C^*$-algebra $C^*(LT,\tau)$ which we will not use though.

In Section \ref{Section Algebra}, we `justify' the package in the previous two sections. More precisely, we construct a $C^*$-algebra which we regard as the twisted crossed product $LT\ltimes_\tau C_0(\ca{M})$. 
Moreover, we combine all of them in terms of spectral triples. 

In Section \ref{Section Applications}, we present some applications.

\section{Preliminaries}\label{Section Preliminaries}
Since we hope so many people to read this paper, we collect definitions and explanations of objects which we need, for convenience of the readers. Firstly, we explain loop groups, for which we must start from representation theory of compact Lie groups. Secondly, we explain $C^*$-algebras. Although we have explain noncommutative geometry we focus on in the previous section, we need deeper results of $C^*$-algebras. Lastly, we briefly explain geometric quantization program which is our original motivation.


\subsection{Compact Lie groups}
We learned this subject from \cite{KO} in Japanese.

Firstly, we list properties of representations of compact Lie groups.
Before that, we recall representation theory of a torus. We use $S$ for a torus here.
Let us notice that a torus $S$ determines two discrete groups
\begin{eqnarray*}
\Pi_S &:=& \ker(\exp:\fra{s}\to S)\subseteq\fra{s} \\
\Lambda_S &:=& \Hom(\Pi_S,\bb{Z})\subseteq\fra{s}^*.
\end{eqnarray*}
They often appear in this paper.
\begin{pro}
Each irreducible representation of $S$ is $1$-dimensional and unitarizable, and hence the group of isomorphism classes of $S$ under the tensor product is isomorphic to $\Hom(S,U(1))$. 
We can define a representation of $\fra{s}$ and hence an element of $\fra{s}^*$ from a representation of $S$.
Considering the infinitesimal representations, $\Hom(S,U(1))$ is isomorphic to $\Hom(\fra{s},\bb{Z})$.
\end{pro}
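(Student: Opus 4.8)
The plan is to establish the three assertions in turn. For the first, I use compactness of $S$: averaging an arbitrary Hermitian inner product over the normalized Haar measure yields an $S$-invariant one, so every finite-dimensional representation is unitarizable, and for a representation on a general Hilbert space the Peter--Weyl theorem reduces matters to the finite-dimensional case. Given an irreducible unitary representation $\rho$ on $V$, commutativity of $S$ makes each $\rho(s)$ an $S$-intertwiner of $\rho$ with itself, so by Schur's lemma $\rho(s)$ is a scalar; then every subspace of $V$ is invariant and irreducibility forces $\dim_{\bb{C}}V=1$. Such a representation is exactly a continuous homomorphism $\chi\colon S\to U(1)$; the tensor product of $\chi_1$ and $\chi_2$ is the pointwise product $\chi_1\chi_2$, and the isomorphism class of a one-dimensional representation is recorded by its character, so the set of isomorphism classes of irreducible representations, a group under $\otimes$, is identified with $\Hom(S,U(1))$.

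For the infinitesimal statement I would differentiate $\chi\colon S\to U(1)$ at the identity to obtain a Lie algebra map $d\chi\colon\fra{s}\to\Lie(U(1))=i\bb{R}$, i.e.\ (after dividing by $2\pi i$) an element $\lambda_\chi\in\fra{s}^*$. Since $S$ is connected and $\exp\colon\fra{s}\to S$ is a surjective homomorphism with kernel $\Pi_S$, the relation $\chi(\exp X)=e^{2\pi i\lambda_\chi(X)}$ recovers $\chi$ from $\lambda_\chi$, so $\chi\mapsto\lambda_\chi$ is injective. Conversely, given $\lambda\in\fra{s}^*$, the assignment $\exp X\mapsto e^{2\pi i\lambda(X)}$ is well defined on $S=\fra{s}/\Pi_S$ precisely when $\lambda(\Pi_S)\subseteq\bb{Z}$, and it is then a continuous homomorphism. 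Hence the image of $\chi\mapsto\lambda_\chi$ is $\{\lambda\in\fra{s}^*\mid\lambda(\Pi_S)\subseteq\bb{Z}\}=\Hom(\Pi_S,\bb{Z})=\Lambda_S$, which is the asserted isomorphism $\Hom(S,U(1))\cong\Lambda_S$ (the target being the group written $\Hom(\fra{s},\bb{Z})$ in the statement).

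The only inputs beyond formal manipulation are the compactness-based unitarization together with Peter--Weyl, and the standard fact that a homomorphism out of a connected Lie group is determined by its differential via the exponential map; everything else is Schur's lemma and bookkeeping. I expect the sole obstacle to be notational: one must fix the normalization (writing $e^{2\pi i(\cdot)}$ rather than $e^{i(\cdot)}$) consistently so that the resulting lattice in $\fra{s}^*$ coincides with the $\Lambda_S$ fixed at the start of the subsection, and similarly interpret $\Hom(\fra{s},\bb{Z})$ as $\Hom(\Pi_S,\bb{Z})=\Lambda_S$.
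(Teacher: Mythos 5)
Your proof is correct, and it is the standard argument. Note that the paper itself gives no proof of this proposition---it appears in the Preliminaries as a fact recalled from the reference [KO]---so there is no paper argument to compare against; your Haar-averaging, Schur's-lemma, and exponential-map reasoning is exactly what one would expect, and you rightly flag that the paper's ``$\Hom(\fra{s},\bb{Z})$'' must be read as $\Hom(\Pi_S,\bb{Z})=\Lambda_S$.
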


Let $G$ be a compact, semisimple, connected and simply connected Lie group.
It is known that $G$ has a maximal connected commutative sub group, so called a maximal torus, and all maximal tori are conjugate to one another. Let $S$ be a chosen maximal torus. It determines its normalizer $S\triangleleft N(S)\subseteq G$ and the Weyl group $W(G):=N(S)/S$. If we choose another maximal torus $S'$, the induced Weyl group $N(S')/S'$ is isomorphic to the original one, and hence the Weyl group is independent of the choice of a maximal torus.
\begin{pro}
$(1)$ All representations on Hilbert spaces are unitarizable, that is, for any representation $(\ca{H},\rho)$, there exists maybe a new inner product, $\rho$ is unitary under it.

$(2)$ Each irreducible representation is finite dimensional.

$(3)$ The restriction of each irreducible representation to the maximal torus $S$ determines a weighted subset of $\Lambda_S$. It is $W(G)$-invariant. Moreover, if two irreducible representations $\rho_1$ and $\rho_2$ are not isomorphic to one another, the subsets are different. It is known as the Cartan-Weyl theory. $\widehat{G}$ denotes the set of isomorphism classes of irreducible representations.
\end{pro}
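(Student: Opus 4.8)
The plan is to dispatch the three parts in turn, each by the standard device of averaging over the normalized Haar measure $dg$ of $G$ combined with Schur's lemma.

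For $(1)$, fix any continuous inner product $\innpro{\cdot}{\cdot}{}$ on $\ca{H}$ and average it over $G$, setting
\[
\Innpro{v}{w}{}:=\int_G \innpro{\rho(g)v}{\rho(g)w}{}\,dg .
\]
Compactness of $G$ makes the integral converge once $\|\rho(g)\|$ is bounded uniformly in $g$, which itself follows from strong continuity and the uniform boundedness principle (each orbit map $g\mapsto\rho(g)v$ is continuous on the compact $G$, hence $\sup_g\|\rho(g)v\|<\infty$ for every $v$). Translation invariance of $dg$ then makes every $\rho(h)$ unitary for $\Innpro{\cdot}{\cdot}{}$, and a short estimate (using $\|\rho(g)^{-1}\|\le C$) shows $\Innpro{\cdot}{\cdot}{}$ is positive definite and induces a norm equivalent to the original one, so $\ca{H}$ remains a Hilbert space.

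For $(2)$ we may assume $\rho$ unitary by $(1)$. Choose a unit vector $v$, let $P$ be the orthogonal projection onto $\bb{C}v$, and form $T:=\int_G \rho(g)P\rho(g)^*\,dg$. Then $T$ is positive, commutes with $\rho(G)$ by invariance of $dg$, and is trace class with $\operatorname{tr}(T)=1$ (integrate $\operatorname{tr}(\rho(g)P\rho(g)^*)=1$); in particular $T\neq 0$. By Schur's lemma for the unitary irreducible $\rho$ — a bounded self-adjoint operator commuting with $\rho(G)$ has all its spectral projections equal to $0$ or $\id$, hence is scalar — we get $T=\lambda\,\id$ with $\lambda>0$, so $\id$ is trace class and $\dim\ca{H}<\infty$. (Alternatively one cites the Peter--Weyl theorem and the embedding of each irreducible into $L^2(G)$ via matrix coefficients.) For $(3)$, restrict the now finite-dimensional irreducible $\rho$ to $S$; since $S$ is a torus, $\ca{H}=\bigoplus_\mu\ca{H}_\mu$ decomposes into weight spaces indexed by a finite subset of $\Lambda_S$, and $\mu\mapsto\dim\ca{H}_\mu$ is the asserted weighted subset. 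For $n\in N(S)$ with image $w\in W(G)$, the operator $\rho(n)$ carries $\ca{H}_\mu$ isomorphically onto $\ca{H}_{w\cdot\mu}$, giving $W(G)$-invariance. Finally, this weighted subset determines $\chi_\rho|_S$, hence $\chi_\rho$ on all of $G$, because every element of $G$ is conjugate into $S$ and $\chi_\rho$ is a class function; since the characters of pairwise non-isomorphic irreducibles are linearly independent (orthonormal in $L^2(G)$), distinct irreducibles yield distinct weighted subsets.

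The genuine obstacles, I expect, are the two facts quoted without proof in $(3)$: the conjugacy theorem (every element of a compact connected Lie group lies in some maximal torus, and all maximal tori are conjugate) and the orthonormality and completeness of irreducible characters in $L^2(G)$, which rests on the Peter--Weyl theorem. A self-contained account would derive Peter--Weyl from the compactness of convolution operators on $L^2(G)$ and the conjugacy theorem from a degree or fixed-point argument; in this preliminary section I would instead simply cite \cite{KO}.
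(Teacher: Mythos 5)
The paper does not actually prove this proposition: it is recalled in the preliminaries as classical background, with the reference \cite{KO} given at the head of the subsection, and no argument is supplied. So there is nothing in the paper to compare against; what matters is whether your proof is sound, and it is. Your route — averaging an arbitrary inner product over the normalized Haar measure for $(1)$, forming $T=\int_G\rho(g)P\rho(g)^*\,dg$ and invoking Schur's lemma plus a trace computation for $(2)$, and using the weight-space decomposition under $S$ together with conjugacy of $N(S)$ and linear independence of irreducible characters for $(3)$ — is the standard textbook proof. One point worth making explicit in $(2)$ is the interchange of trace and integral in $\mathrm{tr}(T)=\int_G\mathrm{tr}\bigl(\rho(g)P\rho(g)^*\bigr)\,dg$: since $T\ge 0$, Tonelli applies to
\[
\sum_n\int_G\bigl|\langle\rho(g)^*e_n,v\rangle\bigr|^2\,dg=\int_G\|\rho(g)^*v\|^2\,dg=\int_G\|v\|^2\,dg=1,
\]
so $T$ is trace class with trace $1$, hence nonzero, and Schur's lemma forces $T=\lambda\,\id$ with $\lambda>0$ and $\dim\ca{H}=1/\lambda<\infty$. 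You are also right to flag the two load-bearing imports in $(3)$ — conjugacy of maximal tori and the orthonormality of irreducible characters coming from Peter--Weyl — as the genuine content that a self-contained treatment would need; the paper simply defers these to \cite{KO}, which is the appropriate move in a preliminaries section.
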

If we take a basis in $\Lambda_S$, the lexicographical order $<$ is defined. If we are given two elements $\lambda_1$ and $\lambda_2$ such that $\lambda_1<\lambda_2$, $\lambda_2$ is said to be higher than $\lambda_1$. The set constructed in $(3)$ is completely determined by the highest weight.
The following is well-known Borel-Weil theory. We state $LT$ version of it as an application of the main result.
\begin{pro}
Suppose that we are given an irreducible representation $(V,\rho)$. It can be realized as the space of holomorphic sections of a line bundle obtained by the highest weight $\lambda$. 
\end{pro}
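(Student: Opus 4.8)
The plan is to reduce everything to the complexification and to the theorem of the highest weight recorded just above. First I would complexify: let $G_{\bb{C}}$ be the complexification of $G$, and fix a Borel subgroup $B\subseteq G_{\bb{C}}$ containing $S_{\bb{C}}$ whose unipotent radical $N$ carries the negative roots relative to the positive system used to define $\lambda$ (the opposite choice merely replaces $\lambda$ by $-w_0\lambda$, so it is immaterial). The inclusion $G\hookrightarrow G_{\bb{C}}$ induces a diffeomorphism $G/S\xrightarrow{\sim}G_{\bb{C}}/B$, which endows the homogeneous space $G/S$ with a $G$-invariant complex structure — indeed with the structure of a smooth projective variety, the flag manifold. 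Next I would build the line bundle: extend $\lambda\in\Hom(S,U(1))\subseteq\Lambda_S$ to a holomorphic character of $B$ by letting $N$ act trivially, write $\bb{C}_\lambda$ for the resulting one-dimensional $B$-module, and set $\ca{L}_\lambda:=G_{\bb{C}}\times_B\bb{C}_\lambda$, a holomorphic line bundle on $G_{\bb{C}}/B$. The left $G_{\bb{C}}$-action, restricted to $G$, makes the space of global holomorphic sections $H^0(G_{\bb{C}}/B,\ca{L}_\lambda)$ a representation of $G$, and it commutes with $\bar\partial$, which is routine.

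The key identification is the classical one: a holomorphic section of $\ca{L}_\lambda$ is the same as a holomorphic function $f:G_{\bb{C}}\to\bb{C}$ with $f(gb)=\lambda(b)^{-1}f(g)$ for $b\in B$, on which $G$ acts by left translation. I would then argue in two steps. \emph{(i) Finiteness and the highest weight line.} By the Bruhat decomposition the big cell $\overline{N}B/B\cong\overline{N}$ is Zariski-dense, so a section restricts injectively to a holomorphic function on the affine space $\overline{N}\cong\bb{C}^{\dim G/S}$; bounding the pole order along the complementary divisor (equivalently, finiteness of coherent cohomology on the compact complex manifold $G_{\bb{C}}/B$) shows these restrictions are polynomials of bounded degree, so $H^0(G_{\bb{C}}/B,\ca{L}_\lambda)$ is finite dimensional. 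The function equal to $\lambda(b)^{-1}$ on $\overline{N}B$ extends to a genuine section $f_0$, and a direct computation shows $f_0$ is left $N$-invariant and of left $S$-weight $\lambda$, i.e.\ a highest weight vector of weight $\lambda$. \emph{(ii) Identification with $V$.} Differentiating the equivariance and Cauchy--Riemann conditions shows every section is annihilated by $\fra{n}$ acting on the left after the evident shift, so $H^0(G_{\bb{C}}/B,\ca{L}_\lambda)$ has a one-dimensional space of $N$-highest vectors, all of weight $\lambda$; by the Cartan--Weyl theorem already stated this representation is irreducible with highest weight $\lambda$, hence isomorphic to $(V,\rho)$. An equivalent route, which I might prefer to write up, uses Peter--Weyl and Frobenius reciprocity: $H^0(G_{\bb{C}}/B,\ca{L}_\lambda)\cong\bigoplus_{\pi\in\widehat{G}}V_\pi\otimes(V_\pi^*)^{\fra{n},\lambda}$, and Cartan--Weyl says the multiplicity space $(V_\pi^*)^{\fra{n},\lambda}$ is $\bb{C}$ for $\pi\cong V$ and $0$ otherwise.

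The step I expect to be the main obstacle is (i): showing that the purely algebraic prescription — the constant-on-the-big-cell datum $f_0$ — actually extends to a \emph{global} holomorphic section, and simultaneously that no further sections appear, i.e.\ that holomorphy on $G_{\bb{C}}/B$ is exactly as rigid as the Lie-algebra condition ``annihilated by $\fra{n}$''. This is precisely where the compactness of $G/S$ (equivalently, the projectivity of the flag variety, or the ampleness of $\ca{L}_\lambda$ for dominant $\lambda$) enters essentially and cannot be replaced by formal bookkeeping; the Peter--Weyl route conceals this behind the completeness of the orthogonal decomposition of $L^2(G)$, which is itself the analytic heart of the matter. The remaining ingredients — that $G/S$ carries the asserted invariant complex structure, that the extension of $\lambda$ across $B$ is holomorphic, and that left translation is holomorphic — are standard and I would only sketch them.
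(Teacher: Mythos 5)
The paper offers no proof of this proposition at all: it appears in the Preliminaries as a recollection of classical Borel--Weil theory, stated purely for orientation and deferred to the references ([KO], [PS]), and the paper's own contribution in this direction is the $LT$-analogue in Section 7, whose proof has an entirely different flavor (index computation against the lowest weight vector rather than $\bar\partial$-cohomology). So there is nothing in the paper to compare your argument against; what you have written is a blind reconstruction of the standard proof, and in outline it is correct — complexify, pass to $G_{\bb{C}}/B$, realize $\ca{L}_\lambda$ associated, and identify $H^0$ via either the Bruhat cell/highest-weight-vector argument or the Peter--Weyl/Frobenius reciprocity decomposition. The one place I would flag is the sentence in step~(ii) claiming ``every section is annihilated by $\fra{n}$ acting on the left after the evident shift'': read literally, that would force every global section to be an $N$-highest vector and hence make $H^0$ one-dimensional, which is not what you want. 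What actually follows from the equivariance is right-$\fra{n}$-annihilation; the correct left-action statement is that the \emph{subspace} of $\fra{n}_+$-invariant sections is at most one-dimensional, because such a section is determined by its value at $e$ once you use the Bruhat-dense cell. Your Peter--Weyl alternative, $H^0\cong\bigoplus_\pi V_\pi\otimes(V_\pi^*)^{\fra{n},\lambda}$, states the multiplicity computation cleanly and is the version I would keep; it also meshes better with the paper, which leans on Peter--Weyl heavily in Section~4 when building $L^2(\Omega T_0)$.
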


Our ``$L^2$-space'' will be constructed modeled on Peter-Weyl theorem. Thus we explain it more precisely. It states that we can use representation theory to analyze $L^2$-theory of compact groups.

Let $G$ be a compact group and hence there exists the unique biinvariant Borel measure $\mu$ on it such that $\mu(G)=1$, which is called the normalized Haar measure.
We can define two canonical representations $L$ and $R$, so called the left and right regular representations respectively.
\begin{dfn}
Let $\phi\in L^2(G)$ and $g,x\in G$. $L(g),R(g):L^2(G)\to L^2(G)$ is defined by
$$\bigl[L(g)\phi\bigr](x):=\phi(g^{-1}x)$$
$$\bigl[R(g)\phi\bigr](x):=\phi(xg).$$
\end{dfn}
It is a unitary representation since $\mu$ is biinvariant. We wish to decompose it as a direct sum of irreducible representations. Before that, we explain how a function can be obtained by a representation.
 Let $(V,\rho)$ be an irreducible representation. $f\otimes v \in V^*\otimes V$ gives a function
$$[f\otimes v](x):=\innpro{f}{\rho(g^{-1})v}{}.$$
Let us call this correspondence a matrix coefficient and $\Psi_\rho:V^*\otimes V\to L^2(G)$ denotes the map.
However, it can be not isometric unless we multiply normalization factors.
\begin{thm}[Peter-Weyl]
For each irreducible representation $(V_\rho,\rho)$, the normalized matrix coefficients $\sqrt{\dim(V_\rho)}\Psi_\rho$ are isometric. Moreover the direct sum of them
$$\bigoplus_{\rho\in \widehat{G}}\sqrt{\dim(V_\rho)}\Psi_\rho:\bigoplus_{\rho\in \widehat{G}}V_\rho^*\otimes V_\rho\to L^2(G)$$ 
is an isometric isomorphism as representation spaces if we take the completion.
\end{thm}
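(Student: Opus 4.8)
The plan is to reduce everything to Schur's orthogonality relations plus the compactness of convolution operators. First I would establish the \emph{isometry} of each $\sqrt{\dim(V_\rho)}\Psi_\rho$. Choosing a $\rho$-invariant inner product on $V_\rho$ (possible by part (1) of the preceding proposition), identify $V_\rho^*$ with $\overline{V_\rho}$, so that $\Psi_\rho(f\otimes v)(x)=\langle v,\rho(x)v_f\rangle$ for the vector $v_f$ representing $f$. For $v_1,v_2,w_1,w_2\in V_\rho$ one computes
$$\int_G \langle v_1,\rho(x)w_1\rangle\,\overline{\langle v_2,\rho(x)w_2\rangle}\,d\mu(x)=\langle v_1,A v_2\rangle,\qquad A:=\int_G \rho(x)w_1\langle w_2,\rho(x)(\cdot)\rangle\,d\mu(x).$$
Biinvariance of $\mu$ makes $A$ intertwine $\rho$ with itself, so by Schur's lemma $A$ is a scalar; taking traces gives the scalar $\langle w_1,w_2\rangle/\dim(V_\rho)$. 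This yields both the normalization factor $\sqrt{\dim(V_\rho)}$ making $\Psi_\rho$ isometric and, by the same trace computation applied to the intertwiner between inequivalent irreducibles, the mutual orthogonality of the images $\Psi_{\rho_1}(V_{\rho_1}^*\otimes V_{\rho_1})\perp\Psi_{\rho_2}(V_{\rho_2}^*\otimes V_{\rho_2})$ for $\rho_1\not\cong\rho_2$. Thus the direct sum map is a well-defined isometry onto a closed subspace of $L^2(G)$.

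Next I would check the equivariance claim. The group $G\times G$ acts on $V_\rho^*\otimes V_\rho$ by $\rho^*\otimes\rho$ and on $L^2(G)$ by $L\otimes R$; a direct substitution in the formula $[f\otimes v](x)=\langle f,\rho(x^{-1})v\rangle$ shows $\Psi_\rho\circ(\rho^*(g)\otimes\rho(h))=(L(g)\otimes R(h))\circ\Psi_\rho$, so each $\Psi_\rho$, and hence their direct sum, is a morphism of $G\times G$-representations.

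The main obstacle is \emph{density} of the image, i.e. the substantive half of Peter--Weyl. Here the plan is the classical compact-operator argument. For a continuous function $\phi$ on $G$ with $\phi(x^{-1})=\overline{\phi(x)}$, the convolution operator $T_\phi\psi:=\psi*\phi$ on $L^2(G)$ is self-adjoint and Hilbert--Schmidt (its kernel $\phi(y^{-1}x)$ lies in $L^2(G\times G)$ since $G$ is compact with finite Haar measure), hence compact. Its nonzero eigenspaces are therefore finite dimensional, and since $T_\phi$ commutes with the left regular representation $L$, each eigenspace is an $L$-invariant finite dimensional subspace of $L^2(G)$, so decomposes into finitely many irreducible subrepresentations; by continuity of the matrix coefficients such a subrepresentation sits inside $C(G)$ and its elements are, up to the identification above, matrix coefficients of some $\rho\in\widehat{G}$. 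Now let $\phi$ run over an approximate identity of continuous positive-definite functions concentrated near $e$: then $T_\phi\psi\to\psi$ uniformly for $\psi\in C(G)$, while each $T_\phi\psi$ lies in the closed span $\ca{E}$ of all matrix coefficients (being a norm-limit of its spectral pieces, each of which lies in $\ca{E}$). Hence $C(G)\subseteq\overline{\ca{E}}$ in sup-norm, a fortiori in $L^2$-norm, and since $C(G)$ is $L^2$-dense in $L^2(G)$ we conclude $\ca{E}$ is all of $L^2(G)$.

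Combining the three steps: the direct sum $\bigoplus_{\rho\in\widehat{G}}\sqrt{\dim(V_\rho)}\Psi_\rho$ is an isometry with dense image, hence extends to an isometric isomorphism of the completed Hilbert space $\widehat{\bigoplus}_\rho V_\rho^*\otimes V_\rho$ onto $L^2(G)$; by Step 2 it intertwines $\bigoplus_\rho\rho^*\otimes\rho$ with $L\otimes R$. I expect Steps 1 and 2 to be routine given Schur's lemma; the technical care goes into Step 3, specifically verifying the Hilbert--Schmidt bound, the $G$-invariance of eigenspaces, and the approximate-identity convergence $T_\phi\psi\to\psi$ in sup-norm.
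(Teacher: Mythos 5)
The paper states Peter--Weyl purely as background (Section 2.1, citing [KO]) and gives no proof, so there is no in-paper argument to compare your proposal against. Your proof is the standard classical one — Schur orthogonality for isometry and pairwise orthogonality of the matrix-coefficient blocks, a direct substitution for $(L\otimes R)$-equivariance, and the compact-self-adjoint convolution operator plus approximate-identity argument for density — and the strategy is sound. Two small remarks: positive-definiteness of $\phi$ is not needed (only $\phi(x^{-1})=\overline{\phi(x)}$ for self-adjointness of $T_\phi$, together with the usual approximate-identity normalization); and in your intertwiner $A$ the second slot should carry $\rho(x^{-1})$ rather than $\rho(x)$, so that $\langle v_1,Av_2\rangle=\int\langle v_1,\rho(x)w_1\rangle\overline{\langle v_2,\rho(x)w_2\rangle}\,d\mu(x)$ actually holds and $A$ genuinely commutes with $\rho(g)$; with that correction the trace computation yields the stated scalar $\langle w_1,w_2\rangle/\dim V_\rho$.
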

Thanks to the above, representation theory can be a quite powerful tool for harmonic analysis of compact Lie groups. For example, the infinitesimal versions of $R$ and $L$ correspond to direction derivatives in the Peter-Weyl theorem and hence representation theory knows all invariant differential operators. In particular, the Casimir operator $\triangle:=\sum dR(e_i)^2$ corresponds to the Laplacian, where $\{e_i\}$ is an orthonomal basis of $\fra{g}$ with respect to an invariant inner product. 

In a similar idea, we can define a Dirac operator in terms of representation theory.
One can consult \cite{Kos}.
Let $(S(\fra{g}),\gamma)$ be a Spinor of $\fra{g}$ with respect to an invariant inner product. 
\begin{dfn}
An algebraic Dirac operator $D$ is defined by
$$D:= \sum dR(e_i)\otimes\gamma(e_i)$$
and it acts on $L^2(G,G\times S(\fra{g}))=L^2(G)\otimes S(\fra{g})$.
\end{dfn}
\begin{rmk}
We omit the cubic term for simplicity, and thus, our $D$ is not FHT's unless $G$ is commutative.
\end{rmk}

Let us observe it and understand how algebraic it is. Peter-Weyl theorem enables us to consider $v\otimes f\otimes s\in V_\rho\otimes V^*_\rho\otimes S(\fra{g})$ as an $L^2$-section of $G\times S(\fra{g})$, where we switch $V_\rho$ and $V_\rho^*$ for  simplicity. The action of $D$ on it can be computed as follows;
\begin{eqnarray*}
D(v\otimes f\otimes s)&=&\sum dR(e_i)(v\otimes f)\gamma(e_i)s \\
&=& \sum v\otimes d\rho^*(e_i)f\otimes\gamma(e_i)s \\
&=& v\otimes\Bigl(\sum d\rho^*(e_i)f\otimes \gamma(e_i)s\Bigr).
\end{eqnarray*}
If we set $D_\rho:=\sum d\rho^*(e_i)\otimes\gamma(e_i)$, it is an operator acting on a {\bf finite dimensional} space and the Dirac operator can be written as the direct sum of $D_\rho$'s. This is why they call their operator ``Dirac operator'' although their Hilbert space is just a tensor product of representation space with Spinor space.

\subsection{Loop groups and its representation}
Our main references of this subsection are \cite{PS} and \cite{FHTII}.
The loop group $LG$ for a compact Lie group $G$ is the space of $C^\infty$ maps from a circle $\bb{T}_{{\rm rot}}$ to $G$. Since this paper is full of circles, we use different symbols to distinguish them.
The circle for the target of the loop group is $T$, and the circle for central extensions is $U(1)$.
Although $LG$ has a big symmetry $\mathit{Diff}(\bb{T}_\rot)$, the diffeomorphism group of $\bb{T}_\rot$, we use only the symmetry coming from the rigid rotations defined below.
\begin{dfn}
For $\theta_0\in\bb{T}_\rot$ and $l\in LG$,
$$\theta_0.l(\theta):=l(\theta+\theta_0).$$
The infinitesimal generator of it is written as $d:L\fra{g}\to L\fra{g}$.
\end{dfn}
We hope to deal with $LG$ by similar techniques of compact Lie groups. For this aim, we desire a nice class of representations as we explained in Section \ref{Section Introduction}
Let us present the precise definition of P.E.R.s.
Before that, we need to explain what kind of central extension is suitable.
\begin{dfn}[\cite{FHTII}{Definition 2.10.}]
A central extension $1\to U(1) \xrightarrow{i} LG^\tau \xrightarrow{p} LG\to 1$ is admissible if:

$(1)$ The rigid rotation action on $LG$ lifts to $LG^\tau$, therefore, we can construct the semi direct product $\bb{T}_{{\rm rot}}\ltimes LG^\tau$.

$(2)$ There exists $\bb{T}_{{\rm rot}}\ltimes LG^\tau$-invariant symmetric bilinear form $\Innpro{\cdot}{\cdot}{\tau}$ on its Lie algebra such that
$$\Innpro{K}{d}{\tau}=-1$$
for $d\in {\rm Lie}(\bb{T}_{{\rm rot}}\ltimes LG^\tau)$ which project to the generator in ${\rm Lie}(\bb{T}_{{\rm rot}})$ and $K$ is the generator of the added circle $i(U(1))\subseteq LG^\tau$.
\end{dfn}
We are in the position to define P.E.R.s.
\begin{dfn}
A strongly continuous unitary representation $\rho:LG^\tau\to PU(\ca{H})$ is said to be a P.E.R. at level $\tau$ if

$(1)$ The added center $i(U(1))$ acts on $\ca{H}$ by the scaler multiplication.

$(2)$ $\rho$ lifts to the semi direct product $\bb{T}_\rot\ltimes LG^\tau$.

$(3)$ When we decompose $\ca{H}$ via the weight of the action of $\bb{T}_\rot$, as $\ca{H}=\oplus_n\ca{H}_n$, each $\ca{H}_n$ is finite dimensional and $\ca{H}_n=0$ for sufficiently small $n$.
\end{dfn}
\begin{dfn}
The set of isomorphism classes of P.E.R.s at level $\tau$ consists of semigroup under the direct sum. Its Grothendieck completion is denoted by $R^\tau(LT)$.
\end{dfn}

When $G$ is a torus $T$, we can avoid use of central extensions.

Let us recall the canonical decomposition mentioned above, $LT=T\times \Omega T=T\times\Pi_T\times \Omega T_0$. This decomposition implies the homotopy equivalence $LT\approx T\times\Pi_T$ and hence any $U(1)$-principal bundle is trivial. Therefore, there exists a global continuous section $s:LT\to LT^\tau$. It determines a $2$-cocycle
$$\tau(l_1,l_2):=s(l_1)s(l_2)s(l_1l_2)^{-1}\in\ker(p)=i(U(1))$$
where we use the same symbol for the $2$-cocycle. If we take another section $s'=s\cdot f$, where $f:LT\to U(1)$ is a continuous $U(1)$ valued function, the new cocycle is as follows;
\begin{eqnarray*}
\tau'(l_1,l_2) &=& s'(l_1)s'(l_2)s'(l_1l_2)^{-1} \\
&=& s(l_1)s(l_2)s(l_1l_2)^{-1}f(l_1)f(l_2)f(l_1l_2)^{-1} \\
&=& \tau(l_1,l_2)\delta(f)(l_1,l_2).
\end{eqnarray*}
$\delta f$ is the group coboundary of $f$.
It tells that the cohomology class does not depend on the choice of $s$. We define P.E.R.s again. $\tau$  in the following denotes a chosen cocycle.
\begin{dfn}
A continuous map $\rho:LT\to U(\ca{H})$ is a P.E.R. at level $\tau$ if

$(1)$ $\rho(l_1)\rho(l_2)=\tau(l_1,l_2)\rho(l_1l_2)$.

$(2)$ $\rho$ lifts to $\bb{T}_\rot\ltimes LT$.

$(3)$ When we decompose $\ca{H}$ via the weight of the action of $\bb{T}_\rot$, as $\ca{H}=\oplus_n\ca{H}_n$, each $\ca{H}_n$ is finite dimensional and $\ca{H}_n=0$ for sufficiently small $n$.
\end{dfn}
In this paper, we use this model to study P.E.R.s. 

Let us review the classification of cocycles coming from admissible central extensions, and P.E.R.s at each level following Proposition 2.27 in \cite{FHTII}.

\begin{pro}\label{Prop cocycle Sect.2}
For any $2$-cocycle $\tau$ which comes from an admissible central extension,
there exists a group homomorphism $\kappa^\tau:\Pi_T\to \Lambda_T$ such that:
\begin{itemize}
\item $\tau\bigl(l,(t,n)\bigr)=\tau\bigl((t,n),l\bigr)=1$
\item $\tau\bigl((t_1,n_1),(t_2,n_2)\bigr)=\kappa^\tau(n_1)(t_2)$
\item $\tau\bigl(l_1,l_2\bigr)=\exp\bigl( i\Innpro{l_1}{dl_2/d\theta}{\tau}\bigr)$
\end{itemize}
for any $l,l_1,l_2\in \Omega T_0$, $t,t_1,t_2\in T$ and $n,n_1,n_2\in\Pi_T$.
In short, $\Omega T_0$ and $T\times \Pi_T$ commutes with one another, the cocycle on $\Omega T_0$ comes from the canonical symplectic form, and the commutator in $T\times \Pi_T$ defines a homomorphism.
\end{pro}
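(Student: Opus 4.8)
The plan is to follow the argument behind Proposition 2.27 of \cite{FHTII}: reduce everything to the infinitesimal picture supplied by admissibility, and then exploit the rigidity of the discrete factor $\Pi_T$ and the compact factor $T$ in the decomposition $LT=T\times\Pi_T\times\Omega T_0$ to normalize the section $s$ and read off the three formulas.

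First I would work at the level of Lie algebras. An admissible extension carries a $\bb{T}_\rot\ltimes LT^\tau$-invariant symmetric form $\Innpro{\cdot}{\cdot}{\tau}$ with $\Innpro{K}{d}{\tau}=-1$. The $2$-cocycle $\omega$ on $L\fra{t}$ defining the central extension of Lie algebras satisfies $[\hat\xi,\hat\eta]=\omega(\xi,\eta)K$ in $\Lie(LT^\tau)$; pairing this with $d$, and using $\Innpro{K}{d}{\tau}=-1$ together with $\ad(d)$-invariance and $[d,\xi]=d\xi/d\theta$, identifies $\omega(\xi,\eta)=\Innpro{\xi}{d\eta/d\theta}{\tau}$ up to an overall sign. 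Rotation invariance also forces the restriction of $\Innpro{\cdot}{\cdot}{\tau}$ to $L\fra{t}$ to be a fixed multiple, say $\tfrac{k}{2\pi}\int_0^{2\pi}\langle\cdot,\cdot\rangle\,d\theta$, of an invariant inner product on $\fra{t}$.

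Next I would choose the global section $s\colon LT\to LT^\tau$, which exists by the topological triviality already recorded, so as to normalize $\tau$ on all three factors at once. Since $\Pi_T$ is free abelian (and, when $T$ is a circle, of rank one, so it carries no nontrivial alternating bicharacter) and $T$ is a compact connected torus, a short argument shows the extensions these subgroups inherit are abelian and split; hence $s$ may be taken to be a homomorphism on $T\times\Pi_T$. On the Fr\'echet vector group $\Omega T_0\cong C^\infty(S^1,\fra{t})/\fra{t}$ every smooth $U(1)$-valued $2$-cocycle is cohomologous to a bilinear one, and by the previous paragraph the bilinear representative must be $\exp\!\big(i\Innpro{l_1}{dl_2/d\theta}{\tau}\big)$; note this is well defined on $\Omega T_0$ because replacing $l_1$ by a constant changes the integral by $\tfrac{k}{2\pi}\langle c,\eta(2\pi)-\eta(0)\rangle=0$. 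Since the correcting cochains are coboundaries, these normalizations are mutually compatible. With this $s$, the third bullet is now a definition, and the first bullet follows by integration by parts: the commutator pairing of a loop of constant velocity with a based periodic loop $\eta\in\Omega T_0$ is $\exp\!\big(\tfrac{ik}{2\pi}\langle\,\cdot\,,\eta(2\pi)-\eta(0)\rangle\big)=1$, and what survives is a one-sided cochain, which is a coboundary and can be deleted.

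Finally, for the second bullet I would restrict $\tau$ to $T\times\Pi_T$ by writing $(t_i,n_i)$ as the loop $\theta\mapsto\exp(\xi_i+n_i\theta)$ with $t_i=\exp\xi_i$ (and $\xi_i$ based at $0$). The same constant-velocity computation, after discarding coboundary terms and using that the inner product is symmetric so the $n_1$--$n_2$ contribution drops out of the antisymmetric part, leaves $\tau\big((t_1,n_1),(t_2,n_2)\big)=\exp(ik\langle n_1,\xi_2\rangle)$; thus $\kappa^\tau(n)(t):=\exp(ik\langle n,\xi\rangle)$ for $t=\exp\xi$ does the job. It is well defined modulo $\Pi_T$ exactly because admissibility forces $k\langle n,\Pi_T\rangle\subseteq2\pi\bb{Z}$, so $\kappa^\tau(n)\in\Hom(T,U(1))=\Lambda_T$, and bilinearity of $\langle\cdot,\cdot\rangle$ makes $\kappa^\tau\colon\Pi_T\to\Lambda_T$ a homomorphism. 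I expect the main obstacle to be the bookkeeping: producing one section $s$ that realizes all three normalizations simultaneously, choosing the right loop representatives, and tracking the level $k$ and its integrality so that $\kappa^\tau$ genuinely lands in $\Lambda_T$; everything else is either the definition of admissibility or a standard connectedness/freeness argument.
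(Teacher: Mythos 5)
The paper supplies no proof of this proposition: it is imported verbatim (together with its label) from Proposition~2.27 of \cite{FHTII}, and the text that follows the statement in Section~2 only \emph{constructs an example} of such a cocycle from a chosen homomorphism $\kappa^\tau$, rather than proving the classification. So there is no in-text argument to match your proposal against; what you have written is an attempt to reconstruct the argument of \cite{FHTII}, which is a legitimate thing to do, and the overall shape of your reconstruction (derive the bilinear cocycle on $\Omega T_0$ from $\ad(d)$-invariance and $\Innpro{K}{d}{\tau}=-1$; use the topological triviality of the extension over the discrete and compact factors to normalize $s$; read off $\kappa^\tau$ from the commutator pairing; check integrality) is the right one.

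However, one sentence is genuinely wrong as written and would undermine the proof if taken literally. You say ``a short argument shows the extensions these subgroups inherit are abelian and split; hence $s$ may be taken to be a homomorphism on $T\times\Pi_T$.'' If $s|_{T\times\Pi_T}$ were a group homomorphism, then $\tau$ would restrict to the trivial cocycle on $T\times\Pi_T$, directly contradicting the second bullet you are trying to prove (and rendering $\kappa^\tau$ identically $1$). The correct statement, which your final paragraph actually uses, is that $s$ may be chosen to be a homomorphism on $T$ \emph{and} a homomorphism on $\Pi_T$ \emph{separately}; the extension of the product $T\times\Pi_T$ does not split, and the residual $2$-cocycle $s(t_1)s(n_1)s(t_2)s(n_2)s(t_1t_2)^{-1}s(n_1n_2)^{-1}$ is exactly the commutator $[s(n_1),s(t_2)]\in U(1)$, which is the homomorphism $\kappa^\tau(n_1)(t_2)$ of the second bullet. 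You should restate the normalization step accordingly. One further point to make explicit: you assert ``since the correcting cochains are coboundaries, these normalizations are mutually compatible,'' but for this to be used one must check that the coboundary killing the one-sided cross-term cochain in the first bullet can be chosen supported away from $T\times\Pi_T$ and $\Omega T_0$ themselves, so as not to disturb the normalizations already made there; you flag this as ``bookkeeping'' at the end, which is fair, but it is precisely the step that needs to be written out rather than presumed.
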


Let us construct such a cocycle.
Take a homomorphism $\kappa^\tau:\Pi_T\to \Lambda_T$ such that the induced bilinear form on $\fra{t}$ is positive definite. $\innpro{v}{w}{\tau}:=\kappa^\tau(v)(w)$.
The bilinear form required in the definition of admissibility is defined by $\Innpro{v}{w}{\tau}:=\int_{\bb{T}_\rot}\innpro{v}{w}{\tau}d\theta$.
These data determine a $2$-cocycle $\tau$ as in the above Proposition.
We set a symplectic form $\omega(v,w):=\int \innpro{v}{dw/d\theta}{\tau}d\theta$.

For this $\tau$, we can construct P.E.R.s at level $\tau$. Firstly, $\Omega T_0$ has the unique irreducible P.E.R. $V_\ca{H}$ which we will construct explicitly in Section \ref{Section Hilbert}. 
We can make $T\times\Omega T_0$ act on $V_\ca{H}\otimes\bb{C}_\lambda$ via the tensor product of the actions. $T$ acts on $\bb{C}_\lambda$ via the character $\lambda$. We can not define an action on $\Pi_T$ because of the relation $\tau\bigl((t_1,n_1),(t_2,n_2)\bigr)=\kappa^\tau(n_1)(t_2)$. To define a representation, we need to take an infinite sum 
$$\bigoplus_{n\in\Pi_T}V_\ca{H}\otimes\bb{C}_{\lambda+\kappa^\tau(n)},$$
where $\Pi_T$ permits the component and $T\times\Omega T_0$ acts on each component naturally. Each of irreducible P.E.R.s is of the above form. Therefore, isomorphism classes of irreducible P.E.R.s are in 1:1 correspondence with the orbit space $\Lambda_T/\kappa^\tau(\Pi_T)$.
\begin{pro}\label{Prop RLT Sect.2}
$$R^\tau(LT)\cong\bigoplus_{[\lambda]\in\Lambda_T/\kappa^\tau(\Pi_T)}\bb{Z}.$$
\end{pro}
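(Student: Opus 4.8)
The plan is to reduce the classification of level-$\tau$ P.E.R.s of $LT$ to the Stone--von Neumann-type uniqueness statement for $\Omega T_0$, together with a weight decomposition for $T\times\Pi_T$, using the explicit shape of the cocycle recorded in Proposition \ref{Prop cocycle Sect.2}.

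First I would fix an irreducible P.E.R. $\rho$ on $\ca{H}$ at level $\tau$ and restrict it to the normal subgroup $\Omega T_0$. By Proposition \ref{Prop cocycle Sect.2} the restricted cocycle is $\tau(l_1,l_2)=\exp\bigl(i\Innpro{l_1}{dl_2/d\theta}{\tau}\bigr)$, i.e.\ the Heisenberg cocycle of the symplectic form $\omega$, and the rotation action and energy bound on $\ca{H}$ restrict to make $\ca{H}|_{\Omega T_0}$ a P.E.R. of $\Omega T_0$. By uniqueness of the irreducible P.E.R. $V_\ca{H}$ of $\Omega T_0$ (constructed in Section \ref{Section Hilbert}), together with finiteness of the energy eigenspaces, $\ca{H}$ decomposes as $V_\ca{H}\otimes\ca{W}$ with multiplicity space $\ca{W}:=\Hom_{\Omega T_0}(V_\ca{H},\ca{H})$. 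Since $T\times\Pi_T$ commutes with $\Omega T_0$ inside $LT$ and the added scalars act trivially on $\ca{W}$, the group $T\times\Pi_T$ acts on $\ca{W}$ as a projective representation with residual cocycle $\tau\bigl((t_1,n_1),(t_2,n_2)\bigr)=\kappa^\tau(n_1)(t_2)$.

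Next I would analyze this $T\times\Pi_T$-action. Since $T$ enters untwisted, $\ca{W}$ splits into $T$-weight spaces $\ca{W}=\widehat{\bigoplus}_{\mu\in\Lambda_T}\ca{W}_\mu$, and the commutator relation forces $\rho(0,n)$ to carry $\ca{W}_\mu$ isomorphically onto $\ca{W}_{\mu+\kappa^\tau(n)}$; hence the $T$-support of $\ca{W}$ is a union of $\kappa^\tau(\Pi_T)$-cosets, and irreducibility of $\ca{H}$ cuts it down to a single coset $\lambda+\kappa^\tau(\Pi_T)$. Since $\innpro{\cdot}{\cdot}{\tau}$ is positive definite, $\kappa^\tau$ is injective, so $\Pi_T$ permutes these weights freely, each $\ca{W}_\mu$ is one-dimensional, and $\ca{H}\cong\bigoplus_{n\in\Pi_T}V_\ca{H}\otimes\bb{C}_{\lambda+\kappa^\tau(n)}$. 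Conversely, for each $\lambda\in\Lambda_T$ this formula does define a P.E.R.: the only point that is not purely formal is the positive-energy condition, and here one uses that $V_\ca{H}$ already satisfies it and that the lowest rotation weight on the summand indexed by $n$ grows quadratically in $n$ (by positive-definiteness), so that only finitely many $n$ contribute to any fixed rotation degree. Distinct cosets give non-isomorphic representations because the $T$-supports differ, so the irreducible level-$\tau$ P.E.R.s are parametrized by $\Lambda_T/\kappa^\tau(\Pi_T)$.

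To conclude, every P.E.R. at level $\tau$ is a direct sum of irreducibles, so the monoid of isomorphism classes under $\oplus$ is the free commutative monoid on $\Lambda_T/\kappa^\tau(\Pi_T)$, and its Grothendieck completion is $\bigoplus_{[\lambda]\in\Lambda_T/\kappa^\tau(\Pi_T)}\bb{Z}$. I expect the genuine obstacle to be the input uniqueness statement for $\Omega T_0$ — that the infinite-dimensional symplectic vector space $(\Omega T_0,\omega)$ admits exactly one irreducible P.E.R. — which is the true Stone--von Neumann phenomenon here and is what the whole decomposition rests on; once that, and the routine but slightly fussy energy estimate for the infinite direct sum, are in hand, the remainder is the weight bookkeeping above.
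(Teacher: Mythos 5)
Your proof is correct and follows essentially the same route as the paper (which itself leans on Proposition~2.27 of Freed--Hopkins--Teleman): construct each irreducible P.E.R. as $\bigoplus_{n\in\Pi_T}V_\ca{H}\otimes\bb{C}_{\lambda+\kappa^\tau(n)}$ and classify by the coset $[\lambda]\in\Lambda_T/\kappa^\tau(\Pi_T)$. What you add beyond the paper's sketch is the argument for \emph{completeness} of the list: restricting an arbitrary irreducible P.E.R. to $\Omega T_0$, invoking uniqueness of $V_\ca{H}$ to factor $\ca{H}\cong V_\ca{H}\otimes\ca{W}$, and then running the $T$-weight and $\Pi_T$-shift bookkeeping on $\ca{W}$ --- this is precisely the point the paper asserts without proof. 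You also correctly flag the two places where the argument is not purely formal, namely the Stone--von Neumann uniqueness for $(\Omega T_0,\omega)$ and the quadratic energy growth over $\Pi_T$ that guarantees finite rotation-eigenspaces in the infinite direct sum.
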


\begin{rmk}$\tau^{-1}$ denotes the cocycle coming from $-\kappa^\tau$. For example, the dual representation of a P.E.R. is a negative energy representation which the readers might guess the definition at level $\tau^{-1}$.
\end{rmk}

\subsection{$C^*$-algebras}\label{Subsection Cstar algebra}

We review a very beautiful theory of $C^*$-algebras and some topics we need very quickly. If the readers are interested in the topics, they can consult \cite{Mur}.
\begin{dfn}
An algebra $A$ over $\bb{C}$ is a $C^*$-algebra if
\begin{itemize}
\item it has an anti-linear involution $*$,
\item it has a complete norm $\|\cdot\|$,
\item $\|ab\|\leq \|a\|\|b\|$,
\item $\|a^*a\|=\|a\|^2$.
\end{itemize}
The last equality is called the {\it $C^*$-condition}. A $C^*$-algebra with unit is called a {\it unital $C^*$-algebra} and if not, a {\it non-unital $C^*$-algebra}.

A ring homomorphism $f:A\to B$ between $C^*$-algebras is a $*$-homomorphism if it preserves the involution $*$.
\end{dfn}
\begin{exs}
$(1)$ When $\ca{H}$ is a Hilbert space, the algebra $\ca{B}(\ca{H})$ consisting of bounded operators is a $C^*$-algebra with respect to the operator norm. The only closed ideal $\ca{K}(\ca{H})$ is also a $C^*$-algebra with respect to the induced norm. $\ca{K}(\ca{H})$ is the most fundamental non-unital $C^*$-algebra and sometimes appears in this paper.

$(2)$ For a compact Hausdorff space $X$, the algebra $C(X)$ consisting of continuous functions is a commutative $C^*$-algebra with respect to the sup norm.
\end{exs}

A representation of a group on a vector space $V$ is the group homomorphism from a group to the unitary group of $V$. Like this,
$*$-homomorphism $A\to \ca{B}(\ca{H})$ is called a {\it $*$-representation} on $\ca{H}$.

Eigenvalues of a matrix play important roles to analyze the matrix. Spectrum of bounded operators is an infinite dimensional version of the set of eigenvalues. We can generalize this concept to general $C^*$-algebras. Before that, we need to introduce the unitalization of $C^*$-algebras.
\begin{dfn}\label{unitalization}
$(1)$ Let $A$ be a non-unital $C^*$-algebra. $\widetilde{A}:=A\oplus \bb{C}$ is a $*$-algebra by the following involution and multiplication
$$(a,\lambda)^*:=(a^*,\overline{\lambda})$$
$$(a,\lambda)\cdot(a',\lambda'):=(aa'+\lambda a'+\lambda'a,\lambda\lambda').$$
$\widetilde{A}$ acts on $A$ by the action $(a,\lambda)\cdot b:=ab+\lambda b$ and hence we have a homomorphism $\pi:\widetilde{A}\to \ca{B}(A)$, where $\ca{B}(A)$ is the algebra consisting of bounded operators of $A$ regarding $A$ as a Banach space. If we define a norm on $\widetilde{A}$ by the norm of $\pi(a)$, it satisfies the $C^*$-condition. We often use such a construction of norms in several situations.

$(2)$ Let $A$ be a unital $C^*$-algebra, $a\in A$. The resolvent set $\rho(a)$ of $a$ is the set of $\lambda\in\bb{C}$ such that $a-\lambda\cdot 1_A$ is invertible in $A$. The spectrum is the complement of $\rho(a)$ and it is written as $\sigma(a)$.

$(3)$ For a non-unital $C^*$-algebra, the resolvent and the spectrum in $A$ is defined by ones in $\widetilde{A}$.
\end{dfn}
It is known that $\rho(a)$ is a compact subset in $\bb{C}$ just like ordinary spectrum of bounded operators.
Thanks to spectral theory, the followings hold.
\begin{pro}
$(1)$ A $*$-homomorphism is automatically norm decreasing.

$(2)$ An injective $*$-homomorphism is automatically isometric.
\end{pro}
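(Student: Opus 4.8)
The plan is to reduce both statements to two facts: that the spectrum can only shrink under a $*$-homomorphism, and that for a self-adjoint element the norm equals the spectral radius. First I would dispose of the non-unital case by passing to unitalizations: a $*$-homomorphism $f\colon A\to B$ extends to a unital $*$-homomorphism $\widetilde{f}\colon\widetilde{A}\to\widetilde{B}$, $\widetilde{f}(a,\lambda):=(f(a),\lambda)$, and by construction of the norm on the unitalization (Definition \ref{unitalization}) it suffices to prove the claims for $\widetilde{f}$; moreover $f$ is injective iff $\widetilde f$ is. So assume $A,B$ unital and $f$ unital. Then for $a\in A$ and $\lambda\in\rho(a)$, applying $f$ to $(a-\lambda 1_A)^{-1}$ shows $f(a)-\lambda 1_B=f(a-\lambda 1_A)$ is invertible, i.e.\ $\sigma(f(a))\subseteq\sigma(a)$; in particular the spectral radius satisfies $r(f(a))\le r(a)$.

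The key lemma is that $\|b\|=r(b)$ whenever $b=b^*$. This follows from the $C^*$-condition: $\|b^2\|=\|b^*b\|=\|b\|^2$, so by induction $\|b^{2^k}\|=\|b\|^{2^k}$, and the Beurling--Gelfand spectral radius formula $r(b)=\lim_k\|b^k\|^{1/k}$ (valid in any Banach algebra, using that $\sigma(b)$ is a nonempty compact subset of $\bb{C}$ as recalled after Definition \ref{unitalization}) forces $r(b)=\|b\|$. With this, part $(1)$ is immediate for a general $a\in A$: since $a^*a$ is self-adjoint and $f(a^*a)=f(a)^*f(a)$ is self-adjoint,
$$\|f(a)\|^2=\|f(a)^*f(a)\|=\|f(a^*a)\|=r(f(a^*a))\le r(a^*a)=\|a^*a\|=\|a\|^2.$$

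For part $(2)$ I would argue by contradiction. Suppose $f$ is injective but $\|f(a_0)\|<\|a_0\|$ for some $a_0$; then $b:=a_0^*a_0$ is a positive self-adjoint element with $r(f(b))=\|f(b)\|<\|b\|=r(b)$ by the computation above. Consider the commutative $C^*$-subalgebra generated by $b$; by the Gelfand theory (as recalled in the examples of Section \ref{Subsection Cstar algebra}) it is isomorphic to a function algebra on $\sigma(b)$, giving a continuous functional calculus $g\mapsto g(b)$. A $*$-homomorphism intertwines this calculus, $f(g(b))=g(f(b))$: this is clear for polynomials, and extends to all continuous $g$ vanishing at $0$ by uniform approximation, using the norm estimate from part $(1)$. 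Now choose a continuous function $g$ on $\bb{C}$ with $g(0)=0$ that vanishes on $\{\,t: |t|\le r(f(b))\,\}\supseteq\sigma(f(b))$ but is nonzero at a point of $\sigma(b)$ (possible since $r(f(b))<\max\sigma(b)$). Then $g(b)\ne0$ because $g$ does not vanish on $\sigma(b)$, whereas $f(g(b))=g(f(b))=0$ since $g$ vanishes on $\sigma(f(b))$, contradicting injectivity of $f$. Hence $\|f(a)\|=\|a\|$ for all $a$.

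The main obstacle is the self-adjoint norm–spectral-radius identity together with the inclusion $\sigma(f(a))\subseteq\sigma(a)$; once these are in hand the rest is bookkeeping, apart from the mild technical point in part $(2)$ that in the non-unital setting one must work with functions vanishing at $0$ so that the functional calculus lands in the (non-unital) algebra, and one must check the intertwining $f\circ(g\mapsto g(b))=(g\mapsto g(f(b)))$ by the polynomial-then-density argument.
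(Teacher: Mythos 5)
The paper gives no proof of this proposition; it merely states it in the Preliminaries with the remark that ``Thanks to spectral theory, the followings hold'' and points to the $C^*$-condition. Your proof is the standard argument (essentially the one in Murphy's \emph{$C^*$-algebras and operator theory}, which the paper cites), and it is correct: $(1)$ follows from the spectral inclusion $\sigma(f(a))\subseteq\sigma(a)$ together with the identity $\|b\|=r(b)$ for self-adjoint $b$, and $(2)$ follows by the continuous functional calculus / Stone--Weierstrass trick. Two very minor technical points worth tightening: first, the paper's Definition \ref{unitalization} gives the norm on $\widetilde{A}$ via the left-regular action of $\widetilde{A}$ on $A$, which is only faithful (hence only a norm, not merely a seminorm) when $A$ is non-unital; so the blanket reduction ``pass to $\widetilde{A},\widetilde{B}$'' should be understood as unitalizing only when necessary, or one should note $\widetilde{A}\cong A\oplus\bb{C}$ with the sup norm when $A$ is already unital. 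Second, in $(2)$ the auxiliary function $g$ should be taken as a continuous function on the compact set $\sigma(b)\subseteq[0,\|b\|]$ (or an interval containing it) rather than on all of $\bb{C}$, and since $b=a_0^*a_0$ and $f(b)=f(a_0)^*f(a_0)$ are positive, $\sigma(f(b))\subseteq[0,r(f(b))]$ is automatic, which is what makes the choice of $g$ vanishing on $[0,r(f(b))]$ but not on all of $\sigma(b)$ possible. Neither point affects the substance of the argument.
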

All such unbelievably beautiful results are essentially due to the $C^*$-condition which can be considered as a compatibility condition between algebra and analysis.

We need some additional concepts, tensor products and inductive limits.
They are not straightforward. Recall the case of Hilbert spaces; the algebraic tensor product of two Hilbert spaces is not complete and hence we must take a completion. In the case of $C^*$-algebras, the situation is much more complicated. 

\begin{dfn}
Let $A$ and $B$ be $C^*$-algebras. $A\otimes^\alg B$ is the algebraic tensor product over $\bb{C}$. Although it is an algebra equipped with an involution, it is not complete in general and we must take a completion to define a tensor product as a $C^*$-algebra. We introduce two norms by the formula
\begin{eqnarray*}
&&\|\sum a_i\otimes b_i\|_{{\rm min}}:= \\
&&\;\;\;\sup\{\|\sum\pi_A(a_i)\otimes \pi_B(b_i)\|_\op\mid\pi_A,\pi_B\text{: representations of }A,B\text{ respectively}\} \\
&&\|\sum a_i\otimes b_i\|_{{\rm Max}}:= \\
&&\;\;\;\sup\{\|\sum \pi(a_i\otimes b_i)\|_\op\mid\pi\text{: a representation of the algebra }A\otimes^\alg B\}.
\end{eqnarray*}
$A\otimes_{{\rm Max}} B$ and $A\otimes_{{\rm min}} B$ are the completion of $A\otimes^\alg B$ with respect to $\|\cdot\|_{{\rm Max}}$ and $\|\cdot\|_{{\rm min}}$ respectively. These algebras are called the maximal/minimal tensor product algebra respectively.
\end{dfn}
In general, two tensor products are not the same. However, we do not face such a difficulty as follows.
\begin{pro}
If either $A$ or $B$ is commutative, $A\otimes_{{\rm min}}B=A\otimes_{{\rm Max}}B$. Like this, if either $A$ or $B$ is the $C^*$-algebra $\ca{K}(\ca{H})$ consisting of compact operators on a Hilbert space, $A\otimes_{{\rm min}}B=A\otimes_{{\rm Max}}B$. More generally, a $C^*$-algebra $D$ is said to be nuclear if $D\otimes_{{\rm min}}A=D\otimes_{{\rm Max}}A$ for any $C^*$-algebra $A$.
\end{pro}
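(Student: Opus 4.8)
The plan is to establish the two substantive claims --- that an abelian $C^*$-algebra is nuclear and that $\ca{K}(\ca{H})$ is nuclear --- separately, the last sentence of the statement being merely the definition. In both cases one first reduces to unital $A$ and $B$: since $A\otimes_{{\rm min}}B$ and $A\otimes_{{\rm Max}}B$ sit as closed ideals of $\widetilde{A}\otimes_{{\rm min}}\widetilde{B}$ and $\widetilde{A}\otimes_{{\rm Max}}\widetilde{B}$, each carrying the restriction of the ambient tensor norm (a standard lemma), an equality of tensor norms for the unitalizations descends to $A\otimes^\alg B$.

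\emph{Abelian case.} Let $A=C(\Omega)$ with $\Omega$ compact Hausdorff (Gelfand), $B$ unital, and for $t\in A\otimes^\alg B$ write $f_t\in C(\Omega,B)$, $f_t(\omega):=(\mathrm{ev}_\omega\otimes\id)(t)$. The sup norm $\|f_t\|_\infty$ is a faithful $C^*$-norm on $A\otimes^\alg B$, realized as the norm of multiplication by $f_t(\cdot)$ on $L^2(\mu,\ca{H}_B)$ for a full-support measure $\mu$ and a faithful representation $\ca{H}_B$ of $B$; since this is $\|\rho_A\otimes\rho_B(t)\|$ for one admissible pair of representations, $\|f_t\|_\infty\leq\|t\|_{{\rm min}}$, and $\|t\|_{{\rm min}}\leq\|t\|_{{\rm Max}}$ holds directly from the definitions. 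So it remains to prove $\|t\|_{{\rm Max}}\leq\|f_t\|_\infty$. Let $\pi$ be any representation of $A\otimes^\alg B$, with commuting restrictions $\pi_A,\pi_B$ to the two factors. Given $\varepsilon>0$, uniform continuity of the finitely many coordinate functions of $t$ yields a finite partition of unity $\{\phi_i\}\subseteq C(\Omega)$ with $\phi_i\geq0$, $\sum_i\phi_i=1$, and points $\omega_i$ such that, for $s:=\sum_i\phi_i\otimes f_t(\omega_i)$, one has $\|\pi(t-s)\|<\varepsilon$ --- this bound uses only that $\pi_A,\pi_B$ are norm-decreasing on the respective factors, so no circularity arises. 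Setting $x_i:=\pi_A(\phi_i)$ (positive, pairwise commuting, commuting with each $\pi_B(f_t(\omega_i))$, with $\sum_i x_i=1$), the Cauchy--Schwarz estimate
$$\Bigl|\Bigl\langle\sum_i x_i\pi_B(f_t(\omega_i))\xi,\eta\Bigr\rangle\Bigr|\leq\max_i\|f_t(\omega_i)\|\Bigl(\sum_i\|x_i^{1/2}\xi\|^2\Bigr)^{1/2}\Bigl(\sum_i\|x_i^{1/2}\eta\|^2\Bigr)^{1/2}$$
and $\sum_i\|x_i^{1/2}\xi\|^2=\|\xi\|^2$ give $\|\pi(s)\|\leq\max_i\|f_t(\omega_i)\|\leq\|f_t\|_\infty$. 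Hence $\|\pi(t)\|\leq\|f_t\|_\infty+\varepsilon$ for all $\varepsilon>0$, and $\|t\|_{{\rm Max}}\leq\|f_t\|_\infty$.

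\emph{The case $A=\ca{K}(\ca{H})$.} Realize $\ca{K}(\ca{H})$ as the inductive limit of its finite-dimensional matrix subalgebras $M_n(\bb{C})$ (corners by an increasing net of finite-rank projections exhausting $1$). Each $M_n(\bb{C})$ is nuclear: $M_n(\bb{C})\otimes^\alg B=M_n(B)$ carries a $C^*$-norm --- embed $M_n(B)\hookrightarrow M_n(\ca{B}(\ca{H}_B))=\ca{B}(\ca{H}_B^{\oplus n})$ via a faithful representation of $B$ --- and a $*$-algebra admits at most one $C^*$-norm, since injective $*$-homomorphisms of $C^*$-algebras are isometric; hence $M_n(\bb{C})\otimes_{{\rm min}}B=M_n(B)=M_n(\bb{C})\otimes_{{\rm Max}}B$. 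As $\otimes_{{\rm min}}$ respects inductive limits (the connecting maps are injective, hence isometric on the spatial tensor products) and $\otimes_{{\rm Max}}$ does so by its universal property, passing to the limit gives $\ca{K}(\ca{H})\otimes_{{\rm min}}B=\ca{K}(\ca{H})\otimes_{{\rm Max}}B$. Interchanging $A$ and $B$ covers the cases where $B$ is abelian or $B=\ca{K}(\ca{H})$; the final sentence is a definition, with nothing to prove. Each standard fact used above is in \cite{Mur}.

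The main obstacle is the abelian case: one must make the partition-of-unity approximation rigorous so that the estimate for $\|\pi(t-s)\|$ rests only on the separate norm bounds on $A$ and $B$ (keeping the argument non-circular), and handle the reduction to unital algebras with care. The $\ca{K}(\ca{H})$ part is by contrast a soft inductive-limit argument, needing only that $M_n(B)$ has a unique $C^*$-norm and that both tensor products commute with inductive limits.
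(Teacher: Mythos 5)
The paper does not prove this proposition: it sits in the preliminaries as a collection of standard facts, cited to Murphy's book. So there is no proof in the paper to compare against, and the question is only whether your argument is sound. It mostly is, and both halves follow the classical routes (partition of unity plus unitization for the abelian case; inductive limit of matrix corners for $\ca{K}(\ca{H})$). Two points, however, need fixing.

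First, in the abelian case you realize $\|f_t\|_\infty$ as the norm of multiplication on $L^2(\mu,\ca{H}_B)$ ``for a full-support measure $\mu$.'' A compact Hausdorff space need not carry a fully supported finite Borel measure (e.g.\ $\beta\bb{N}$ does not, since it fails the countable chain condition). This is a harmless overreach: the inequality $\|f_t\|_\infty\leq\|t\|_{{\rm min}}$ is immediate by taking, for each $\omega\in\Omega$, the representation $\mathrm{ev}_\omega\otimes\rho_B$ of $C(\Omega)\otimes^\alg B$ on $\ca{H}_B$, whose norm on $t$ is $\|f_t(\omega)\|_B$, and observing that each of these is dominated by $\|t\|_{{\rm min}}$; no measure is needed. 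Second, in the $\ca{K}(\ca{H})$ case you justify $M_n(\bb{C})\otimes_{{\rm min}}B=M_n(\bb{C})\otimes_{{\rm Max}}B$ by asserting that ``a $*$-algebra admits at most one $C^*$-norm.'' That general statement is false---the paper itself records a counterexample, since $\bb{C}[F_2]$ has two distinct $C^*$-completions $C^*(F_2)\neq C^*_{{\rm red}}(F_2)$. What rescues the argument is that $M_n(\bb{C})\otimes^\alg B=M_n(B)$ is already complete in its standard norm, i.e.\ it \emph{is} a $C^*$-algebra; and a $C^*$-algebra does carry a unique $C^*$-norm, by exactly the isometry fact you cite applied to the identity map into the $\gamma$-completion. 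You should state this completeness explicitly, because the uniqueness comes from it and not from the $*$-algebra structure alone. Beyond that, the reduction to unital algebras, the Cauchy--Schwarz estimate for $\|\pi(s)\|$, and the compatibility of both tensor products with inductive limits are fine; just note that to get $\|\pi(t-s)\|<\varepsilon$ you should regroup $t-s=\sum_j\bigl(\sum_i\phi_i(a_j-a_j(\omega_i))\bigr)\otimes b_j$, so that the number of summands is the fixed $n$ from the elementary-tensor decomposition of $t$ rather than the unbounded number of patches in the partition of unity.
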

We are always in the above situation in this paper and hence we can omit the subscript ${{\rm Max}}$ and ${{\rm min}}$. However we might add the subscript when we emphasize the way to complete.

We show some examples of tensor products.
\begin{exs}
$(1)$ Let $X$ and $Y$ be compact Hausdorff spaces. Then, $C(X)\otimes C(Y)\cong C(X\times Y)$ due to Stone-Weierstrass theorem.

$(2)$ Let $\ca{H}_1$ and $\ca{H}_2$ be Hilbert spaces. Then $\ca{K}(\ca{H}_1)\otimes \ca{K}(\ca{H}_1)\cong \ca{K}(\ca{H}_1\otimes\ca{H}_2)$.
\end{exs}

Let us explain inductive limits. We can formulate the concept of inductive limits in any category, that is, the inductive limit of the system
$$A_1\xrightarrow{i_1}A_2\xrightarrow{i_2}A_3\xrightarrow{i_3}\cdots$$
is an object $A$ equipped with $j_n:A_n\to A$ such that for any $B$ equipped with $k_n:A_n\to B$ satisfying that $k_n=k_{n+1}\circ i_n$, there uniquely exists $f:A\to B$ satisfying that $f\circ j_n=k_n$. We can easily show a category such that it is not closed under the inductive limit. For example, the category consisting of finite dimensional vector spaces is such a category. However, the category consisting of $C^*$-algebras is not.
\begin{pro}
For any sequence of $C^*$-algebra
$$A_1\xrightarrow{i_1}A_2\xrightarrow{i_2}A_3\xrightarrow{i_3}\cdots,$$
where $i_j$'s are $*$-homomorphisms, there exists a $C^*$-algebra $A$ such that $A$ is the inductive limit of the above sequence. The inductive limit is written as $\varinjlim A_n$.
\end{pro}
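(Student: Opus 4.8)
The plan is the textbook three-stage construction: build the inductive limit first in the category of $*$-algebras, then equip it with a natural $C^*$-seminorm, and finally divide by the null ideal of that seminorm and complete.

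First I would form the algebraic inductive limit. Writing $i_{n,m}:=i_{n-1}\circ\cdots\circ i_m:A_m\to A_n$ for $n>m$ and $i_{m,m}:=\id$, I take the disjoint union of the $A_n$ and identify $a\in A_m$ with $i_{n,m}(a)\in A_n$; more precisely, $a\in A_m$ and $b\in A_n$ are declared equivalent when $i_{N,m}(a)=i_{N,n}(b)$ for some $N\geq m,n$. The quotient set $A_\infty$ carries a well-defined $*$-algebra structure — to combine two classes, push their representatives into a common $A_N$ and use the operations there — together with $*$-homomorphisms $j_n:A_n\to A_\infty$ satisfying $j_{n+1}\circ i_n=j_n$. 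A short check shows $(A_\infty,\{j_n\})$ is the inductive limit in the category of $*$-algebras: any compatible family $k_n:A_n\to B$ with $k_{n+1}\circ i_n=k_n$ factors uniquely through the formula $f(j_m(a)):=k_m(a)$.

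Next I would define the seminorm. Using the fact recorded above that every $*$-homomorphism is norm decreasing, for a fixed $x=j_m(a)\in A_\infty$ the sequence $\bigl(\|i_{n,m}(a)\|\bigr)_{n\geq m}$ is non-increasing and therefore converges; I set $\|x\|_\infty:=\lim_{n}\|i_{n,m}(a)\|$. This does not depend on the chosen representative, because two representatives agree after being pushed into some common $A_N$, after which the two defining sequences coincide. Passing to the limit in the submultiplicativity, the subadditivity and the $C^*$-identity $\|i_{n,m}(a)^*i_{n,m}(a)\|=\|i_{n,m}(a)\|^2$ valid in each $A_n$ shows that $\|\cdot\|_\infty$ is a $C^*$-seminorm on $A_\infty$. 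Then I would let $N:=\{x\mid\|x\|_\infty=0\}$, which the seminorm inequalities force to be a two-sided $*$-ideal; the quotient $A_\infty/N$ is a normed $*$-algebra satisfying the $C^*$-condition, and I define $A$ to be its completion, a $C^*$-algebra. Composing $j_n$ with the quotient map and the inclusion of $A_\infty/N$ into its completion gives the structure maps $j_n:A_n\to A$, still satisfying $j_{n+1}\circ i_n=j_n$.

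Finally I would verify the universal property. Given a $C^*$-algebra $B$ with a compatible family $k_n:A_n\to B$, the algebraic step already produces a unique $*$-homomorphism $f:A_\infty\to B$ with $f\circ j_n=k_n$; since each $k_n$ is norm decreasing and $k_n\circ i_{n,m}=k_m$, one has $\|f(j_m(a))\|=\|k_n(i_{n,m}(a))\|\leq\|i_{n,m}(a)\|$ for all $n\geq m$, hence $\|f(x)\|\leq\|x\|_\infty$, so $f$ annihilates $N$, descends to $A_\infty/N$, and extends by continuity to a $*$-homomorphism $A\to B$; it is the unique such map because $\bigcup_n j_n(A_n)$ is dense in $A$. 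This confirms $A\cong\varinjlim A_n$. I do not expect a genuine obstacle here: the construction is routine, and the only two points that really must be watched are that norm-decreasingness of $*$-homomorphisms is exactly what makes the limit defining $\|\cdot\|_\infty$ exist, and that one must divide by $N$ before completing precisely because the connecting maps $i_j$ may fail to be injective — if they are all injective, an injective $*$-homomorphism being isometric forces $N=0$ and the quotient step is vacuous.
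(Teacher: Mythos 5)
Your construction is correct and is exactly the route the paper has in mind: it does not prove this proposition but simply records it as a preliminary, summarizing the argument with the single remark that ``the inductive limit of $C^*$-algebra is the completion of the algebraic limit.'' Your three-stage version (algebraic colimit, limiting $C^*$-seminorm via norm-decreasingness of $*$-homomorphisms, quotient by the null ideal and complete) fills in precisely that summary, and your closing observations about why the limit defining $\|\cdot\|_\infty$ exists and why the quotient is needed when the $i_j$ are not injective are the two points worth watching.
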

\begin{ex}
Let $i_n:M_n(\bb{C})\to M_{n+1}(\bb{C})$ be a $*$-homomorphism defined by
$$a\mapsto \begin{pmatrix} a & 0 \\ 0 & 0\end{pmatrix}.$$
The inductive limit of this sequence is $\ca{K}(\ca{H})$.
\end{ex}
In short, the inductive limit of $C^*$-algebra is the completion of the algebraic limit.

Let us move to noncommutative geometry. At the risk of being misunderstood, noncommutative geometry is a study of invariant of $C^*$-algebras. Let us justify this crazy idea. We have explained that compact Hausdorff space determines a commutative $C^*$-algebra. The converse direction can be also established.

\begin{pro}
The correspondence
\begin{center}
$X$ $\mapsto$ $C_0(X)$
\end{center}
gives an equivalence between a category of locally compact Hausdorff topological spaces and a one of commutative $C^*$-algebras.
\end{pro}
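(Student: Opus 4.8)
The plan is to prove the classical Gelfand--Naimark theorem, with the understanding that the relevant morphisms are proper continuous maps on the space side and nondegenerate (``proper'') $*$-homomorphisms on the $C^*$-side, so that the asserted equivalence is contravariant. First I would attach to a commutative $C^*$-algebra $A$ its \emph{character space} $\widehat{A}$, the set of nonzero $*$-homomorphisms $\varphi\colon A\to\bb{C}$, topologized by the weak-$*$ topology. Using the unitalization $\widetilde{A}$ from Definition \ref{unitalization} one checks that $\widehat{\widetilde{A}}=\widehat{A}\sqcup\{\infty\}$, where $\infty$ is the character of $\widetilde{A}$ vanishing on $A$; since $\widehat{\widetilde{A}}$ is a weak-$*$ closed subset of the unit ball of $\widetilde{A}^*$, Banach--Alaoglu makes it compact Hausdorff, so $\widehat{A}$ is locally compact Hausdorff. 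In the unital case characters are in bijection with maximal ideals ($\ker\varphi$ is maximal; conversely $A/\fra{m}$ is a unital Banach division algebra, hence $\cong\bb{C}$ by Gelfand--Mazur), which reconciles this picture with the ``set of maximal ideals'' description from the introduction.

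Next I would show the \emph{Gelfand transform} $\Gamma\colon A\to C_0(\widehat{A})$, $\Gamma(a)=\widehat{a}$ with $\widehat{a}(\varphi):=\varphi(a)$, is an isometric $*$-isomorphism. That $\widehat{a}\in C_0(\widehat{A})$ and that $\Gamma$ is multiplicative is immediate; $\Gamma$ is $*$-preserving because every character is positive (it sends $a^*a$, whose spectrum lies in $[0,\infty)$, to a nonnegative real), whence $\varphi(a^*)=\overline{\varphi(a)}$. The essential point is isometry. Reducing to a unital commutative $C^*$-algebra, for self-adjoint $a$ the $C^*$-condition gives $\|a^{2^n}\|=\|a\|^{2^n}$, so the spectral radius $r(a):=\lim_n\|a^n\|^{1/n}$ equals $\|a\|$; and in a commutative unital algebra $\sigma(a)=\{\varphi(a):\varphi\in\widehat{A}\}$, because $a-\lambda$ is invertible iff it lies in no maximal ideal; therefore $\|\widehat{a}\|_\infty=\sup_\varphi|\varphi(a)|=r(a)=\|a\|$, and the general case follows from $\|\widehat{a}\|_\infty^2=\|\widehat{a^*a}\|_\infty=\|a^*a\|=\|a\|^2$. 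Being isometric, $\Gamma(A)$ is a closed, conjugation-closed, point-separating subalgebra of $C_0(\widehat{A})$, so Stone--Weierstrass forces $\Gamma$ to be onto. This is the step I expect to be the main obstacle, since it carries all the analytic content---Banach-algebra spectral theory together with the $C^*$-identity.

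Then I would identify the other composite. For a locally compact Hausdorff $X$, evaluation $\mathrm{ev}\colon X\to\widehat{C_0(X)}$, $x\mapsto(f\mapsto f(x))$, is the candidate. Injectivity and continuity are immediate from Urysohn's lemma and the definition of the weak-$*$ topology. For surjectivity and the homeomorphism property I would pass to the one-point compactification $X^+$, under which $C(X^+)\cong\widetilde{C_0(X)}$, reducing to compact $X$: there one shows every maximal ideal of $C(X)$ has the form $\{f:f(x_0)=0\}$ for a unique $x_0$ (if an ideal missed every such set, finitely many functions $|f_i|^2$ in it would, by compactness, sum to an everywhere-positive, hence invertible, element), and that $\mathrm{ev}\colon X\to\widehat{C(X)}$ is then a continuous bijection between compact Hausdorff spaces, hence a homeomorphism; restricting back to $X$ gives the locally compact statement.

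Finally I would assemble functoriality and naturality. A proper continuous $g\colon X\to Y$ induces $g^*\colon C_0(Y)\to C_0(X)$, $g^*(f)=f\circ g$ (properness is exactly what keeps $f\circ g$ vanishing at infinity), and conversely a nondegenerate $*$-homomorphism $\pi\colon B\to A$ induces $\widehat{\pi}\colon\widehat{A}\to\widehat{B}$, $\varphi\mapsto\varphi\circ\pi$, which is continuous and proper; one checks these are mutually inverse on morphisms under the identifications $\widehat{C_0(X)}\cong X$, and that $\Gamma$ and $\mathrm{ev}$ are natural isomorphisms $\id\Rightarrow C_0(\widehat{\,\cdot\,})$ and $\id\Rightarrow\widehat{C_0(\,\cdot\,)}$. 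Together these exhibit the claimed contravariant equivalence.
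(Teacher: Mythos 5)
The paper states this proposition as a standard textbook fact and gives no proof of its own (the relevant background reference in this section is \cite{Mur}), so there is no argument in the paper against which to compare. Your proposal is the standard, complete proof of the Gelfand--Naimark duality, and it is correct: character space and local compactness via the unitalization and Banach--Alaoglu, isometry of the Gelfand transform via the $C^*$-identity and the spectral-radius formula, surjectivity via Stone--Weierstrass, identification of $X$ with $\widehat{C_0(X)}$ via the one-point compactification and the maximal-ideal description, and the contravariant functorial package with proper continuous maps matched to nondegenerate $*$-homomorphisms.
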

Generalizing it, we regard a $C^*$-algebra as a function algebra vanishing at infinity of a locally compact ``noncommutative space'' or ``quantum space.'' 

A fist obvious example of noncommutative $C^*$-algebra is a matrix algebra $M_n(\bb{C})$ or the algebra consisting of compact operators on a Hilbert space $\ca{K}(\ca{H})$. However, we regard them as a strange function algebra on a point because of the following.
\begin{pro}\label{Prop Morita Sect 2}
$M_n(\bb{C})$ and $\ca{K}(\ca{H})$ are Morita equivalent to $\bb{C}=C(\{{\rm pt}\})$. More precisely, any finitely generated module of $M_n(\bb{C})$ is of the form $(\bb{C}^n)^m$ and one of $\ca{K}(\ca{H})$ is of the form $\ca{H}^m$.
\end{pro}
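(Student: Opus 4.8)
The plan is to exhibit explicit imprimitivity bimodules implementing the two Morita equivalences and then classify the finitely generated (Hilbert $C^*$-)modules by hand.

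First, for $M_n(\bb{C})$: I would take $X:=\bb{C}^n$, viewed as column vectors, with the left $M_n(\bb{C})$-action given by matrix multiplication, the right $\bb{C}$-action given by scalars, the $\bb{C}$-valued inner product $\langle v,w\rangle_{\bb{C}}:=v^*w$, and the $M_n(\bb{C})$-valued inner product $_{M_n}\langle v,w\rangle:=vw^*$. One checks the associativity relation $\bigl(_{M_n}\langle v,w\rangle\bigr)u=v\langle w,u\rangle_{\bb{C}}$ and that both inner products are full, since $\{v^*w\}$ spans $\bb{C}$ and $\{vw^*\}$ spans $M_n(\bb{C})$; completeness in the module norm is automatic in finite dimensions. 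Hence $X$ is an $M_n(\bb{C})$--$\bb{C}$ imprimitivity bimodule and the two algebras are Morita equivalent. For the module statement, $M_n(\bb{C})$ is semisimple with unique simple module $\bb{C}^n$ by Artin--Wedderburn, so every module is a direct sum of copies of $\bb{C}^n$; a finitely generated one is $(\bb{C}^n)^m$. (Equivalently, $X\otimes_{\bb{C}}(-)$ sends $\bb{C}^m$ to $(\bb{C}^n)^m$ and every f.g.\ module arises this way.)

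Second, for $\ca{K}(\ca{H})$: I would take $X:=\ca{H}$, with the left $\ca{K}(\ca{H})$-action the usual action on vectors, the right $\bb{C}$-action by scalars, $\langle\xi,\eta\rangle_{\bb{C}}$ the Hilbert space inner product, and $_{\ca{K}}\langle\xi,\eta\rangle:=\xi\eta^*$ (the rank-one operator $\zeta\mapsto\xi\langle\eta,\zeta\rangle$). Fullness over $\bb{C}$ is obvious, and $\ca{H}$ is already complete in the module norm, which coincides with the Hilbert norm. Fullness over $\ca{K}(\ca{H})$ amounts to the density of finite-rank, hence rank-one, operators in $\ca{K}(\ca{H})$. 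Thus $\ca{H}$ is a $\ca{K}(\ca{H})$--$\bb{C}$ imprimitivity bimodule, so $\ca{K}(\ca{H})$ and $\bb{C}$ are Morita equivalent. For the classification, a finitely generated projective $\ca{K}(\ca{H})$-module is $\bigoplus_{i=1}^k\ca{K}(\ca{H})p_i$ for projections $p_i$; but each nonzero projection in $\ca{K}(\ca{H})$ has finite rank $r_i$ and splits as an orthogonal sum of $r_i$ rank-one projections, and for a rank-one $p=\xi\xi^*$ with $\|\xi\|=1$ one has $\ca{K}(\ca{H})p\cong\ca{H}$ as left modules via $Tp\mapsto T\xi$ (well-defined, $\ca{K}(\ca{H})$-linear, with inverse $\eta\mapsto\eta\xi^*$). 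Summing gives $\ca{H}^m$ with $m=\sum_i r_i$; equivalently, transport $\bb{C}^m$ across the equivalence via $\ca{H}\otimes_{\bb{C}}(-)$.

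The only genuinely substantive point is the identification of the ``compact operators'' of the Hilbert $\bb{C}$-module $\ca{H}$ --- the norm-closed span of the $_{\ca{K}}\langle\xi,\eta\rangle=\xi\eta^*$ --- with all of $\ca{K}(\ca{H})$; this is precisely the classical approximation of compact operators by finite-rank ones and is where the completeness of $\ca{H}$ as a genuine Hilbert space (rather than a general pre-Hilbert module) is used. Everything else is a routine check of the imprimitivity-bimodule axioms, for which I would cite the standard development of Morita equivalence of $C^*$-algebras (e.g.\ \cite{Mur}).
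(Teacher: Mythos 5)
The paper states this proposition without proof: it appears in the Preliminaries section (Subsection 2.3) as a quoted, standard fact about $C^*$-algebraic Morita equivalence, and no argument is supplied, so there is no paper proof to compare against. Your proof is correct and is the usual one. The column space $\bb{C}^n$ with inner products $\langle v,w\rangle_{\bb{C}}=v^*w$ and ${}_{M_n}\langle v,w\rangle=vw^*$ is a genuine $M_n(\bb{C})$--$\bb{C}$ imprimitivity bimodule, and $\ca{H}$ with the rank-one operators $\xi\eta^*$ is a $\ca{K}(\ca{H})$--$\bb{C}$ imprimitivity bimodule, with fullness on the $\ca{K}(\ca{H})$ side coming exactly from norm-density of finite-rank operators, as you say. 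Your module classifications are also right: for $M_n(\bb{C})$ the Artin--Wedderburn semisimplicity argument handles all finitely generated modules, and for $\ca{K}(\ca{H})$ your reduction of $\ca{K}(\ca{H})p$ to $\ca{H}$ via $Tp\mapsto T\xi$ for a rank-one $p=\xi\xi^*$ is exactly the right computation.

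One remark worth making explicit, since the paper's phrasing is loose: for $\ca{K}(\ca{H})$ the claim cannot be about arbitrary finitely generated algebraic modules (e.g.\ algebraic quotients of $\ca{K}(\ca{H})$ give counterexamples); the intended reading, which you correctly adopt, is finitely generated \emph{projective} modules --- equivalently, finitely generated Hilbert $C^*$-modules --- which is the category that Morita equivalence of $C^*$-algebras actually compares. With that standard reading your argument is complete, and citing the construction of imprimitivity bimodules from the literature as you suggest is exactly what the paper implicitly does.
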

It is a typical example of Morita equivalence. In short, two algebras are Morita equivalent to one another if the categories of modules are equivalent. However, to state the definition precisely, we must prepare some new notions including Hilbert $C^*$-modules which we do not need. This is why we state the equivalent condition which is easier to understand than the original definition.
\begin{pro}[\cite{Lan}]
Two $C^*$-algebras $A$ and $B$ satisfying a certain mild condition are Morita equivalent if and only if $A\otimes\ca{K}(\ca{H})\cong B\otimes\ca{K}(\ca{H})$.
\end{pro}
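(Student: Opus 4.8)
The plan is to pass through the linking algebra and then invoke a stabilization theorem for full corners. Recall that $A$ and $B$ are (strongly) Morita equivalent precisely when there exists an $A$-$B$ equivalence bimodule $X$: a right Hilbert $B$-module carrying a left action of $A$ by adjointable operators, with the two $C^*$-valued inner products linked by ${}_A\langle x,y\rangle z=x\langle y,z\rangle_B$ and with both inner products having dense range. First I would form the linking algebra $L(X)$, the $C^*$-algebra which as a set is the $2\times2$ ``matrix'' $\begin{pmatrix} A & X \\ \overline{X} & B\end{pmatrix}$ with the obvious multiplication and the involution swapping $X$ with the conjugate module $\overline{X}$; concretely it is realized as the $C^*$-algebra of adjointable operators on the Hilbert $B$-module $X\oplus B$ generated by $A$, $X$, $B$. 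Inside the multiplier algebra $M(L(X))$ there is a projection $p$ with $pL(X)p\cong A$ and $(1-p)L(X)(1-p)\cong B$, and the fullness built into the equivalence bimodule $X$ is exactly the statement that both corners are \emph{full}, i.e. each generates all of $L(X)$ as a closed ideal.

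The crux, and the place where the ``mild condition'' enters, is the following stabilization fact: if $L$ is a $\sigma$-unital $C^*$-algebra (which holds in particular when $L$ is separable) and $q\in M(L)$ is a full projection, then $qLq$ is again $\sigma$-unital and $qLq\otimes\ca{K}(\ca{H})\cong L\otimes\ca{K}(\ca{H})$. This is Brown's theorem, whose real content is Kasparov's stabilization theorem for countably generated Hilbert modules (any such module, summed with the standard module $\ell^2$ over the coefficient algebra, is isomorphic to $\ell^2$). Granting it and noting that $L(X)$ is $\sigma$-unital whenever $A$ and $B$ are, one applies it to $q=p$ and to $q=1-p$ to get $A\otimes\ca{K}(\ca{H})\cong L(X)\otimes\ca{K}(\ca{H})\cong B\otimes\ca{K}(\ca{H})$, which is one implication.

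For the converse I would use transitivity of Morita equivalence together with the elementary observation that every $C^*$-algebra $A$ is Morita equivalent to its stabilization $A\otimes\ca{K}(\ca{H})$: the module $\ca{H}\otimes A$ is a full Hilbert $A$-module whose adjointable compact operators form $\ca{K}(\ca{H})\otimes A=A\otimes\ca{K}(\ca{H})$. Hence, if $A\otimes\ca{K}(\ca{H})\cong B\otimes\ca{K}(\ca{H})$, then $A$ is Morita equivalent to $A\otimes\ca{K}(\ca{H})\cong B\otimes\ca{K}(\ca{H})$, which is Morita equivalent to $B$, and composing the equivalence bimodules via the balanced tensor product over the middle algebra produces an explicit $A$-$B$ equivalence bimodule. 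The only routine-but-fiddly points are verifying that $L(X)$ is genuinely a $C^*$-algebra and that its two corners are full; the genuinely substantial input is Brown's stabilization theorem, and everything else is formal. Since all of this is standard, in the paper I would simply cite \cite{Lan}.
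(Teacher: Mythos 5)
Your sketch is a correct outline of the Brown--Green--Rieffel theorem: linking algebra, fullness of the two corners, Brown's stabilization theorem (via Kasparov stabilization) for the forward direction, and stability of Morita equivalence under $\otimes\,\ca{K}$ plus transitivity for the converse, with the ``mild condition'' correctly identified as $\sigma$-unitality. The paper itself gives no proof and simply cites \cite{Lan}, which is exactly what you conclude one should do, so you are in agreement with the paper.
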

It relates with local equivalence or Morita equivalence of groupoids. For details, one can consult \cite{Mei}.

Rewriting of classical geometrical objects is a start point of noncommutative geometry. 
We have mentioned spectral triples which is a noncommutative version of Dirac operators in Section \ref{Section Introduction}. We add another one here. The following is a noncommutative version of a better quotient space by a group action on a topological space. We refer to \cite{Wil} and \cite{Kha}.

Let $\Gamma$ be a locally compact unimodular group, and act on a $C^*$-algebra $A$, that is, we are given a group homomorphism $\sigma:\Gamma\to {\rm Aut}(A)$. The convolution product in $C_c(\Gamma,A)$ is defined by the formula
$$f_1*f_2(\gamma):=\int_\Gamma f_1(\gamma')\sigma(\gamma')\Bigl(f_2(\gamma'^{-1}\gamma)\Bigr)d\gamma'.$$
It has an involution defined by
$$f^*(g):=\sigma(g)\bigl(f(g^{-1})^*\bigr).$$
We take completions by some norms like the case of tensor products. We show two norms, $\|\cdot\|_{{\rm Max}}$ and $\|\cdot\|_{{\rm red}}$. 

Before that, we need to introduce a terminology ``covariant representation.''
A representation $(\pi,\ca{H})$ of $A$ equipped with a linear action of $\rho:\Gamma\to U(\ca{H})$ is said to be a covariant representation if it satisfies that
$$\rho_\gamma(\pi_a(v))=\pi_{\gamma.a}(\rho_\gamma(v)).$$
Omitting the name of maps, $\gamma.(av)=(\gamma.a).(\gamma.v)$. We call such a pair $(\pi,\rho)$ a covariant representation.
It defines a representation of the algebra $C_c(\Gamma,A)$ by the formula
$$\pi\rtimes\rho_f(v):=\int_\Gamma \pi_{f(\gamma)}(\rho_\gamma(v))d\mu(\gamma).$$

As an example, we present a canonical covariant representation $(L,\widetilde{\pi})$ on $L^2(\Gamma,\ca{H})$ induced by a faithful representation $(\ca{H},\pi)$ of $A$ by the formula $\widetilde{\pi}_a(f)(x):=\pi(x^{-1}.a)f(x)$. $L$ is the left regular representation $L_\gamma(f)(x):=f(\gamma^{-1}x)$. Let us call this covariant representation a canonical covariant representation. Let us notice that any $C^*$-algebra has a faithful representation thanks to Gelfand-Neimark-Segal theorem (\cite{Mur}).

Let us define two norms like the case in tensor products.
\begin{dfn}\label{Dfn norms Sect.2}
Let $f\in C_c(\Gamma,A)$. 
\begin{eqnarray*}
\|f\|_{{\rm Max}}&:=&\sup\{\|\pi\rtimes\rho_f\|_\op\mid(\pi,\rho)\text{: a covariant representation}\} \\
\|f\|_{{\rm red}}&:=&\|\widetilde{\pi}\rtimes L_f\|_\op \text{ for a faithful }\pi.
\end{eqnarray*}
The completion of $C_c(\Gamma,A)$ with respect to $\|\cdot\|_{{\rm Max}}$ and $\|\cdot\|_{{\rm red}}$ is written as $\Gamma\ltimes_{{\rm Max}} A$ and $\Gamma\ltimes_{{\rm red}} A$, and is called max/reduced crossed product respectively. Since a faithful representation of $C^*$-algebras is isometric, the definition of $\|\cdot\|_{{\rm red}}$ is independent of the choice of $\pi$.
\end{dfn}
\begin{rmks}
$(1)$ It is not always true that $\Gamma\ltimes_{{\rm Max}} A=\Gamma\ltimes_{{\rm red}}A$. In fact, what we can verify generally is only the existence of a projection $\Gamma\ltimes_{{\rm Max}} A\to\Gamma\ltimes_{{\rm red}}A$. 

$(2)$ Two algebras $\Gamma\ltimes_{{\rm Max}} A$ and $\Gamma\ltimes_{{\rm red}}A$ coincide if $\Gamma$ is amenable (Theorem 7.13 in \cite{Wil}). This fact enables us to benefit from the properties of the both crossed products. In such a case, we omit the subscript.

$(3)$ In short, a locally compact group is amenable if it has an invariant integral for essentially bounded functions with respect to the Haar measure. We quote some properties of amenability from \cite{Pie}:
\begin{itemize}
\item a group extension of an amenable group by an amenable group is again amenable. In particular, a direct product of two amenable groups is again amenable.
\item a compact group is amenable.
\item a locally compact abelian group is amenable.
\end{itemize}
\end{rmks}


Let us show some examples which tells us that a crossed product can be considered as a better quotient. For details, consult \cite{Kha}.
\begin{exs}
$(1)$ When $\Gamma$ acts on itself via the left translation, $\Gamma\ltimes C_0(\Gamma)$ is isomorphic to $\ca{K}(L^2(\Gamma))$ and it is Morita equivalent to $\bb{C}$.

$(2)$ More generally, if $\Gamma$ acts on a locally compact Hausdorff space $X$ freely and properly, $\Gamma\ltimes C_0(X)$ is Morita equivalent to $C_0(X/\Gamma)$.

$(3)$ When the homomorphism $\sigma$ is trivial, $\Gamma\ltimes_{{\rm red}} A\cong C^*_{\rm red}\otimes_{{\rm min}}A$, and $\Gamma\ltimes_{{\rm Max}} A\cong C_{{\rm Max}}^*(\Gamma)\otimes_{{\rm Max}}A$. $C^*(\Gamma)$ is defined below.
\end{exs}

A special case when $A=\bb{C}$ and the action is trivial, of crossed product $\Gamma\ltimes\bb{C}$ is called a full group $C^*$-algebra of $\Gamma$ and written as $C^*(\Gamma)$. Like this, $C^*_{{\rm red}}(\Gamma):=\Gamma\ltimes_{{\rm red}}\bb{C}$.
We regard a module of $C^*(\Gamma)$ as a representation with suitable size. We present some examples of group $C^*$-algebras.
\begin{exs}
$(1)$ When $\Gamma$ is a compact group, 
$$C^*_{{\rm red}}(\Gamma)\cong C^*(\Gamma)\cong\sum_{\lambda\in\widehat{\Gamma}} {\rm End}(V_\lambda).$$

$(2)$ More generally, if $\Gamma$ is amenable, $C^*_{{\rm red}}(\Gamma)\cong C^*(\Gamma)$.

$(3)$ Let $F_2$ be the free group generated by two elements. It is known that $C^*(F_2)$ is not isomorphic to $C^*_{{\rm red}}(F_2)$.
\end{exs}

We are in the position to explain twisted version of crossed products and twisted group $C^*$-algebras. Let $\Gamma$ and $A$ be as the above. Suppose that we are given a $2$-cocycle $\sigma:\Gamma\times\Gamma\to U(1)$.
\begin{dfn}
The twisted convolution product in $C_c(\Gamma,A)$ is defined by
$$f_1*_\sigma f_2(\gamma):=\int_\Gamma f_1(\gamma')\gamma.\bigl(f_2(\gamma'^{-1}\gamma)\bigr)\sigma(\gamma',\gamma)d\mu(\gamma').$$
The twisted crossed product $\Gamma\ltimes_{\sigma,{\rm Max}} A$ and $\Gamma\ltimes_{\sigma,{\rm red}}A$ are the completions of it just like the ordinary crossed products.
\end{dfn}
An $A$-module $M$ is a $\sigma$-twisted $\Gamma$-$A$-bimodule if it admits a map $\rho:\Gamma\to {\rm Aut}(A)$ satisfying that $\rho_{\gamma_1}\rho_{\gamma_2}=\sigma(\gamma_1,\gamma_2)\rho_{\gamma_1\gamma_2}$, such that
$$\rho_\gamma(am)=(\gamma.a)(\rho_\gamma(m)).$$
A twisted crossed product controls $\sigma$-twisted $\Gamma$-$A$-bimodule just like the case of ordinary ones.


Let us notice that we can realize 
the algebra $C_c(\Gamma,A)$ with the above product
as a subalgebra of a certain ordinary crossed product as follows.

The cocycle $\sigma$ defines a group structure on $\Gamma^\sigma:=\Gamma\times U(1)$ by the formula $(\gamma_1,z_1)\cdot(\gamma_2,z_2)=(\gamma_1\gamma_2,z_1z_2\sigma(\gamma_1,\gamma_2))$. Let $C_c(\Gamma^\sigma,A)(k)$ be the subalgebra consisting of functions such that $f(\gamma,z)=f(\gamma,1)z^{-k}$. $C_c(\Gamma^\sigma,A)(1)$ is an algebra equipped with the above twisted convolution. Twisted group $C^*$-algebras can be obtained by a similar method. 
We will explain such things in Section \ref{Section Index}.

Such a perspective enables us to translate results of ordinary crossed products into twisted ones without much effort. For example, $\Gamma\ltimes_{\tau,{\rm Max}}A=\Gamma\ltimes_{\tau,{\rm min}}A$ for amenable $\Gamma$.

\subsection{Geometric quantization}
Let us explain a quantization procedure along \cite{Son}, \cite{Sja} and \cite{AMM}, by which we are motivated.

While classical  mechanics can be considered as a dynamical system on a symplectic manifold, quantum mechanics is a unitary representation on a Hilbert space which is called a state space. We wish to obtain a quantum system from a classical system. Geometric quantization is a program seeking such correspondence.

If a classical system has a symmetry, we hope that corresponding quantum system inherits this symmetry. Bott's quantization is such a quantization.
\begin{dfn}
Let $(M,\omega)$ be a Hamiltonian $G$-space, that is, $(M,\omega)$ is a symplectic manifold, $G$ acts on $M$ preserving $\omega$, and there exists a map $\mu:M\to \fra{g}^*$ so called a momentum map satisfying that $d(\mu(m),\xi)=\iota_{\xi_M}\omega(m)$, where $\xi_M$ is the induced vector field by $\xi\in\fra{g}$. We suppose that the symplectic structure admits a $G$-invariant almost complex structure which is compatible with the symplectic structure and hence a Spinor bundle $S=S^+\oplus S^-\to M$.

The $G$-equivariant line bundle $L\to M$ with connection $\nabla$ is called a pre-quantum line bundle if $\nabla^2=i\omega$. A Dirac operator $D_L:L^2(M,S^+\otimes L)\to L^2(M,S^-\otimes L)$ determines a quantization
$$Q(M,\omega):=[\ker(D_L)]-[{\rm coker}(D_L)]\in R(G).$$

\end{dfn}

In \cite{Son}, he generalized it to the case of Hamiltonian $LG$-spaces.
\begin{dfn}
A Hamiltonian $LG$-space is an infinite dimensional Banach manifold $\ca{M}$ equipped with:
\begin{itemize}
\item an $LG$-invariant closed $2$-form $\Omega$ such that the induced linear map $T_p\ca{M}\to T^*_p\ca{M}$ is injective.
\item a proper equivariant momentum map $\Phi:\ca{M}\to L\fra{g}^*$ satisfying that $d\innpro{\Phi}{\xi}{}=\iota_{\xi_\ca{M}}\Omega$ for $\xi\in L\fra{g}$, where $LG$ acts on $L\fra{g}^*$ via the gauge action, $\xi_\ca{M}$ is the vector field generated by $\xi$.
\end{itemize}
\end{dfn}
It has a compact model $\ca{M}/\Omega G$. Since $\Omega G$ acts on $L\fra{g}^*$ freely, the restriction of the $LG$-action on $\ca{M}$ to $\Omega G$ is also free. Since $\Phi$ is proper, the quotient space is compact. Unfortunately, it can be not a Hamiltonian $G$-space, but a quasi-Hamiltonian $G$-space defined below.
\begin{dfn}[\cite{AMM}]
A manifold $M$ is a quasi-Hamiltonian $G$-space if it has an invariant $2$-form $\omega$ and an equivariant map $\phi:M\to G$ such that:
\begin{itemize}
\item $d\omega=-\phi^*\chi$, where $\chi$ is the canonical $3$-form on $G$ defined by an invariant inner product on $\fra{g}$.
\item $\iota_{\xi_M}=\frac{1}{2}\phi^*(\theta+\overline{\theta},\xi)$ for any $\xi\in\fra{g}$, where $\theta$ ($\overline{\theta}$) is the left- (right-) invariant Maurer-Cartan form respectively.
\item $\ker \omega_x=\{\xi_M(x)\mid \xi\in\ker({\rm Ad}_{\phi(x)}+1)\}$ at each $x\in M$.
\end{itemize}
$\phi$ is called a group valued momentum map.
\end{dfn}
They are in 1:1 correspondence as follows.
\begin{pro}[\cite{AMM}]\label{Prop equivalence Sect.2}
If $\ca{M}$ is a Hamiltonian $LG$-space, $\ca{M}/\Omega G$ is a quasi-Hamiltonian $G$-space. Conversely, if $M$ is a quasi-Hamiltonians $G$-space, $\phi^*(L\fra{g}^*\to G)$ is a Hamiltonian $LG$-space. In short, the following diagram commutes.
$$\begin{CD}
\ca{M} @>\Phi>> L\fra{g}^* \\
@VHolVV @VholVV \\
M @>\phi>> G
\end{CD}$$
$Hol:\ca{M}\to M$ is the quotient map.
\end{pro}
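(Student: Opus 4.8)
The plan is to prove the equivalence by a single construction — reduction by the normal subgroup $\Omega G\triangleleft LG$ — with essentially all the difficulty concentrated in one distinguished $2$-form. First I would set up the ``bridge space.'' Identify $L\fra{g}^*$ with the affine space $\mathscr{A}$ of connections on the trivial principal $G$-bundle over the spatial circle $S^1$, on which $LG$ acts by gauge transformations and for which the identity is the tautological momentum map. Holonomy around $S^1$ defines a map $hol\colon\mathscr{A}=L\fra{g}^*\to G$ which I would first verify is a locally trivial principal $\Omega G$-bundle, equivariant for the evaluation homomorphism $LG\to G$, $\gamma\mapsto\gamma(0)$, intertwining the gauge action with the conjugation action on $G$; in particular $hol$ is $\Omega G$-invariant and induces $L\fra{g}^*/\Omega G\xrightarrow{\ \sim\ }G$. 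This bundle structure is what makes orbit spaces manifolds: equivariance of a momentum map $\Phi\colon\ca{M}\to L\fra{g}^*$ together with freeness of $\Omega G$ on $L\fra{g}^*$ forces the $\Omega G$-action on $\ca{M}$ to be free, and properness of $\Phi$ makes $M:=\ca{M}/\Omega G$ a Hausdorff manifold (compact, since $G$ is) with $Hol\colon\ca{M}\to M$ a principal $\Omega G$-bundle, whence $Hol^*$ is injective on forms.

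The technical heart is a distinguished gauge-invariant $2$-form $\varpi$ on $\mathscr{A}=L\fra{g}^*$ — a canonical primitive transgressing the Cartan $3$-form $\chi$ along the fibres of $hol$ — which I would take as the black box supplied by \cite{AMM} (it is built from connections on the interval and their holonomy, and could alternatively be written down and checked by an integration-by-parts computation using the explicit derivative of $hol$). Its two defining properties are $d\varpi=hol^*\chi$, and a momentum-correction identity: for a constant loop $\xi\in\fra{g}\hookrightarrow L\fra{g}$ one has $\iota_{\xi_{\mathscr{A}}}\varpi=d\innpro{\cdot}{\xi}{}-\tfrac12\,hol^*(\theta+\overline{\theta},\xi)$, while $\iota_{\xi_{\mathscr{A}}}\varpi=d\innpro{\cdot}{\xi}{}$ for $\xi\in\Omega\fra{g}=\Lie(\Omega G)$. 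Intuitively $\varpi$ records the discrepancy between the linear, $\fra{g}^*$-valued momentum and the group-valued one, and these two identities are exactly the bookkeeping that lets the axioms translate back and forth.

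\textbf{Forward direction.} Given a Hamiltonian $LG$-space $(\ca{M},\Omega,\Phi)$, the composite $hol\circ\Phi$ is $\Omega G$-invariant and descends to $\phi\colon M\to G$ with $\phi\circ Hol=hol\circ\Phi$, which is the commuting square of the statement. I then define $\omega$ by $Hol^*\omega=\Omega-\Phi^*\varpi$; this makes sense because $\Omega-\Phi^*\varpi$ is $\Omega G$-basic — its horizontality along $\Omega G$-orbits is precisely $\iota_{\xi_{\ca{M}}}\Omega=d\innpro{\Phi}{\xi}{}$ for $\xi\in\Omega\fra{g}$ combined with $\iota_{\xi_{\mathscr{A}}}\varpi=d\innpro{\cdot}{\xi}{}$ there, and its invariance is that of $\Omega$ and of $\varpi$. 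The three quasi-Hamiltonian axioms for $(M,\omega,\phi)$ now follow by applying the injective operator $Hol^*$: closedness of $\Omega$ and $d\varpi=hol^*\chi$ give $Hol^*(d\omega)=-\Phi^*hol^*\chi=-Hol^*\phi^*\chi$, so $d\omega=-\phi^*\chi$; contracting $\Omega-\Phi^*\varpi$ with a constant loop $\xi\in\fra{g}$ and using $\iota_{\xi_{\ca{M}}}\Omega=d\innpro{\Phi}{\xi}{}$ together with the momentum-correction identity makes the two correction terms cancel down to $\iota_{\xi_M}\omega=\tfrac12\phi^*(\theta+\overline{\theta},\xi)$; and the kernel condition $\ker\omega_x=\{\xi_M(x)\mid\xi\in\ker({\rm Ad}_{\phi(x)}+1)\}$ drops out of combining the weak nondegeneracy of $\Omega$ with the pointwise structure of $\varpi$ on $\mathscr{A}$ (how the $hol$-vertical tangent space pairs with the $G$-orbit directions), which is where ${\rm Ad}_{\phi}+1$ actually enters. \textbf{Backward direction.} Given a quasi-Hamiltonian $G$-space $(M,\omega,\phi)$, set $\ca{M}:=\phi^*(L\fra{g}^*\xrightarrow{hol}G)=\{(m,A)\mid\phi(m)=hol(A)\}$, let $LG$ act by $\gamma.(m,A)=(\gamma(0).m,\ \gamma.A)$ (well defined and an action, by equivariance of $\phi$ and $hol$), and put $\Phi:=\mathrm{pr}_{L\fra{g}^*}$, $\Omega:=\mathrm{pr}_M^*\omega+\Phi^*\varpi$. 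Properness of $\Phi$ follows from that of $\phi$ and compactness of $G$; closedness of $\Omega$ from $d\omega=-\phi^*\chi$, $d\varpi=hol^*\chi$ and the on-the-nose identity $\phi\circ\mathrm{pr}_M=hol\circ\Phi$; the momentum-map identity and weak nondegeneracy from the same computations run in reverse, now using the kernel axiom of $\omega$ as an input rather than an output. Since $\Omega G$ acts on $\ca{M}$ only through the $A$-factor, freely, with quotient $M$, one gets $\ca{M}/\Omega G\cong M$, and the two constructions are visibly mutually inverse ($Hol$ is recovered as the pullback of $hol$ and $Hol^*\omega=\mathrm{pr}_M^*\omega=\Omega-\Phi^*\varpi$).

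The step I expect to be the main obstacle is the construction of $\varpi$ and the verification of the kernel/nondegeneracy axiom built on it: one must show that the failure of $\omega$ on $M$ to be symplectic is governed \emph{exactly} by $\ker({\rm Ad}_{\phi(x)}+1)$, which requires a genuine pointwise analysis of $\varpi$ on the infinite-dimensional space of connections rather than any formal manipulation. Everything else is routine equivariant de Rham calculus once $hol$ is known to be a smooth principal $\Omega G$-bundle and $\varpi$ with its two properties is in hand. In the abelian case $G=T$ relevant to the rest of the paper the difficulty evaporates: $\chi=0$, ${\rm Ad}\equiv\mathrm{id}$ so $\ker({\rm Ad}_{\phi}+1)=0$, and $\omega$ on $M$ becomes a genuine $T$-invariant symplectic form with $T$-valued momentum map $\phi$.
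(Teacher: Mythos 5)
The paper itself does not prove this proposition; it is quoted directly from \cite{AMM} as background. Your sketch is the standard \cite{AMM} argument and is correct: identify $L\fra{g}^*$ with gauge connections so that $hol\colon L\fra{g}^*\to G$ is an $\Omega G$-principal bundle, introduce the canonical primitive $\varpi$ with $d\varpi=hol^*\chi$ together with the two contraction identities (one for $\xi\in\Omega\fra{g}$, one for constant $\xi\in\fra{g}$), and transport the $2$-form across the bundle via $Hol^*\omega=\Omega-\Phi^*\varpi$ in one direction and $\Omega=\mathrm{pr}_M^*\omega+\Phi^*\varpi$ in the other; your sign bookkeeping matches the paper's conventions ($d\omega=-\phi^*\chi$ and $\iota_{\xi_M}\omega=\tfrac12\phi^*(\theta+\overline{\theta},\xi)$). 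You also correctly flag the kernel/nondegeneracy axiom as the only step requiring genuine pointwise analysis of $\varpi$ rather than formal pushing of forms, and your remark that the whole issue trivializes for $G=T$ (where $\chi=0$ and $\ker(\mathrm{Ad}_\phi+1)=0$) is accurate and is exactly the simplification the rest of the paper exploits.
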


If an $LG^\tau$-equivariant pre-quantum line bundle can be defined, $\ca{M}$ is said to be at level $\tau$.
We quote the result in \cite{Son} related to the above.

\begin{pro}
For a Hamiltonian $LG$-space $\ca{M}$ at level $\tau$, we can define a quantization $Q(\ca{M},\Omega)\in R^\tau(LG)$.
\end{pro}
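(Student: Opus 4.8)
The plan is to quantize the compact model and then transport the answer to the loop group via known identifications. First, by Proposition~\ref{Prop equivalence Sect.2} the quotient $M:=\ca{M}/\Omega G$ is a compact quasi-Hamiltonian $G$-space with group-valued moment map $\phi:M\to G$; compactness holds because $\Omega G$ acts freely on $\ca{M}$ and $\Phi$ is proper. The level-$\tau$ pre-quantum line bundle $\ca{L}\to\ca{M}$, which is $LG^{\tau}$-equivariant with the added central circle acting by scalars, is precisely the datum of a trivialization of $\phi^{*}\ca{G}_{\tau}$, where $\ca{G}_{\tau}$ is the level-$\tau$ basic gerbe on $G$; equivalently it is a $\phi^{*}\tau$-twisted $G$-equivariant line bundle on $M$. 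Thus ``$\ca{M}$ at level $\tau$'' is the same as ``$M$ together with a relative level-$\tau$ prequantization of $\phi$.''

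Second, I would place a $Spin^{c}$-Dirac operator on $M$. The invariant $\Omega$-compatible almost complex structure on $\ca{M}$, carried through the $G$-equivariant splitting $T\ca{M}\cong\phi^{*}TM\oplus(\text{trivial }\Omega\fra{g})$, equips $\phi$ with a canonical $G$-equivariant relative $Spin^{c}$-structure; concretely $TM$ acquires a stable complex structure twisted by $\phi^{*}S(\fra{g})$. Tensoring this with the twisted line bundle from the first step produces a class $[\ca{L}]\in K^{\phi^{*}\tau}_{G}(M)$, represented by a twisted $Spin^{c}$-Dirac operator $D_{\ca{L}}$ acting on ``$L^{2}(M,\ca{S}\otimes\ca{L})$.''

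Third, I would push this class forward along the proper $G$-map $\phi$. Using the relative $Spin^{c}$-orientation of $\phi$, the Gysin map gives $\phi_{!}[\ca{L}]\in K^{\tau+\dim G}_{G}(G)$, and by the theorem of Freed--Hopkins--Teleman this group is canonically isomorphic to $R^{\tau}(LG)$, the Verlinde ring (with the standard level and degree shifts absorbed into the identification); we then set $Q(\ca{M},\Omega):=\phi_{!}[\ca{L}]$. For $G=T$ a torus everything becomes elementary: the Freed--Hopkins--Teleman identification collapses to Proposition~\ref{Prop RLT Sect.2}, the gerbe can be dispensed with, and one may argue directly through the decomposition $LT=T\times\Pi_{T}\times\Omega T_{0}$ exactly as in the body of this paper, so that $Q(\ca{M},\Omega)$ coincides with the analytic index furnished by Theorem~\ref{Thm index Sect.1}. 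Alternatively --- closer to the method of the present paper --- one can remain on the infinite-dimensional $\ca{M}$, form ``$L^{2}(\ca{M},\ca{L}\otimes\ca{S})$'' and a regularized $LG^{\tau}$-equivariant Dirac operator, take its $R^{\tau}(LG)$-valued analytic index, and then prove that this index agrees with $\phi_{!}[\ca{L}]$.

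The main obstacle is the second step: manufacturing an honest $Spin^{c}$-structure and Dirac operator on $M$ despite the fact that the two-form $\omega$ on a quasi-Hamiltonian space is degenerate, so that $M$ itself admits no $\omega$-compatible almost complex structure. One must build the $Spin^{c}$-structure intrinsically from the quasi-Hamiltonian data (the Maurer--Cartan forms and $\phi^{*}\chi$), check its $G$-equivariance and its independence of the auxiliary choices up to the equivalences that $K$-theory does not see, and verify naturality under the correspondence of Proposition~\ref{Prop equivalence Sect.2} so that the outcome depends only on $(\ca{M},\Omega)$ and not on the chosen compact model. It is then desirable to record compatibility of $Q$ with the fusion product, which is what makes this construction deserve the name ``quantization.''
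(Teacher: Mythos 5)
This proposition is not proved in the paper at all: it is explicitly quoted from \cite{Son} (``We quote the result in \cite{Son} related to the above''), with no proof environment following. There is therefore no internal argument to compare your sketch against; the paper's own contribution to this circle of ideas is the $G=T$ analytic index built in Sections 4--6 and wrapped up as the Bott-quantization corollary in Section \ref{Section Applications}.

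That said, your sketch faithfully reconstructs the route that \cite{Son} (building on Alekseev--Malkin--Meinrenken and Meinrenken) actually takes: pass to the compact quasi-Hamiltonian model $M=\ca{M}/\Omega G$ via Proposition \ref{Prop equivalence Sect.2}, recast the $LG^\tau$-equivariant pre-quantum line bundle as a relative twisted datum over the group-valued moment map $\phi$, equip $\phi$ with a $G$-equivariant twisted $Spin^c$-structure, and push forward along $\phi$ into $K_G^{\tau+\dim G}(G)\cong R^\tau(LG)$ by Freed--Hopkins--Teleman. But you then explicitly defer the one genuinely nontrivial step, writing that ``the main obstacle'' is manufacturing the $Spin^c$-structure from the quasi-Hamiltonian data in spite of the degeneracy of $\omega$. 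That deferred step is not a loose end one can wave past --- it is the technical heart of \cite{Son} --- so as it stands your proposal is an outline, not a proof. Two further points: (i) the FHT isomorphism $K_G^{\tau+\dim G}(G)\cong R^\tau(LG)$ in its standard form requires $G$ compact, simple, simply connected; for $G=T$ you correctly fall back on Proposition \ref{Prop RLT Sect.2}, but for general $G$ one must be careful which version of FHT is being invoked and how the cubic/$\rho$-shift is normalized; and (ii) your closing ``alternative'' --- staying on $\ca{M}$, building ``$L^2(\ca{M},\ca{L}\otimes\ca{S})$'' and a regularized $LG^\tau$-equivariant Dirac operator and taking its $R^\tau(LG)$-valued index --- is exactly what this paper does, but only for $G=T$, where the decomposition $LT=T\times\Pi_T\times\Omega T_0$ makes the inductive-limit constructions tractable; nothing in the paper supports extending that analytic route to nonabelian $G$.
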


\section{Settings and division of the problem}\label{Section simplification}
In this section, we explain our settings and simplify our problems. Since we have written what we do at least intuitively in Section \ref{Section Introduction}, we focus on more technical aspects.　In particular, we explain why we need a twisted crossed product.

The manifold which we study is as follows.
\begin{dfn}
Let $\ca{M}$ be an infinite dimensional Banach manifold equipped with an $LT$-action.
It is called a {\bf proper $LT$-space} if the restriction of the action to $\Omega T$ is free, the quotient space $M:=\ca{M}/\Omega T$ is compact and has a $T$-equivariant $Spin^c$-structure, and the principal bundle
$$\Omega T_0\to \ca{M}\to \ca{M}/\Omega T_0=:\widetilde{M}$$
is trivial. We always use this decomposition to study $\ca{M}$. The quotient space $M$ is called a {\bf compact model} of $\ca{M}$ and $\widetilde{M}$ is called a {\bf locally compact model} of $\ca{M}$.
\end{dfn}
One may feel the definition is artificial a little. In fact, we believe we can extend the results to more general cases. However, we do not make an effort to generalize in this paper. Let us show some examples which tell us that the above package is worth studying.

\begin{exs}
$(1)$ The set of connections on the trivial bundle $\bb{T}_\rot\times T$ can be identified with $L\fra{t}^*=\Omega^1(\bb{T}_\rot,\fra{t})$. It is equipped with the gauge action of $LT$, and is a proper $LT$-space as follows.
The compact model is $T$ and the quotient map to $T$ is the holonomy map $hol:L\fra{t}^*\to T$, where we regard $L\fra{t}^*$ as the set of connections on the trivial bundle $\bb{T}_\rot\times T$. Since the holonomy map factors through
$$L\fra{t}^*\xrightarrow{\int}\fra{t}\xrightarrow{\exp}T,$$
the locally compact model is $\fra{t}$.
The parallel connections $d+v$ for $v\in\fra{t}$ trivialize the principal bundle
$\Omega T_0\to L\fra{t}^*\to \fra{t}$.

$(2)$ The left translation on $LT$ is an example of the above definition. The compact model is $T$ and the locally compact model is $T\times \Pi_T$. The both quotient maps are the natural ones induced from the canonical decomposition $LT=T\times \Pi_T\times \Omega T_0$.

$(3)$ An $LT$-Hamiltonian space is also an example. In fact, our setting is an obvious generalization of this type.
The restriction of the $LT$-action on $\ca{M}$ to $\Omega T$ is free because of the equivariance of $\Phi$ which is the momentum map. The quotient $\ca{M}/\Omega T$ is compact since $\Phi$ is proper.
The trivialization of $\ca{M}\to\ca{M}/\Omega T_0$ can be defined by the pull back of the one of $L\fra{t}^*\to \fra{t}$ explained in $(1)$.
The group valued moment map $\phi:M\to T$ constructed in Proposition \ref{Prop equivalence Sect.2}
 is in fact a circle valued moment map introduced by \cite{McD} in this case.
\end{exs}

In order to define a Dirac operator, we need to deal with a Clifford module bundle. In order to define an analytic index as an element of $R^\tau(LT)$, we must deal with not an ordinary $LT$-equivariant Clifford module bundle, but a $\tau$-twisted one. In this paper, we deal with only the following type.

\begin{dfn}
$(1)$ Let $\ca{L}$ be a $\tau$-twisted equivariant line bundle, that is, $l\in LT$ determines a linear isomorphism $\sigma(l):\ca{L}|_m\to\ca{L}|_{l.m}$ satisfying that $\sigma(l_1)\sigma(l_2)=\tau(l_1,l_2)\sigma(l_1l_2)$. 

$(2)$ Let $\ca{S}:=S(T\widetilde{M})\boxtimes S(\Omega T_0)$ be the Spinor bundle. The first factor is well-defined since we assume that $M$ is $Spin^c$ and so is $\widetilde{M}$. The second one is the trivial bundle whose fiber is the Spinor defined in Def \ref{Dfn Spinor Sect.4},
 which comes from Section 3 in \cite{FHTII}. We regard that $\widetilde{M}$ and $\Omega T_0$ are orthogonal to one another, and also regard $\Omega T_0$ as an even dimensional manifold.

$(3)$ The tensor product $\ca{L}\otimes\ca{S}$ is our Clifford module bundle we deal with in this paper.
\end{dfn}

We finish the setting here. We will construct our objects ``$LT\ltimes_\tau C_0(\ca{M})$'' and ``$L^2(\ca{M},\ca{L}\otimes\ca{S})$'' using inductive limits. More precisely, we will explain the finite dimensional approximation of $\Omega T_0$:
$$U_1\to U_2\to\cdots\to U:=\Omega T_0,$$
and we use it for the construction as follows.

\begin{dfn}
$LT\ltimes_\tau C_0(\ca{M})$ is the inductive limit of the sequence
$$(T\times\Pi_T\times U_1)\ltimes_\tau C_0(\widetilde{M}\times U_1)\xrightarrow{i_1}(T\times\Pi_T\times U_2)\ltimes_\tau C_0(\widetilde{M}\times U_2)\xrightarrow{i_2}\cdots.$$
\end{dfn}
The construction of $i_1,i_2,\cdots$ is one of the difficulties of this paper. It will be done through the representation of each algebra.

\begin{dfn}
$L^2(\ca{M},\ca{L}\otimes\ca{S})$ is the inductive limit of the sequence
$$L^2(\widetilde{M}\times U_1,\ca{L}\otimes S(T\widetilde{M})\otimes S(U_1))\to L^2(\widetilde{M}\times U_2,\ca{L}\otimes S(T\widetilde{M})\otimes S(U_2))\to \cdots.$$
\end{dfn}
The construction of homomorphisms will be done using representation of $U_1,U_2,\cdots$.

Let us simplify the problems by using the property of our $2$-cocycle. For this purpose, we use the following. $\Gamma$ and $A$ are as around Definition \ref{Dfn norms Sect.2}.

\begin{lemma}
Let $\Gamma_1$ and $\Gamma_2$ be amenable groups, $A_1$ and $A_2$ be $C^*$-algebras, $\Gamma_i$ act on $A_i$ for $i=1,2$. Suppose that either $A_1$ or $A_2$ and either $\Gamma_1\ltimes_{\sigma_1} A_1$ or $\Gamma_2\ltimes_{\sigma_2} A_2$ are nuclear, that is, we do not face any troubles to define tensor products. Then,
$$\Gamma\ltimes A\cong(\Gamma_1\ltimes A_1)\otimes (\Gamma_2\ltimes A_2).$$
\end{lemma}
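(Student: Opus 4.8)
The plan is to reduce everything to the untwisted case and then invoke a tensor-product decomposition for ordinary crossed products. First I would set $\Gamma = \Gamma_1 \times \Gamma_2$, $A = A_1 \otimes A_2$, $\sigma = \sigma_1 \times \sigma_2$ (meaning the product cocycle on the product group, which vanishes on cross terms), and the diagonal-type action. The key observation is the one recorded just before the statement: the twisted crossed product $\Gamma \ltimes_\sigma A$ is a distinguished subalgebra of an ordinary crossed product. Concretely, $\Gamma^\sigma := \Gamma \times U(1)$ has a genuine group structure twisted by $\sigma$, and $\Gamma \ltimes_\sigma A \cong C_c(\Gamma^\sigma, A)(1)^{-}$, the completion of the weight-$1$ eigenspace under the canonical $U(1)$-action on $\Gamma^\sigma \ltimes A$ (with $U(1)$ acting trivially on $A$). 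So I would first prove the lemma for \emph{untwisted} crossed products, and then transfer along this identification.

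For the untwisted statement, $(\Gamma_1 \times \Gamma_2) \ltimes (A_1 \otimes A_2) \cong (\Gamma_1 \ltimes A_1) \otimes (\Gamma_2 \ltimes A_2)$, I would work at the level of $C_c$ and check that the obvious map $C_c(\Gamma_1, A_1) \otimes^{\mathrm{alg}} C_c(\Gamma_2, A_2) \to C_c(\Gamma_1 \times \Gamma_2, A_1 \otimes A_2)$, sending $f_1 \otimes f_2$ to $(\gamma_1, \gamma_2) \mapsto f_1(\gamma_1) \otimes f_2(\gamma_2)$, is an algebra $*$-isomorphism onto a dense subalgebra; this uses that the Haar measure on the product is the product measure and that the convolution and involution formulas factor. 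Then I would argue that this extends to a $C^*$-isomorphism on completions. The amenability hypotheses enter here: since $\Gamma_1$, $\Gamma_2$, hence $\Gamma_1 \times \Gamma_2$ are amenable, the maximal and reduced crossed products coincide, so there is no ambiguity in which completion to take; and the nuclearity hypothesis on one of the factors (or one of the crossed products) ensures that $(\Gamma_1 \ltimes A_1) \otimes (\Gamma_2 \ltimes A_2)$ has an unambiguous $C^*$-tensor norm. One clean way to finish is to identify the two sides via their covariant representations: a covariant representation of $(A_1 \otimes A_2, \Gamma_1 \times \Gamma_2)$ restricts to commuting covariant representations of $(A_1, \Gamma_1)$ and $(A_2, \Gamma_2)$ and conversely, which matches the universal property of both the crossed product and the maximal tensor product.

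To pass to the twisted case, I would note that the construction $B \mapsto C_c(B^\sigma, \cdot)(1)$ is compatible with all of the above: $(\Gamma_1 \times \Gamma_2)^{\sigma_1 \times \sigma_2}$ contains $\Gamma_1^{\sigma_1}$ and $\Gamma_2^{\sigma_2}$ with their two copies of $U(1)$ amalgamated to one, and the weight-$1$ eigenspace of the product $U(1)$-action corresponds, under the untwisted isomorphism, to the tensor product of the weight-$1$ eigenspaces. Equivalently, a $\sigma$-twisted covariant representation of $(A, \Gamma)$ is precisely a pair of commuting $\sigma_i$-twisted covariant representations with a common implementation of the central $U(1)$, so the universal-property argument goes through verbatim with ``covariant'' replaced by ``$\sigma_i$-twisted covariant.'' Finally, amenability of $\Gamma_i^{\sigma_i}$ (a $U(1)$-extension of an amenable group is amenable) again collapses max and reduced, so $\Gamma \ltimes_\sigma A$ is well-defined and equals both.

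The main obstacle I expect is bookkeeping around the tensor-norm and completion issues rather than anything conceptually deep: one must be careful that the algebraic isomorphism at the $C_c$ level is isometric for the relevant norms, and this is exactly where amenability (so that $\ltimes = \ltimes_{\mathrm{red}} = \ltimes_{\mathrm{Max}}$) and nuclearity (so that $\otimes = \otimes_{\min} = \otimes_{\mathrm{Max}}$) are doing the real work — without them the various completions need not match up and the statement could fail. A secondary subtlety is making the ``$\sigma = \sigma_1 \times \sigma_2$'' hypothesis explicit: the lemma as stated is really about the situation where the cocycle and the action on the product decompose, which is the case relevant to $LT = (T \times \Pi_T) \times \Omega T_0$ by Proposition \ref{Prop cocycle Sect.2}, so I would state that decomposition hypothesis cleanly at the start of the proof.
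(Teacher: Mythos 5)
Your proposal is correct, and it finishes by a route that is dual to the paper's. Both arguments begin identically: identify $C_c(\Gamma_1,A_1)\otimes^\alg C_c(\Gamma_2,A_2)$ with a dense $*$-subalgebra of $C_c(\Gamma_1\times\Gamma_2,A_1\otimes A_2)$ and then use amenability (so $\ltimes_{\rm Max}=\ltimes_{\rm red}$) and nuclearity (so $\otimes_{\rm Max}=\otimes_{\rm min}$) to make the choice of completion immaterial. Where you diverge is in how you pin down the $C^*$-norm: the paper works on the \emph{reduced/minimal} side, taking a faithful $\pi=\pi_1\otimes\pi_2$, observing that the canonical covariant representation of $(A,\Gamma)$ on $L^2(\Gamma_1\times\Gamma_2,\ca{H}_1\otimes\ca{H}_2)$ restricts on the dense subalgebra to the tensor product of the canonical covariant representations, and concluding from the definition of the minimal tensor norm; you instead work on the \emph{maximal} side, matching covariant representations of $(A_1\otimes A_2,\Gamma_1\times\Gamma_2)$ with commuting pairs of covariant representations via the universal properties of $\ltimes_{\rm Max}$ and $\otimes_{\rm Max}$. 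The paper's version is a concrete computation with the regular representation; yours is a universal-property argument, and it makes a little more visible exactly where nuclearity is used (to factor a representation of $A_1\otimes A_2$ into a commuting pair). Both are standard and both are correct in this amenable, nuclear setting. Two further points in your favor: you correctly isolate the hypothesis $\sigma(\Gamma_1,\Gamma_2)=1$ needed for the twisted version, which the paper only states informally after the lemma rather than in its hypotheses (note that the displayed conclusion in the paper's lemma is untwisted even though $\sigma_i$ appear in the hypothesis — the lemma is really the untwisted statement, with the twisted version deduced afterward via the $\Gamma^\sigma$ trick, exactly as you describe); and your treatment of the twisted-to-untwisted transfer, via the weight-one eigenspace of the $U(1)$-action on $\Gamma^\sigma\ltimes A$, is spelled out in more detail than the paper's one-sentence remark.
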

\begin{proof}
If we verify that
$$\Gamma\ltimes_{{\rm red}} A\cong(\Gamma_1\ltimes_{{\rm red}} A_1)\otimes_{{\rm min}} (\Gamma_2\ltimes_{{\rm red}} A_2),$$
amenability and nuclearity imply the result.

The above can be verified as follows. Let $\widetilde{\pi}\rtimes L$, $\widetilde{\pi}_1\rtimes L_1$ and $\widetilde{\pi}_2\rtimes L_2$ be the canonical covariant representation of $A$, $A_1$ and $A_2$ respectively explained in the above of Definition \ref{Dfn norms Sect.2}. We use a notation $\pi:=\pi_1\otimes\pi_2$ and $L:=L_1\otimes L_2$.
The algebraic tensor product
$$C_c(\Gamma_1,A_2)\otimes^\alg C_c(\Gamma_2,A_2)$$
is mapped to a dense subalgebra of $C_c(\Gamma,A)$ and the restriction of $\widetilde{\pi}\rtimes L$ to it is the tensor product of $\widetilde{\pi}_1\rtimes L_1$ and $\widetilde{\pi}_2\rtimes L_2$. The fact that the image is dense implies the completions coincide.
From the definition of the minimal tensor product, we obtain the result.
\end{proof}

We can obtain the twisted version of it thanks to the observation in the bottom of Section \ref{Subsection Cstar algebra} 
if we assume that $\sigma(\Gamma_1,\Gamma_2)=1$. Our cocycle $\tau$ satisfies it when $\Gamma_1=T\times\Pi_T$ and $\Gamma_2=U_N$, and hence we obtain the following.
\begin{pro}\label{Prop division Sect.3}
$$(T\times\Pi_T\times U_N)\ltimes_\tau C_0(\widetilde{M}\times U_N)$$
$$\cong (T\times\Pi_T)\ltimes_\tau C_0(\widetilde{M})\bigotimes U_N\ltimes_\tau C_0(U_N).$$
\end{pro}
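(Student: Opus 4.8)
The plan is to reduce Proposition \ref{Prop division Sect.3} to the previous Lemma, which gives the untwisted statement $\Gamma\ltimes A\cong(\Gamma_1\ltimes A_1)\otimes(\Gamma_2\ltimes A_2)$ whenever $\Gamma=\Gamma_1\times\Gamma_2$ with both factors amenable and the relevant nuclearity holds, and then transport it through the twist. First I would set $\Gamma_1:=T\times\Pi_T$, $\Gamma_2:=U_N$, $A_1:=C_0(\widetilde{M})$, $A_2:=C_0(U_N)$, and $\Gamma:=\Gamma_1\times\Gamma_2=T\times\Pi_T\times U_N$ acting on $A:=C_0(\widetilde{M}\times U_N)=A_1\otimes A_2$. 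Amenability of $\Gamma_1$ and $\Gamma_2$ follows from the quoted facts ($T$ compact, $\Pi_T$ and $U_N$ locally compact abelian, and extensions/products of amenable groups are amenable), and nuclearity holds because $A_1,A_2$ are commutative. So the untwisted isomorphism is immediate from the Lemma.

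Next I would bring in the twist. Recall from the bottom of Section \ref{Subsection Cstar algebra} that a $\sigma$-twisted crossed product $\Gamma\ltimes_\sigma A$ is realized inside the ordinary crossed product $\Gamma^\sigma\ltimes A$ as the subalgebra $C_c(\Gamma^\sigma,A)(1)$ of functions transforming by the central character $z\mapsto z^{-1}$, and that for amenable $\Gamma$ the max and reduced completions agree so one may work with either. By Proposition \ref{Prop cocycle Sect.2} our cocycle $\tau$ satisfies $\tau(l,(t,n))=\tau((t,n),l)=1$ for $l\in\Omega T_0$, $(t,n)\in T\times\Pi_T$; since $U_N\subseteq\Omega T_0$, this says precisely that $\tau$ restricted to $\Gamma_1\times\Gamma_2$ is trivial, i.e. $\tau|_{\Gamma_1\times\Gamma_2}=1$. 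Consequently the central extension splits compatibly: $\Gamma^\tau\cong\Gamma_1^{\tau_1}\times_{U(1)}\Gamma_2^{\tau_2}$, where $\tau_1$ is the restriction of $\tau$ to $T\times\Pi_T$ and $\tau_2$ its restriction to $U_N$, and the two added circles are identified. The point is that the twist ``factorizes'' because there is no cross term.

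I would then run the same argument as in the Lemma's proof but on the twisted convolution algebras: the algebraic tensor product $C_c(\Gamma_1,A_1)\otimes^{\mathrm{alg}}C_c(\Gamma_2,A_2)$, equipped with the twisted products $*_{\tau_1}$ and $*_{\tau_2}$, maps into a dense subalgebra of $C_c(\Gamma,A)$ with the twisted product $*_\tau$ — and here one checks that $\tau\bigl((\gamma_1,\gamma_2),(\gamma_1',\gamma_2')\bigr)=\tau_1(\gamma_1,\gamma_1')\tau_2(\gamma_2,\gamma_2')$ exactly because the cross terms $\tau(\Gamma_1,\Gamma_2)$ vanish, so the twisted convolution on the product genuinely factors as the (twisted-)tensor of the two twisted convolutions. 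The canonical covariant representation of $\Gamma^\tau\ltimes A$ restricted to this subalgebra is the (outer) tensor product of the canonical covariant representations for $\Gamma_i^{\tau_i}\ltimes A_i$, restricted to the $(1)$-eigenspaces. Since the image is dense, the reduced completions coincide; amenability of $\Gamma$ (hence of $\Gamma_1,\Gamma_2$) upgrades reduced to max on all three algebras, and nuclearity of the commutative factors removes any ambiguity between $\otimes_{\mathrm{min}}$ and $\otimes_{\mathrm{Max}}$. This yields
$$(T\times\Pi_T\times U_N)\ltimes_\tau C_0(\widetilde{M}\times U_N)\cong(T\times\Pi_T)\ltimes_\tau C_0(\widetilde{M})\bigotimes U_N\ltimes_\tau C_0(U_N),$$
where on the right I have written $\tau$ for $\tau_1$ and $\tau_2$ as in the statement.

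The main obstacle I anticipate is purely bookkeeping: verifying cleanly that $\tau$ splits as a product cocycle under the decomposition $\Gamma=\Gamma_1\times\Gamma_2$ and that, under the realization-inside-$\Gamma^\tau\ltimes A$ picture, the $(1)$-graded piece of $C_c(\Gamma^\tau,A)$ corresponds to the tensor product of the $(1)$-graded pieces of $C_c(\Gamma_1^{\tau_1},A_1)$ and $C_c(\Gamma_2^{\tau_2},A_2)$ — this is where the hypothesis $\tau(\Gamma_1,\Gamma_2)=1$ is indispensable and where a careless argument would break. Everything else is a direct transcription of the untwisted Lemma together with the amenability/nuclearity remarks already recorded in Section \ref{Subsection Cstar algebra}.
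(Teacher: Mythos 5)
Your proposal is correct and follows the same route as the paper: invoke the untwisted Lemma (amenability plus nuclearity give $\Gamma\ltimes A\cong(\Gamma_1\ltimes A_1)\otimes(\Gamma_2\ltimes A_2)$), then transport to the twisted setting via the realization of $\Gamma\ltimes_\tau A$ inside $\Gamma^\tau\ltimes A$, using precisely the vanishing $\tau(\Gamma_1,\Gamma_2)=1$ from Proposition \ref{Prop cocycle Sect.2} to split the cocycle and the central extension. The paper states this in one sentence; you have merely written out the bookkeeping (the splitting $\Gamma^\tau\cong\Gamma_1^{\tau_1}\times_{U(1)}\Gamma_2^{\tau_2}$ and the matching of the weight-$1$ pieces) that the paper leaves implicit.
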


About $L^2$-spaces, we can simplify much more easily.
The following is because the restriction of $\ca{L}$ to $\Omega T_0$ is trivial.
\begin{pro}
$$L^2(\widetilde{M}\times U_N,\ca{L}\otimes S(T\widetilde{M})\otimes S(U_N))$$
$$\cong L^2\bigl(\widetilde{M},\ca{L}\otimes S(T\widetilde{M})\bigr)\bigotimes \Bigl(L^2(U_N)\otimes S(U_N)\Bigr)$$
as representation spaces of $T\times \Pi_T\times U_N$.
\end{pro}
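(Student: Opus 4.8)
The plan is to reduce the claimed tensor-product decomposition to the classical fact that, for a compact (or locally compact) manifold $X$ equipped with a product structure $X = X_1 \times X_2$ and vector bundles of the form $E \boxtimes F$, one has $L^2(X, E\boxtimes F) \cong L^2(X_1, E)\otimes L^2(X_2, F)$ as Hilbert spaces, and that this isometry is natural with respect to any group that acts factorwise. Here the product structure is exactly the trivialization $\ca{M} \cong \widetilde{M}\times\Omega T_0$ built into the definition of a proper $LT$-space, restricted to the finite-dimensional approximation $\widetilde{M}\times U_N$; the bundle is $\ca{L}\otimes S(T\widetilde{M})\otimes S(U_N)$, and the key structural input is that $\ca{S} = S(T\widetilde{M})\boxtimes S(\Omega T_0)$ is by definition an external tensor product, with the second factor a \emph{trivial} bundle over $\Omega T_0$ (hence over $U_N$), and that $\ca{L}$ restricted to the $\Omega T_0$-direction is trivial.

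First I would record the bundle identification: on $\widetilde{M}\times U_N$ we have
\begin{eqnarray*}
\ca{L}\otimes S(T\widetilde{M})\otimes S(U_N) &\cong& \bigl(\ca{L}|_{\widetilde{M}}\otimes S(T\widetilde{M})\bigr)\boxtimes \bigl(\underline{\mathbb{C}}\otimes S(U_N)\bigr),
\end{eqnarray*}
using the triviality of $\ca{L}$ along $\Omega T_0$ to write $\ca{L}$ as the external product of $\ca{L}|_{\widetilde{M}}$ with the trivial line bundle over $U_N$, and the fact that $S(U_N)$ is already the trivial bundle with fiber the Spinor space of Definition \ref{Dfn Spinor Sect.4}. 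Second, I would invoke the standard $L^2$ Fubini-type isometry for external tensor products of bundles over a (finite-dimensional) product manifold: choosing the product of the given measures on $\widetilde{M}$ and $U_N$, the map $s_1\otimes s_2 \mapsto \bigl((m,u)\mapsto s_1(m)\otimes s_2(u)\bigr)$ extends to a unitary
\[
L^2\bigl(\widetilde{M},\ca{L}\otimes S(T\widetilde{M})\bigr)\;\widehat{\otimes}\;\bigl(L^2(U_N)\otimes S(U_N)\bigr)\;\xrightarrow{\;\cong\;}\;L^2\bigl(\widetilde{M}\times U_N,\ca{L}\otimes S(T\widetilde{M})\otimes S(U_N)\bigr).
\]
Third, I would check equivariance: the group $T\times\Pi_T\times U_N$ acts on $\widetilde{M}\times U_N$ respecting the product decomposition, with $T\times\Pi_T$ acting through the first factor and $U_N$ acting through the second (this is precisely how the $LT$-action restricts under the canonical decomposition), and the lift to $\ca{L}\otimes S(T\widetilde{M})\otimes S(U_N)$ is the external tensor product of the lift to $\ca{L}\otimes S(T\widetilde{M})$ with the lift to $S(U_N)$ carried by the $U_N$-representation on the Spinor space; hence the unitary above intertwines the two actions.

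The only genuine point requiring care—and the step I would flag as the main obstacle—is making sure the ``$L^2(U_N)\otimes S(U_N)$'' factor is interpreted with the correct measure and bundle metric so that the Fubini isometry is exact rather than merely an isomorphism: $U_N$ is a finite-dimensional vector space here, so one uses genuine Lebesgue measure (up to the normalization fixed elsewhere in the construction of the inductive system) and the flat metric on the trivial bundle, and one must confirm that the metric on $\ca{L}$ used to form $L^2(\widetilde{M},\ca{L}\otimes S(T\widetilde{M}))$ is the restriction of the metric on $\ca{L}$ over $\ca{M}$ along the trivializing slice—this is automatic from the way $\ca{L}$'s triviality along $\Omega T_0$ is set up. Everything else is the routine density argument (the algebraic tensor product of smooth compactly supported sections is dense on both sides, and the map is isometric on that dense subspace, hence extends uniquely to the claimed unitary).
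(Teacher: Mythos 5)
Your proposal is correct and takes the same approach the paper intends: the paper gives no explicit proof, only the one-line remark that the isomorphism holds ``because the restriction of $\ca{L}$ to $\Omega T_0$ is trivial,'' and your write-up is exactly the elaboration of that remark — the external tensor-product splitting of the bundle using $\ca{L}|_{\Omega T_0}$ trivial and $\ca{S}=S(T\widetilde{M})\boxtimes S(\Omega T_0)$, the standard Fubini isometry over the product $\widetilde{M}\times U_N$, and the factorwise equivariance of the $T\times\Pi_T\times U_N$-action.
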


Therefore, it is enough to consider $\Omega T_0\ltimes_\tau \Omega T_0=\varinjlim U_N\ltimes_\tau C_0(U_N)$ and $L^2(\Omega T_0)=\varinjlim L^2(U_N)$ to construct $LT\ltimes_\tau C_0(\ca{M})$ and $L^2(\ca{M},\ca{L}\otimes\ca{S})$.

Let us move to the study of Dirac operator and its index.

We regard $\widetilde{M}$ and $\Omega T_0$ as orthogonal to one another. Therefore we define a Dirac operator $\ca{D}$ by
$$\ca{D}:=\cancel{D}\otimes\id +\id\otimes \cancel{\partial},$$
where $\cancel{D}$ is a $(T\times\Pi_T)^\tau$-equivariant Dirac operator on $\widetilde{M}$ and hence it makes sense. $\cancel{\partial}$ is what we will construct in the next section by using the algebraic Dirac operator constructed in \cite{FHTII}.

We will define the index by
$$\ind(\ca{D}):=\ind(\cancel{D})\otimes\ind(\cancel{\partial})$$
 as an element of $R^\tau(LT)$. We define the index of $\cancel{\partial}$ by an explicit computation as a formal difference of P.E.R. of $\Omega T_0$.
According to \cite{Kas}, $\cancel{D}$ has a well-defined index valued in $K_0(C^*((T\times\Pi_T)^\tau))$. 
To verify that the tensor product of two indices can be regarded as a formal difference of P.E.R. of $LT$, we need to compute $K_0(C^*((T\times\Pi_T)^\tau))$ carefully. It will be done in Section \ref{Section Index}.


\section{The $L^2$-space and the Dirac operator}\label{Section Hilbert}
In this section, we concentrate on the $\Omega T_0$-part, that is, we construct a Hilbert space $L^2(\Omega T_0)\otimes S$ and a Dirac operator acting on it. While the definitions are quite simple, it is not clear that the definition is reasonable. To show that the definition is not just an analogue of compact Lie groups but a limit of finite dimensional case, we study representations of $\Omega T_0$ and the finite dimensional approximation of it.
Then, we study $L^2$-spaces of finite dimensional approximation of $\Omega T_0$ by using the representations.

\subsection{P.E.R. of $\Omega T_0$}
Using $\Omega T_0\cong \{l\in C^\infty(\bb{T}_\rot,\fra{t})\mid\int ld\theta=0\}$ $f\mapsto f'$, we describe the representation as a projective representation of linear space. Let $U$ be the right hand side for simplicity.
$d$ is the generator of $\bb{T}_{{\rm rot}}$-action and define the complex structure on ${\rm Lie}(U)$ as
$J:=-d/|d|.$ Let $\innpro{\cdot}{\cdot}{\tau}:\fra{t}\times\fra{t}\to\bb{R}$ be the inner product constructed in the following of the Proposition \ref{Prop cocycle Sect.2}.
Since $U$ is simply connected, it is enough to study representations of the Lie algebra ${\rm Lie}(U)$, and hence we focus on it.

We introduce a symplectic form on ${{\rm Lie}}(U)$ by
$$\omega(f,g):=\int \innpro{f}{\frac{dg}{d\theta}}{\tau}d\theta$$
for $f,g\in {\rm Lie}(U)$. Let ${\bf 1}\in\fra{t}$ be a unit vector.
The symplectic base is the set
$$\frac{\cos\theta}{\sqrt{\pi}}{\bf 1},\frac{\sin\theta}{\sqrt{\pi}}{\bf 1},\cosi{2}{\bf 1},\sine{2}{\bf 1},\cdots$$
and $J$ acts on this basis as
$$J\bigl(\cosi{k}{\bf 1}\bigr)=\sine{k}{\bf 1},\; J\bigl(\sine{k}{\bf 1}\bigr)=-\cosi{k}{\bf 1}.$$
We choose the inner product $g$ defined by a usual formula $g(X,Y):=\omega(X,J(Y))$. Hence the symplectic basis is also an orthonomal basis.

\begin{rmks}
$(1)$ From now on, we omit the symbol ${\bf 1}$ for simplicity. It is possible because we deal with only a circle.

$(2)$ Usually, a terminology ``level'' appears in the representation theory of loop groups. However, in our setting, information of level is included in the inner product $\innpro{\cdot}{\cdot}{\tau}$ and does not appear explicitly.
\end{rmks}

We take the complex orthonormal basis on $\Lie(U)\otimes \bb{C}$ by
$$z_k:= \frac{1}{\sqrt{2}}\bigl(\cosi{k}+i\sine{k}\bigr),\;\overline{z_k}:= \frac{1}{\sqrt{2}}\bigl(\cosi{k}-i\sine{k}\bigr).$$
Let us notice that $d(z_k)=ikz_k$, $d(\overline{z_k})=-ik\overline{z_k}$. We call a linear combination of such vectors a {\it finite energy vector}.

Let us recall that a $\tau$-twisted representation $\rho$ is a continuous map from $\Omega T_0$ to $U(\ca{H})$ satisfying that
$\rho(v_1)\rho(v_2)=\rho(v_1+v_2)\tau(v_1,v_2).$
We are interested in the case when $\tau(v_1,v_2)=\exp(i\omega(v_1,v_2))$. Taking the infinitesimal representation, the relation implies the commutation relation
$$[d\rho(X),d\rho(Y)]=2i\omega(X,Y).$$
It tells us that the representation of $\Omega T_0$ heavily relates to the infinite particles CCR and hence quantum field theory.

This viewpoint suggests that we should define the representation by
$$d\rho(\cosi{k})=\frac{\partial}{\partial x_k},\; d\rho(\sine{k})=2ix_k$$
on ``$L^2(\bb{R}^\infty)$.'' Needless to say, this Hilbert space itself does NOT make sense. Rather, $L^2(\bb{R}^\infty)$ {\bf IS DEFINED} as the representation space.

Let us construct an irreducible P.E.R. $d\rho$ of $\Lie(U)$, that is, we define operators $d\rho(z_k)$, $d\rho(\overline{z_k})$ and $d\rho(d)$.

Let $\widehat{S}\bigl((U\otimes\bb{C})_+\bigr)$ be the completion of the symmetric algebra of 
$(U\otimes\bb{C})_+:=\bigoplus_{k\in \bb{N}}\bb{C}z_k$ with respect to the inner product defined by a little strange formula
$$(z_{i_1},z_{i_2},\cdots,z_{i_l},z_{j_1},z_{j_2},\cdots,z_{j_l})=\sum_{\sigma\in\fra{S}_l}\delta_{i_1,\sigma(j_1)}\cdots\delta_{i_l,\sigma(j_l)}$$
or more explicit formula
$$(z_1^{n_1}z_2^{n_2}\cdots,z_1^{m_1}z_2^{m_2}\cdots)=n_1!n_2!\cdots\delta_{n_1,m_1}\delta_{n_2,m_2}\cdots.$$
We sometimes write $1$ as $\Omega$ in the following, and call it the {\it lowest weight vector}.

First of all, $d\rho(d)$ is defined by the formula
$$d\rho(d)(z_1^{n_1}z_2^{n_2}\cdots):=i(\sum_jjn_j)z_1^{n_1}z_2^{n_2}\cdots.$$
A finite linear combination of such eigenvectors is also called a {\it finite energy vector}. 

Let us define the operators $d\rho(z_k)$ and $d\rho(\overline{z_k})$.
We set
$$d\rho(z_k):=-\sqrt{2}M_{z_k},\;d\rho(\overline{z_k})=\sqrt{2}\frac{\partial}{\partial z_k},$$
where $M_{z_k}$ is the multiplication operator with $z_k$. One can check that the operators satisfy the commutation relation by an easy computation.

\begin{lemma}
$d\rho$ is a unitary representation on $\Lie(U)$, that is, $d\rho(X)$ is skew adjoint for any $X\in{\rm Lie}(U)$.
\end{lemma}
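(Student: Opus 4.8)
The plan is to verify skew-adjointness on the real basis vectors $\cosi{k}$ and $\sine{k}$, since $\Lie(U)$ is spanned over $\bb{R}$ by these and the operators $d\rho(X)$ depend linearly on $X$. Writing $z_k = \tfrac{1}{\sqrt2}(\cosi{k}+i\sine{k})$ and $\overline{z_k} = \tfrac{1}{\sqrt2}(\cosi{k}-i\sine{k})$, I would invert these relations to get $d\rho(\cosi{k}) = \tfrac{1}{\sqrt2}\bigl(d\rho(z_k)+d\rho(\overline{z_k})\bigr) = -M_{z_k}+\frac{\partial}{\partial z_k}$ and $d\rho(\sine{k}) = \tfrac{1}{i\sqrt2}\bigl(d\rho(z_k)-d\rho(\overline{z_k})\bigr) = \tfrac{1}{i}\bigl(-M_{z_k}-\frac{\partial}{\partial z_k}\bigr) = i\bigl(M_{z_k}+\frac{\partial}{\partial z_k}\bigr)$. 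So everything reduces to showing that $M_{z_k}$ and $\frac{\partial}{\partial z_k}$ are formal adjoints of one another with respect to the given inner product on the completed symmetric algebra; then $-M_{z_k}+\frac{\partial}{\partial z_k}$ has adjoint $-\frac{\partial}{\partial z_k}+M_{z_k}$, i.e. is skew-adjoint, and $i(M_{z_k}+\frac{\partial}{\partial z_k})$ has adjoint $-i(\frac{\partial}{\partial z_k}+M_{z_k})$, again skew-adjoint.

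The computational core is therefore the adjunction $\langle M_{z_k}\, p, q\rangle = \langle p, \frac{\partial}{\partial z_k} q\rangle$ for monomials $p = z_1^{n_1}z_2^{n_2}\cdots$ and $q = z_1^{m_1}z_2^{m_2}\cdots$. Using the explicit formula $(z_1^{n_1}z_2^{n_2}\cdots, z_1^{m_1}z_2^{m_2}\cdots) = n_1!\,n_2!\cdots\,\delta_{n_1,m_1}\delta_{n_2,m_2}\cdots$, the left side is $(n_k+1)!\,\prod_{j\neq k}n_j!$ precisely when $m_k = n_k+1$ and $m_j = n_j$ for $j\neq k$, and zero otherwise; the right side, since $\frac{\partial}{\partial z_k} q = m_k\, z_1^{m_1}\cdots z_k^{m_k-1}\cdots$, equals $m_k\cdot n_k!\,\prod_{j\neq k}n_j!$ under the same index matching, and $m_k\cdot n_k! = (n_k+1)\cdot n_k! = (n_k+1)!$. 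So the two agree. This is a short bookkeeping argument with factorials; I would present it compactly rather than exhaustively.

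I would then close by remarking on the mild analytic point: strictly speaking "skew-adjoint" requires self-adjointness of $id\rho(X)$ as an unbounded operator, but here $d\rho(X)$ is only defined a priori on the dense domain of finite energy vectors (the algebraic symmetric algebra), so what the lemma really establishes is that $d\rho(X)$ is \emph{symmetric} (skew-symmetric), i.e. formally skew-adjoint, on this domain. The essential self-adjointness — that $id\rho(X)$ admits a self-adjoint closure — follows because each $id\rho(\cosi{k})$ and $id\rho(\sine{k})$ is, up to the standard unitary intertwiner with the Bargmann--Fock model, a creation-plus-annihilation operator, which is essentially self-adjoint on the span of number eigenvectors by Nelson's analytic vector theorem (the lowest weight vector $\Omega$ is an analytic vector and the representation is a direct sum over the finitely many $k$ appearing in $X$). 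I expect the main obstacle to be purely expositional — namely being careful that the claimed identity is an identity of formal adjoints on the finite-energy domain — rather than any genuine difficulty; the factorial bookkeeping is routine and the essential self-adjointness is standard CCR theory.
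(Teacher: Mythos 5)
Your proof matches the paper's: both reduce the claim to the single adjunction $M_{z_k}^* = \partial/\partial z_k$ via the real/complex basis change, and both verify it by the same factorial bookkeeping on monomials under the renormalized inner product. Your closing remark on the distinction between formal skew-symmetry on the finite-energy domain and essential self-adjointness (via Nelson's analytic vector theorem) is a legitimate refinement that the paper's statement tacitly elides.
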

\begin{proof}
Since $d\rho(\cosi{k})=d\rho(\frac{z_k+\overline{z_k}}{\sqrt{2}})$ and $d\rho(\sine{k})=d\rho(\frac{z_k-\overline{z_k}}{\sqrt{2}i}$), it is sufficient to verify that $M_{z_k}^*=\frac{\partial}{\partial z_k}$. It can be verified as follows.
$$(M_{z_k}z_1^{n_1}\cdots z_k^{n_k}\cdots,z_1^{n_1}\cdots z_k^{n_k+1}\cdots)=(n_k+1)\Pi_j n_j!$$
$$(z_1^{n_1}\cdots z_k^{n_k}\cdots,\frac{\partial}{\partial z_k}z_1^{n_1}\cdots z_k^{n_k+1}\cdots)=(n_k+1)\Pi_j n_j!$$
\end{proof}

The following is a convenient formula to deal with representations.

\begin{lemma}
\begin{eqnarray*}
d\rho(d)&=&-\frac{i}{2}\sum_k kd\rho(z_k)d\rho(\overline{z_k}) \\
&=& -\frac{i}{2}\sum_kd\rho(\sqrt{k}z_k)d\rho(\sqrt{k}\overline{z_k})
\end{eqnarray*}
\end{lemma}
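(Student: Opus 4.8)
The plan is to verify the two displayed identities by computing the action of both sides on the orthonormal basis of monomials $z_1^{n_1}z_2^{n_2}\cdots$, which spans a dense subspace of $\widehat{S}\bigl((U\otimes\bb{C})_+\bigr)$ consisting of finite energy vectors. Since $d\rho(d)$ is already given explicitly on this basis by $d\rho(d)(z_1^{n_1}z_2^{n_2}\cdots)=i(\sum_j jn_j)z_1^{n_1}z_2^{n_2}\cdots$, it suffices to show the right-hand side operators act the same way.

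First I would recall the definitions $d\rho(z_k)=-\sqrt{2}M_{z_k}$ and $d\rho(\overline{z_k})=\sqrt{2}\,\partial/\partial z_k$, so that
\begin{equation*}
d\rho(z_k)d\rho(\overline{z_k}) = -\sqrt 2 M_{z_k}\cdot \sqrt 2\frac{\partial}{\partial z_k} = -2\,M_{z_k}\frac{\partial}{\partial z_k}.
\end{equation*}
The operator $M_{z_k}\,\partial/\partial z_k$ is the number operator in the $k$-th variable: applied to $z_1^{n_1}z_2^{n_2}\cdots$ it multiplies by $n_k$ (since $\partial/\partial z_k$ brings down a factor $n_k$ and lowers the exponent by one, then $M_{z_k}$ restores it). Hence $d\rho(z_k)d\rho(\overline{z_k})$ acts as multiplication by $-2n_k$, and
\begin{equation*}
-\frac{i}{2}\sum_k k\, d\rho(z_k)d\rho(\overline{z_k}) \;=\; -\frac{i}{2}\sum_k k(-2n_k)\cdot\id \;=\; i\sum_k k n_k\cdot\id,
\end{equation*}
which matches $d\rho(d)$ on each basis vector. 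For the second line I would simply note $d\rho(\sqrt{k}z_k)=\sqrt{k}\,d\rho(z_k)$ and $d\rho(\sqrt{k}\,\overline{z_k})=\sqrt{k}\,d\rho(\overline{z_k})$ by linearity of $d\rho$, so $d\rho(\sqrt{k}z_k)d\rho(\sqrt{k}\,\overline{z_k})=k\,d\rho(z_k)d\rho(\overline{z_k})$, and the two sums agree term by term.

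There is essentially no obstacle here; the only mild point of care is justifying that proving the identity on the dense domain of finite energy vectors is enough — both sides are (densely defined, unbounded) operators that are manifestly defined on this common core and agree there, which is the sense in which the formula is meant. I would remark that all sums are in fact finite when applied to a fixed monomial, so no convergence issue arises. This makes the lemma a routine bookkeeping check once the number-operator interpretation of $M_{z_k}\,\partial/\partial z_k$ is observed.
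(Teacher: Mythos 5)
Your computation is correct. The key observation that $M_{z_k}\,\partial/\partial z_k$ acts as the number operator in the $k$-th variable, combined with the prefactors $-\sqrt{2}$ and $\sqrt{2}$ from the definitions of $d\rho(z_k)$ and $d\rho(\overline{z_k})$, gives $d\rho(z_k)d\rho(\overline{z_k})=-2n_k\cdot\mathrm{id}$ on a monomial $z_1^{n_1}z_2^{n_2}\cdots$, and then $-\frac{i}{2}\sum_k k(-2n_k)=i\sum_k kn_k$ matches the defining formula for $d\rho(d)$; the second equality is indeed immediate from $\bb{C}$-linearity of $d\rho$ on $\mathrm{Lie}(U)\otimes\bb{C}$. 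One point worth flagging for context: the paper states this lemma without giving a proof, treating it as an immediate consequence of the explicit formulas. The bracket computation that follows it in the paper, namely $[-\frac{i}{2}\sum_k kz_k\overline{z_k},z_p]=ipz_p$ in the universal enveloping algebra, is an \emph{application} of the lemma (to verify that $d\rho$ respects the commutation relations with $d$), not its proof. Your argument supplies exactly the routine verification the paper leaves implicit, and your closing remark about domains — that both sides share the dense core of finite energy vectors and that the sum is finite when applied to any fixed monomial — is the right level of care for an unbounded-operator identity of this kind.
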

\begin{rmk}
The coefficient $k$ in the first row is slightly artificial and the $\sqrt{k}$ in the second is also. It is because we choose not $\sqrt{k}z_k$'s but $z_k$'s  as the basis. The former is a orthogonal basis with respect to $\Innpro{\cdot}{\cdot}{\tau}$ and Freed, Hopkins and Teleman used this basis in order to define the Dirac operator in \cite{FHTII}.
\end{rmk}
This lemma allows us to check the commutation relation in the universal enveloping algebra of $\bigl({\rm Lie}(U)\oplus \bb{R}d\oplus\bb{R}K\bigr)\otimes\bb{C}$, where $K$ is an added element such that $[z_k,\overline{z_k}]=2K$, $[K,X]=0$ for any $X$, and $[d,Y]:=dY$ for any $Y\in \Lie(U)$. Since $K$ acts on the representation space by $i\id$, we omit $K/i$ in the following, for example, $[z_k,\overline{z_k}]=2$.
 It is sufficient to verify that
$$[-\frac{i}{2}\sum_kkz_k\overline{z_k},z_p]=dz_p=ipz_p$$
$$[-\frac{i}{2}\sum_kkz_k\overline{z_k},\overline{z_p}]=d\overline{z_p}=-ip\overline{z_p}.$$
Let us check only the first equality.
\begin{eqnarray*}
[-\frac{i}{2}\sum_jjz_k\overline{z_k},z_p] &=& -\frac{i}{2}[pz_p\overline{z_p},z_p] \\
&=&-\frac{i}{2}pz_p[\overline{z_p},z_p] \\
&=& -\frac{i}{2}pz_p\cdot (-2) \\
&=& ipz_p
\end{eqnarray*}
thanks to the Leibniz property of the Lie bracket.
Combining all of the above, we finish the construction of P.E.R. of $\Omega T_0$.
It is known that the representation constructed above is the unique irreducible P.E.R. of $\Omega T_0$ at level $\tau$.

We introduce a slightly strange notation.
\begin{dfn}
$$L^2(\bb{R}^\infty):=\widehat{S}\bigl((U\otimes\bb{C})_+\bigr),$$
where $\widehat{S}$ denotes the completion of the symmetric algebra.
\end{dfn}

\subsection{The construction of $L^2(\Omega T_0)$ and its justification}

WZW Hilbert space for a loop group $LG$ is defined by
$$\bigoplus_{\lambda\in \widehat{LG^\tau}}V_\lambda^*\otimes V_\lambda$$
as an analogue of Peter-Weyl's theorem, where $\widehat{LG^\tau}$ is the set of isomorphism classes of P.E.R.s of $LG$ at level $\tau$. 
It is studied in conformal field theory and the name WZW comes from three physicists Wess, Zumino and Witten. One can consult with \cite{Gaw} about the theory.
$\ca{H}_{{\rm WZW}}$ is regarded as the Hilbert space consisting of $L^2$-sections of the line bundle on $LG$ defined by the central extension $\tau$.
For $\Omega T_0$, P.E.R. is unique and we don't need the $\oplus$. We define a Hilbert space following the above.
\begin{dfn}
We set the WZW Hilbert space for $\Omega T_0$ as
$$\ca{H}_{{\rm WZW}}:=L^2(\bb{R}^\infty)^*\otimes L^2(\bb{R}^\infty).$$
$L^2(\bb{R}^\infty)^*$ is the adjoint representation space of $L^2(\bb{R}^\infty)$; the action $\rho^*$ is defined by the formula
$\innpro{\rho^*_g(\phi)}{\psi}{}:=\innpro{\phi}{\rho_{-g}(\psi)}{}.$ It is a negative energy representation whose representation can be guessed easily. For example, $\frac{1}{i}d\rho(d)$ is negative definite and $L^2(\bb{R}^\infty)^*$ is generated by the highest weight vector.

We sometimes write $\ca{H}_{{\rm WZW}}$ as $L^2(\Omega T_0)$.
\end{dfn}
An element $\phi\otimes \psi\in\ca{H}_{{\rm WZW}}$ determines a ``function'' on $\Omega T_0$ by the formula
$$f\otimes v(g):=\Bigl(\frac{\sqrt{2}}{\pi}\Bigr)^\infty\innpro{\phi}{\rho_{-g}(\psi)}{},$$
where $\innpro{\cdot}{\cdot}{}$ is the natural pairing.
The crazy normalization factor $(\sqrt{2}/\pi)^\infty$ is explained below.
We cannot define a {\bf VALUE} of this ``function'' at any point. It means that the notation $L^2(\Omega T_0)$ has not been justified yet.
For a compact group, such a function is square integrable. In our case, $\Omega T_0$ does not have a nice measure and hence it is nonsense to wonder whether it it also $L^2$.

In this subsection, we ``justify'' this model by using inductive limits. More precisely, we study $L^2(U_N)$'s in terms of representation theory, and study the inductive limit of these spaces, where $\{U_N\}$ is the finite dimensional approximation described below.

\begin{dfn}
$$U_{N}:={\rm Span}_\bb{R}\Bigl\{\frac{\cos\theta}{\sqrt{\pi}},\frac{\sin\theta}{\sqrt{\pi}},\cosi{2},\sine{2},\cdots,\cosi{N},\sine{N}\Bigr\}$$
Each of the restrictions of $J$, $d$, $g$, $\omega$ and $\tau$ makes suitable sense (for example, $\omega$ is nondegenerate) and we use the same symbols for the restrictions of them. We define $\widehat{S}\bigl((U_{N}\otimes\bb{C})_+\bigr)$, $\rho_{N}:U_{N}\to U\Bigl(\widehat{S}\bigl((U_{N}\otimes\bb{C})_+)\bigr)\Bigr)$, $d\rho_N(d)$, $z_k$ and $\overline{z_k}$ in the same manner.
\end{dfn}
The following is obvious from the definition and the above construction
\begin{pro}
The natural embedding $(x_1,y_1,\cdots,x_p,y_p)\mapsto(x_1,y_1,\cdots,x_p,y_p,0,0)$ defines a sequence
$$U_1\xrightarrow{i_1} U_2\xrightarrow{i_2}\cdots \xrightarrow{i_{N-1}} U_{N}\xrightarrow{i_N}\cdots,$$
and it induces a sequence
$$ \widehat{S}\bigl((U_{1}\otimes\bb{C})_+\bigr)\xrightarrow{\Phi_1}\widehat{S}\bigl((U_{2}\otimes\bb{C})_+\bigr)\xrightarrow{\Phi_2}\cdots\xrightarrow{\Phi_{N-1}}\widehat{S}\bigl((U_{N}\otimes\bb{C})_+\bigr)\xrightarrow{\Phi_N}\cdots.$$
They satisfy that
$$\Phi_N\circ\rho_{N}(g)=\rho_{N+1}(i_N(g))\circ\Phi_N.$$
\end{pro}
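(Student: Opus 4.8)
The plan is to take for $\Phi_N$ the map induced on symmetric algebras by the inclusion of the ``positive parts'' $(U_N\otimes\bb{C})_+=\bigoplus_{k=1}^{N}\bb{C}z_k\hookrightarrow\bigoplus_{k=1}^{N+1}\bb{C}z_k=(U_{N+1}\otimes\bb{C})_+$; equivalently, under the canonical identification $\widehat{S}\bigl((U_{N+1}\otimes\bb{C})_+\bigr)\cong\widehat{S}\bigl((U_{N}\otimes\bb{C})_+\bigr)\widehat{\otimes}\widehat{S}(\bb{C}z_{N+1})$ it is the map $\xi\mapsto\xi\otimes\Omega$, i.e.\ tensoring with the lowest weight vector of the new direction $\bb{C}z_{N+1}$. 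First I would check that this is a well-defined isometric embedding: the monomial basis $z_1^{n_1}\cdots z_N^{n_N}$ is carried bijectively onto the part of the monomial basis of $\widehat{S}\bigl((U_{N+1}\otimes\bb{C})_+\bigr)$ with exponent $0$ on $z_{N+1}$, and the explicit formula $(z_1^{n_1}z_2^{n_2}\cdots,z_1^{m_1}z_2^{m_2}\cdots)=n_1!n_2!\cdots\,\delta_{n_1,m_1}\delta_{n_2,m_2}\cdots$ shows norms and orthogonality are preserved (recall $0!=1$), so $\Phi_N$ extends uniquely to an isometry of the completions. Since all the $\Phi_N$ are induced by inclusions of subspaces, they are automatically composable and give the displayed sequence.

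Next I would reduce the intertwining identity $\Phi_N\circ\rho_N(g)=\rho_{N+1}(i_N(g))\circ\Phi_N$, $g\in U_N$, to the infinitesimal level. Because $U_N$ is a simply connected abelian Lie group and $\rho_N$ is the strongly continuous $\tau$-twisted unitary representation obtained by exponentiating the skew-adjoint closures of the generators $d\rho_N(X)$, $X\in{\rm Lie}(U_N)=U_N$ (these being essentially skew-adjoint on the finite energy vectors, as in the construction of the P.E.R.), it suffices, by Stone's theorem together with a standard core argument, to show that $\Phi_N$ intertwines the generators on the common core of finite energy vectors (which $\Phi_N$ maps into the analogous core of $\widehat{S}\bigl((U_{N+1}\otimes\bb{C})_+\bigr)$).

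Finally, the generator identity is immediate from the explicit formulas. For $1\le k\le N$ one has $d\rho_N(z_k)=-\sqrt{2}M_{z_k}$ and $d\rho_N(\overline{z_k})=\sqrt{2}\,\partial/\partial z_k$, and for $X\in U_N$ the operator $d\rho_N(X)$ is a finite linear combination of the $M_{z_k}$ and $\partial/\partial z_k$ with $k\le N$. Both $M_{z_k}$ and $\partial/\partial z_k$ commute with the inclusion $\Phi_N$ of polynomial algebras --- the variable $z_{N+1}$ never enters --- and the same operators represent $i_N(X)$ inside $d\rho_{N+1}$, so $\Phi_N\circ d\rho_N(X)=d\rho_{N+1}(i_N(X))\circ\Phi_N$ on finite energy vectors; exponentiating the skew-adjoint closures then yields the stated group-level identity. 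The only point that is not pure bookkeeping is the functional-analytic step in the second paragraph --- verifying that the finite energy vectors form a common core and that a Lie-algebra intertwiner of essentially skew-adjoint generators lifts to the one-parameter unitary groups --- and everything else follows directly from the definitions.
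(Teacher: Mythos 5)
Your proof is correct and takes essentially the same route the paper has in mind; in fact the paper does not write a proof at all but declares the proposition ``obvious from the definition and the above construction,'' and then, just after the theorem on $\varinjlim\rho_N$, identifies $\Phi_N$ exactly as you do --- tensoring with the lowest weight vector of the new direction (there phrased in the $L^2(\bb{R}^N)$ picture as $f\mapsto f\otimes(2/\pi)^{1/4}e^{-x^2}$). Your verification of isometry from the explicit inner product formula and of the infinitesimal intertwining via the explicit formulas $d\rho_N(z_k)=-\sqrt{2}M_{z_k}$, $d\rho_N(\overline{z_k})=\sqrt{2}\,\partial/\partial z_k$ is exactly what makes the statement ``obvious.'' Your one caveat about passing from the Lie-algebra intertwining to the group-level intertwining is correctly identified as the only genuine analytic content; one clean way to dispose of it is to observe that under the factorization $\widehat{S}\bigl((U_{N+1}\otimes\bb{C})_+\bigr)\cong\widehat{S}\bigl((U_N\otimes\bb{C})_+\bigr)\widehat{\otimes}\widehat{S}(\bb{C}z_{N+1})$ the closure of $d\rho_{N+1}(i_N(X))$ is literally $\overline{d\rho_N(X)}\otimes\id$, so that $\rho_{N+1}(i_N(g))=\rho_N(g)\otimes\id$ and the identity $\Phi_N\circ\rho_N(g)=\rho_{N+1}(i_N(g))\circ\Phi_N$ is immediate from $\Phi_N(\xi)=\xi\otimes\Omega$, without any core-lifting lemma.
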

In general, inductive limit of Hilbert space in the algebraic sense is not always a Hilbert space. For example, the inductive limit of $\bb{R}\to\bb{R}^2\to\cdots$ is not complete. However, we always take the completion of the inductive limit of Hilbert space like the case of $C^*$-algebras. The inductive sequence $U_1\xrightarrow{i_1} U_2\xrightarrow{i_2}\cdots$ is like this and we take a completion in the $C^\infty$-topology in $U$.

Using the explicit description of the representation, we obtain the following. 
\begin{thm}
$$\rho:U\to U\Bigl(\widehat{S}\bigl(U\otimes\bb{C})_+\bigr)\Bigr)$$
$$=\varinjlim\biggl(\rho_N:U_N\to U\Bigl(\widehat{S}\bigl((U_N\otimes\bb{C})_+\bigr)\Bigr)\biggr)$$

\end{thm}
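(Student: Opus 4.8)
The plan is to unwind what the statement actually asserts: the inductive limit on the right-hand side is formed in the category of (pre-)Hilbert spaces with unitary intertwiners, using the maps $\Phi_N\colon\widehat S\bigl((U_N\otimes\bb C)_+\bigr)\to\widehat S\bigl((U_{N+1}\otimes\bb C)_+\bigr)$ from the previous proposition, which are compatible with the representations $\rho_N$ in the sense $\Phi_N\circ\rho_N(g)=\rho_{N+1}(i_N(g))\circ\Phi_N$. So what must be checked is (i) that each $\Phi_N$ is isometric, so that the algebraic inductive limit carries a well-defined inner product and its completion is a Hilbert space $\ca H_\infty$; (ii) that $\ca H_\infty$ is naturally identified, as a Hilbert space, with $\widehat S\bigl((U\otimes\bb C)_+\bigr)$; and (iii) that under this identification the maps $\varinjlim\rho_N$ assemble into the representation $\rho$ of $U=\Omega T_0$ constructed in the first subsection. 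I would prove these three points in that order.

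For (i), I would work with the explicit orthogonal basis of monomials: $\widehat S\bigl((U_N\otimes\bb C)_+\bigr)$ has orthogonal basis $\{z_1^{n_1}\cdots z_N^{n_N}\}$ with $\|z_1^{n_1}\cdots z_N^{n_N}\|^2=n_1!\cdots n_N!$, and $\Phi_N$ is the algebra map sending each such monomial to the same monomial viewed inside $\widehat S\bigl((U_{N+1}\otimes\bb C)_+\bigr)$ (i.e. adjoining the exponent $n_{N+1}=0$). Since that target monomial has identical norm $n_1!\cdots n_N!\cdot 0!$, $\Phi_N$ sends an orthonormal basis to an orthonormal system, hence is isometric; this is the formal content of "obvious from the construction." For (ii), the union $\bigcup_N \widehat S\bigl((U_N\otimes\bb C)_+\bigr)$ inside $\widehat S\bigl((U\otimes\bb C)_+\bigr)$ is exactly the algebraic symmetric algebra $S\bigl((U\otimes\bb C)_+\bigr)$ of finite-energy vectors, which is dense by definition of the completion $\widehat S$; so the universal map from the algebraic inductive limit into $\widehat S\bigl((U\otimes\bb C)_+\bigr)$ has dense image and is isometric by (i), giving after completion the desired isometric isomorphism $\ca H_\infty\cong\widehat S\bigl((U\otimes\bb C)_+\bigr)$.

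For (iii), I would note that each $g\in U_N$ also lies in $U$, and the operators $d\rho_N(z_k)=-\sqrt2\,M_{z_k}$, $d\rho_N(\overline{z_k})=\sqrt2\,\partial/\partial z_k$ and $d\rho_N(d)$ are literally the restrictions of $d\rho(z_k),d\rho(\overline{z_k}),d\rho(d)$ to the invariant subspace $\widehat S\bigl((U_N\otimes\bb C)_+\bigr)$ — because multiplication by $z_k$ and differentiation $\partial/\partial z_k$ for $k\le N$ preserve that subspace and act there by the same formulas. Exponentiating, $\rho_N(g)$ is the restriction of $\rho(g)$ for $g\in U_N$. Hence the $\rho_N(g)$ are compatible with the inclusions $\Phi_N$ and pass to the limit to give exactly $\rho(g)$ on the dense subspace of finite-energy vectors; since both sides are unitary they agree on all of $\ca H_\infty\cong\widehat S\bigl((U\otimes\bb C)_+\bigr)$, for every $g\in\bigcup_N U_N$, and then by continuity (both $\rho$ and $\varinjlim\rho_N$ being strongly continuous in the $C^\infty$-topology on $U$, with $\bigcup_N U_N$ dense) for every $g\in U$.

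The only genuinely substantive point — and the place I would be most careful — is the continuity/density argument in (iii): one must confirm that the chosen completion is the $C^\infty$-topology completion of $\bigcup_N U_N$ inside $U$, that $\rho$ is strongly continuous in this topology, and that the limit representation obtained from the $\rho_N$'s is too, so that agreement on the dense subgroup $\bigcup_N U_N$ forces agreement on $U$. The algebraic steps (i) and (ii) are routine bookkeeping with the monomial basis; the subtlety is entirely in matching topologies on the parameter group and invoking strong continuity of the (already constructed, unique) level-$\tau$ P.E.R. to close the gap between $\bigcup_N U_N$ and $U$.
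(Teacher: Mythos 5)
Your proposal is correct and follows the same route the paper sketches: define $\varinjlim\rho_N$ via the intertwining relation $\Phi_N\circ\rho_N(g)=\rho_{N+1}(i_N(g))\circ\Phi_N$, identify the inductive-limit Hilbert space with $\widehat{S}\bigl((U\otimes\bb{C})_+\bigr)$ by density of the finite-energy subspace, and extend the agreement on $\bigcup_N U_N$ to all of $U$ by continuity. The paper's own proof is terser — it simply gives the defining formula for $\varinjlim\rho_N$ and asserts the result ``using the explicit description of the representation'' — whereas you supply the missing bookkeeping (isometry of $\Phi_N$ on the monomial basis, restriction of $d\rho$ to invariant subspaces, density in the $C^\infty$-topology), correctly isolating the continuity/density step as the only point of real substance.
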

We need to explain the definition of $\varinjlim \rho_N$. Let $j_N:U_N\to U$ and $\Psi_N:\widehat{S}\bigl((U_N\otimes\bb{C})_+\bigr)\to \widehat{S}\bigl((U\otimes\bb{C})_+\bigr)$ be the natural homomorphisms defined by the inductive limit. 
Due to the continuity, it is sufficient to deal with $g\in U_N$ and $v\in \widehat{S}\bigl((U_M\otimes\bb{C})_+\bigr)$. Moreover, we may assume that $M=N$ by using $i_n$'s or $\Phi_n$'s.
$$\bigl(\varinjlim\rho_N\bigr)_{j_N(g)}\Bigl(\Psi_N(v)\Bigr):=\Psi_N\Bigl(\rho_N(g)(v)\Bigr)$$
is the definition of the inductive limit of representations.

The Hilbert space $\widehat{S}\bigl((U_N\otimes\bb{C})_+\bigr)$ inside the $\varinjlim$ is isomorphic to $L^2(\bb{R}^N)$ as in \cite{Kir}. This is why, we introduce the notation $L^2(\bb{R}^\infty)$.
The only nontrivial point is the definition of the counterpart of the homomorphism $\widehat{S}\bigl((U_N\otimes\bb{C})_+\bigr)\to\widehat{S}\bigl((U_{N+1}\otimes\bb{C})_+\bigr)$. It is defined by a map
$$f\mapsto f\otimes \Bigl(\frac{2}{\pi}\Bigr)^{\frac{1}{4}}e^{-x^2}.$$
$e^{-x^2}$ is the lowest weight vector of the representation of $U_1$.

Let us study $L^2(U_N)$ in terms of matrix coefficients. It has two twisted actions $L$ and $R$ of $U_N$. $L$ is $\tau$-twisted and $R$ is $\tau^{-1}$-twisted defined by the formula
$$L_g(f)(x):=f(x-g)\tau(g,x),\; R_g(f)(x):=f(x+g)\tau(g,x).$$

We set a matrix coefficient by the formula
$$\phi\otimes\psi(x):=\Bigl(\frac{\sqrt{2}}{\pi}\Bigr)^N\innpro{\phi}{\rho_{-x}(\psi)}{}.$$
The crazy factor above comes from this formula.
We have not verified the RHS is truly square integrable. However, it is verified as a corollary of the following. Let $\Psi$ be the map defined by the above matrix coefficients.

\begin{thm}
$(1)$ $\Psi\circ(\id\otimes\rho_g)=L_g\circ\Psi$ and $\Psi\circ(\rho^*_g\otimes\id)=R_g\circ\Psi$.

$(2)$ If $\phi$ and $\psi$ are finite energy vectors, $\Psi(\phi\otimes\psi)$ can be written as a linear combination of Hermite polynomials.

$(3)$ $\Psi:L^2(\bb{R}^N)^*\otimes L^2(\bb{R}^N)\to L^2(U_N)$ is an isometric isomorphism as representation spaces.
\end{thm}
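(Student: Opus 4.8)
The plan is to treat this as a finite-dimensional analogue of the Peter--Weyl theorem for the (twisted) Heisenberg-type representation, adapted to the fact that $U_N$ is a vector space carrying a $\tau$-twisted action rather than a compact group. First I would prove $(1)$ by a direct computation: the matrix coefficient $\Psi(\phi\otimes\psi)(x)=(\sqrt2/\pi)^N\innpro{\phi}{\rho_{-x}(\psi)}{}$ is built out of $\rho$, so replacing $\psi$ by $\rho_g(\psi)$ gives $\innpro{\phi}{\rho_{-x}\rho_g(\psi)}{}=\innpro{\phi}{\rho_{-(x-g)}(\psi)}{}\tau(-x,g)$ after moving the cocycle out, and one checks this equals $L_g\Psi(\phi\otimes\psi)(x)=\Psi(\phi\otimes\psi)(x-g)\tau(g,x)$ using $\tau(g_1,g_2)=\exp(i\omega(g_1,g_2))$ and the bilinearity/antisymmetry of $\omega$; the statement for $\rho^*_g\otimes\id$ and $R_g$ is the mirror computation using $\innpro{\rho^*_g\phi}{\chi}{}=\innpro{\phi}{\rho_{-g}\chi}{}$. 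This is bookkeeping with the cocycle, no real obstacle.

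For $(2)$, I would use the explicit model: $L^2(\bb{R}^N)=\widehat S((U_N\otimes\bb{C})_+)$ has an orthogonal basis of monomials $z_1^{n_1}\cdots z_N^{n_N}$, which under the isomorphism with the usual $L^2(\bb{R}^N)$ correspond (up to normalization) to products of Hermite functions $h_{n_k}(x_k)e^{-x_k^2}$, because $d\rho(\overline{z_k})=\sqrt2\,\partial/\partial z_k$ and $d\rho(z_k)=-\sqrt2 M_{z_k}$ are exactly the annihilation/creation operators whose Fock-space eigenvectors are Hermite functions. A finite energy vector is a finite combination of such monomials, so $\rho_{-x}(\psi)$ acting on a finite energy $\psi$ stays within a finite-dimensional space on which $\rho_{-x}$ acts polynomially-times-Gaussian in $x$ (the group element $x\in U_N$ acts by $\exp$ of a linear combination of creation/annihilation operators, and the Baker--Campbell--Hausdorff / Mehler-type formula shows the matrix entries in the Hermite basis are polynomials in the coordinates of $x$ times the fixed Gaussian weight that is absorbed into the normalization). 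Pairing with a finite energy $\phi$ then extracts a finite linear combination of Hermite polynomials; I would make this precise by reducing to $N=1$, computing $\innpro{h_m}{\rho_{-x}h_n}{}$ there, and tensoring.

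The heart of the matter is $(3)$: isometry and surjectivity of $\Psi:L^2(\bb{R}^N)^*\otimes L^2(\bb{R}^N)\to L^2(U_N)$. For isometry I would compute $\|\Psi(\phi\otimes\psi)\|^2_{L^2(U_N)}=(\sqrt2/\pi)^{2N}\int_{U_N}|\innpro{\phi}{\rho_{-x}\psi}{}|^2\,dx$ on basis elements $\phi=z^{m*}$, $\psi=z^{n}$; by $(2)$ this integral is an integral of (Hermite polynomial)$\times$(Hermite polynomial)$\times$(Gaussian) over $\bb{R}^N$, and the orthogonality relations for Hermite functions plus the normalization $(z_1^{n_1}\cdots,z_1^{m_1}\cdots)=n_1!n_2!\cdots\delta$ force $\|\Psi(z^{m*}\otimes z^{n})\|^2=\|z^{m*}\otimes z^n\|^2$ — this is precisely what pins down the factor $(\sqrt2/\pi)^N$, and verifying that the factor comes out exactly right is the main obstacle, essentially a Gaussian-integral normalization computation that must be done carefully (and made consistent with the inductive-limit map $f\mapsto f\otimes(2/\pi)^{1/4}e^{-x^2}$). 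Once isometry is established, surjectivity follows from density: by $(2)$ the image contains all products of Hermite polynomials times the appropriate Gaussian, i.e. all Schwartz functions of the form (polynomial)$\times e^{-|x|^2}$, and these are dense in $L^2(U_N)$ (Hermite functions form an orthonormal basis of $L^2(\bb{R}^N)$); since $\Psi$ is isometric its image is closed, hence all of $L^2(U_N)$. Finally the intertwining property from $(1)$ shows $\Psi$ is an isomorphism of $U_N\times U_N$-representations, completing the proof; I would then remark that taking $\varinjlim$ over $N$, compatibly with the Gaussian stabilization maps, yields the claimed description of $L^2(\Omega T_0)$.
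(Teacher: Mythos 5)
Your outline is essentially correct, and parts $(1)$ and $(2)$ go the same way the paper does (direct cocycle bookkeeping for $(1)$; reduce to $N=1$ and identify $\rho_{-x}$ with creation/annihilation operators for $(2)$). For $(3)$, though, you take a genuinely different route from the paper. You propose to compute the Gaussian integral $\int_{U_N}|\langle\phi,\rho_{-x}\psi\rangle|^2\,dx$ directly, using Mehler-type formulas to identify the integrand and then Hermite orthogonality to evaluate. The paper instead avoids any explicit $2N$-dimensional Gaussian integral beyond the single normalization $\|\Omega\|^2=1$: it records the key structure $dR(z)=\tfrac{1}{\sqrt2}(a_1+ia_2)$, $dL(z)=\tfrac{1}{\sqrt2}(a_1^*+ia_2^*)$, notes that $dR$'s and $dL$'s commute with each other and satisfy $[dL(z),dL(\overline z)]=[dR(\overline z),dR(z)]=2\,\mathrm{id}$, and then evaluates $\langle dR(\overline z)^k dL(z)^l\Omega,\,dR(\overline z)^m dL(z)^n\Omega\rangle$ by pushing annihilators past creators algebraically until they kill $\Omega$, obtaining $2^k k!\,2^l l!\,\delta_{k,m}\delta_{l,n}\|\Omega\|^2$, which matches the inner product on the tensor side exactly. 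Your surjectivity argument (isometry plus density of the image) is the same as the paper's.

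Your route works in principle, but note a point you should make precise: the ``Hermite orthogonality'' you invoke has to be orthogonality of Hermite functions on $U_N\cong\bb{R}^{2N}$ (you write $\bb{R}^N$, a slip), and you still need to identify exactly which $\bb{R}^{2N}$-Hermite functions the basis elements $\Psi(z^{m*}\otimes z^n)$ are and with what coefficients before orthogonality lets you conclude the norm comes out to $\|z^{m*}\otimes z^n\|^2=\prod m_i!\,n_i!$. That identification is precisely what the paper's ladder-operator computation supplies; if you want to stick with the Gaussian-integral approach, you should carry out the $N=1$ Mehler/Bargmann computation explicitly rather than only asserting it. The paper's algebraic reduction is the cleaner way to pin down both the coefficients and the crucial $(\sqrt2/\pi)^N$ normalization simultaneously, since it funnels everything through the single quantity $\langle\Omega,\Omega\rangle$.
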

\begin{proof}
$(1)$ can be verified as follows. We verify here $R$-side only. 
\begin{eqnarray*}
\Phi(\rho^*_g(\phi)\otimes\psi)(x) &=& \Bigl(\frac{\sqrt{2}}{\pi}\Bigr)^N\innpro{\rho^*_g(\phi)}{\rho_{-x}(\psi)}{} \\
&=&\Bigl(\frac{\sqrt{2}}{\pi}\Bigr)^N\innpro{\phi}{\rho_{-g}\bigl(\rho_{-x}(\psi)\bigr)}{} \\
&=&\Bigl(\frac{\sqrt{2}}{\pi}\Bigr)^N\innpro{\phi}{\rho_{-g-x}(\psi)\tau(-g,-x)}{} \\
&=&\Psi(\phi\otimes\psi)(x+g)\tau(g,x) \\
&=&R_g\circ\Psi(\phi\otimes\psi)(x)
\end{eqnarray*}

In order to verify $(2)$ and $(3)$, it is enough to study when $N=1$. Hence we can omit the subscript in $z_k$'s. The result can be verified by direct computation using the following.
\begin{lemma}
Let $F\in C_c^\infty(U_1)$.
\begin{eqnarray*}
dR(e_1)(F)(x) &=& \frac{\partial F}{\partial x_1}(x)+ix_2F(x),\\
dR(e_2)(F)(x) &=& \frac{\partial F}{\partial x_2}(x)-ix_1F(x), \\
dL(e_1)(F)(x) &=& -\frac{\partial F}{\partial x_1}(x)+ix_2F(x),\\
dL(e_2)(F)(x) &=& -\frac{\partial F}{\partial x_2}(x)-ix_1F(x).
\end{eqnarray*}
\end{lemma}

Let $a_j:=\partial/\partial x_j+x_j$ be the annihilation operator, and the creation operator $a_j^*$ be its dual.
We obtain the following from the above:
\begin{itemize}
\item $dR(z)=\frac{1}{\sqrt{2}}(a_1+ia_2)$ and $dR(\overline{z})=\frac{1}{\sqrt{2}}(-a_1^*+ia_2^*)$.
\item $dL(z)=\frac{1}{\sqrt{2}}(a_1^*+ia_2^*)$ and $dL(\overline{z})=\frac{1}{\sqrt{2}}(-a_1+ia_2).$
\item $[dL(z),dL(\overline{z})]=[dR(\overline{z}),dR(z)]=2\id$.
\item $dR$'s commute with $dL$'s. 
\end{itemize}
In particular, we can describe creation/annihilation operators in terms of representations.
Therefore, if one notice that $\Omega:=\Psi(1\otimes1)$ is a Gaussian function, $\Psi(d\rho^*(\overline{z})^k1\otimes d\rho(z)^l1)$ can be written as a linear combination of Hermite polynomials.


To verify $(3)$, we use Hermite polynomial technique which we learned in \cite{Ara}. 
We can compute the inner product between $dR(\overline{z})^kdL(z)^l\Omega$ with $dR(\overline{z})^mdL(z)^n\Omega$. Before that, we notice the following computation
\begin{eqnarray*}
dR(z)^nR(\overline{z})^k &=& dR(z)^{n-1}\Bigl([dR(z),dR(\overline{z})]+dR(\overline{z})dR(z)\Bigr)dR(\overline{z})^{k-1} \\
&=&-2dR(z)^{n-1}dR(\overline{z})^{k-1} + dR(z)^{n-1}dR(\overline{z})dR(z)dR(\overline{z})^{k-1}\\
&=& \cdots\\
&=& -2kdR(z)^{n-1}dR(\overline{z})^{k-1} + {\rm something}\cdot dR(z) \\
&=& \cdots \\
&=& \begin{cases} \text{something}\cdot dR(z) & k<n \\
(-2)^kk! + \text{something}\cdot dR(z) & k=n \\
dR(\overline{z})\cdot \text{something} & k>n
\end{cases}
\end{eqnarray*}
and a similar formula for $dL$'s. The inner product can be computed as follows. Notice that $dR(z)$ and $dL(\overline{z})$ kill $\Omega$.
\begin{eqnarray*}
&&\innpro{dR(\overline{z})^kdL(z)^l\Omega}{dR(\overline{z})^mdL(z)^n\Omega}{} \\
\;\;&=& (-1)^{k+l}\innpro{\Omega}{dL(\overline{z})^ldR(z)^kR(\overline{z})^mdL(z)^n\Omega}{} \\
\;\;&=&\begin{cases} (-1)^{k+l}\innpro{\Omega}{\cdots dR(z)\Omega}{} & k<n \\
(-1)^l2^kk!\innpro{\Omega}{dL(\overline{z})^ldL(z)^n\Omega}{} & k=n \\
(-1)^{k+l}\innpro{\Omega}{dR(\overline{z})\cdots \Omega}{} & k>n
\end{cases}\\
\;\;&=&\cdots \\
\;\;&=&2^kk!\cdot 2^ll!\delta_{k,m}\delta_{l,n}\innpro{\Omega}{\Omega}{}.
\end{eqnarray*}
It coincides with $\innpro{d\rho(\overline{z})^k1\otimes d\rho(z)^l1}{d\rho(\overline{z})^m1\otimes d\rho(z)^n1}{}$, hence $\Psi$ is isometric. Since the image is dense, $\Psi$ is bijective and we obtain the result.




\end{proof}

Therefore we can identify $L^2(U_N)$ with $L^2(\bb{R}^N)^*\otimes L^2(\bb{R}^N)$, and hence we can define an inductive system $\cdots\to L^2(U_N)\to L^2(U_{N+1})\to\cdots$ by tensoring with the lowest weight vector. The limit of this system is $\ca{H}_\WZW$ and we regard it as a justification of the definition $L^2(\Omega T_0):=\ca{H}_\WZW$.


\subsection{The algebraic Dirac operator}
We construct a Dirac operator $\cancel{\partial}$ on $\Omega T_0$ following \cite{FHTII}, and observe that the operator is reasonable if one accepts the definition $L^2(\Omega T_0)=L^2(\bb{R}^\infty)^*\otimes L^2(\bb{R}^\infty)$.

Let us recall the construction of the algebraic Dirac operator. The Spinor is the exterior algebra of an infinite dimensional complex vector space.
\begin{dfn}\label{Dfn Spinor Sect.4}
We set the dense subspace of the Spinor $S$ as
$$S_{{\rm fin}}:=\bigwedge^*_{{\rm alg}}\bigoplus^{{\rm alg}}_{k>0}\bb{C}\overline{z_k}.$$
The Clifford multiplication is defined by
\begin{eqnarray*}
\gamma(\overline{z_k})&:=&\sqrt{2}\overline{z_k}\wedge \\
\gamma(z_k)&:=&-\sqrt{2}\overline{z_k}\rfloor.
\end{eqnarray*}
$S_\fin$ has a standard inner product and a standard grading. $S$ is the completion of $S_\fin$ with respect to this inner product and inherits the grading as $S=S^0\widehat{\oplus} S^1$.
\end{dfn}
We define the number operator $N$ by the formula $N(\overline{z_{j_1}}\wedge \overline{z_{j_1}}\wedge \cdots \wedge\overline{z_{j_p}}):=(\sum j_k)\overline{z_{j_1}}\wedge \overline{z_{j_1}}\wedge \cdots \wedge\overline{z_{j_p}}$. Let us notice that $N=-\frac{1}{2}\sum k\gamma(z_k)\gamma(\overline{z_k})$.

The following operator acts on $L^2(\bb{R}^\infty)_\fin^*\otimes^\alg S_\fin$, which is essentially the Dirac operator in \cite{FHTII}.
\begin{dfn}
$$\cancel{d}:=i\sum \sqrt{k}d\rho^*(z_k)\otimes\gamma(\overline{z_k})+i\sum \sqrt{k}d\rho^*(\overline{z_k})\otimes\gamma(z_k).$$
\end{dfn}
Let us notice that an artificial coefficient $\sqrt{k}$ is due to the choice of the basis just like the definition of $d\rho^*(d)$. Since our Spinor is associated to our inner product $\omega(\cdot,J\cdot)$, such a coefficient does not appear in Clifford multiplication.

While Dirac operators act on the space of $L^2$-sections of Spinor bundles generally, $\cancel{d}$ acts on the ``square root'' of the $L^2$-space. We justify it by using the WZW model. We introduce another operator on $\ca{H}_{{\rm WZW}}\otimes S$, which looks like a Dirac operator more naturally after some computation.
\begin{dfn}
$$\cancel{\partial}:=i\sum \sqrt{k}d\rho^*(z_k)\otimes\id\otimes\gamma(\overline{z_k})+i\sum \sqrt{k}d\rho^*(\overline{z_k})\otimes\id\otimes\gamma(z_k).$$
If we rewrite $\ca{H}_\WZW$ as $L^2(\bb{R}^\infty)\otimes L^2(\bb{R}^\infty)^*$, $\cancel{\partial}$ can be written simply as $\id\otimes\cancel{d}$.
\end{dfn}
We will regard it as an $\Omega T_0$-equivariant Dirac operator on $\Omega T_0$, and compute the index valued in $R^\tau(\Omega T_0)$. Before that, we observe a corresponding object in the sub group $U_1$.

\begin{thm}
$(1)$ If we use the coordinate of $U_1$ and make $\cancel{\partial}$ act on $L^2(U_1)\otimes S(U_1)$,
\begin{eqnarray*}
\cancel{\partial}_{U_1} &=&i
\begin{pmatrix}
0 & -a_{x_1}^*+ia_{x_2}^* \\
a_{x_1}+ia_{x_2} & 0\end{pmatrix},
\end{eqnarray*}
where $a_{x_j}:=\frac{\partial}{\partial x_j}+x_j$, the annihilation operator. 

$(2)$ It is an elliptic operator.

$(3)$ It is $U_1$-equivariant with respect to the action $L$.
\end{thm}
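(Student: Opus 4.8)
The plan is to unwind every symbol in the definition of $\cancel{\partial}$ when $N=1$, using the dictionary established in the previous subsection, and then read off ellipticity and equivariance directly from the resulting explicit $2\times2$ matrix. First I would recall that when $N=1$ there is only one pair $z,\overline{z}$ (so the sums over $k$ collapse to the single term $k=1$), and that the Spinor $S(U_1)=S^0\widehat\oplus S^1$ is the exterior algebra on the one-dimensional space $\bb{C}\overline{z}$, i.e. $S^0=\bb{C}\,1$, $S^1=\bb{C}\,\overline{z}$, with $\gamma(\overline{z})=\sqrt2\,\overline{z}\wedge$ and $\gamma(z)=-\sqrt2\,\overline{z}\rfloor$. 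In this basis $\gamma(\overline z)$ sends $S^0\to S^1$ and kills $S^1$, while $\gamma(z)$ sends $S^1\to S^0$ and kills $S^0$; hence $\cancel\partial$ is off-diagonal with respect to the grading, as displayed. For the $L^2(U_1)$-factor I would invoke the isometry $\Psi:L^2(\bb{R})^*\otimes L^2(\bb{R})\xrightarrow{\ \sim\ }L^2(U_1)$ of the previous theorem together with the bullet-point formulas identifying $d\rho^*(\overline z)$ acting through $\Psi$ with the operator $dR(\overline z)$ — more precisely, since $\cancel\partial=\id\otimes\cancel d$ in the $L^2(\bb R^\infty)\otimes L^2(\bb R^\infty)^*$ picture, the relevant operators on $L^2(U_1)$ are the right-regular ones, and the lemma in the previous subsection gives $dR(z)=\tfrac1{\sqrt2}(a_{x_1}+ia_{x_2})$, $dR(\overline z)=\tfrac1{\sqrt2}(-a_{x_1}^*+ia_{x_2}^*)$, with $a_{x_j}=\partial/\partial x_j+x_j$.

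Next I would assemble these pieces: $\cancel\partial_{U_1}=i\sqrt1\,dR(z)\otimes\gamma(\overline z)+i\sqrt1\,dR(\overline z)\otimes\gamma(z)$. Feeding in $dR(z)=\tfrac1{\sqrt2}(a_{x_1}+ia_{x_2})$ against $\gamma(\overline z)=\sqrt2\,\overline z\wedge$ (the $S^0\to S^1$ block) cancels the $\sqrt2$'s and yields the lower-left entry $i(a_{x_1}+ia_{x_2})$; feeding in $dR(\overline z)=\tfrac1{\sqrt2}(-a_{x_1}^*+ia_{x_2}^*)$ against $\gamma(z)=-\sqrt2\,\overline z\rfloor$ (the $S^1\to S^0$ block, which on $\overline z$ gives $-\sqrt2$ times the contraction, i.e. acts as $-\sqrt2$ on $S^1\to S^0$) yields the upper-right entry $i(-a_{x_1}^*+ia_{x_2}^*)$. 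This is exactly the claimed matrix in $(1)$; I would be careful here only about the sign and the placement of $\sqrt2$ factors, since the Clifford conventions in Definition \ref{Dfn Spinor Sect.4} are nonstandard.

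For $(2)$, ellipticity: the principal symbol is governed by the top-order part of the entries, which are the constant-coefficient first-order operators $\partial/\partial x_1$ and $\partial/\partial x_2$ (the $x_j$ terms are zeroth order). Thus the symbol of $\cancel\partial_{U_1}$ at a covector $\xi=(\xi_1,\xi_2)$ is $i$ times the off-diagonal matrix with entries $-i(\xi_1+i\xi_2)$ wait — more simply, $\cancel\partial_{U_1}^2$ has leading part $(a_{x_1}+ia_{x_2})(-a_{x_1}^*+ia_{x_2}^*)$ and its partner, whose principal symbol is $|\xi|^2$ times the identity (this is the standard fact that $\cancel\partial^2$ is, up to lower order, a Laplacian); hence $\cancel\partial_{U_1}$ is elliptic of order one. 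Concretely one can also note $\cancel\partial_{U_1}$ is, up to zeroth-order perturbation, the untwisted Dolbeault-type operator $i\binom{0\ \ -\partial_{x_1}+i\partial_{x_2}}{\ \partial_{x_1}+i\partial_{x_2}\ \ 0}$ on $\bb R^2$, which is manifestly elliptic. For $(3)$, $L$-equivariance: this is immediate from the general fact recorded in the previous theorem that $\Psi$ intertwines $\id\otimes\rho_g$ with $L_g$, while $\cancel\partial=\id\otimes\cancel d$ acts only on the factor carried by the $R$-action, which commutes with every $dL$ — indeed the bullet list explicitly states "$dR$'s commute with $dL$'s", and $\cancel d$ is built purely from $dR$'s, so it commutes with the $L$-action. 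I expect the only genuinely delicate point to be the bookkeeping of the $\sqrt2$'s and signs in step $(1)$; parts $(2)$ and $(3)$ are then essentially formal consequences of the explicit formula and the commutation relations already proved.
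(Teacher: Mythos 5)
Your approach is essentially the paper's: unwind $\cancel{\partial}=\mathrm{id}\otimes\cancel{d}$ through the isometry $\Psi$ to turn $d\rho^*$ into $dR$, substitute $dR(z)=\tfrac{1}{\sqrt{2}}(a_{x_1}+ia_{x_2})$, $dR(\overline z)=\tfrac{1}{\sqrt{2}}(-a_{x_1}^*+ia_{x_2}^*)$, and multiply against the Clifford matrices for the rank-one Spinor. That is exactly what the paper does. Two remarks.

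First, there is an arithmetic slip in your assembly of the upper-right block. You correctly record that $\gamma(z)=-\sqrt{2}\,\overline z\rfloor$ acts as $-\sqrt{2}$ from $S^1$ to $S^0$, but then you only cancel the $\sqrt{2}$'s and forget the remaining $(-1)$: the entry should be
\[
i\cdot\tfrac{1}{\sqrt{2}}\bigl(-a_{x_1}^*+ia_{x_2}^*\bigr)\cdot(-\sqrt{2})=i\bigl(a_{x_1}^*-ia_{x_2}^*\bigr),
\]
not $i(-a_{x_1}^*+ia_{x_2}^*)$. Curiously, the sign you wrote reproduces the theorem's displayed matrix, while the sign you should have gotten is what the paper's own computation in the body of the proof arrives at (``Therefore $\cancel{\partial}_{U_1}=i\begin{pmatrix}0 & a_{x_1}^*-ia_{x_2}^*\\ a_{x_1}+ia_{x_2}&0\end{pmatrix}$''); the statement and the proof disagree by this sign in the source, so the discrepancy is the paper's, not substantively yours — but you should still carry the $-1$. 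Note the sign is immaterial for parts $(2)$ and $(3)$: the leading symbol and the vanishing of the relevant commutators are unaffected.

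Second, your argument for $(3)$ is a genuinely cleaner route than the paper's. The paper verifies $[dL(z)\otimes\mathrm{id},\cancel{\partial}_{U_1}]=0$ by a direct $2\times2$ matrix commutator computation using the CCR for the $a_{x_j}$. You instead observe that $\cancel{\partial}=\mathrm{id}\otimes\cancel{d}$ is built entirely out of $d\rho^*$'s, which under $\Psi$ are $dR$'s, and the previous theorem's bullet list already records that $dR$'s commute with $dL$'s; hence $L$-equivariance is immediate with no further computation and with no dependence on the sign settled in $(1)$. That is a small but real improvement: it isolates the conceptual reason ($\cancel{d}$ lives on the $R$-factor) rather than re-deriving it coordinate by coordinate.
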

\begin{proof}
For $(1)$, it is just a computation. We omit the subscript in $z_1$ in this proof. 
We use the notation $\partial_j:=\partial/\partial x_j$. Notice the formula
$$\gamma(z)=\begin{pmatrix} 0 & -\sqrt{2} \\ 0 & 0 \end{pmatrix}, 
\gamma(\overline{z})=\begin{pmatrix} 0 & 0 \\ \sqrt{2} & 0 \end{pmatrix}$$

\begin{eqnarray*}
dR(z)&=&\frac{1}{\sqrt{2}}\Bigl(dR(e_1)+idR(e_2)\Bigr)=\frac{1}{\sqrt{2}}\Bigl(\partial_1+ix_2+i\partial_2+x_1\Bigr) \\
&=&\frac{1}{\sqrt{2}}\Bigl(a_{x_1}+ia_{x_2}\Bigr) \\
dR(\overline{z})&=&\frac{1}{\sqrt{2}}\Bigl(dR(e_1)-idR(e_2)\Bigr)=\frac{1}{\sqrt{2}}\Bigl(\partial_1+ix_2-i\partial_2-x_1\Bigr) \\
&=&\frac{1}{\sqrt{2}}\Bigl(-a_{x_1}^*+ia_{x_2}^*\Bigr).
\end{eqnarray*}
Therefore
$$\cancel{\partial}_{U_1}=i\begin{pmatrix}
 0 & a_{x_1}^*-ia_{x_2}^* \\ a_{x_1}+ia_{x_2} & 0 \end{pmatrix}$$

$(2)$ is clear from the description of $(1)$. 

For $(3)$, what we need is that $\bigl(L(g)\otimes{\rm id}\bigr)\circ \cancel{\partial}_{U_1} \circ \bigl(L(g^{-1})\otimes{\rm id}\bigr)=\cancel{\partial}_{U_1}$. It is equivalent to that 
$$[dL(v)\otimes{\rm id},\cancel{\partial}_{U_1}]=0$$
for all $v\in {\rm Lie}(U_1)\otimes\bb{C}$ due to the connectedness of $U_1$. It suffices to check it when $v=z$ or $\overline{z}$.
\begin{eqnarray*}
[dL(z)\otimes\id,\cancel{\partial}_{U_1}] &=& \frac{i}{\sqrt{2}}\biggl[
\begin{pmatrix} 
a_{x_1}^*+ia_{x_2}^* & 0 \\ 
0 &a_{x_1}^*+ia_{x_2}^* 
\end{pmatrix},
\begin{pmatrix}
0 & -a_{x_1}^*+ia_{x_2}^* \\
a_{x_1}+ia_{x_2} & 0\end{pmatrix}\biggr]\\
&=& \frac{i}{\sqrt{2}}\begin{pmatrix} 0 & [a_{x_1}^*+ia_{x_2}^*,-a_{x_1}^*+ia_{x_2}^*] \\
[a_{x_1}^*+ia_{x_2}^*,a_{x_1}+ia_{x_2}] & 0
\end{pmatrix}\\
&=& 0
\end{eqnarray*}
The last equality is due to the commutation relation $[a_{x_i},a_{x_j}]=[a_{x_i}^*,a_{x_j}^*]=[a_{x_1},a_{x_2}^*]=[a_{x_1}^*,a_{x_2}]=0$ and $[a_{x_1},a_{x_1}^*]=[a_{x_2},a_{x_2}^*]=2$ for any $i,j$.

Since $(\cancel{\partial}_{U_1})^*=\cancel{\partial}_{U_1}$ and $dL(\overline{z_1})^*=-dL(z_1)$, we obtain that $[dL(\overline{z_1}),\cancel{\partial}_{U_1}]=0$.
\end{proof}
Since $(3)$ is valid for $\cancel{\partial}$, we can regard  it as an $\Omega T_0$-equivariant Dirac operator on $\Omega T_0$. It tells us that ${\rm Lie}(\Omega T_0)$, and hence $\Omega T_0$ acts on the kernel and the cokernel of $\cancel{\partial}$. Therefore, we can define an index 
$${\rm ind}(\cancel{\partial}):=[\ker(\cancel{\partial}|_{\ca{H}_{{\rm WZW}}\otimes S^0})]-[\ker(\cancel{\partial}|_{\ca{H}_{{\rm WZW}}\otimes S^1})]$$
as the formal difference of representations of $\Omega T_0$. Unlike the classical case when a compact group acts on a compact manifold, this index is doubtful whether it is finitely reducible.
Let us study it by an explicit computation of the index. Since $\cancel{\partial}={\rm id}\otimes \cancel{d}$,
$$\ker(\cancel{\partial})=\ker(\cancel{d})\otimes L^2(\bb{R}^\infty).$$

\begin{thm}
$$\ker(\cancel{d})=\bb{C}\Omega\widehat{\oplus} 0.$$
\end{thm}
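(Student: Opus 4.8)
The plan is to reduce the claim to an identification of $\ker\cancel{d}^{\,2}$ and to compute $\cancel{d}^{\,2}$ explicitly. Throughout we work on the algebraic space $L^2(\bb{R}^\infty)^*_{\rm fin}\otimes^{\rm alg}S_{\rm fin}$, so there are no domain subtleties, and every operator below preserves the decomposition by total energy, whose eigenspaces are finite dimensional. Since trivially $\ker\cancel{d}\subseteq\ker\cancel{d}^{\,2}$, it suffices to show that $\ker\cancel{d}^{\,2}$ is the one-dimensional space spanned by $\Omega\otimes 1$, where $\Omega\in L^2(\bb{R}^\infty)^*$ is the vacuum (the highest weight vector of the dual representation) and $1\in S^0$ is the Spinor vacuum, and then to observe that this vector is obviously annihilated by $\cancel{d}$: the summand $d\rho^*(z_k)\otimes\gamma(\overline{z_k})$ kills $\Omega$ in the first factor, while the summand $d\rho^*(\overline{z_k})\otimes\gamma(z_k)$ kills $1$ in the second factor because $\gamma(z_k)=-\sqrt2\,\overline{z_k}\rfloor$ and $\overline{z_k}\rfloor 1=0$.

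To compute $\cancel{d}^{\,2}$ I would expand the square over the two families of indices and use three inputs: (i) $\{z_k\}_{k>0}$ and $\{\overline{z_k}\}_{k>0}$ each span an isotropic subspace of $({\rm Lie}(U)\otimes\bb{C},\omega)$, so the operators $d\rho^*(z_k)$ mutually commute, the $d\rho^*(\overline{z_k})$ mutually commute, and $[d\rho^*(z_k),d\rho^*(\overline{z_l})]$ is a scalar proportional to $\delta_{kl}$; (ii) the Clifford relations $\gamma(\overline{z_k})\gamma(\overline{z_l})+\gamma(\overline{z_l})\gamma(\overline{z_k})=0=\gamma(z_k)\gamma(z_l)+\gamma(z_l)\gamma(z_k)$, while $\gamma(z_k)\gamma(\overline{z_l})+\gamma(\overline{z_l})\gamma(z_k)$ is a scalar proportional to $\delta_{kl}$; (iii) the identities $d\rho(d)=-\tfrac{i}{2}\sum_k k\,d\rho(z_k)d\rho(\overline{z_k})$ (dualized to $L^2(\bb{R}^\infty)^*$) and $N=-\tfrac12\sum_k k\,\gamma(z_k)\gamma(\overline{z_k})$ recorded after Definition \ref{Dfn Spinor Sect.4}. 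By (i) and (ii) the ``same type'' terms $d\rho^*(z_k)d\rho^*(z_l)\otimes\gamma(\overline{z_k})\gamma(\overline{z_l})$ and $d\rho^*(\overline{z_k})d\rho^*(\overline{z_l})\otimes\gamma(z_k)\gamma(z_l)$ antisymmetrize into commutators and vanish; among the cross terms the off-diagonal part ($k\ne l$) cancels, and the diagonal part reorganizes, via (iii), into
$$\cancel{d}^{\,2}=\alpha\,\Bigl(-\tfrac1i\,d\rho^*(d)\Bigr)\otimes\id\;+\;\beta\,\id\otimes N$$
for two strictly positive constants $\alpha,\beta$, an additive constant being absorbed by the normal ordering implicit in (iii). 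The precise normalization is pinned down most easily on the sub group $U_1$: there $\cancel{d}^{\,2}$ is the block-diagonal operator ${\rm diag}(BB^*,\,B^*B)$ with $B:=-a_{x_1}^*+ia_{x_2}^*$, and $B^*B=BB^*+c\cdot\id$ for a positive constant $c$, so $BB^*\ge0$ and $B^*B>0$.

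Consequently $\cancel{d}^{\,2}$ is a positive-coefficient sum of two commuting non-negative operators. By the description of $L^2(\bb{R}^\infty)^*$ in the definition of the WZW Hilbert space, $-\tfrac1i d\rho^*(d)$ is a non-negative number operator whose kernel is exactly the line $\bb{C}\Omega$ spanned by the dual vacuum; and by the discussion after Definition \ref{Dfn Spinor Sect.4}, $N\ge0$ on $S$ with $\ker N=\bb{C}\,1\subseteq S^0$. Hence $\ker\cancel{d}^{\,2}=\bb{C}\Omega\otimes\bb{C}\,1$, a single line lying in $L^2(\bb{R}^\infty)^*\otimes S^0$; that is, $\ker(\cancel{d})$ meets $L^2(\bb{R}^\infty)^*\otimes S^0$ in $\bb{C}\Omega$ and meets $L^2(\bb{R}^\infty)^*\otimes S^1$ in $0$, which is the assertion $\ker(\cancel{d})=\bb{C}\Omega\,\widehat\oplus\,0$. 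The main obstacle is the second step: verifying that the off-diagonal double sum really cancels and fixing the normal-ordering constant so that the surviving operator is manifestly non-negative. An alternative that avoids the infinite sum altogether is to run the same computation at each finite level $U_N$, where $\cancel{d}_N^{\,2}$ is visibly a finite sum of harmonic-oscillator number operators, so $\dim\ker\cancel{d}_N=1$ with the kernel spanned by the Gaussian, and then to identify $\ker\cancel{d}$ with $\varinjlim\ker\cancel{d}_N$, the connecting maps sending the $N$-th vacuum to the $(N{+}1)$-st.
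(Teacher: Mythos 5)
Your proposal is correct and follows the same route as the paper: square $\cancel{d}$, use the commutation relations among the $d\rho^*(z_k),d\rho^*(\overline{z_k})$ and the anticommutation relations among the $\gamma(z_k),\gamma(\overline{z_k})$ to show that the off-diagonal cross terms vanish, reorganize the diagonal part into a positive multiple of $\bigl(-\tfrac{1}{i}d\rho^*(d)\bigr)\otimes\id$ plus a positive multiple of $\id\otimes N$, and then read off the kernel as the tensor product of the two one-dimensional kernels. The paper obtains $\cancel{d}^{\,2}=4\,\id\otimes N-\tfrac{4}{i}\,d\rho^*(d)$ in precisely this way, so your $\alpha=\beta=4$; also note that no additive ``normal-ordering'' constant actually survives in the paper's computation, because the commutators $[d\rho^*(z_k),d\rho^*(\overline{z_k})]$ and $[\gamma(z_k),\gamma(\overline{z_k})]$ absorb one another exactly. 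Your sketched alternative via finite levels $U_N$ and $\varinjlim\ker\cancel{d}_N$ is a sensible variant but would need an argument that kernel formation commutes with the inductive limit; the direct computation of $\cancel{d}^{\,2}$, which you and the paper both use as the primary route, avoids that issue.
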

\begin{proof}
We list formulas we need to relate $\cancel{d}^2$ with $d\rho^*(d)$.
\begin{itemize}
\item $d\rho^*(z_k)$'s and $d\rho^*(\overline{z_k})$'s commute with $\gamma(z_l)$'s and $\gamma(\overline{z_l})$'s.

\item $d\rho^*(z_k)$'s, $d\rho^*(\overline{z_k})$'s, $\gamma(z_l)$'s and $\gamma(\overline{z_l})$'s commute with themselves respectively.

\item $[d\rho^*(z_k),d\rho^*(\overline{z_l})]=[\gamma(z_k),\gamma(\overline{z_l})]=0$ if $k\neq l$.

\item $[d\rho^*(z_k),d\rho^*(\overline{z_k})]=-2{\rm id}_{L^2(\bb{R}^\infty)}$ and $[\gamma(z_k),\gamma(\overline{z_k})]=-2{\rm id}_S$.
\item $\gamma(z_k)^2=\gamma(\overline{z_k})^2=0$.
\end{itemize}
The readers should be careful that our commutater is graded, for example, $[\gamma(z_k),\gamma(\overline{z_k})]=\gamma(z_k)\gamma(\overline{z_k})+\gamma(\overline{z_k})\gamma(z_k)$.

\begin{eqnarray*}
\cancel{d}^2 &=& -\biggl(\sum \sqrt{k}d\rho^*(z_k)\otimes\gamma(\overline{z_k})\biggr)\biggl(\sum \sqrt{l}d\rho^*(z_l)\otimes\gamma(\overline{z_l})\biggr) \\
&&\;\;\;\;\;
-\biggl(\sum \sqrt{k}d\rho^*(z_k)\otimes\gamma(\overline{z_k})\biggr)\biggl(\sum \sqrt{l}d\rho^*(\overline{z_l})\otimes\gamma(z_l)\biggr) \\ 
&&  \;\;\;\;\;\;\;\;\;\;   -\biggl(\sum \sqrt{l}d\rho^*(\overline{z_l})\otimes\gamma(z_l)\biggr)\biggl(\sum \sqrt{k}d\rho^*(z_k)\otimes\gamma(\overline{z_k})\biggr)\\
&&\;\;\;\;\;\;\;\;\;\;\;\;\;\;\;-\biggl(\sum \sqrt{k}d\rho^*(\overline{z_k})\otimes\gamma(z_k)\biggr)\biggl(\sum \sqrt{l}d\rho^*(\overline{z_l})\otimes\gamma(z_l)\biggr)\\
&=& -\sum_{k\neq l}\sqrt{kl}d\rho^*(z_k)d\rho^*(\overline{z_l})\bigl[\gamma(\overline{z_k})\gamma(z_l)+\gamma(z_l)\gamma(\overline{z_k})\bigl] \\ 
&& \;\;\;\;\; -\sum_{k>0}k\bigl(d\rho^*(z_k) d\rho^*(\overline{z_k})\otimes\gamma(\overline{z_k})\gamma(z_k)+ d\rho^*(\overline{z_k})d\rho^*(z_k)\otimes\gamma(z_k)\gamma(\overline{z_k})
\bigr)\\
&=& -\sum_{k>0}k\bigl(d\rho^*(z_k) d\rho^*(\overline{z_k})\otimes\gamma(\overline{z_k})\gamma(z_k)+ d\rho^*(\overline{z_k})d\rho^*(z_k)\otimes\gamma(z_k)\gamma(\overline{z_k})
\bigr)\\
&=& -\sum_{k>0}k\cdot 2\gamma(\overline{z_k})\gamma(z_k)+\sum_{k>0}kd\rho^*(\overline{z_k}) d\rho^*(z_k)\otimes\bigl[\gamma(z_k)\gamma(\overline{z_k})+\gamma(\overline{z_k})\gamma(z_k)\bigr]\\
&=& 4\id\otimes N
-\frac{4}{i}d\rho^*(d).
\end{eqnarray*}
The both terms are positive definite, hence 
$$\ker(\cancel{d}^2)=\ker(d\rho^*(d))\otimes \ker(N).$$
$\ker(d\rho^*(d))$ is the $1$-dimensinal vector space spanned by the lowest weight vector $\Omega$. $\ker(N)$ is the $1$-dimensional space spanned by $1\in \bigwedge \bigoplus\bb{C}\overline{z_k}$. Combining them, we obtain the result.
\end{proof}

\section{The analytic index and the twisted group $C^*$-algebra}\label{Section Index}

Before defining the total index of $\ca{D}=\cancel{D}\otimes\id+\id\otimes\cancel{\partial}$ as an element of $R^\tau(LT)$, we need to introduce a twisted group $C^*$-algebras. According to Proposition \ref{Prop Kas Sect.1},
$\ind(\cancel{D})$ is an element of a $K$-group and our index $\ind(\cancel{\partial})$ is a representation. We need to rewrite $\ind(\cancel{D})$ as a formal difference of representations to be consistent. More precisely we verify that the $K$-group which has $\ind(\cancel{D})$ as an element is isomorphic to $R^\tau(T\times\Pi_T)$ defined below. It means that the index $\ind(\cancel{D})$ determines a formal difference of $\tau$-twisted representations of $T\times \Pi_T$. Therefore, our total index $\ind(\ca{D})$ can be defined as a formal difference of P.E.R. of $LT$ thus
 an element of $R^\tau(LT)$.

In addition, we define a twisted group $C^*$-algebra $C^*(LT,\tau)$ and verify that $K_0(C^*(LT,\tau))\cong R^\tau(LT)$. It is one of the circumstantial evidences so that our construction is reasonable.

\subsection{The analytic index valued in $K_0(C^*(T\times\Pi_T)^\tau)$}
We study the index of locally compact part in this and the next subsection by using the following result due to Kasparov
\begin{thm}[\cite{Kas88}, \cite{Kas}]\label{Thm Kas Sect.5}
Let $\Gamma$ be a locally compact group acting on a manifold $X$ properly and cocompactly, that is, the map $(x,\gamma)\mapsto (x,\gamma.x)$ is proper and the quotient space $X/\Gamma$ is compact. Let $D=D^+\oplus D^-$ be an $\Gamma$-equivariant Dirac operator on $X$.

Then, $D$ is $C^*(\Gamma)$-Fredholm and it has a well-defined index valued in $K_0(C^*(\Gamma))$.

\end{thm}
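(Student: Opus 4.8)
This is Kasparov's theorem, so strictly it suffices to cite \cite{Kas88} and \cite{Kas}; but let me indicate how the argument runs. The plan is to realize $D$ as a $C^*(\Gamma)$-Fredholm operator on a Hilbert $C^*(\Gamma)$-module $\ca{E}$ built from square-integrable sections of the Spinor bundle, and then to read off $\ind(D)$ as a formal difference of classes of finitely generated projective $C^*(\Gamma)$-modules.

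First I would construct $\ca{E}$. Because $X/\Gamma$ is compact, there is a cutoff function $c\in C_c(X)$, $c\geq 0$, with $\int_\Gamma(\gamma.c)(x)^2\,d\gamma=1$ for every $x\in X$. On $C_c(X,S)$ set $\langle\xi,\eta\rangle(\gamma):=\int_X\langle\xi(x),(\gamma.\eta)(x)\rangle\,dx$; properness of the action guarantees this is a well-defined element of $C_c(\Gamma)$ and gives a $C^*(\Gamma)$-valued inner product, and I let $\ca{E}$ be the completion. Then $\ca{E}$ is a countably generated Hilbert $C^*(\Gamma)$-module, and since $D$ is $\Gamma$-equivariant it commutes with the $C^*(\Gamma)$-action and extends from $C_c(X,S)$ to a regular self-adjoint unbounded operator on $\ca{E}$, essential self-adjointness on $C_c(X,S)$ being the usual fact about Dirac operators after one reduces to the case that $X$ carries a complete $\Gamma$-invariant metric.

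The heart of the matter is $C^*(\Gamma)$-Fredholmness: I must show that the bounded transform $F:=D(1+D^2)^{-1/2}$ satisfies $1-F^2=(1+D^2)^{-1}\in\ca{K}(\ca{E})$, and that $[F,T]\in\ca{K}(\ca{E})$ for $T$ in a dense subalgebra. The key local input is that multiplication by $c$ followed by $(1+D^2)^{-1/2}$ is a compact endomorphism of $\ca{E}$: this follows from the interior elliptic estimate for $D$ together with Rellich's lemma on a relatively compact neighbourhood of ${\rm supp}(c)$. One then writes $(1+D^2)^{-1/2}=\int_\Gamma(\gamma.c)\,(1+D^2)^{-1/2}\,(\gamma.c)\,d\gamma$ modulo an operator assembled from the commutators $[(1+D^2)^{-1/2},\gamma.c]$, and checks that both the main term and the error lie in $\ca{K}(\ca{E})$. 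I expect the technical obstacle to be exactly this averaging step: the commutator $[(1+D^2)^{-1/2},\gamma.c]$ is only ``of order $-1$'' and one must upgrade this to genuine membership in $\ca{K}(\ca{E})$, uniformly enough that the integral over the non-compact group $\Gamma$ converges in norm; this is where properness and cocompactness are used together, and where one genuinely needs Hilbert-module pseudodifferential estimates rather than their classical counterparts.

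Once $D$ is $C^*(\Gamma)$-Fredholm, the Mishchenko--Fomenko theory of Fredholm operators on Hilbert modules applies: after a compact perturbation $\ker D^\pm$ become finitely generated projective $C^*(\Gamma)$-modules, the class $[\ker D^+]-[\ker D^-]\in K_0(C^*(\Gamma))$ is independent of the perturbation and of lower-order modifications of $D$, and this is the required index. Equivalently one packages $(\ca{E},F)$ as a Kasparov cycle for $KK(\bb{C},C^*(\Gamma))=K_0(C^*(\Gamma))$; from this viewpoint the theorem says that a $\Gamma$-equivariant Dirac operator on a proper cocompact $\Gamma$-manifold defines a class in $KK^\Gamma(C_0(X),\bb{C})$ whose image under Kasparov descent, paired with the canonical class of the proper cocompact action, lands in $K_0(C^*(\Gamma))$.
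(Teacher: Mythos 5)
The paper gives no proof of this theorem: it is stated as an external result attributed to Kasparov, with citations to \cite{Kas88} and \cite{Kas}, and the paper immediately moves on to checking that its own setting (the $(T\times\Pi_T)^\tau$-action on $\widetilde{M}$) satisfies the hypotheses. You recognize this yourself at the start, so there is no conflict of approach; you are supplying detail the paper deliberately omits. Your sketch is a sound outline of how Kasparov's argument actually goes: the Hilbert $C^*(\Gamma)$-module built from $C_c(X,S)$ with a $C_c(\Gamma)$-valued inner product, regularity and self-adjointness of $D$ on $\ca{E}$, compactness of $(1+D^2)^{-1}$ via the cutoff function and a Rellich-type estimate, and then reading off the index class either as $[\ker D^+]-[\ker D^-]$ after a compact perturbation (Mishchenko--Fomenko) or as a class in $KK(\bb{C},C^*(\Gamma))$ via the bounded transform. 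You also correctly flag the genuine technical point — upgrading the order-$(-1)$ commutators $[(1+D^2)^{-1/2},\gamma.c]$ to membership in $\ca{K}(\ca{E})$ with enough uniformity that the $\Gamma$-average converges — as the place where properness and cocompactness are both essential.

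Two small remarks. First, the normalization of the cutoff function varies by source (some authors require $\int_\Gamma(\gamma.c)^2\,d\gamma=1$, others $\int_\Gamma\gamma.c\,d\gamma=1$); either works, but be consistent with whichever convention you use in the inner-product and parametrix formulas. Second, the identity
\[
(1+D^2)^{-1/2}=\int_\Gamma(\gamma.c)\,(1+D^2)^{-1/2}\,(\gamma.c)\,d\gamma+\text{error}
\]
is the right heuristic, but the cleaner route (and the one Kasparov takes) is to work with the bounded operator $F=D(1+D^2)^{-1/2}$ directly and show $c(1-F^2)$, $c[F,a]\in\ca{K}(\ca{E})$ for the cutoff $c$ and generators $a$, then average. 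None of this affects the correctness of your outline; it is just where the bookkeeping is most transparent.
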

Even if we can verify that our setting satisfies the assumption above, we get an analytic index valued not in $K_0(C^*(T\times\Pi_T,\tau))$ but in $K_0(C^*((T\times\Pi_T)^*))$. We must improve it in the next subsection.

Let us check the assumption above here.
Through the homomorphism $(T\times\Pi_T)^\tau\to T\times \Pi_T$, $(T\times\Pi_T)^\tau$ acts on $\widetilde{M}$.
\begin{pro}
The action of $(T\times\Pi_T)^\tau$ on $\widetilde{M}$ is proper and cocompact.
\end{pro}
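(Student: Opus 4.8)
The plan is to strip away the compact pieces of the group, reducing the statement to a single claim about the lattice $\Pi_T\cong\bb{Z}$, and to handle cocompactness by a direct identification of orbit spaces.

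\textbf{Cocompactness.} Since $\ca M\to\widetilde M$ is the quotient by $\Omega T_0$ and $\ca M\to M$ the quotient by $\Omega T=\Pi_T\times\Omega T_0$, iterating quotients gives $\widetilde M/\Pi_T=\ca M/\Omega T=M$. The central $U(1)\subseteq(T\times\Pi_T)^\tau$ acts trivially on $\widetilde M$, the action factoring through $(T\times\Pi_T)^\tau\to T\times\Pi_T$, so $\widetilde M/(T\times\Pi_T)^\tau=\widetilde M/(T\times\Pi_T)=(\widetilde M/\Pi_T)/T=M/T$, which is compact because $M$ and $T$ are.

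\textbf{Properness.} First I would discard $U(1)$: being central, compact, and acting trivially, it makes the $(T\times\Pi_T)^\tau$-action proper iff the induced $(T\times\Pi_T)$-action is. Next I would discard $T$: using that a $G$-action is proper iff $\{g\mid gK\cap K\neq\emptyset\}$ is relatively compact for every compact $K\subseteq\widetilde M$, and that for $(t,n)\in T\times\Pi_T$ the condition $(t,n)K\cap K\neq\emptyset$ forces $n(TK)\cap(TK)\neq\emptyset$ with $TK$ compact, one sees that $T\times\Pi_T$ acts properly as soon as $\Pi_T$ does (the admissible $n$ then form a finite set, while $t$ ranges over the compact group $T$). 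It thus remains to show that $\Pi_T\cong\bb{Z}$ acts properly on $\widetilde M$. For this I would exhibit $q\colon\widetilde M=\ca M/\Omega T_0\to M=\ca M/\Omega T$ as a principal $\Pi_T$-bundle, hence --- $\Pi_T$ being discrete --- as a normal covering space: $\ca M\to M$ is a free $\Omega T$-quotient, the factorisation $\ca M=\widetilde M\times\Omega T_0$ trivialises the $\Omega T_0$-direction, and $\widetilde M\to M$ is the associated principal bundle for $\Omega T/\Omega T_0\cong\Pi_T$. The deck action of the structure group of a covering space is automatically proper: about any point of $\widetilde M$ choose an open $U$ carried homeomorphically by $q$ onto an open subset of $M$; then $nU\cap U\neq\emptyset$ together with freeness forces $n=e$, so the action is wandering, and, $M$ being Hausdorff, this yields properness.

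\textbf{The main obstacle.} The one non-routine ingredient is that $q\colon\widetilde M\to M$ admits local sections --- equivalently, that $\ca M\to M$ is locally trivial --- which is precisely what upgrades ``free action with compact quotient'' to ``covering space''; in the infinite-dimensional Banach-manifold setting this is not a formal consequence of freeness and must be taken from the definition of a proper $LT$-space. It is visible in the motivating examples: for the space of connections $L\fra t^*$ one has $\widetilde M=\fra t\xrightarrow{\exp}T=M$, the universal cover, and for the left $LT$-action one has $\widetilde M=T\times\Pi_T\to T=M$. I would therefore record the local-section property among the standing hypotheses, after which the covering-space argument above applies verbatim; cocompactness, by contrast, needs nothing beyond compactness of $M$ and $T$.
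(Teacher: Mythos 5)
Your proof is correct but takes a genuinely different route from the paper's, and in fact it is both more complete and more careful. The paper's proof addresses only properness (cocompactness is left implicit), and does so by choosing a fundamental domain $M'$ for the $\Pi_T$-action, restricting to $K,K'\subseteq M'$, and using compactness of $T.K'$ to deduce that only finitely many $n\in\Pi_T$ can move $K$ into $K'$, giving $Y_{K,K'}\subseteq K\times(T\times F)^\tau$ for a finite $F$. You instead peel off the compact factors $U(1)$ and $T$ by general nonsense about proper actions, reducing to the bare claim that the discrete group $\Pi_T$ acts properly on $\widetilde M$, and then invoke the covering-space (wandering-action) criterion with the Hausdorffness of $M$. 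Both arguments pivot on the same structural fact, namely that $\widetilde M\to M$ is a genuine $\Pi_T$-covering rather than merely the quotient of a free action; the paper buries this in the phrase ``we can take a fundamental domain,'' whereas you flag it explicitly as the non-formal ingredient that must come from the definition of a proper $LT$-space (or, in the paper's setup, from the requirement that $M$ carry a $Spin^c$-structure, which forces local triviality of $\ca M\to M$). Your version is a cleaner audit of what is really being used; the paper's is shorter once the fundamental-domain claim is granted, though as written its covering statement ``$T.K\subseteq\bigcup_{n\in F}n.K$'' is imprecise (translates of the compact $K$ cannot cover $T.K$; it should be translates of $M'$). Your explicit treatment of cocompactness via $\widetilde M/(T\times\Pi_T)\cong M/T$ fills a gap the paper's written proof leaves open.
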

\begin{proof}
Let us recall the setting. $T$ acts on the compact model $M$ and $\widetilde{M}$ is a $\Pi_T$-bundle on $M$, that is, $\Pi_T$ acts on $\widetilde{M}$ freely. Therefore, we can take a fundamental domain $M'$ of the action of $\Pi_T$.

Let us take a compact set of the form $K\times K'\subseteq \widetilde{M}\times\widetilde{M}$. It is enough to deal with only the case when $K$ and $K'$ are subsets of $M'$. We need to verify that the set
$$Y_{K,K'}:=\{(x,\gamma)\in X\times (T\times\Pi_T)^\tau|x\in K, \gamma.x\in K'\}$$
is compact. Since $T$ is compact, $T.K$ is also. Therefore, there exists a finite subset $F\subseteq \Pi_T$ such that $T.K\subseteq \bigcup_{n\in F}n.K$. It tells us that
$$Y_{K,K'}\subseteq K\times (T\times F)^\tau.$$
Since the RHS is compact and the LHS is closed, $Y_{K,K'}$ is a compact set.
\end{proof}

\subsection{$C^*(T\times\Pi_T,\tau)$ and the analytic index}
We study the structure of the algebra $C^*((T\times\Pi_T)^\tau)$ and verify that our index $\ind(\cancel{D})$ is an element of $K_0(C^*(T\times\Pi_T,\tau))$, which is a desired result.
\begin{pro}
$$C^*((T\times\Pi_T)^\tau)\cong \bigoplus_{n\in \bb{Z}}C^*(T\times\Pi_T,n\tau),$$
where 
$$C^*(T\times\Pi_T,n\tau)\cong \bigoplus_{[\lambda]\in\Lambda_T/\kappa^{n\tau}(\Pi_T)}\ca{K}(V_{[\lambda]})$$
for any $n\neq0$, and
$$C^*(T\times\Pi_T,0)\cong C_0(\Pi_T\times T).$$
It is so called Pontryagin duality. Each of these algebras is defined in the proof.
\end{pro}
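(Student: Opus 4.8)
The plan is to split $C^*((T\times\Pi_T)^\tau)$ into its homogeneous pieces under the central circle and then identify each piece by hand; since $T\times\Pi_T$ is abelian, hence amenable, there is no reduced/full ambiguity and $C^*$ of an abelian group is the commutative $C^*$-algebra of its Pontryagin dual. First I would exploit that $i(U(1))$ is a \emph{compact} central subgroup of $(T\times\Pi_T)^\tau$. Averaging against its characters, $p_n(f)(\gamma,z):=\int_{U(1)}f(\gamma,zw)\,w^{n}\,dw$ for $n\in\widehat{U(1)}=\bb{Z}$, produces a family of mutually orthogonal, norm-decreasing, central idempotents on $C_c((T\times\Pi_T)^\tau)$ with $\sum_n p_n=\id$; the image of $p_n$ is precisely $C_c((T\times\Pi_T)^\tau)(n)$ in the notation at the end of Section \ref{Subsection Cstar algebra}, and I would \emph{define} $C^*(T\times\Pi_T,n\tau)$ to be its $C^*$-completion (for $n=0$ the cocycle is trivial, so this is by definition the ordinary group $C^*$-algebra $C^*(T\times\Pi_T)$). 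Since the $p_n$ extend to orthogonal central projections summing to $1$ on the completion, this yields the ($c_0$-)direct sum decomposition $C^*((T\times\Pi_T)^\tau)\cong\bigoplus_{n\in\bb{Z}}C^*(T\times\Pi_T,n\tau)$.

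For $n=0$ I would just invoke Pontryagin duality: $C^*(T\times\Pi_T)\cong C^*(T)\otimes C^*(\Pi_T)\cong C_0(\widehat T)\otimes C_0(\widehat{\Pi_T})\cong C_0(\widehat T\times\widehat{\Pi_T})$, and since $T$ is a circle $\widehat T\cong\Pi_T$ (a discrete group $\cong\bb{Z}$) while $\widehat{\Pi_T}\cong T$, which gives $C_0(\Pi_T\times T)$. For $n\neq0$ I would first rewrite the twisted group algebra as an \emph{untwisted} crossed product. By Proposition \ref{Prop cocycle Sect.2} the restriction of (a cocycle representing) $n\tau$ to $T\times\Pi_T$ is $\bigl((t_1,m_1),(t_2,m_2)\bigr)\mapsto\kappa^{n\tau}(m_1)(t_2)$, which is trivial on $T$ and on $\Pi_T$ separately. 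Viewing $C_c(T\times\Pi_T)$ as $C_c(T)$-valued functions on $\Pi_T$ and taking the Fourier transform in the $T$-variable, $\hat f(\lambda,m)=\int_T f(t,m)\,\overline{\lambda(t)}\,dt$ with $\lambda\in\widehat T=\Lambda_T$, a direct computation turns the $n\tau$-twisted convolution into the ordinary convolution of $\Pi_T\ltimes C_0(\Lambda_T)$, where $\Pi_T$ acts on the discrete set $\Lambda_T$ by translation $\lambda\mapsto\lambda+\kappa^{n\tau}(m)$: the cross term $\kappa^{n\tau}(m)(t)$, being a character of $T$, Fourier-transforms to exactly this shift with nothing left over, and unitarity of the Fourier transform together with amenability matches the $C^*$-norms. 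Hence $C^*(T\times\Pi_T,n\tau)\cong\Pi_T\ltimes C_0(\Lambda_T)$.

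Now $\kappa^{n\tau}=n\kappa^\tau$ is injective for $n\neq0$ (the bilinear form induced by $\kappa^\tau$ is positive definite, so $\kappa^\tau$ is injective), hence the translation action is free, and a free action of a discrete group on a discrete set is proper. Decomposing $\Lambda_T$ into its $\kappa^{n\tau}(\Pi_T)$-orbits, indexed by $[\lambda]\in\Lambda_T/\kappa^{n\tau}(\Pi_T)$, and noting that each orbit is $\Pi_T$-equivariantly isomorphic to $\Pi_T$ with its translation action, the crossed product splits as $\bigoplus_{[\lambda]}\bigl(\Pi_T\ltimes C_0(\Pi_T)\bigr)\cong\bigoplus_{[\lambda]}\ca{K}(L^2(\Pi_T))$, using the identity $\Gamma\ltimes C_0(\Gamma)\cong\ca{K}(L^2(\Gamma))$ recorded in Section \ref{Subsection Cstar algebra}. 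Finally, $L^2$ of the orbit of $\lambda$ is naturally the underlying Hilbert space $V_{[\lambda]}=\bigoplus_{m\in\Pi_T}\bb{C}_{\lambda+\kappa^{n\tau}(m)}$ of the irreducible $n\tau$-twisted representation of $T\times\Pi_T$ attached to $[\lambda]$ (cf.\ Proposition \ref{Prop RLT Sect.2}), so $C^*(T\times\Pi_T,n\tau)\cong\bigoplus_{[\lambda]\in\Lambda_T/\kappa^{n\tau}(\Pi_T)}\ca{K}(V_{[\lambda]})$, as claimed.

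The two genuinely load-bearing steps, despite looking routine, are the following. First, one must check that the central-circle averaging projections $p_n$ are bounded for the $C^*$-norm and extend to orthogonal central projections on the completion whose ranges are exactly the $C^*$-completions of the $C_c(\cdots)(n)$ — i.e.\ that the \emph{algebraic} decomposition of $C_c$ passes to a genuine $C^*$-direct sum; this is precisely where compactness of the central $U(1)$ enters. Second, one must actually carry out the Fourier computation in the $n\neq0$ case and verify that after transforming the residual twist is trivial, so that the cross term of $n\tau$ is absorbed \emph{entirely} into the translation action. Everything after that is bookkeeping with the Morita-type facts already collected in Section \ref{Subsection Cstar algebra}.
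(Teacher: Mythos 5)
Your first step—splitting $C^*((T\times\Pi_T)^\tau)$ by Fourier analysis along the compact central circle—is essentially the paper's first step too; the paper phrases it as a Fourier series $f=\sum_k f_k(t,m)z^k$ and computes the convolution, whereas you phrase it via the averaging projections $p_n$, but both record the same grading over $\widehat{U(1)}=\bb{Z}$. After that, however, you genuinely diverge from the paper. For $n\neq 0$, the paper stays inside the twisted group algebra: it first classifies the irreducible $n\tau$-projective representations of $T\times\Pi_T$ (getting $V_{[\lambda]}$ as a sum of $T$-weight lines permuted by $\Pi_T$), then picks the ``good basis'' $\delta_m\theta^l$ of $C_c^\infty(T\times\Pi_T,n\tau)$, computes both the twisted convolution $\delta_n\theta^l*_k\delta_{n'}\theta^{l'}$ and the action $\pi_\rho(\delta_n\theta^l)$ on $V_{[\lambda]}$ by hand, and concludes that the image is $\ca{K}(V_{[\lambda]})$ by a density argument. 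You instead Fourier-transform in the $T$-variable to replace the twisted group algebra by the untwisted transformation group algebra $\Pi_T\ltimes C_0(\Lambda_T)$, where the cross term $\kappa^{n\tau}(m)(t)$ becomes the shift $\lambda\mapsto\lambda+\kappa^{n\tau}(m)$; then you decompose $\Lambda_T$ into free $\Pi_T$-orbits and invoke $\Gamma\ltimes C_0(\Gamma)\cong\ca{K}(L^2(\Gamma))$ orbit by orbit. Both routes are correct. Yours is more structural: it reuses the Morita-type facts from Section \ref{Subsection Cstar algebra}, gives the $n=0$ case (Pontryagin duality) for free, and makes visible that the action on $\Lambda_T$ being free for $n\neq 0$ is exactly what turns on the $\ca{K}$-decomposition. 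The paper's is more elementary in the sense of using no crossed-product machinery, and it simultaneously produces the explicit model $V_{[\lambda]}=\bigoplus_{m\in\Pi_T}\bb{C}_{\lambda+\kappa^{n\tau}(m)}$ together with the concrete action of the basis elements, which it reuses immediately afterwards to argue that the $\tau$-twisted index lands in the $k=1$ summand. If you want your argument to serve the same downstream purpose, you should also note that under your orbit identification $L^2(\Pi_T\cdot\lambda)\cong V_{[\lambda]}$ the crossed-product action coincides with the integrated form $\pi_\rho$ used in the corollary that follows.
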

\begin{rmk}
We notice that if $\sigma$ is nonzero, it is injective. This is due to the assumption that $T$ is a circle, and hence for general torus, we must modify the statement. However, in the case when P.E.R.s are nice, $\sigma$ is injective and $C^*(T\times\Pi_T,\sigma)$ has a similar description.
\end{rmk}
\begin{proof}
Since the proof is slightly long, we summarize the proof here. Firstly we verify the decomposition in the first line of the statement computing the convolution product explicitly. Secondly, we determine all representations of $T\times \Pi_T$ at each level. Lastly, we study each direct summands which we study in the first step using the representation theory developed in the second step. More precisely, we study the induced representations of $C_c((T\times\Pi_T)^\tau)$ from representations of $(T\times\Pi_T)^\tau$.
We omit the proof of the last statement related to Pontryagin dual, because we do not need it in this paper.

Let us carry out the first step. For $f\in C_c^\infty((T\times\Pi_T)^\tau)$, we can decompose it as $f=\sum f_k(t,m)z^k$ by using Fourier series with respect to the direction of the added center. We use the coordinate $t\in T$, $m\in\Pi_T$ and $z\in U(1)$ for $(T\times\Pi_T)^\tau=T\times \Pi_T\times U(1)$ which is an isomorphism not as groups but as spaces. Let us compute the convolution product $f^1*f^2$. We use the formula
$(t,m,z)(t',m',z')=(tt',mm',zz'\kappa^\tau(m)(t'))$.
\begin{eqnarray*}
f^1*f^2(t,m,z) &=& \int f^1(t',m',z')f^2(t'^{-1}t,m'^{-1}m,z'^{-1}z\kappa^\tau(m')(t't^{-1}))dt'dm'dz' \\
&=&\int \sum f^1_k(t',m')z'^{k}f^2_l(t'^{-1}t,m'^{-1}m)(z'^{-1}z\kappa^\tau(m')(t't^{-1})^{-1})^ldt'dm'dz' \\
&=& \int \sum f^1_k(t',m')f^2_l(t'^{-1}t,m'^{-1}m)z^lz'^{k-l}\bigl(\kappa^\tau(m')(t't^{-1})\bigr)^ldt'dm'dz' \\
&=& \int \sum   f^1_k(t',m')f^2_k(t'^{-1}t,m'^{-1}m)z^k\bigl(\kappa^\tau(m')(t't^{-1})\bigr)^kdt'dm' \\
&=:& \sum f^1_k*_{-k}f^2(t,m)z^k.
\end{eqnarray*}
It tells us that the weight decomposition of $C_c^\infty((T\times\Pi_T)^\tau)=\bigoplus C_c^\infty(T\times\Pi_T)\otimes z^{-k}$ is in fact a decomposition as algebras with the above twisted convolutions in $C_c^\infty (T\times\Pi_T)\otimes z^{-k}$. $C_c^\infty(T\times\Pi_T,k\tau)$ denotes the algebra $C_c^\infty (T\times\Pi_T)\otimes z^{-k}$ equipped the twisted convolution $*_k$. We choose $-k$ as the subscript for convenience.
We finish the first step of the statement.

Let us move to the second step. 
Recall that $\tau((t,m),(t',m'))=\kappa^\tau(m)(t')$. Let $(V,\rho)$ be an irreducible projective unitary representation. Since the restriction of $\rho$ to $T$ is an ordinary representation, we can decompose $V$ by the weight of $T$ as $V=\oplus V_n$. Let us choose a weight $\lambda$ such that $V_\lambda\neq0$ and take a unit vector $v_\lambda\in V_\lambda$. Let us determine $V_n$ which contains $\rho(m)(v_\lambda)$, by computing $\rho(t)\rho(m)(v_\lambda)$.
\begin{eqnarray*}
\rho(t)\rho(m)(v_\lambda) &=& \rho((t,m))(v_\lambda) \\
&=& \bigl(\kappa^\tau(m)(t)\bigr)^{-1}\rho(m)\rho(t)(v_\lambda) \\
&=& \bigl(\kappa^\tau(m)(t)\bigr)^{-1}\rho(m)\lambda(t)(v_\lambda) \\
&=& [\lambda-\kappa^\tau(m)](t)\rho(m)(v_\lambda).
\end{eqnarray*}
It tells us that $\rho(m):V_\lambda\to V_{\lambda-\kappa^\tau(m)}$. Therefore $V$ has an invariant sub space $\oplus_{m\in\Pi_T}\rho(m)\bb{C}v_\lambda$ and hence it coincides with $V$ since it is irreducible. Since $V$ is completely determined by the orbit $\lambda+\kappa^\tau(\Pi_T)$, we use the symbol $V_{[\lambda]}$ for such a representation. We finish the study of representations of $T\times\Pi_T$ at level $\tau$.

For the third step, recall the definition of the norm of the group $C^*$-algebra.
The norm of $C^*_{{\rm Max}}((T\times\Pi_T)^*)$ is defined by
$$\|f\|:=\sup_{\rho\in\widehat{(T\times\Pi_T)^\tau}}\{\|\pi_\rho(f)\|_{{\rm op}}\},$$
where $\pi_\rho$ is the natural representation of $C_c^\infty((T\times\Pi_T)^\tau)$ induced by $\rho$. $\widehat{(T\times\Pi_T)^\tau}$ denotes the set consisting of isomorphism classes of irreducible unitary representations of $(T\times\Pi_T)^\tau$.
Therefore, it is enough to know the induced representation to study the norm.

Before that, we can simplify the problem.
We claim that $C^\infty_c(T\times\Pi_T,l\tau)$ acts on $(V,\rho)$ at level $k$ trivially unless $k=l$. We verify that $\pi_\rho(f)(v)=0$ for any $f\in C^\infty_c(T\times\Pi_T,l\tau)$. We regard $f$ as a function from $T\times\Pi_T$ and extend to $(T\times\Pi_T)^\tau$ in the obvious manner.
\begin{eqnarray*}
\pi_\rho(f)(v) &=& \int f(t,m)z^{-l}\rho(t,m,z)(v)dtdmdz \\
&=& \int f(t,m)z^{k-l}\rho(t,m,1)(v)dtdmdz \\
&=& \int f(t,m)\rho(t,m,1)(v)dtdm\int z^{k-l}dz
\end{eqnarray*}
It is zero if $k-l\neq 0$. In fact, the twisted convolution algebra comes from such observation. It implies the following result:
$$\|f\|=\|\sum f_kz^k\|=\sup_k \{\|f_k\|\}.$$
Moreover, $\|f_k\|$ can be defined by using only representations at level $k$.
In short,
$$C^*((T\times\Pi_T)^\tau)\cong\bigoplus_{k\in\bb{Z}} C^*(T\times\Pi_T,k\tau)$$
as $C^*$-algebras.
Let us notice that $(T\times\Pi_T)^\tau$ is amenable and hence we do not need subscripts neither ${\rm Max}$ nor ${\rm min}$ for the group $C^*$-algebra.



Let us study the induced action of $C_c^\infty(T\times\Pi_T,k\tau)$ on $V_{[\lambda]}$. $C_c^\infty(T\times\Pi_T,k\tau)$ has a good basis $\{\delta_m\theta^l\}$ defined by
$\delta_n\theta^l(t,m):=\delta_{n,m}t^l$
for $n\in\Pi_T$ and $l\in\bb{Z}$, where $\delta_{n,m}$ is Kronecker's delta. If one deals with a general torus, one should replace $\theta^l$ with a character of $T$. We compute two objects; $\delta_n\theta^l*_{k}\delta_{n'}\theta^{l'}$ and $\pi_\rho(\delta_n\theta^l)(v)$.

Let us compute the twisted convolution in a similar technique in the above.
\begin{eqnarray*}
&&\delta_n\theta^l*_{k}\delta_{n'}\theta^{l'}(t,m) \\
&&\;\;\;\;\;\;\;\;\; =\int \delta_n\theta^l(t',m')\delta_{n'}\theta^{l'}(t'^{-1}t,m'^{-1}m)\kappa^\tau(m')(t't^{-1})^{-k}dt'dm' \\
&&\;\;\;\;\;\;\;\;\; =\int \delta_{n,m'}\delta_{n',m'^{-1}m}t'^{l-l'}t^{l'}\bigl(\kappa^\tau(m')(t^{-1}t')\bigr)^{-k}dt'dm' \\
&&\;\;\;\;\;\;\;\;\; =\delta_{n+n',m}\int\bigl( -k\kappa^\tau(n)+l-l')(t')dt' \\
&&\;\;\;\;\;\;\;\;\; =
\begin{cases}\delta_{n+n'}\theta^{l'-\kappa^\tau(n)}(t,m) & l=l'+k\kappa^\tau(n) \\
0 & \text{otherwise.} \end{cases}
\end{eqnarray*}
It tells us that if $l$ and $l'$ belong different orbits, the product is always $0$. Therefore we can decompose $C^*(T\times\Pi_T,-k\tau)$ as the direct sum
$$\ca{K}_{[\lambda]_k}:=\overline{C_c(\Pi_T)\otimes {\rm Span}_{\bb{C}}\{ \theta^l\mid l\in\lambda+k\kappa^\tau(\Pi_T)\}}$$
as {\bf algebras}, that is, the product of each element of $\ca{K}_{[\lambda]_k}$ and one of $\ca{K}_{[\lambda']_{k}}$ is always $0$ unless $[\lambda]_{k}=[\lambda']_{k}$.

Let us move to the calculation of $\pi_\rho(\delta_n\theta^l)(v)$ for $v\in V_\lambda$.
\begin{eqnarray*}
\pi_\rho(\delta_n\theta^l)(v) &=& \int \delta_n\theta^l(t,m)\rho((t,m))(v)dtdm \\
&=& \int \delta_{n,m}t^l [\lambda-k\kappa^\tau(m)](t)(\rho(m)(v)) dtdm \\ 
&=&\begin{cases}
\rho(m)(v) & \lambda-k\kappa^\tau(n)+l=0 \\
0 & \text{otherwise.} \end{cases}
\end{eqnarray*}
Therefore we can create any finite rank operators preserving the dense subspace obtained by the algebraic direct sum $\oplus^{{\rm alg}}V_{\lambda+k\kappa^\tau(n)}$. 
Since the linear space spanned by $\delta_n\theta^l$'s is dense in $C_c^\infty(T\times\Pi_T,k\tau)$, and the representation $\pi_\rho:C_c^\infty(T\times\Pi_T,k\tau)\to\ca{B}(V_{[\lambda]_k})$ is continuous, the image of the representation is contained in $\ca{K}(V_{[\lambda]_k})$. From the definition of the group $C^*$-algebra, the image of $C^*(T\times\Pi_T,k\tau)$ is the whole of $\ca{K}(V_{[\lambda]_k})$.
Therefore we obtain the isomorphism
$$\ca{K}_{[\lambda]_k}\cong\ca{K}(V_{[\lambda]_k}).$$

Combining all of them, we reach the conclusion.
\end{proof}

\begin{cor}
$$K_0(C^*(T\times\Pi_T,\tau))\cong \bigoplus_{\Lambda_T/\kappa^\tau(\Pi_T)}\bb{Z}.$$
It is isomorphic to $R^\tau(T\times\Pi_T)$, which is the Grothendieck completion of the semi group consisting of $\tau$-twisted representations of $T\times\Pi_T$.
\end{cor}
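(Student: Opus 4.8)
The plan is to read off the corollary from the structure theorem for $C^*(T\times\Pi_T,\tau)$ proved immediately above, combined with two standard facts about $K$-theory of $C^*$-algebras: $K_0$ is additive and continuous, so that $K_0\bigl(\bigoplus_i A_i\bigr)\cong\bigoplus_i K_0(A_i)$, and $K_0(\ca{K}(\ca{H}))\cong\bb{Z}$ for every separable Hilbert space $\ca{H}$, with canonical generator the class of any rank-one projection (equivalently, by the Morita equivalence $\ca{K}(\ca{H})\sim\bb{C}$ of Proposition \ref{Prop Morita Sect 2}, the class corresponding to the standard module $\ca{H}$).

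First I would invoke the isomorphism $C^*(T\times\Pi_T,\tau)\cong\bigoplus_{[\lambda]\in\Lambda_T/\kappa^\tau(\Pi_T)}\ca{K}(V_{[\lambda]})$ and apply $K_0$. Additivity and continuity give
$$K_0\bigl(C^*(T\times\Pi_T,\tau)\bigr)\cong\bigoplus_{[\lambda]\in\Lambda_T/\kappa^\tau(\Pi_T)}K_0\bigl(\ca{K}(V_{[\lambda]})\bigr)\cong\bigoplus_{\Lambda_T/\kappa^\tau(\Pi_T)}\bb{Z},$$
which is the first assertion. Here each $V_{[\lambda]}=\bigoplus_{n\in\Pi_T}\bb{C}v_{\lambda+\kappa^\tau(n)}$ from the second step of the previous proof, so $\ca{K}(V_{[\lambda]})$ is an algebra of compact operators on a (separable) infinite-dimensional space and its $K_0$ is computed in the usual way. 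For the second assertion I would identify $R^\tau(T\times\Pi_T)$ with $\bigoplus_{\Lambda_T/\kappa^\tau(\Pi_T)}\bb{Z}$ as well: the classification in the second step of the previous proof shows that the irreducible $\tau$-twisted representations of $T\times\Pi_T$ are exactly the $V_{[\lambda]}$, indexed by the orbits $[\lambda]\in\Lambda_T/\kappa^\tau(\Pi_T)$ — this is the $T\times\Pi_T$-analogue of Proposition \ref{Prop RLT Sect.2} — so the Grothendieck group $R^\tau(T\times\Pi_T)$ is the free abelian group on this orbit set. Composing the two identifications yields $K_0\bigl(C^*(T\times\Pi_T,\tau)\bigr)\cong R^\tau(T\times\Pi_T)$.

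The main, and quite modest, obstacle is simply to make the last isomorphism canonical rather than a coincidence of ranks, i.e.\ to check that the generator of the $[\lambda]$-summand of $K_0$ corresponds to the class $[V_{[\lambda]}]$. This follows because the representation $\pi_{\rho_{[\lambda]}}$ realises the summand $\ca{K}_{[\lambda]}\cong\ca{K}(V_{[\lambda]})$ acting on its standard module $V_{[\lambda]}$ (as shown in the third step of the previous proof), so under the Morita equivalence $\ca{K}(V_{[\lambda]})\sim\bb{C}$ a rank-one projection in $\ca{K}_{[\lambda]}$ maps to the class $[V_{[\lambda]}]\in R^\tau(T\times\Pi_T)$; equivalently, the corner of $C^*(T\times\Pi_T,\tau)$ cut by such a projection is $\bb{C}$ with module $V_{[\lambda]}$. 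This is bookkeeping, not an analytic difficulty, but it is the step that actually pins down the isomorphism, and I would write it out explicitly.
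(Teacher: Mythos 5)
Your proposal is correct and takes exactly the route the paper intends: the corollary is read off directly from the structure theorem $C^*(T\times\Pi_T,\tau)\cong\bigoplus_{[\lambda]}\ca{K}(V_{[\lambda]})$ using additivity of $K_0$ and $K_0(\ca{K}(\ca{H}))\cong\bb{Z}$, while the identification with $R^\tau(T\times\Pi_T)$ follows from the classification of irreducible $\tau$-twisted representations by orbits $[\lambda]\in\Lambda_T/\kappa^\tau(\Pi_T)$ established in the proof of the preceding proposition. Your closing remark about tracking the generator of the $[\lambda]$-summand to the class $[V_{[\lambda]}]$ under the Morita equivalence is the right way to make the stated isomorphism canonical, and is implicit in the paper's treatment.
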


The action of $\pi_\rho$ computed above tells us that each summand $C^*(T\times\Pi_T,n\tau)$ for $n\neq1$ acts on a $\tau$-twisted representation space $V_\lambda$ trivially for $\tau$-twisted representation $V_{[\lambda]}$. Since our index $\ker(\cancel{D}^+)-\ker(\cancel{D}^-)$ is $\tau$-twisted, it is an element of the direct summand $K_0(C^*(T\times\Pi_T,\tau))$. We reach the following desired result.
\begin{cor}
$$\ind(\cancel{D})\in K_0(C^*(T\times\Pi_T,\tau))\cong R^\tau(T\times\Pi_T).$$
\end{cor}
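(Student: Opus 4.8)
The plan is to combine Kasparov's theorem (Theorem~\ref{Thm Kas Sect.5}) with the structural description of $C^*((T\times\Pi_T)^\tau)$ established just above, and then to feed the output into the preceding Corollary. Since we have already checked that $(T\times\Pi_T)^\tau$ acts on $\widetilde{M}$ properly and cocompactly, Theorem~\ref{Thm Kas Sect.5} applies: $\cancel{D}$ is $C^*((T\times\Pi_T)^\tau)$-Fredholm, so it carries a well-defined index $\ind(\cancel{D})=[\ker(\cancel{D}^+)]-[\ker(\cancel{D}^-)]\in K_0(C^*((T\times\Pi_T)^\tau))$. The isomorphism $C^*((T\times\Pi_T)^\tau)\cong\bigoplus_{n\in\bb{Z}}C^*(T\times\Pi_T,n\tau)$ proved above induces a splitting
$$K_0\bigl(C^*((T\times\Pi_T)^\tau)\bigr)\cong\bigoplus_{n\in\bb{Z}}K_0\bigl(C^*(T\times\Pi_T,n\tau)\bigr),$$
so it suffices to show that $\ind(\cancel{D})$ is concentrated in the $n=1$ summand; the identification $K_0(C^*(T\times\Pi_T,\tau))\cong R^\tau(T\times\Pi_T)$ of the Corollary just proved then gives the statement.

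The key point I would record next is that the representation governing this index is at level $\tau$. Indeed $\cancel{D}$ acts on $L^2(\widetilde{M},\ca{L}\otimes S(T\widetilde{M}))$, on which $(T\times\Pi_T)^\tau$ acts through the quotient $(T\times\Pi_T)^\tau\to T\times\Pi_T$ on $\widetilde{M}$, trivially on $S(T\widetilde{M})$ (which is pulled back from the compact model), and by the standard weight-one character of the added circle on $\ca{L}$, by the standing hypothesis on $\ca{L}$. Equivalently, the associated covariant representation of $C^*((T\times\Pi_T)^\tau)$ factors through the summand $C^*(T\times\Pi_T,\tau)$. By the computation in the proof of the structure Proposition --- that $C^\infty_c(T\times\Pi_T,\ell\tau)$ annihilates every level-$k$ representation unless $k=\ell$ --- each summand $C^*(T\times\Pi_T,n\tau)$ with $n\neq 1$ acts by zero on $L^2(\widetilde{M},\ca{L}\otimes S(T\widetilde{M}))$, hence on the closed invariant subspaces $\ker(\cancel{D}^{\pm})$.

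Finally I would conclude as follows. The direct-sum decomposition of $C^*((T\times\Pi_T)^\tau)$ is central --- it is implemented by the projections onto the $U(1)$-weight spaces of the added circle --- and $\cancel{D}$, being $(T\times\Pi_T)^\tau$-equivariant, commutes with all of these projections; hence the entire Kasparov construction is block-diagonal over $n\in\bb{Z}$ and $\ind(\cancel{D})$ is the sum of the contributions of the blocks. All blocks with $n\neq 1$ operate on zero Hilbert modules and contribute nothing, so $\ind(\cancel{D})\in K_0(C^*(T\times\Pi_T,\tau))$, and combining with $K_0(C^*(T\times\Pi_T,\tau))\cong R^\tau(T\times\Pi_T)$ yields the claim. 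The only point deserving care is the compatibility of Kasparov's abstractly defined index with this splitting of the coefficient algebra; but because the splitting is central and $\cancel{D}$ commutes with the group action this is automatic --- one may equally say from the outset that the covariant representation used to build the index takes values in the summand $C^*(T\times\Pi_T,\tau)$, so the kernel and cokernel modules are modules over that summand and $\ind(\cancel{D})$ is tautologically the image of a class in $K_0(C^*(T\times\Pi_T,\tau))$ under the split inclusion. Thus there is essentially no obstacle beyond bookkeeping; all the genuine work sits in the structure Proposition that precedes the statement.
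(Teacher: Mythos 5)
Your proposal follows the same route as the paper: invoke Kasparov's theorem to place $\ind(\cancel{D})$ in $K_0(C^*((T\times\Pi_T)^\tau))$, use the direct-sum decomposition into levels, and observe that the kernel and cokernel are $\tau$-twisted so only the level-one summand acts nontrivially. You spell out the centrality of the decomposition and the compatibility with the Kasparov machinery more explicitly than the paper does, but the core argument is identical.
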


\subsection{$C^*(\Omega T_0,\tau)$, $C^*(LT,\tau)$ and its noncommutative topology}
We also use symbols in the previous section here. We construct $C^*(\Omega T_0,\tau)$ by the method of inductive limit. To carry it out, we use the following.
\begin{thm}
$$C^*(U_N,\tau)\cong \ca{K}(L^2(\bb{R}^N)).$$
\end{thm}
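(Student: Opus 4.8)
\textit{Proof proposal.} Regarded additively, $U_N$ is the $2N$-dimensional real vector space $\Lie(U_N)$, and by construction $\tau(v_1,v_2)=\exp\bigl(i\omega(v_1,v_2)\bigr)$ with $\omega$ the \emph{nondegenerate} symplectic form on $U_N$. Thus $C^*(U_N,\tau)$ is a twisted group $C^*$-algebra of Heisenberg type, and the assertion is a $C^*$-algebraic incarnation of the Stone--von Neumann theorem. The plan is threefold: realise the $\tau$-twisted convolution algebra $C_c^\infty(U_N)$ inside $\ca{B}(L^2(\bb{R}^N))$ through the Schr\"odinger representation already constructed in Section \ref{Section Hilbert}; identify the twisted group $C^*$-norm with the operator norm of that representation, using amenability of $U_N$ and the matrix-coefficient description of $L^2(U_N)$; and finally show that the resulting norm-closed $*$-algebra of operators is precisely $\ca{K}(L^2(\bb{R}^N))$.

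For the first two points, let $\pi_N$ be the integrated form of the irreducible P.E.R. $\rho_N$ on $\widehat{S}\bigl((U_N\otimes\bb{C})_+\bigr)\cong L^2(\bb{R}^N)$, that is, $\pi_N(f)=\int_{U_N}f(v)\,\rho_N(v)\,dv$ for $f\in C_c^\infty(U_N)$, with Lebesgue measure normalised by the metric $g$ (the normalisation does not affect the isomorphism type). Since $\rho_N$ is a $\tau$-twisted unitary representation, $\pi_N$ is a $*$-representation of the twisted convolution algebra. Now $U_N$ is abelian, hence amenable, so by the remarks at the end of Subsection \ref{Subsection Cstar algebra} the full and reduced twisted group $C^*$-algebras agree and the norm on $C_c^\infty(U_N)$ is computed by the $\tau$-twisted left regular representation $L$ on $L^2(U_N)$. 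By the matrix-coefficient isomorphism established in Section \ref{Section Hilbert}, the unitary $\Psi\colon L^2(\bb{R}^N)^*\otimes L^2(\bb{R}^N)\to L^2(U_N)$ satisfies $\Psi\circ(\id\otimes\rho_g)=L_g\circ\Psi$; integrating against $f$ gives $\Psi\circ(\id\otimes\pi_N(f))=L_f\circ\Psi$, so $\|f\|_{C^*(U_N,\tau)}=\|L_f\|_{\op}=\|\pi_N(f)\|_{\op}$. Hence $C^*(U_N,\tau)$ is isometrically isomorphic to the norm-closure of $\pi_N\bigl(C_c^\infty(U_N)\bigr)$ in $\ca{B}(L^2(\bb{R}^N))$.

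It remains to identify that closure with $\ca{K}(L^2(\bb{R}^N))$, which I would do by two inclusions. For ``$\subseteq$'': using the explicit Schr\"odinger formulas $d\rho_N(\cosi{k})=\partial/\partial x_k$ and $d\rho_N(\sine{k})=2ix_k$, each $\rho_N(v)$ is a Weyl unitary, so for Schwartz $f$ the operator $\pi_N(f)$ is an integral operator whose kernel is, up to a symplectic Fourier transform, as regular as $f$; in particular $\pi_N(f)$ is Hilbert--Schmidt, hence compact, and since $\ca{K}(L^2(\bb{R}^N))$ is norm-closed we get $\overline{\pi_N(C_c^\infty(U_N))}\subseteq\ca{K}(L^2(\bb{R}^N))$. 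For ``$\supseteq$'': $\pi_N$ is topologically irreducible, since (by strong continuity of $\rho_N$) its commutant coincides with the commutant of $\{\rho_N(v)\}_{v\in U_N}$, which is $\bb{C}$ by irreducibility of the Schr\"odinger representation $\rho_N$; and $\pi_N\bigl(C_c^\infty(U_N)\bigr)$ contains a nonzero compact operator, e.g. for a suitable Gaussian $f$ one computes $\pi_N(f)$ to be a nonzero finite-rank operator attached to the lowest-weight vector $\Omega$. By the standard fact that an irreducible $C^*$-subalgebra of $\ca{B}(\ca{H})$ containing a nonzero compact operator must contain all of $\ca{K}(\ca{H})$, we obtain $\ca{K}(L^2(\bb{R}^N))\subseteq\overline{\pi_N(C_c^\infty(U_N))}$. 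The two inclusions give $C^*(U_N,\tau)\cong\ca{K}(L^2(\bb{R}^N))$.

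The conceptual heart is Stone--von Neumann uniqueness, but the matrix-coefficient decomposition of $L^2(U_N)$ proved in Section \ref{Section Hilbert} already packages exactly that statement, so no new representation-theoretic input is needed. The genuine obstacle is the concrete bookkeeping in the last step: pinning down the Schr\"odinger representation explicitly enough to see that $\pi_N(f)$ is honestly an integral operator with a square-integrable kernel, and checking irreducibility carefully so that the ``nonzero compact implies all compacts'' lemma applies. These computations are routine, but they are where the actual work lies.
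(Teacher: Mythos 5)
Your proof is correct, and it covers the same ground as the paper's proof (explicit Schr\"odinger representation, density of $\pi_N(\mathscr{S}(U_N))$ in $\ca{K}$, and the ``twisted group $C^*$-norm equals $\|\pi_N(\cdot)\|$'' reduction) but handles the two key steps rather differently. For the inclusion $\overline{\pi_N(C_c^\infty(U_N))}\subseteq\ca{K}$ you argue abstractly that Schwartz symbols give Hilbert--Schmidt integral operators; the paper instead singles out the family ``polynomial times Gaussian,'' shows the pure Gaussian $e^{-\frac{1}{2}|g|^2}$ goes to a rank-one projection onto the lowest-weight vector, and propagates this through the correspondence of creation/annihilation operators on $\mathscr{S}(U_1)$ and on $L^2(\bb{R})$, getting \emph{finite-rank} operators directly. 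For the reverse inclusion you invoke the standard fact that an irreducible $C^*$-subalgebra containing one nonzero compact contains all of $\ca{K}$; the paper's polynomial-times-Gaussian computation instead produces a dense family of finite-rank operators explicitly, so it never needs the irreducibility lemma. Finally, you spell out carefully why $\|f\|_{C^*(U_N,\tau)}=\|\pi_N(f)\|_{\op}$ (amenability of $U_N$ plus the intertwining $\Psi\circ(\id\otimes\rho_g)=L_g\circ\Psi$ from the matrix-coefficient isomorphism), whereas the paper simply appeals to the uniqueness of the irreducible P.E.R.\ and asserts that $C^*(U_1,\tau)$ ``is defined through $\pi_\rho$''; your version is a legitimate and welcome fleshing-out of that brisk step. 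On balance your argument is somewhat more abstract and self-contained, while the paper's is more computational and, as a bonus, exhibits explicit finite-rank generators (which it then reuses when building the connecting maps $U_N\ltimes_\tau C_0(U_N)\to U_{N+1}\ltimes_\tau C_0(U_{N+1})$ via the rank-one projection $P$).
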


Therefore we can construct an inductive system $\cdots \to C^*(U_N,\tau)\to C^*(U_{N+1},\tau)\to\cdots$ by tensoring with $P$ which is the projection onto the $1$-dimensional space spanned by the lowest weight vector.
\begin{proof}
For simplicity, we verify the result when $N=1$. We can use an explicit description of representation to study the action of the $C^*$-algebra. Let $g=(a,b)\in U_1$, $\phi\in L^2(\bb{R})$ and $x\in\bb{R}$. 
The representation $\rho$ is given by
$$\rho_g(\phi)(x):=e^{i(2bx+ab)}\phi(x+a)$$
and hence the action of a rapidly decreasing $f$ is given by
$$\pi_\rho(f)(\phi)(x):=\int f(a,b)e^{i(2bx+ab)}\phi(x+a)dadb.$$
Let us verify that a certain dense subspace of $\mathscr{S}(U_1)$ is mapped to $\ca{K}(L^2(\bb{R}))$ and its image is also dense, where $\mathscr{S}(U_1)$ is the space of rapidly decreasing functions. If it is true,  $C^*(\Omega T_0,\tau)$ is $\ca{K}(L^2(\bb{R}))$ itself, since $C^*(U_1,\tau)$ is defined through the representation $\pi_\rho$ induced from the unique irreducible representation $\rho$ of $U_1$.

\begin{lemma}
Each of functions of the form
$$(\text{polynomial of }g)e^{-\frac{1}{2}|g|^2}$$
acts on $L^2(\bb{R})$ as a finite rank operator preserving finite energy vectors.
\end{lemma}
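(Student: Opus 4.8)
The plan is to reduce the lemma to a single Gaussian integral and then generate all polynomial prefactors by intertwining with the Lie algebra action. First I would compute $\pi_\rho$ on the pure Gaussian $f(a,b)=e^{-\frac12(a^2+b^2)}$. Substituting into $\pi_\rho(f)(\phi)(x)=\int f(a,b)e^{i(2bx+ab)}\phi(x+a)\,da\,db$, doing the $b$-integral first by completing the square ($\int e^{-\frac12 b^2+i(2x+a)b}\,db=\sqrt{2\pi}\,e^{-\frac12(2x+a)^2}$) and then the $a$-integral after the substitution $y=x+a$, the exponent collapses to $-(x^2+y^2)$, so that
$$\pi_\rho\bigl(e^{-\frac12|g|^2}\bigr)(\phi)(x)=\sqrt{2\pi}\,e^{-x^2}\int e^{-y^2}\phi(y)\,dy=\sqrt{2\pi}\,\innpro{\phi}{e^{-x^2}}{}\,e^{-x^2}.$$
Hence $\pi_\rho(e^{-\frac12|g|^2})$ is a nonzero rank-one operator with range $\bb{C}\,e^{-x^2}=\bb{C}\Omega$, where $\Omega$ is the lowest weight vector; in particular it is finite rank and carries finite energy vectors to finite energy vectors.

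Next, by linearity it suffices to treat a monomial $a^mb^n e^{-\frac12|g|^2}$, which is obtained from $e^{-\frac12|g|^2}$ by repeated multiplication by the coordinate functions $a$ and $b$. The key point is that, under $\pi_\rho$, multiplication of a function $F$ on $U_1$ by a coordinate function corresponds to composing $\pi_\rho(F)$ with $d\rho$ of a fixed element of $\Lie(U_1)$ on the left and on the right. Indeed, the twisted translations satisfy $\pi_\rho(L_hF)=\rho_h\,\pi_\rho(F)$ and, similarly, $\pi_\rho(R_hF)=\pi_\rho(F)\,\rho_h^{-1}$, so differentiating gives $\pi_\rho(dL(X)F)=d\rho(X)\pi_\rho(F)$ and $\pi_\rho(dR(X)F)=-\pi_\rho(F)d\rho(X)$. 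Combining this with the formulas for $dR(e_k)F$ and $dL(e_k)F$ proved in the preceding lemma — which let one solve multiplication by $a$ and by $b$ as $\frac{i}{2}\bigl(dR(e_2)+dL(e_2)\bigr)$ and $\frac{1}{2i}\bigl(dR(e_1)+dL(e_1)\bigr)$ respectively — one obtains relations of the form
$$\pi_\rho(a\cdot F)=c_1\,d\rho(e_2)\pi_\rho(F)+c_2\,\pi_\rho(F)d\rho(e_2),\qquad \pi_\rho(b\cdot F)=c_1'\,d\rho(e_1)\pi_\rho(F)+c_2'\,\pi_\rho(F)d\rho(e_1).$$

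Iterating these relations starting from the Gaussian, $\pi_\rho\bigl(P(g)e^{-\frac12|g|^2}\bigr)$ becomes a finite linear combination of operators of the form $d\rho(X_1)\cdots d\rho(X_r)\,\pi_\rho(e^{-\frac12|g|^2})\,d\rho(Y_1)\cdots d\rho(Y_s)$ with $X_i,Y_j\in\{e_1,e_2\}$. Since $d\rho(e_1)=\partial_x$ and $d\rho(e_2)=2ix$ are first order in $x$ and $\partial_x$, they carry finite energy vectors to finite energy vectors and are skew-adjoint; hence each such term equals $|u\rangle\langle v|$ with $u=d\rho(X_1)\cdots d\rho(X_r)\Omega$ and $v=\pm\,d\rho(Y_s)\cdots d\rho(Y_1)\Omega$, both finite energy. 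A finite sum of such rank-one operators is finite rank and maps the finite energy subspace into itself, which is exactly the claim.

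The only genuinely delicate point is the bookkeeping in the intertwining step: tracking the effect of the $2$-cocycle $\tau$ (near the identity it contributes only corrections already encoded in the Lie algebra relations, which is why the twisted $L,R$ were set up this way) and the unbounded domains of the $d\rho(e_k)$ (harmless, since everything is evaluated on the algebraic span of finite energy vectors). If one prefers to sidestep this, the same conclusion follows from orthogonality relations: for finite energy $\phi,\psi$ the function $g\mapsto\innpro{\phi}{\rho_{-g}\psi}{}$ is precisely a polynomial times $e^{-\frac12|g|^2}$ (the computation above is the case $\phi=\psi=\Omega$), Schur orthogonality for the square-integrable projective representation $\rho$ identifies $\pi_\rho$ of such a function with a multiple of $|\psi\rangle\langle\phi|$, and a triangularity count with respect to the energy grading shows that these matrix coefficients span all of $\{P(g)e^{-\frac12|g|^2}\}$.
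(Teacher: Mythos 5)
Your argument is correct and matches the paper's approach: you first verify by completing the square that $\pi_\rho\bigl(e^{-\frac12|g|^2}\bigr)$ is a rank-one operator onto $\bb{C}\Omega$, then absorb polynomial prefactors via the intertwining between multiplication/differentiation in the $g$-variable and the Lie algebra operators $d\rho(e_k)$ acting on $L^2(\bb{R})$. The paper obtains the same intertwining by differentiating the integral formula directly (its sample identity is $\frac{d}{dx}\pi_\rho(f)=\pi_\rho\bigl(-\tfrac{\partial f}{\partial a}+ibf\bigr)$) and leaves the remaining relations ``as a game''; you make that bookkeeping explicit through $dL$ and $dR$ and also sketch a Schur-orthogonality alternative, but the underlying strategy is the same.
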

Let us verify it. One can verify that $e^{-\frac{1}{2}|g|^2}$ is a rank one projection onto the space spanned by the lowest weight vector up to constant by computation. Creation/annihilation operators acting on $\mathscr{S}(U_1)$ and ones acting on $L^2(\bb{R})$ are related to one another as follows. We show only one example and the readers can enjoy the game to verify the rest.
\begin{eqnarray*}
\frac{d}{d x}\pi_\rho(f)(\phi)(x) &=& \frac{d}{dx}\int f(a-x,b)e^{i(bx+ab)}\phi(a)dadb \\
&=& \int \bigl( -\frac{\partial f}{\partial a}(a-x,b)+ibf(a-x,b)\bigr)e^{i(bx+ab)}\phi(a)dadb \\
&=& \pi_\rho(-\frac{\partial f}{\partial a}+ibf)(\phi(x).
\end{eqnarray*}

\end{proof}
The above implies that
$$C^*(\Omega T_0,\tau):=\varinjlim C^*(U_N,\tau)\cong\varinjlim \ca{K}(L^2(\bb{R}^N))\cong \ca{K}(L^2(\bb{R}^\infty)).$$

Let us define the twisted group $C^*$-algebra of $LT$ by the limit
$$C^*(LT,\tau):=\varinjlim C^*(T\times\Pi_T\times U_N,\tau)$$
and it is isomorphic to $C^*(T\times\Pi_T,\tau)\otimes C^*(\Omega T_0,\tau)\cong C^*(T\times\Pi_T,\tau)\otimes\ca{K}(L^2(\bb{R}^\infty)).$
We have already known the structure of it. In particular, we can compute its $K$-group easily.
\begin{cor}
$$K_0(C^*(LT,\tau))\cong \bigoplus_{[\lambda]\in\Lambda_T/\kappa^\tau(\Pi_T)}\bb{Z}\cong R^\tau(LT).$$
\end{cor}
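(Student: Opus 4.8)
The plan is to read the $K$-theory off the structural isomorphism $C^*(LT,\tau)\cong C^*(T\times\Pi_T,\tau)\otimes\ca{K}(L^2(\bb{R}^\infty))$ obtained just above, using only two standard properties of operator $K$-theory: it is stable (unaffected by tensoring with the compacts) and continuous (it commutes with inductive limits of $C^*$-algebras). Granting these, $K_0(C^*(LT,\tau))\cong K_0(C^*(T\times\Pi_T,\tau))$; the Corollary above computing $K_0(C^*(T\times\Pi_T,\tau))$ identifies the latter with $\bigoplus_{[\lambda]\in\Lambda_T/\kappa^\tau(\Pi_T)}\bb{Z}$; and Proposition \ref{Prop RLT Sect.2} gives $R^\tau(LT)\cong\bigoplus_{[\lambda]\in\Lambda_T/\kappa^\tau(\Pi_T)}\bb{Z}$, so both asserted isomorphisms follow.

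To make the reduction precise I would work with the inductive system $C^*(T\times\Pi_T\times U_N,\tau)\cong C^*(T\times\Pi_T,\tau)\otimes\ca{K}(L^2(\bb{R}^N))$, whose connecting maps act as the identity on the first factor and by $a\mapsto a\otimes P$ on the second, with $P$ the rank-one projection onto the lowest weight vector. Under $C^*(U_N,\tau)\cong\ca{K}(L^2(\bb{R}^N))$ this second map is, up to unitary equivalence, the standard corner embedding $\ca{K}\hookrightarrow\ca{K}$, which induces the identity on $K_0\cong\bb{Z}$. Hence, by continuity, $K_*(C^*(LT,\tau))=\varinjlim K_*\bigl(C^*(T\times\Pi_T,\tau)\otimes\ca{K}(L^2(\bb{R}^N))\bigr)$, and by stability each term is $K_*(C^*(T\times\Pi_T,\tau))$ with all transition maps the identity, so $K_1=0$ and $K_0(C^*(LT,\tau))\cong K_0(C^*(T\times\Pi_T,\tau))$. (Equivalently one first notes $K_0(C^*(\Omega T_0,\tau))=\varinjlim K_0(\ca{K}(L^2(\bb{R}^N)))=\bb{Z}$ and then applies stability once more.) Invoking the $K_0(C^*(T\times\Pi_T,\tau))$ Corollary then finishes the first isomorphism.

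The step needing genuine care — and the main obstacle — is that the identification $K_0(C^*(LT,\tau))\cong R^\tau(LT)$ should be the canonical one rather than a coincidence of ranks. I would check that the generator of $K_0$ indexed by $[\lambda]$, i.e.\ the class of a minimal projection in the summand $\ca{K}(V_{[\lambda]})\otimes\ca{K}(L^2(\bb{R}^\infty))$ of $C^*(LT,\tau)$, corresponds to the irreducible P.E.R.\ of $LT$ at level $\tau$ described after Proposition \ref{Prop cocycle Sect.2}, namely $\bigoplus_{n\in\Pi_T}V_\ca{H}\otimes\bb{C}_{\lambda+\kappa^\tau(n)}$. This is forced: $V_{[\lambda]}$ is the unique irreducible module of $\ca{K}(V_{[\lambda]})$, carrying exactly the $T\times\Pi_T$-action computed in the proof of the structure Proposition for $C^*((T\times\Pi_T)^\tau)$; $L^2(\bb{R}^\infty)$ is simultaneously the unique irreducible module of $\ca{K}(L^2(\bb{R}^\infty))$ and, as $V_\ca{H}$, the unique irreducible P.E.R.\ of $\Omega T_0$ at level $\tau$; and the module attached to a minimal projection in a tensor product of compacts is the outer tensor product of the two modules. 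Assembling these, the equivalence between $\tau$-twisted representations of $T\times\Pi_T$ and P.E.R.s of $LT$ at level $\tau$ matches the two descriptions through the common index set $\Lambda_T/\kappa^\tau(\Pi_T)$, which gives the required naturality and completes the proof.
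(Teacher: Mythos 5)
Your proof is correct and follows the same route the paper intends: read the $K$-theory off the structural isomorphism $C^*(LT,\tau)\cong C^*(T\times\Pi_T,\tau)\otimes\ca{K}(L^2(\bb{R}^\infty))$ via stability and continuity of $K$-theory, then invoke the preceding corollary on $K_0(C^*(T\times\Pi_T,\tau))$ together with Proposition~\ref{Prop RLT Sect.2}. The paper dispenses with the argument in one line (``we can compute its $K$-group easily''); your final paragraph checking that the resulting identification with $R^\tau(LT)$ is canonical, by matching minimal projections to irreducible P.E.R.s through the common index set $\Lambda_T/\kappa^\tau(\Pi_T)$, is a welcome strengthening the paper leaves implicit.
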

It is a ``$C^*$-algebraic proof'' of Proposition \ref{Prop RLT Sect.2}.

\section{The function algebra and the spectral triple}\label{Section Algebra}
Firstly, we construct a $C^*$-algebra which we regard as ``$LT\ltimes_\tau C_0(\ca{M})$.'' As a result, it is the tensor product of $(T\times\Pi_T)\ltimes_\tau C_0(\widetilde{M})$ with $\ca{K}(L^2(\Omega T_0))$. However, our construction is not just a tensor product, but an inductive limit explained in Section \ref{Section simplification}.

Secondly, we combine the algebra with the construction in Section \ref{Section Hilbert} in terms of spectral triples following \cite{CGRS}; we study summability of the spectral triple. We find that our spectral triple is not finitely summable. It tells us that our triple comes from an infinite dimensional object in the sense of noncommutative geometry.

\subsection{Infinite dimensional crossed products}
$L^2(U_{N})$ is one of the most fundamental representation spaces of the $C^*$-algebra $C_0(U_N)$, which acts on by the pointwise multiplication. Moreover, $U_N$ acts on this Hilbert space projectively. More precisely, the action of $g\in U_N$ on $\phi\in L^2(U_N)$ is defined by the formula which we have already used sometimes:
$$[g.\phi](x):=\phi(x-g)\tau(g,x).$$
Clearly, it is a $\tau$-twisted $U_N$-$C_0( U_N)$-bimodule by the following computation.
Let $g,x\in U_N$, $f\in C_0( U_N)$ and $\phi\in L^2( U_N)$.
\begin{eqnarray*}
[g.(f\cdot\phi)](x) &=& (f\cdot \phi)(x-g)\tau(g,x) \\
&=& f(x-g)\cdot\bigl(\phi(x-g)\tau(g,x)\bigr) \\
&=& [g.f](x)[g.\phi](x) \\
&=&(g.f)\cdot (g.\phi)(x)
\end{eqnarray*}
Notice that the action of $U_N$ on $C_0(U_N)$ is not twisted.
\begin{thm}
The above action defines an isomorphism
$$ U_N\ltimes_\tau C_0( U_N)\cong \ca{K}(L^2( U_N)).$$
Using the lowest weight vector, we can define a natural rank $1$-projection $P$ and we can define a homomorpshism 
$$ U_N\ltimes_\tau C_0( U_N)\to U_{N+1}\ltimes_\tau C_0( U_{N+1})$$
by the formula 
$$k\mapsto k\otimes P$$
under the isomorphism $L^2( U_{N+1})\cong L^2( U_N)\otimes L^2(U_1)$.
\end{thm}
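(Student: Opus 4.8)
The plan is to make the defining covariant representation $(M,\rho)$ of $U_N\ltimes_\tau C_0(U_N)$ on $L^2(U_N)$ explicit — $M$ being multiplication by $C_0(U_N)$ and $\rho$ the $\tau$-twisted translation $[\rho_g\phi](x)=\phi(x-g)\tau(g,x)$ — then to compute its integrated form $\pi:=M\rtimes\rho$, recognise it as an integral operator, and read off from the kernel that $\pi$ is an isomorphism onto $\ca{K}(L^2(U_N))$. Since $U_N$ is a finite dimensional vector space it is amenable, so the maximal and reduced twisted crossed products coincide and I may work with a single completion and a single covariant representation; and $\pi$, being the integrated form of a covariant representation, automatically extends to a $*$-homomorphism on $U_N\ltimes_\tau C_0(U_N)$.

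First I would unwind the integrated form: for $f\in C_c(U_N,C_0(U_N))$, $\phi\in L^2(U_N)$ and $x\in U_N$,
\begin{eqnarray*}
(\pi(f)\phi)(x) &=& \int_{U_N}f(g)(x)\,[\rho_g\phi](x)\,dg \;=\; \int_{U_N}f(g)(x)\,\phi(x-g)\,\tau(g,x)\,dg \\
&=& \int_{U_N}f(x-y)(x)\,\tau(x-y,x)\,\phi(y)\,dy,
\end{eqnarray*}
using the translation-invariant substitution $y=x-g$. Hence $\pi(f)$ is the integral operator with kernel $K_f(x,y):=f(x-y)(x)\,\tau(x-y,x)\in C_c(U_N\times U_N)$, which is Hilbert--Schmidt, in particular compact. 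Because $\lvert\tau\rvert\equiv 1$ and $(g,x)\mapsto(x-y,x)$ is a diffeomorphism of $U_N\times U_N$, the kernels $K_f$ sweep out a dense subspace of $L^2(U_N\times U_N)$ as $f$ varies; so $\pi$ has dense range in $\ca{K}(L^2(U_N))$, and since the image of a $C^*$-algebra under a $*$-homomorphism is closed, $\pi(U_N\ltimes_\tau C_0(U_N))=\ca{K}(L^2(U_N))$. For injectivity, I would observe that the translation action of $U_N$ on itself is free and proper with one-point quotient, so (the twisted version of the fact that $\Gamma\ltimes C_0(\Gamma)\cong\ca{K}(L^2(\Gamma))$) the crossed product $U_N\ltimes_\tau C_0(U_N)$ is Morita equivalent to $C^*(\{e\})=\bb{C}$, hence simple; as $\pi\neq 0$, it is therefore faithful. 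This passage from injectivity on $C_c$ to faithfulness on the completion — i.e. the simplicity input — is the only real obstacle; everything else is routine.

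Finally, for the connecting homomorphism: writing $U_{N+1}$ as the orthogonal sum of $U_N$ and the plane spanned by its last pair of symplectic basis vectors gives $L^2(U_{N+1})\cong L^2(U_N)\otimes L^2(U_1)$ by Fubini, the second factor being the $L^2$-space of that plane. Its lowest weight vector is a Gaussian spanning a line in $L^2(U_1)$; let $P$ be the orthogonal projection onto that line. Since $P=P^*=P^2$ has rank one, $k\mapsto k\otimes P$ is an injective $*$-homomorphism $\ca{K}(L^2(U_N))\to\ca{K}(L^2(U_N)\otimes L^2(U_1))=\ca{K}(L^2(U_{N+1}))$. Transporting this map through the isomorphisms $U_N\ltimes_\tau C_0(U_N)\cong\ca{K}(L^2(U_N))$ and $U_{N+1}\ltimes_\tau C_0(U_{N+1})\cong\ca{K}(L^2(U_{N+1}))$ established in the first part yields the asserted homomorphism, compatible with $L^2(U_{N+1})\cong L^2(U_N)\otimes L^2(U_1)$; these are precisely the maps over which the inductive limit $\varinjlim U_N\ltimes_\tau C_0(U_N)$ is formed.
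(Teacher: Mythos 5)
Your proof reaches the same integral-kernel computation as the paper (the same change of variables, the same identification of $\pi(f)$ as the integral operator with kernel $K_f(x,y)=f(x-y)(x)\tau(x-y,x)$), and the arguments that the image lands in $\ca{K}(L^2(U_N))$ and has dense range run parallel to the paper's. Where you genuinely diverge is injectivity, which you correctly flag as the real obstacle. The paper handles it by an explicit witness: given $f\neq 0$ it writes $F(g,x)=f(x-g,x)\tau(x,g)$, fixes $p=(p_1,p_2)$ with $F(p)\neq 0$, and checks that $\phi(x):=b(x)\overline{F(p_1,x)}$ (with $b$ a bump around $p_2$) satisfies $\pi(f)\phi\neq0$. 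You instead invoke simplicity: the translation action of $U_N$ on itself is free and proper with one-point quotient, so by (the twisted version of) Green's imprimitivity theorem $U_N\ltimes_\tau C_0(U_N)$ is Morita equivalent to $\bb{C}$, hence simple, and any nonzero representation of a simple $C^*$-algebra is faithful. Your route is conceptually cleaner — it sidesteps the otherwise delicate point of passing from injectivity on the dense $*$-subalgebra $C_c$ to faithfulness on the completed crossed product — but it leans on the twisted imprimitivity theorem as a black box, whereas the paper's preliminaries only record the untwisted Morita equivalence. The paper's hands-on argument is more self-contained and is closer in spirit to its inductive-limit style throughout. A small infelicity in your write-up: citing "the twisted version of the fact that $\Gamma\ltimes C_0(\Gamma)\cong\ca{K}(L^2(\Gamma))$" reads as circular, since that iso is exactly what is being proved; what you actually use is the a priori weaker Morita equivalence $\Gamma\ltimes_\tau C_0(X)\sim_{\mathrm{Morita}}\bb{C}$ for a free proper action with one-point quotient, which is logically independent. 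The connecting homomorphism $k\mapsto k\otimes P$ onto the lowest-weight projection line coincides with the paper's (where $P=\Omega\otimes\Omega^*$), and your treatment of it is correct.
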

\begin{proof}
We verify here that the above representation is faithful, valued in $\ca{K}(L^2(U_N))$, and the image is dense. If they are true, from the injectivity, the representation is isometric. Combining it with the denseness of the image, the representation is surjective. 

Let $\phi\in L^2(U_N)$, $x\in U_N$ a point of the space, $g\in U_N$ an element of the group $U_N$, and $f\in \mathscr{S}(U_N\times U_N)$. We regard $f$ as an element of $U_N\ltimes_\tau C_0(U_N)$.
By a change of variables, we obtain the following useful form.
\begin{eqnarray*}
(f.\phi)(x) &=& \int f(g,x)\phi(x-g)\tau(g,x) dg \\
&=& \int f(x-g,x)\tau(x,g)\phi(g)dg.
\end{eqnarray*}
If $f$ satisfies the equation $f(g-x,x)\tau(g,x)=f_1(x)\overline{f_2(g)}$
for some rapidly decreasing $f_1$ and $f_2$,
the operator induced by $f$ is a Schatten form. Since the space consisting of linear combinations of such functions is dense in $\mathscr{S}(U_N\times U_N)$, the image of the representation is contained in $\ca{K}(L^2(U_N))$.
Moreover, since $\mathscr{S}(U_N)$ is dense in $L^2(U_N)$, and the space consisting of linear combinations of Shatten forms is dense in $\ca{K}(L^2(U_N))$, the homomorphism has the dense range.

The injectivity can be verified as follows.
Let $F(g,x):=f(x-g,x)\tau(x,g)$ for simplicity. Then
$$(f.\phi)(x)=\int F(g,x)\phi(g)dg$$
In order to find $\phi$ such that $f.\phi\neq0$ for $f\neq0$, we fix a point $p\in U_N^2$ such that $F(p)\neq0$ and write $p=(p_1,p_2)\in U_N^2$. Let $b$ be a bump function on $U_N$ supported around $p_2$. $\phi(x):=b(x)\overline{F(p_1,x)}$ is a function we need.

Let $P$ be the Schatten form $\Omega\otimes\Omega^*$, where $\Omega$ is the lowest weight vector. Using it, we can define a homomorphism
$$U_N\ltimes C_0( U_N)\to U_{N+1}\ltimes C_0( U_{N+1})$$
by $k\mapsto k\otimes P$.


\end{proof}

In Section \ref{Section Hilbert}, we justified the WZW model $L^2(\Omega T_0)\cong \ca{H}_{{\rm WZW}}$ and it can be written as the inductive limit
$L^2(\Omega T_0):=\varinjlim L^2(U_N).$
Under this notation, $\Omega T_0\ltimes_\tau C_0(\Omega T_0)\cong \ca{K}(L^2(\Omega T_0))$ and it is Morita equivalent to $\bb{C}$.


Combining the above with Proposition \ref{Prop division Sect.3},
we know the structure of our $C^*$-algebra.
\begin{cor}
$$LT\ltimes_\tau C_0(\ca{M})\cong (T\times\Pi_T)\ltimes_\tau C_0(\widetilde{M})\bigotimes \ca{K}(L^2(\Omega T_0)).$$
It is Morita equivalent to $(T\times\Pi_T)\ltimes_\tau C_0(\widetilde{M})$, which is a $C^*$-algebraic description of the local equivalence of groupoids $\ca{M}//LT\cong \widetilde{M}//(T\times\Pi_T)$.
\end{cor}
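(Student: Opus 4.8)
The plan is to pass to the inductive limit in Proposition \ref{Prop division Sect.3} and then substitute the structure theorem $U_N\ltimes_\tau C_0(U_N)\cong\ca{K}(L^2(U_N))$ proved just above. By definition $LT\ltimes_\tau C_0(\ca{M})$ is the inductive limit of the system with $N$-th term $(T\times\Pi_T\times U_N)\ltimes_\tau C_0(\widetilde{M}\times U_N)$ and connecting maps $i_N$, and Proposition \ref{Prop division Sect.3} identifies that term with $(T\times\Pi_T)\ltimes_\tau C_0(\widetilde{M})\bigotimes\bigl(U_N\ltimes_\tau C_0(U_N)\bigr)$, the splitting being legitimate because $\tau(T\times\Pi_T,U_N)=1$. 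The first thing I would do is check that under these identifications $i_N$ becomes $\id_{(T\times\Pi_T)\ltimes_\tau C_0(\widetilde{M})}\otimes j_N$, where $j_N\colon U_N\ltimes_\tau C_0(U_N)\to U_{N+1}\ltimes_\tau C_0(U_{N+1})$ is the map $k\mapsto k\otimes P$ of the preceding theorem, $P$ the rank-one projection onto the lowest weight vector. This should follow directly from the construction of $i_N$ at the level of the canonical covariant representations: the embedding $U_N\hookrightarrow U_{N+1}$ only enlarges the $\Omega T_0$-variable and leaves the $\widetilde{M}$- and $T\times\Pi_T$-directions untouched, and concretely one reads it off the map $f\mapsto f\otimes(2/\pi)^{1/4}e^{-x^2}$ on Hilbert spaces and $k\mapsto k\otimes P$ on algebras.

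Granting that, the two inductive systems are isomorphic, hence have the same limit. Since all algebras in sight are nuclear, the inductive limit commutes with $\bigotimes$ against the fixed algebra $(T\times\Pi_T)\ltimes_\tau C_0(\widetilde{M})$, so
$$LT\ltimes_\tau C_0(\ca{M})\cong (T\times\Pi_T)\ltimes_\tau C_0(\widetilde{M})\bigotimes\varinjlim_N\bigl(U_N\ltimes_\tau C_0(U_N)\bigr).$$
Next I would identify the remaining limit: by the preceding theorem each $U_N\ltimes_\tau C_0(U_N)\cong\ca{K}(L^2(U_N))$, and the maps $k\mapsto k\otimes P$ correspond to the standard embeddings $\ca{K}(L^2(U_N))\hookrightarrow\ca{K}(L^2(U_{N+1}))$ under $L^2(U_{N+1})\cong L^2(U_N)\otimes L^2(U_1)$, so the limit is $\ca{K}\bigl(\varinjlim_N L^2(U_N)\bigr)=\ca{K}(L^2(\Omega T_0))$, which by construction is exactly $\Omega T_0\ltimes_\tau C_0(\Omega T_0)$. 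This yields the displayed isomorphism.

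For the Morita equivalence I would invoke the criterion quoted from \cite{Lan}: for $\sigma$-unital $A$ and $B$ (which holds here, all algebras being separable), $A$ and $B$ are Morita equivalent iff $A\otimes\ca{K}(\ell^2)\cong B\otimes\ca{K}(\ell^2)$. Taking $A=(T\times\Pi_T)\ltimes_\tau C_0(\widetilde{M})$ and $B=A\bigotimes\ca{K}(L^2(\Omega T_0))$, the space $L^2(\Omega T_0)$ is separable and infinite dimensional, so $L^2(\Omega T_0)\otimes\ell^2\cong\ell^2$ and hence $B\otimes\ca{K}(\ell^2)\cong A\otimes\ca{K}\bigl(L^2(\Omega T_0)\otimes\ell^2\bigr)\cong A\otimes\ca{K}(\ell^2)$; so $A$ and $B$ are Morita equivalent. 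The groupoid statement $\ca{M}//LT\cong\widetilde{M}//(T\times\Pi_T)$ is then the dictionary interpretation of this: $\Omega T_0$ acts freely and properly on $\ca{M}$ with quotient $\widetilde{M}$, and by functoriality of the crossed product this matches the free-proper-action example in Subsection \ref{Subsection Cstar algebra}; I would state this without further detail.

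I expect the only genuine obstacle to be the compatibility check in the first paragraph: one must be certain that the inductive-system structure written into the definition of $LT\ltimes_\tau C_0(\ca{M})$ really does restrict, on the $\Omega T_0$-factor, to the system $\varinjlim_N U_N\ltimes_\tau C_0(U_N)$ and acts as the identity on the remaining tensor factor. Everything after that is bookkeeping with nuclearity and stabilization.
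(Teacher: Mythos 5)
Your proposal is correct and follows the same route the paper takes: split each term of the inductive system via Proposition \ref{Prop division Sect.3}, identify $U_N\ltimes_\tau C_0(U_N)\cong\ca{K}(L^2(U_N))$ with connecting maps $k\mapsto k\otimes P$, and pass to the limit to get $\ca{K}(L^2(\Omega T_0))$ in the second factor. The paper states the conclusion tersely ("combining the above"); you have merely spelled out the routine bookkeeping — compatibility of $i_N$ with the tensor decomposition, continuity of $\otimes$ under inductive limits, and the stabilization criterion for Morita equivalence — all of which is sound.
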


\subsection{The spectral triple and its summability}
Let us set
$$A:=LT\ltimes_\tau C_0(\ca{M})$$
$$\ca{H}:=L^2(\ca{M},\ca{L}\otimes\ca{S}).$$

Since $(T\times \Pi_T)\ltimes_\tau C_0(\widetilde{M})$ acts on $L^2(\widetilde{M},\ca{L}\otimes S(T\widetilde{M}))$ by the natural action, and $\Omega T_0\ltimes_\tau C_0(\Omega T_0)$ acts on $\ca{H}_{{\rm WZW}}\otimes S$ through the isomorphism $\Omega T_0\ltimes_\tau C_0(\Omega T_0)\cong \ca{K}(L^2(\Omega T_0))=\ca{K}(\ca{H}_{{\rm WZW}})$, we consider that $A$ is a subalgebra of $\ca{B}(\ca{H})$.

In order to get a smooth algebra in $A$, we set some dense subspaces:
\begin{itemize}
\item $L^2(\bb{R}^\infty)_\fin$ is the set of finite energy vectors.
\item $\ca{H}_{\WZW,\fin}:=L^2(\bb{R}^\infty)_\fin^*\otimes^\alg L^2(\bb{R}^\infty)_\fin$ the algebraic tensor product.
\item $\ca{H}_\fin:=C_c^\infty(\widetilde{M},\ca{L}\otimes S(T\widetilde{M}))\bigotimes^\alg \ca{H}_{\WZW,\fin}\otimes^\alg S_\fin$.
\item $A_\fin$ is the subalgebra of $A$ consisting of $a\in A$ preserving $\ca{H}_\fin$.
\end{itemize}
We take a completion of $A_{{\rm fin}}$ with respect to the following sequence of norms to obtain a smooth algebra. Notice that $A_\fin$ and $\ca{D}$ preserves $\ca{H}_{\fin}$.
\begin{dfn}We call a norm
$$\|a\|_k:=\sum_{j=0}^k\|\ad(\ca{D})^j(a)\|_{{\rm op}}$$
a $C^k$ norm and we set Banach algebras associated with $A$
$$\overline{\ca{A}}^{\|\cdot\|_k}:=\overline{A_{{\rm fin}}}^{\|\cdot\|_k}$$
and a Fr\'{e}chet algebra
$$\ca{A}:=\bigcap_{k=0}^\infty\overline{\ca{A}}^{\|\cdot\|_k}.$$
\end{dfn}
We verify that $\ca{A}$ is a smooth algebra and $(\ca{A},\ca{H},\ca{D})$ is a spectral triple. For this purpose, we prepare the followings.

\begin{pro}
$\ca{A}$ is a smooth algebra, that is, $\ca{A}$ is Fr\'{e}chet, $*$-closed and the unitalization of $\ca{A}$ is stable under the holomorphic functional calculus.
\end{pro}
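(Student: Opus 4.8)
The plan is to recognise $\ca{A}$ as the algebra of $C^\infty$-vectors for the closed unbounded derivation $\delta:=\ad(\ca{D})$ on the ambient $C^*$-algebra, and then run the standard argument of Connes (cf. \cite{Con94}, \cite{CGRS}).

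\emph{Step 1: $\delta$ is closed and $\ca{A}$ is a Fr\'echet algebra.} Regard $A_\fin$, and hence $\ca{A}$, inside $\ca{B}(\ca{H})$. The assignment $a\mapsto[\ca{D},a]$, a priori a sesquilinear form on ${\rm dom}(\ca{D})\times{\rm dom}(\ca{D})$, gives a derivation $\delta$ whose domain consists of the $a\in\ca{B}(\ca{H})$ for which $[\ca{D},a]$ extends to a bounded operator. Since $\ca{D}=\ca{D}^*$, pairing $\delta(a_n)$ against $\xi,\eta\in{\rm dom}(\ca{D})$ and using $\langle\ca{D}a_n\xi,\eta\rangle=\langle a_n\xi,\ca{D}\eta\rangle$ shows $\delta$ is norm-closed, and an iteration shows $({\rm dom}(\delta^k),\|\cdot\|_k)$ is complete for every $k$. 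Hence each abstract completion $\overline{A_\fin}^{\|\cdot\|_k}$ embeds injectively and continuously into $\overline{A_\fin}^{\|\cdot\|_0}\subseteq\ca{B}(\ca{H})$, the system $(\overline{A_\fin}^{\|\cdot\|_k})_k$ is decreasing, and $\ca{A}=\bigcap_k\overline{A_\fin}^{\|\cdot\|_k}$ is complete for the increasing family $\{\|\cdot\|_k\}$, i.e. Fr\'echet. The graded Leibniz rule $\delta^n(ab)=\sum_{j=0}^n\binom{n}{j}\delta^j(a)\delta^{n-j}(b)$ gives $\|ab\|_k\le 2^k\|a\|_k\,\|b\|_k$, so after rescaling the $\|\cdot\|_k$ are submultiplicative and $\ca{A}$ is a Fr\'echet $m$-convex algebra with jointly continuous product. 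All manipulations are legitimate because $A_\fin$ and $\ca{D}=\cancel{D}\otimes\id+\id\otimes\cancel{\partial}$ preserve the common core $\ca{H}_\fin$.

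\emph{Step 2: $\ca{A}$ is $*$-closed.} From $\ca{D}^*=\ca{D}$ one gets $\delta(a^*)=-\delta(a)^*$, hence $\|\delta^j(a^*)\|_{\op}=\|\delta^j(a)\|_{\op}$ and $\|a^*\|_k=\|a\|_k$. Since the generators preserving $\ca{H}_\fin$ are symmetric, $A_\fin$ is a $*$-subalgebra, so its completion $\ca{A}$ is too.

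\emph{Step 3: holomorphic functional calculus.} Let $\widetilde{A}$ be the $C^*$-unitalisation of $A$ and $\widetilde{\ca{A}}=\ca{A}\oplus\bb{C}$; it suffices to prove (i) $\widetilde{\ca{A}}\cap GL(\widetilde{A})=GL(\widetilde{\ca{A}})$, and (ii) for $a\in\widetilde{\ca{A}}$ the resolvent $z\mapsto(z-a)^{-1}$ is holomorphic as a $\widetilde{\ca{A}}$-valued function off $\sigma_{\widetilde{A}}(a)$. Then, for $f$ holomorphic near the compact set $\sigma_{\widetilde{A}}(a)$ and $\Gamma$ a suitable contour, $f(a)=\frac1{2\pi i}\int_\Gamma f(z)(z-a)^{-1}\,dz$ converges in the Fr\'echet space $\widetilde{\ca{A}}$ and maps to the $\widetilde{A}$-functional calculus under $\widetilde{\ca{A}}\hookrightarrow\widetilde{A}$, so the latter lies in $\widetilde{\ca{A}}$. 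For (i): if $b=a^{-1}\in\widetilde{A}$, differentiating $ab=1$ gives $\delta(b)=-b\,\delta(a)\,b$, and by induction the Leibniz rule expresses $\delta^k(b)$ as a finite sum of words in $b$ and $\delta^1(a),\dots,\delta^k(a)$; all factors are bounded, so $b\in{\rm dom}(\delta^k)$ for all $k$, and approximating the entries in $A_\fin$ shows $b\in\widetilde{\ca{A}}$. For (ii): the same formulas with $a$ replaced by $z-a$ (using $\delta(z\cdot 1)=0$) express $\delta^k((z-a)^{-1})$ through $(z-a)^{-1}$ and the $\delta^i(a)$, whose $\|\cdot\|_k$-norms are locally bounded off the spectrum; the resolvent identity then makes the $\widetilde{A}$-difference quotients converge to $-(z-a)^{-2}$ in every $\|\cdot\|_k$.

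\emph{Main obstacle.} The genuinely delicate point is Step 1: checking that $\delta=\ad(\ca{D})$ is closed and that the completions $\overline{A_\fin}^{\|\cdot\|_k}$ assemble into a decreasing system inside $\ca{B}(\ca{H})$, since $\ca{D}$ is unbounded and both $A$ and $\ca{H}$ are defined only as inductive limits, so one must verify that $A_\fin$, $\ca{D}$ and the relevant commutators really do preserve $\ca{H}_\fin$ and pass through the limits. Once that bookkeeping is settled, Steps 2 and 3 are the standard Connes argument and involve only routine estimates.
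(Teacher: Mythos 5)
Your proof is correct in spirit but takes a genuinely different route from the paper's. The paper goes directly at the contour integral: it proves a spectrum-continuity lemma (if $\|a-a'\|_{\rm op}<\delta$ then $\sigma(a')\subseteq\sigma(a)+B(\varepsilon)$), takes an approximating sequence $a_n\in A_\fin$ with $\|a-a_n\|<\delta$, and then estimates $\|\omega(a_n)-\omega(a)\|$ and $\|{\rm ad}(\ca{D})^j(\omega(a_n)-\omega(a))\|$ explicitly, bounding each by products of resolvent norms along the contour. Your argument instead factors the problem through spectral invariance of $\widetilde{\ca{A}}$ in $\widetilde{A}$ (via the Leibniz formula $\delta(b)=-b\,\delta(a)\,b$ and its iterates) and then derives the holomorphic calculus from the $\widetilde{\ca{A}}$-holomorphy of the resolvent. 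This is the classic two-step Connes argument; it is cleaner conceptually, avoids the explicit multi-commutator estimates the paper grinds through, and also makes explicit the verifications of Fr\'echet-ness and $*$-closedness, which the paper dispatches in a single sentence. What the paper's version buys is the explicit quantitative control on $\|\cdot\|_k$, obtained by bounding the contour integral by $|\Gamma|\sup|\omega|$ times products of resolvent norms.

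One point you should tighten is the clause ``approximating the entries in $A_\fin$ shows $b\in\widetilde{\ca{A}}$'' in Step 3(i). Since $\ca{A}$ is \emph{defined} as $\bigcap_k\overline{A_\fin}^{\|\cdot\|_k}$ rather than as $A\cap\bigcap_k{\rm dom}(\delta^k)$, showing that $b=a^{-1}$ has all $\delta^k(b)$ bounded is not by itself enough: you must also exhibit $b$ as a $\|\cdot\|_k$-limit of elements of $A_\fin$ for every $k$. The natural candidate $b_n:=a_n^{-1}$ (for $a_n\to a$ in $\ca{A}$, $a_n\in A_\fin$) converges to $b$ in every $\|\cdot\|_k$ by your Leibniz identities, but $b_n$ itself need not lie in $A_\fin$ — an element of $A_\fin$ preserves $\ca{H}_\fin$, but its inverse need not — so this does not close the loop without a density statement relating $\overline{A_\fin}^{\|\cdot\|_k}$ to $A\cap{\rm dom}(\delta^k)$. (To be fair, the paper's conclusion ``$\{\omega(a_n)\}$ is an approximating sequence of $\omega(a)$ in the sense of $\ca{A}$'' rests on the same implicit identification, so this is a shared weak point rather than a defect specific to your version.) If you either (a) prove $A_\fin$ is $\|\cdot\|_k$-dense in $A\cap{\rm dom}(\delta^k)$ for every $k$, or (b) redefine $\ca{A}$ directly as $A\cap\bigcap_k{\rm dom}(\delta^k)$ and observe that your closedness argument in Step~1 makes this a Fr\'echet algebra, then Step~3 goes through cleanly by the closed-derivation argument you already have.
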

\begin{proof}
It is sufficient to verify the stability under the holomorphic functional calculus, that is, we need to verify that
$$\omega(a)=\frac{1}{2\pi i}\oint\frac{\omega(z)}{z-a}dz$$
is an element of $\ca{A}$ for any
$a\in \ca{A}$ and a holomorphic function $\omega$ from a certain neighborhood of $\sigma(a)$ to $\bb{C}$.

We need the following purely functional analytical lemma.
Let $B(\varepsilon):=\{z\in\bb{C}\mid|z|<\varepsilon\}$ and $K+B(\varepsilon):=\{k+z\mid k\in K,z\in B(\varepsilon)\}$.
\begin{lemma}
For any $\varepsilon$, there exists $\delta>0$ such that
\begin{center}
``$\|a-a'\|_{{\rm op}}<\delta$'' implies ``$\sigma(a')\subseteq \sigma(a)+B(\varepsilon)$.''
\end{center}
\end{lemma}
\begin{proof}
We set a constant $C$ as follows:
$$C:=\max\{\|(a-\lambda)^{-1}\|_{{\rm op}}\mid d(\lambda,\sigma(a))\geq\varepsilon\}.$$
$C$ is indeed a finite number because $\|(a-\lambda)^{-1}\|_{{\rm op}}$ vanishes at infinity and the set of $\lambda$ such that $d(\lambda,\sigma(a))\geq\varepsilon$ is closed. $\delta:=C^{-1}/2$ is the number which we are seeking. In fact, suppose that $\|a-a'\|<\delta$ and $\lambda\notin \sigma(a)+B(\varepsilon)$, then $a'-\lambda$ is invertible thanks to Neumann series argument, because
$$a'-\lambda=a-\lambda+a'-a=(a-\lambda)(1+(a-\lambda)^{-1}(a'-a)),$$
and $(a-\lambda)^{-1}(a'-a)$ is small enough.
Therefore $\lambda\notin \sigma(a')$.
\end{proof}
Let $\Gamma$ be an integral path for the above contour integral. Since the both of $\Gamma$ and $\sigma(a)$ are compact, $d(\Gamma,\sigma(a))=:2\varepsilon>0$ and hence there exists $\delta$ as above. We can take an approximating sequence $\{a_n\}\subseteq A_{{\rm fin}}$ converging to $a$ and satisfying that $\|a-a_n\|<\delta$. It tells us that $\omega(a_n)$ can be defined by the same path.

$\omega(a_n)$ converges to $\omega(a)$ as follows.
\begin{eqnarray*}
\|\omega(a_n)-\omega(a)\| &=&\|\frac{1}{2\pi i}\oint\omega(z)\Bigl(\frac{1}{z-a_n}-\frac{1}{z-a}\Bigr)dz\| \\
&=& \frac{1}{2\pi}\|\oint\omega(z)\Bigl(\frac{1}{z-a_n}(z-a-z+a_n)\frac{1}{z-a}\Bigr)dz\|\\
&\leq& \frac{1}{2\pi}|\Gamma|\sup{|\omega(z)|}\sup_\Gamma{\|\frac{1}{z-a_n}\|}\|a_n-a\|\sup_\Gamma{\|\frac{1}{z-a}\|},
\end{eqnarray*}
where $|\Gamma|$ is the length of the path. Since $a_n$ converges to $a$, $\frac{1}{z-a_n}$ also converges to $\frac{1}{z-a}$, hence $\frac{1}{z-a_n}$ is a bounded sequence and $\omega(a_n)$ converges to $\omega(a)$.

${\rm ad}(\ca{D})(\omega(a_n))$ converges to $\rm{ad}(\ca{D})(\omega(a))$ as follows.


\begin{eqnarray*}
&&\|{\rm ad}(\ca{D})(\omega(a_n))-{\rm ad}(\ca{D})(\omega(a))\| \\
&&\;\;\;=\|\frac{1}{2\pi i}\oint\omega(z){\rm ad}(\ca{D})\Bigl(\frac{1}{z-a_n}(a_n-a)\frac{1}{z-a}\Bigr)\| \\
&&\;\;\;= \frac{1}{2\pi}\|\oint\omega(z)\Bigl(\frac{1}{z-a_n}[\ca{D},z-a_n]\frac{1}{z-a_n}
(a_n-a)\frac{1}{z-a}+ 
\\
&& \;\;\;\;\;\; \frac{1}{z-a_n}[\ca{D},a_n-a]\frac{1}{z-a}+\frac{1}{z-a_n}(a_n-a)\frac{1}{z-a}[\ca{D},z-a]
\frac{1}{z-a}
\Bigr)dz\|\\
&&\;\;\; \leq |\Gamma|\frac{1}{2\pi}\sup{|\omega(z)|}\sup{\|\frac{1}{z-a_n}\|^2}\sup{\|\frac{1}{z-a}\|}\|[\ca{D},a_n]\|\|a_n-a\|+\text{similar ones}
\end{eqnarray*}
The higher derivatives ${\rm ad}(\ca{D})^k(\omega(a_n))$ converges in the same manner. Therefore, $\{\omega(a_n)\}$ is an approximating sequence of $\omega(a)$ in the sense of $\ca{A}$.
\end{proof}

Let us move to the verification of the three conditions of spectral triples. Before that, we precisely describe the domain of $\ca{D}$.

\begin{dfn}We call the inner product on $\ca{H}_{{\rm fin}}$
\begin{eqnarray*}
\innpro{\phi}{\psi}{H^1}&:=&\innpro{\phi}{\psi}{}+\innpro{\ca{D}\phi}{\ca{D}\psi}{} \\
&=& \innpro{\phi}{(1+\ca{D}^2)\psi}{}\\
&=& \innpro{\phi}{\psi}{}+\innpro{|\ca{D}|\phi}{|\ca{D}|\psi}{}
\end{eqnarray*}
the $H^1$-inner product. The domain of $\ca{D}$ can be obtained by using it as
$${\rm dom}(\ca{D})=\overline{\ca{H}_{{\rm fin}}}^{\|\cdot\|_{H^1}}.$$
\end{dfn}

\begin{lemma}
$\overline{\ca{A}}^{\|\cdot\|_1}$ preserves $\rm{dom}(\ca{D})$,
in particular, $\ca{A}$ preserves ${\rm dom}(\ca{D})$.
\end{lemma}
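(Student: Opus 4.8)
The plan is to prove one Leibniz-type estimate at the finite level and then propagate it through the two completions involved --- the $\|\cdot\|_1$-completion defining $\ca{A}$ and the $\|\cdot\|_{H^1}$-completion defining ${\rm dom}(\ca{D})$ --- by elementary limiting arguments.

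\emph{Step 1 (the estimate on $A_\fin$).} First I would take $a\in A_\fin$ with $\|a\|_1<\infty$ and $\xi\in\ca{H}_\fin$. Since $A_\fin$ and $\ca{D}$ preserve $\ca{H}_\fin$, we have $a\xi\in\ca{H}_\fin\subseteq{\rm dom}(\ca{D})$ and the identity $\ca{D}(a\xi)=[\ca{D},a]\xi+a\ca{D}\xi$ holds literally on $\ca{H}_\fin$, hence
$$\|a\xi\|_{H^1}\leq\|a\xi\|+\|\ca{D}(a\xi)\|\leq\|a\|_\op\|\xi\|+\|[\ca{D},a]\|_\op\|\xi\|+\|a\|_\op\|\ca{D}\xi\|\leq\sqrt{2}\,\|a\|_1\,\|\xi\|_{H^1}.$$
Thus left multiplication by $a$ is a $\|\cdot\|_{H^1}$-bounded operator on $\ca{H}_\fin$ of norm $\leq\sqrt2\,\|a\|_1$. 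As ${\rm dom}(\ca{D})$ is by definition the $\|\cdot\|_{H^1}$-completion of $\ca{H}_\fin$, this operator extends to $({\rm dom}(\ca{D}),\|\cdot\|_{H^1})$; comparing $\ca{H}$-limits with the bounded action of $a$ on $\ca{H}$ identifies the extension with multiplication by $a$, so $a\cdot{\rm dom}(\ca{D})\subseteq{\rm dom}(\ca{D})$ and $\|a\xi\|_{H^1}\leq\sqrt2\,\|a\|_1\,\|\xi\|_{H^1}$ now holds for every $\xi\in{\rm dom}(\ca{D})$.

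\emph{Step 2 (passing to $\overline{\ca{A}}^{\|\cdot\|_1}$).} Next, given $a\in\overline{\ca{A}}^{\|\cdot\|_1}$, I would choose $a_n\in A_\fin$ (with finite $\|\cdot\|_1$) such that $\|a_n-a\|_1\to0$; in particular $a_n\to a$ in $\|\cdot\|_\op$. Fix $\xi\in{\rm dom}(\ca{D})$. By Step 1 each $a_n\xi\in{\rm dom}(\ca{D})$ and
$$\|a_n\xi-a_m\xi\|_{H^1}=\|(a_n-a_m)\xi\|_{H^1}\leq\sqrt2\,\|a_n-a_m\|_1\,\|\xi\|_{H^1}\longrightarrow0,$$
so $\{a_n\xi\}$ is $\|\cdot\|_{H^1}$-Cauchy and, by completeness of ${\rm dom}(\ca{D})$ for $\|\cdot\|_{H^1}$, converges to some $\eta\in{\rm dom}(\ca{D})$. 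On the other hand $\|a_n\xi-a\xi\|\leq\|a_n-a\|_\op\|\xi\|\to0$, and $\|\cdot\|_{H^1}$-convergence dominates $\|\cdot\|$-convergence, whence $\eta=a\xi$. Therefore $a\xi\in{\rm dom}(\ca{D})$, i.e. $\overline{\ca{A}}^{\|\cdot\|_1}$ preserves ${\rm dom}(\ca{D})$. Since $\ca{A}=\bigcap_{k}\overline{\ca{A}}^{\|\cdot\|_k}\subseteq\overline{\ca{A}}^{\|\cdot\|_1}$, the final assertion follows.

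I do not expect a genuine obstacle here: this is the standard argument for smooth subalgebras of spectral triples, and the only points needing a word of care are already recorded above --- that $A_\fin$ and $\ca{D}$ preserve $\ca{H}_\fin$, so the Leibniz identity is meaningful before any closure is taken, and that ${\rm dom}(\ca{D})$ is complete for $\|\cdot\|_{H^1}$, which is immediate since it is defined as a $\|\cdot\|_{H^1}$-completion.
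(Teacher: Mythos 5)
Your proof is correct and follows essentially the same strategy as the paper's: both rest on the Leibniz identity $\ca{D}(a\xi)=[\ca{D},a]\xi+a\ca{D}\xi$ on $\ca{H}_\fin$ together with approximation in the $C^1$- and $H^1$-norms. The only real difference is organizational: the paper runs a single diagonal argument, taking approximating sequences $a_n\to a$ (in $\|\cdot\|_1$) and $\phi_n\to\phi$ (in $\|\cdot\|_{H^1}$) simultaneously and estimating $\|\ca{D}(a_n\phi_n)-\ca{D}(a_m\phi_m)\|$ as a sum of four terms, whereas you factor this into two steps --- first show each fixed $a\in A_\fin$ acts as an $H^1$-bounded operator with $\|a\xi\|_{H^1}\leq\sqrt2\,\|a\|_1\|\xi\|_{H^1}$, then approximate in $a$ alone. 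Your version is marginally cleaner, since it isolates the operator-norm bound and avoids the slightly imprecise ``all of them are small'' language in the paper (where, e.g., $\|[\ca{D},a_m]\|_\op$ is merely bounded, not small), but it is not a genuinely different proof.
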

\begin{proof}
$\phi\in {\rm dom}(\ca{D})$ if and only if there exists an approximating sequence $\{\phi_n\}$ such that it converges to $\phi$ and $\{\ca{D}(\phi_n)\}$ converges to some element in $\ca{H}$.
In the similar way, for $a\in\overline{\ca{A}}^{\|\cdot\|_1}$, we take an approximating sequence $\{a_n\}\subseteq A_{{\rm fin}}$, which converges to $a$ in the $C^1$-norm.

Let us verify that $a\phi\in{\rm dom}(\ca{D})$. It is sufficient to verify that $a_n\phi_n$ is an approximating sequence of $a\phi$ in the sence of $H^1$, that is, $\ca{D}(a_n\phi_n)$ converges to some vector. We prove that it is a Cauchy sequence.
\begin{eqnarray*}
\|\ca{D}(a_n\phi_n)-\ca{D}(a_m\phi_m)\| &=& \| [\ca{D},a_n]\phi_n+a_n\ca{D}(\phi_n)-[\ca{D},a_m]\phi_m-a_m\ca{D}\phi_m\|\\
&=& \|([\ca{D},a_n]-[\ca{D},a_m])\phi_n+[\ca{D},a_m](\phi_n-\phi_m)\\
&& \;\;\;\;+(a_n-a_m)\ca{D}(\phi_n)+a_m(\ca{D}(\phi_n)-\ca{D}(\phi_m)\| \\
&\leq& \|[\ca{D},a_n]-[\ca{D},a_m]\|_{{\rm op}}\|\phi_n\|+\|[\ca{D},a_m]\|_{{\rm op}}\|\phi_n-\phi_m\|  \\
&&\;\;\; +\|a_n-a_m\|_{{\rm op}}\|\ca{D}(\phi_n)\|+\|a_m\|_{{\rm op}}\|\ca{D}(\phi_n)-\ca{D}(\phi_m)\|.
\end{eqnarray*}
By the assumption, all of them are small and hence it is a Cauchy sequence.
\end{proof}

\begin{lemma}
$[\ca{D},a]$ is bounded on ${\rm dom}(\ca{D})$ for $a\in \overline{\ca{A}}^{\|\cdot\|_1}$. In particular it is bounded for $a\in \ca{A}$.
\end{lemma}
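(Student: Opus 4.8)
The plan is to push the bound from the dense subalgebra $A_{{\rm fin}}$, where it holds essentially by construction, out to the whole $C^1$-completion $\overline{\ca{A}}^{\|\cdot\|_1}$, using only that $\ca{D}$ is closed (being self-adjoint). Since $\ca{A}\subseteq\overline{\ca{A}}^{\|\cdot\|_1}$, it suffices to treat $a\in\overline{\ca{A}}^{\|\cdot\|_1}=\overline{A_{{\rm fin}}}^{\|\cdot\|_1}$.

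First I would record the starting point: for $a\in A_{{\rm fin}}$ with $\|a\|_1<\infty$, the operator $\ad(\ca{D})(a)$, a priori defined only on the core $\ca{H}_{{\rm fin}}$, has finite operator norm (this is precisely what enters $\|a\|_1$), hence extends uniquely to a bounded operator $[\ca{D},a]$ on $\ca{H}$ with $\|[\ca{D},a]\|_{{\rm op}}\le\|a\|_1$. A short density argument inside ${\rm dom}(\ca{D})=\overline{\ca{H}_{{\rm fin}}}^{\|\cdot\|_{H^1}}$ --- approximate $\phi\in{\rm dom}(\ca{D})$ by $\psi_k\in\ca{H}_{{\rm fin}}$ with $\psi_k\to\phi$ and $\ca{D}\psi_k\to\ca{D}\phi$, write $\ad(\ca{D})(a)\psi_k=\ca{D}(a\psi_k)-a\ca{D}\psi_k$, and invoke closedness of $\ca{D}$ --- shows that on ${\rm dom}(\ca{D})$ this bounded extension genuinely equals $\ca{D}a-a\ca{D}$; in particular $a$ preserves ${\rm dom}(\ca{D})$, consistently with the preceding lemma.

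Next I would pass to the limit. Given $a\in\overline{A_{{\rm fin}}}^{\|\cdot\|_1}$, choose $\{a_n\}\subseteq A_{{\rm fin}}$ with $\|a_n-a\|_1\to0$. Then $\{a_n\}$ converges to $a$ in operator norm and $\{[\ca{D},a_n]\}$ is Cauchy in operator norm, say $[\ca{D},a_n]\to T$ with $\|T\|_{{\rm op}}\le\sup_n\|a_n\|_1<\infty$. For fixed $\phi\in{\rm dom}(\ca{D})$ we have $\ca{D}(a_n\phi)=[\ca{D},a_n]\phi+a_n\ca{D}\phi\to T\phi+a\ca{D}\phi$, while $a_n\phi\to a\phi$; closedness of $\ca{D}$ then gives $a\phi\in{\rm dom}(\ca{D})$ and $\ca{D}(a\phi)=T\phi+a\ca{D}\phi$, i.e. $[\ca{D},a]\phi=T\phi$. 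Hence $[\ca{D},a]$ is bounded on ${\rm dom}(\ca{D})$, with norm at most $\sup_n\|a_n\|_1$, and the ``in particular'' follows from $\ca{A}\subseteq\overline{\ca{A}}^{\|\cdot\|_1}$.

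I do not expect a serious obstacle: the delicate analytic input --- that $a_n\phi_n$ remains in ${\rm dom}(\ca{D})$ and that the relevant sequences are Cauchy --- was already carried out in the proof that $\overline{\ca{A}}^{\|\cdot\|_1}$ preserves ${\rm dom}(\ca{D})$, and everything else is closedness of $\ca{D}$. The single point meriting a little care is the one in the second paragraph: checking that the bounded extension of the formal commutator on $\ca{H}_{{\rm fin}}$ really is the operator-theoretic commutator $\ca{D}a-a\ca{D}$ on all of ${\rm dom}(\ca{D})$, and not merely some bounded operator agreeing with it on the core.
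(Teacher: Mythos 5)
Your proof takes essentially the same route as the paper: approximate $a$ by a sequence $\{a_n\}\subseteq A_{\rm fin}$ converging in the $C^1$ norm, observe that $\{[\ca{D},a_n]\}$ is Cauchy in operator norm, and invoke completeness of $\ca{B}(\ca{H})$. The one genuine addition is the step you flag yourself: the paper's two-line proof simply asserts that $[\ca{D},a_n]$ converges \emph{to} $[\ca{D},a]$, whereas you actually verify, via closedness of $\ca{D}$ and the identity $\ca{D}(a_n\phi)=[\ca{D},a_n]\phi+a_n\ca{D}\phi$ on $\ca{H}_{\rm fin}$, that the operator-norm limit $T$ of the commutators agrees with the honest commutator $\ca{D}a-a\ca{D}$ on all of ${\rm dom}(\ca{D})$. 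That identification is a real (if small) gap in the paper's argument as written, and your treatment closes it cleanly.
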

\begin{proof}
Let $a\in\ca{A}$ and $\{a_n\}$ be an approximating sequence of it. Then the sequence $[\ca{D},a_n]$ converges to $[\ca{D},a]$ by the definition of the topology of $\overline{A}^{\|\cdot\|_k}$. Since the algebra $\ca{B}(\ca{H})$ is complete, we obtain the result.
\end{proof}

\begin{lemma}
$a(1+\ca{D}^2)^{-1}$ is compact for $a\in A$.
\end{lemma}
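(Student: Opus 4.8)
The plan is to exploit the two tensor decompositions $A\cong\bigl((T\times\Pi_T)\ltimes_\tau C_0(\widetilde{M})\bigr)\otimes\ca{K}(\ca{H}_{\WZW})$ and $\ca{H}=\ca{H}_1\otimes\ca{H}_{\WZW}\otimes S$, where $\ca{H}_1:=L^2\bigl(\widetilde{M},\ca{L}\otimes S(T\widetilde{M})\bigr)$ and $\ca{H}_{\WZW}=L^2(\bb{R}^\infty)\otimes L^2(\bb{R}^\infty)^*$, together with $\ca{D}=\cancel{D}\otimes\id+\id\otimes\cancel{\partial}$ and $\cancel{\partial}=\id_{L^2(\bb{R}^\infty)}\otimes\cancel{d}$. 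Since $(1+\ca{D}^2)^{-1}$ is bounded and $\ca{K}(\ca{H})$ is a closed ideal, it is enough to prove compactness of $a(1+\ca{D}^2)^{-1}$ for $a$ ranging over a set spanning a dense subalgebra of $A$; as $\ca{K}(\ca{H}_{\WZW})=\ca{K}\bigl(L^2(\bb{R}^\infty)\otimes L^2(\bb{R}^\infty)^*\bigr)$ is spanned in norm by elementary tensors $k'\otimes k''$ of finite--rank operators, I would reduce to $a=a_1\otimes k'\otimes k''\otimes\id_S$ with $a_1\in(T\times\Pi_T)\ltimes_\tau C_0(\widetilde{M})$. Because $\widetilde{M}$ and $\Omega T_0$ are orthogonal, $\cancel{D}$ and $\cancel{\partial}$ (super--)commute, so $\ca{D}^2=\cancel{D}^2\otimes\id+\id\otimes\cancel{\partial}^2$; moreover $\cancel{\partial}^2=\id_{L^2(\bb{R}^\infty)}\otimes\cancel{d}^2$ acts as the identity on the factor $L^2(\bb{R}^\infty)\subseteq\ca{H}_{\WZW}$. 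Reordering $\ca{H}=L^2(\bb{R}^\infty)\otimes\mathcal{G}$ with $\mathcal{G}:=\ca{H}_1\otimes L^2(\bb{R}^\infty)^*\otimes S$, one gets $(1+\ca{D}^2)^{-1}=\id_{L^2(\bb{R}^\infty)}\otimes R$ with $R:=\bigl(1+\cancel{D}^2\otimes\id+\id\otimes\cancel{d}^2\bigr)^{-1}$ on $\mathcal{G}$, and $a=k'\otimes(a_1\otimes k''\otimes\id_S)$, so that $a(1+\ca{D}^2)^{-1}=k'\otimes\bigl[(a_1\otimes k''\otimes\id_S)R\bigr]$. As $k'$ is compact, the whole operator is compact as soon as $(a_1\otimes k''\otimes\id_S)R$ is compact on $\mathcal{G}$.

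For that I would first establish the operator estimate $R^2\leq(1+\cancel{D}^2)^{-1}\otimes(1+\cancel{d}^2)^{-1}$ on $\mathcal{G}$. Put $B_1:=(1+\cancel{D}^2)\otimes\id$ and $B_2:=\id\otimes(1+\cancel{d}^2)$; these are commuting positive operators with $B_i\geq\id$, and $R^{-1}=B_1+B_2-\id\geq\tfrac12(B_1+B_2)\geq B_1^{1/2}B_2^{1/2}$ by the arithmetic--geometric inequality for commuting positive operators, whence $R\leq B_1^{-1/2}B_2^{-1/2}$. Since $R$ and $B_1^{-1/2}B_2^{-1/2}$ are both bounded functions of the single commuting pair $(\cancel{D}^2\otimes\id,\id\otimes\cancel{d}^2)$, they commute, so this inequality may be squared to give $R^2\leq B_1^{-1}B_2^{-1}=(1+\cancel{D}^2)^{-1}\otimes(1+\cancel{d}^2)^{-1}$. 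Hence, writing $X:=(a_1\otimes k''\otimes\id_S)R$ and using $R^*=R$,
\begin{align*}
0\leq XX^* &= (a_1\otimes k''\otimes\id_S)\,R^2\,(a_1^*\otimes k''^*\otimes\id_S)\\
&\leq\bigl[a_1(1+\cancel{D}^2)^{-1}a_1^*\bigr]\otimes\bigl[(k''\otimes\id_S)(1+\cancel{d}^2)^{-1}(k''^*\otimes\id_S)\bigr].
\end{align*}
By Proposition \ref{Prop Kas Sect.1} the operator $a_1(1+\cancel{D}^2)^{-1}$ is compact, hence so are its adjoint and $a_1(1+\cancel{D}^2)^{-1}a_1^*$; and $(1+\cancel{d}^2)^{-1}$ is compact on $L^2(\bb{R}^\infty)^*\otimes S$ because, by the identity $\cancel{d}^2=4\,\id\otimes N-\tfrac{4}{i}d\rho^*(d)$ proved above, it is diagonal with finite multiplicities and eigenvalues tending to $\infty$; so $(k''\otimes\id_S)(1+\cancel{d}^2)^{-1}(k''^*\otimes\id_S)$ is compact as well. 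A tensor product of two compact operators is compact, so $XX^*$ is dominated by a compact operator, hence compact, hence $X$ is compact. This settles $a=a_1\otimes k'\otimes k''\otimes\id_S$, and passing to norm limits yields $a(1+\ca{D}^2)^{-1}\in\ca{K}(\ca{H})$ for every $a\in A$.

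The main obstacle is conceptual rather than computational: since $\widetilde{M}$ (as well as $\Omega T_0$) is noncompact, $(1+\ca{D}^2)^{-1}$ is \emph{not} compact, so one genuinely needs the cut--off element $a$ and the $C^*((T\times\Pi_T)^\tau)$--compactness of $(1+\cancel{D}^2)^{-1}$ supplied by Kasparov. The only technical point to watch is that the transparent bound $(1+\ca{D}^2)^{-1}\leq(1+\cancel{D}^2)^{-1/2}\otimes(1+\cancel{d}^2)^{-1/2}$ is useless until it is squared — so that compactness is obtained in \emph{both} tensor factors of $\mathcal{G}$ — and squaring is legitimate here only because all the operators in play are functions of one commuting pair.
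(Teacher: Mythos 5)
Your argument is correct, and it follows the paper's broad strategy (reduce to elementary tensors, handle the $\widetilde{M}$-factor via Kasparov and the $\Omega T_0$-factor via the explicit formula $\cancel{d}^2=4\,\id\otimes N-\tfrac{4}{i}d\rho^*(d)$), but it also supplies a step the paper skips over. The paper checks that $f_i(1+\cancel{D}^2)^{-1}$ and $k_i(1+\cancel{\partial}^2)^{-1}$ are \emph{separately} compact and then stops, as if compactness of $a(1+\ca{D}^2)^{-1}$ could be read off factor by factor. This is not immediate: since $\ca{D}^2=\cancel{D}^2\otimes\id+\id\otimes\cancel{\partial}^2$ is a \emph{sum} of commuting positive operators rather than a product, $(1+\ca{D}^2)^{-1}$ does not split as a tensor product, and some gluing argument is needed. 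Your inequality
$$R^2\ \leq\ (1+\cancel{D}^2)^{-1}\otimes(1+\cancel{d}^2)^{-1},$$
obtained from AM--GM for the commuting pair $(\cancel{D}^2\otimes\id,\ \id\otimes\cancel{d}^2)$ and squared (legitimately, since $R$ and $B_1^{-1/2}B_2^{-1/2}$ commute), is exactly that gluing step: it dominates $XX^*$ by a tensor product of two compact operators, which forces $X$, and hence $a(1+\ca{D}^2)^{-1}=k'\otimes X$, to be compact. Every individual ingredient you use is sound: the reduction to elementary tensors $a_1\otimes k'\otimes k''\otimes\id_S$, the identification of the $a_1$-part with the Kasparov setting, the compactness of $(1+\cancel{d}^2)^{-1}$ on $L^2(\bb{R}^\infty)^*\otimes S$ (finite multiplicities $\sum_{n+m=k}p(m)q(n)$ and eigenvalues $4k\to\infty$), and the closing facts that a tensor product of compacts is compact and that a positive operator dominated by a compact one is compact. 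One minor remark: the finite-multiplicity claim for $\cancel{d}^2$ relies on $\dim\ker(N-n)=q(n)<\infty$ (and the analogous statement $p(m)<\infty$ for the energy operator), which in the paper only appears explicitly in the later spectral-dimension argument; it would be cleanest to state or cross-reference it here rather than leave it implicit.
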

\begin{proof}
It suffices to verify the statement when $a\in A_{{\rm fin}}$, that is, $a$ is of the form $\sum f_i\otimes k_i$, where $f_i\in C_c^\infty(T\times\Pi_T,C_c^\infty(\widetilde{M}))$ and $k_i\in \ca{K}(\ca{H}_{{\rm WZW}})\otimes{\rm id}_S$.
Therefore, we can separate the argument into two parts, the $\widetilde{M}$-part and the $\Omega T_0$-part.

$f_i(1+\cancel{D}^2)^{-1}$ is compact as usual.

$k_i(1+\cancel{\partial}^2)^{-1}$ is compact as follows.
Let us notice that $k_i$ can be approximated by a sum of operators of the form 
\begin{center}
compact operator $\otimes$ compact operator $\otimes$ id
\end{center}
and $(1+\cancel{D}^2)^{-1}$ is a sum of operators of the form
\begin{center}
id $\otimes$ compact operator $\otimes$ compact operator
\end{center}
and hence the composition of them is compact.

\end{proof}

Combining all of them, we obtain the following.
\begin{thm}
The triple $(\ca{A},\ca{H},\ca{D})$ is a spectral triple.

\end{thm}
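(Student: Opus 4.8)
The plan is to read off each clause of Definition~\ref{Dfn n.u.spectral triple Sect.1} from the results already proved in Sections~\ref{Section Hilbert}, \ref{Section Index} and \ref{Section Algebra}, after first recording two structural facts that have been used implicitly. First, that $\ca{H}$ genuinely carries a $*$-representation of $A=LT\ltimes_\tau C_0(\ca{M})$: using $A\cong(T\times\Pi_T)\ltimes_\tau C_0(\widetilde{M})\bigotimes\ca{K}(L^2(\Omega T_0))$, the first tensor factor acts on $L^2(\widetilde{M},\ca{L}\otimes S(T\widetilde{M}))$ through its defining covariant representation and the second acts on $\ca{H}_\WZW\otimes S$ via the identification $\Omega T_0\ltimes_\tau C_0(\Omega T_0)\cong\ca{K}(\ca{H}_\WZW)$; the external tensor product of these is exactly the representation of $A$ on $\ca{H}$ through which $A$ was embedded in $\ca{B}(\ca{H})$, so clause (b) holds. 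Second, that $\ca{A}$ is a dense subalgebra of $A$: one has $A_\fin\subseteq\ca{A}\subseteq\overline{A_\fin}^{\|\cdot\|_0}\subseteq A$, and $A_\fin$ is dense in $A$ because it contains the algebraic tensor product of the finite-rank operators on $\ca{H}_\WZW$ with the compactly supported smooth sections over $\widetilde{M}$, which is already dense.

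With this in place I would check the remaining clauses one by one. Clause (1) together with (a): the Proposition on the holomorphic functional calculus says precisely that $\ca{A}$ is a $*$-closed Fr\'echet algebra whose unitalization is stable under holomorphic functional calculus, i.e. that $\ca{A}$ is smooth in $A$. Clause (c): $\ca{D}=\cancel{D}\otimes\id+\id\otimes\cancel{\partial}$ is densely defined on $\ca{H}_\fin$, which is dense in $\ca{H}$, and it is self-adjoint on ${\rm dom}(\ca{D})=\overline{\ca{H}_\fin}^{\|\cdot\|_{H^1}}$: the first summand is an honest Dirac operator on the complete manifold $\widetilde{M}$, and for the second one has $\cancel{\partial}=\id\otimes\cancel{d}$ with $\cancel{d}^2=4\,\id\otimes N-\tfrac{4}{i}d\rho^*(d)$, a non-negative operator with discrete spectrum, so $\cancel{d}$ --- hence $\cancel{\partial}$, hence $\ca{D}$ --- is essentially self-adjoint on finite-energy vectors. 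Clause (d): the Lemma that $\overline{\ca{A}}^{\|\cdot\|_1}$, and a fortiori $\ca{A}$, preserves ${\rm dom}(\ca{D})$. Clause (e): the Lemma that $[\ca{D},a]$ is bounded for $a\in\ca{A}$. Clause (f$'$): the Lemma that $a(1+\ca{D}^2)^{-1}$ is compact for every $a\in A$. Having verified (a)--(e) together with (f$'$), the triple is a spectral triple.

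I do not expect a genuine obstacle here --- the theorem is an assembly of the preceding lemmas --- but two points deserve a sentence of care. The first is self-adjointness of $\ca{D}$ on the inductive-limit Hilbert space $\ca{H}$: one should note that the operators $\cancel{\partial}_{U_N}$ are compatible along the maps $L^2(U_N)\to L^2(U_{N+1})$ given by tensoring with the lowest weight vector, which the Clifford and creation/annihilation operators respect, so ${\rm dom}(\ca{D})$ really is the closure of $\ca{H}_\fin$ and no pathology enters from the colimit; essential self-adjointness is then read off from the spectral formula for $\cancel{d}^2$ above. The second is that the proof of (f$'$) uses the factorisation of $\ca{D}^2$ into an $\widetilde{M}$-part and an $\Omega T_0$-part compatibly with the decomposition of $a\in A_\fin$ as a finite sum $\sum f_i\otimes k_i$ --- which is exactly the content of the last Lemma --- so that step must be invoked rather than replaced by a general principle.
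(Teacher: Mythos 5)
Your proof takes essentially the same route as the paper: the paper's own justification for this theorem is simply ``Combining all of them, we obtain the following,'' i.e.\ the theorem is read off from the immediately preceding Proposition (smoothness via holomorphic functional calculus) and the three Lemmas (preservation of ${\rm dom}(\ca{D})$, boundedness of $[\ca{D},a]$, and compactness of $a(1+\ca{D}^2)^{-1}$), exactly as you do. The extra care you take on points the paper leaves implicit --- that the representation of $A$ on $\ca{H}$ is genuinely a $*$-representation, that $\ca{A}$ is dense in $A$, and that $\ca{D}$ is essentially self-adjoint on $\ca{H}_\fin$ via the spectral formula $\cancel{d}^2=4\,\id\otimes N-\tfrac{4}{i}d\rho^*(d)$ and the compatibility of $\cancel{\partial}_{U_N}$ along the colimit --- is welcome and fills genuine small gaps, but it is an elaboration of the same assembly, not a different argument.
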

Let us verify the following.

\begin{pro}
Our spectral triple $(\ca{A},\ca{H},\ca{D})$ has infinite spectral dimension.
\end{pro}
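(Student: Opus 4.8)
The plan is to reduce the question to the summability of the two tensor factors separately and show that the $\Omega T_0$-factor already fails every finite summability condition. Recall that a spectral triple $(\ca{A},\ca{H},\ca{D})$ has finite spectral dimension $p$ if $a(1+\ca{D}^2)^{-s/2}$ is trace-class for all $a\in A$ and all $s>p$; the spectral dimension is the infimum of such $p$. So to prove that our triple has infinite spectral dimension it suffices to exhibit a single $a\in A$ such that $a(1+\ca{D}^2)^{-s/2}$ fails to be trace-class for every $s>0$. I would take $a$ of the form $f\otimes k$, with $f\in C_c^\infty(\widetilde M,\ca{L}\otimes S(T\widetilde M))$ a nonzero cutoff and $k=P\in\ca{K}(\ca{H}_\WZW)$ the rank-one projection onto $\Omega\otimes\Omega$, which corresponds under $\Omega T_0\ltimes_\tau C_0(\Omega T_0)\cong\ca{K}(L^2(\Omega T_0))$ to an honest element of $A$. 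Since $\ca{D}=\cancel D\otimes\id+\id\otimes\cancel\partial$ on $\ca{H}=L^2(\widetilde M,\ca{L}\otimes S(T\widetilde M))\otimes(\ca{H}_\WZW\otimes S)$, the operator $(1+\ca{D}^2)^{-s/2}$ dominates, on the range of $a$, a corresponding function of $\id\otimes\cancel\partial^2$, so the trace-class question for $a(1+\ca{D}^2)^{-s/2}$ is controlled by the spectral asymptotics of $\cancel\partial$.

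The key computational input is already in the excerpt: the identity $\cancel d^2=4\,\id\otimes N-\tfrac{4}{i}d\rho^*(d)$, hence $\cancel\partial^2=\id\otimes\cancel d^2$ acts on the finite-energy basis with eigenvalue $4(n+m)$ on the component where $d\rho^*(d)$ has energy $n$ (so $L^2(\bb{R}^\infty)$-degree $n$) and the Spinor number operator $N$ has degree $m$. The first step is therefore to count eigenvalue multiplicities: the number of monomials $z_1^{a_1}z_2^{a_2}\cdots$ with $\sum_j j a_j=n$ is the number $p(n)$ of partitions of $n$, and the number of wedge-products $\overline{z_{j_1}}\wedge\cdots$ with $\sum_k j_k=m$ is the number $q(m)$ of partitions of $m$ into distinct parts. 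So the eigenvalue $4N$ of $\cancel\partial^2$ (for fixed weight structure, after tensoring against the fixed vector in the $L^2(\bb{R}^\infty)^*$-factor and against the finite-dimensional $\widetilde M$-contribution, whose effect is a bounded perturbation) has multiplicity $\sim\sum_{n+m=N}p(n)q(m)$, which grows superpolynomially in $N$ — indeed faster than any power, by the Hardy–Ramanujan estimate $p(n)\sim e^{c\sqrt n}/n$. The second step is then the elementary observation that $\sum_N(\text{mult at }N)\cdot(1+4N)^{-s/2}=\infty$ for every $s>0$, because the multiplicities already grow subexponentially-but-superpolynomially while the damping factor is only polynomial; hence $\operatorname{Tr}\big(a(1+\ca{D}^2)^{-s/2}\big)=\infty$ for all $s>0$. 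This forces the spectral dimension to be infinite.

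A few technical points must be dispatched carefully rather than the combinatorics. First, one should be precise that $a=f\otimes P$ really lies in $A=LT\ltimes_\tau C_0(\ca{M})\cong(T\times\Pi_T)\ltimes_\tau C_0(\widetilde M)\otimes\ca{K}(L^2(\Omega T_0))$ and that under the representation on $\ca{H}$ the operator $aP(1+\ca{D}^2)^{-s/2}a^*$ (or just $a(1+\ca{D}^2)^{-s/2}$, using positivity) has a trace bounded below by the partial trace over the range of $P$ in the $\ca{H}_\WZW$-slot; here the factorization $(1+\ca{D}^2)^{-s/2}\ge c\,(1+\cancel D^2)^{-s/2}\otimes(1+\cancel\partial^2)^{-s/2}$ coming from $\ca{D}^2=\cancel D^2\otimes\id+\id\otimes\cancel\partial^2$ (the cross terms vanish since the two Dirac-type operators act on orthogonal factors) reduces us cleanly to the $\Omega T_0$-factor, where $f(1+\cancel D^2)^{-s/2}$ contributes a genuine trace-class operator on the $\widetilde M$-part and therefore only helps. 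Second, one must note that $\ker(\cancel\partial)$ is infinite-dimensional ($\bb{C}\Omega\otimes L^2(\bb{R}^\infty)$ by the theorem just before Section 5), so even the single eigenvalue $0$ already has infinite multiplicity; this by itself shows $a(1+\ca{D}^2)^{-s/2}$ cannot be compact unless $a$ kills that kernel, and with our choice of $a=f\otimes P$ it does not. The main obstacle, such as it is, is bookkeeping: making sure the chosen $a$ genuinely witnesses non-summability after all the tensor factors and the passage through the $L^2(\bb{R}^\infty)^*$-slot (which carries the dual, negative-energy representation) are accounted for, and confirming the multiplicity growth is robust under the bounded finite-dimensional $\widetilde M$-contribution. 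Once $a$ is pinned down, the divergence $\sum_N p\ast q(N)(1+4N)^{-s/2}=\infty$ is immediate and the proposition follows.
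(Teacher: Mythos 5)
Your proposal follows the same overall strategy as the paper's proof: pick $a=f\otimes P$ with $P$ the rank-one projection onto the lowest weight vector of $\ca{H}_{\WZW}$, reduce to the $\Omega T_0$-factor $\cancel{\partial}$, and observe that the eigenvalue multiplicities of $\cancel{\partial}^2$ grow faster than any polynomial because of the superpolynomial growth of partition numbers. Your operator inequality $(1+\ca{D}^2)^{-s/2}\geq(1+\cancel{D}^2)^{-s/2}\otimes(1+\cancel{\partial}^2)^{-s/2}$ is a nice explicit justification of the paper's more informal "it is enough to deal with $\cancel{\partial}$."

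One thing is off, however: the multiplicity count $\sum_{n+m=N}p(n)q(m)$ is not what the chosen $a$ produces. Since $P$ is rank one on the \emph{full} $\ca{H}_{\WZW}=L^2(\bb{R}^\infty)^*\otimes L^2(\bb{R}^\infty)$, after applying $P\otimes\id$ the $L^2(\bb{R}^\infty)^*$-slot is frozen to the lowest weight vector, so the energy contribution $n$ from $d\rho^*(d)$ is pinned to $0$ and the only remaining degeneracy comes from the spinor factor; the multiplicity of the eigenvalue $4N$ is therefore just $q(N)$, not the convolution $p\ast q(N)$. (If you had instead taken $P$ to be rank one on only the inert $L^2(\bb{R}^\infty)$-slot, leaving the $L^2(\bb{R}^\infty)^*$-slot free, you would get $p\ast q$, but that is not the $a$ you wrote down.) This does not harm the conclusion, since the paper shows $q(n)\geq C\exp(\tfrac12 n^{1/4})$, which already diverges against any polynomial damping. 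Similarly, your remark that $\ker(\cancel{\partial})$ is infinite dimensional is true but beside the point for the chosen $a$: the projection $P\otimes\id$ intersects $\ker(\cancel{\partial})$ in only a one-dimensional subspace, so that observation by itself says nothing about compactness of $a(1+\ca{D}^2)^{-s/2}$. The genuine driver of the argument is, as in the paper, the superpolynomial growth of $q(n)$.
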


\begin{proof}
It is enough to deal with $\cancel{\partial}$ for this purpose. We verify that
$$P\otimes {\rm id}\circ(1+\cancel{\partial}^2)^{-s}$$
is not a trace class operator for any $s>0$, where $P$ is the rank 1 projection onto the lowest weight vector constructed above.

We can take $\{v_i^*\otimes v_j\otimes s_k\}$ as a C.O.N.S. of $\ca{H}=\ca{H}_{{\rm WZW}}\otimes S$, where $\{v_i^*\otimes v_j\}$ is a C.O.N.S. of $\ca{H}_{{\rm WZW}}$ consisting of eigenvectors of $d\rho^*(d)$, and $\{s_k\}$ is a C.O.N.S. of $S$ consisting of eigenvectors of the number operator $N$. Eigenvalues of $v_i^*$ ($s_k$) is written as $-|v_i^*|$ ($|s_k|$ respectively). We use here a notation such that $|\cdot|$ is non negative. Recall that $\rho^*$ is a negative energy representation.
Let us notice that
$$P\otimes {\rm id}\circ(1+\cancel{\partial}^2)^{-s}v_i^*\otimes v_j\otimes s_k=P\otimes {\rm id}\circ[(1+|v_i^*|+|s_k|)^{-s}v_i^*\otimes v_j\otimes s_j]$$
$$=\begin{cases} [(1+|s_k|)^{-s}v_i^*\otimes v_j\otimes s_k] & v_i^*\otimes v_j\text{ is the lowest weight vector} \\
0 & \text{otherwise}. \end{cases}$$

Let us recall that $N(\overline{z_{j_1}}\wedge\overline{z_{j_2}}\wedge\cdots\wedge\overline{z_{j_n}})=(\sum_{k=1}^n j_k)\overline{z_{j_1}}\wedge\overline{z_{j_2}}\wedge\cdots\wedge\overline{z_{j_n}}$ and the dimension of $n$-eigenspace $\ker(N-n)$ is the distinct partition number $q(n)$ and it is estimated as
$$q(n)\geq C\exp(\frac{1}{2}n^{\frac{1}{4}})$$
due to the following lemma.

It enable us to estimate the trace of $P(1+\cancel{\partial}^2)^{-s}$ as follows.
$${\rm Trace}(P(1+\cancel{\partial}^2)^{-s}) \geq \sum C(1+n)^{-s}\exp(\frac{1}{2}n^{\frac{1}{4}})=\infty.$$

\end{proof}

We verify an estimate about the distinct partition function we used above. For this purpose, we need the estimate due to \cite{HR};
$p(n)\geq C\exp(\sqrt{n}).$
\begin{lemma}
$$q(n)\geq C\exp(\frac{1}{2}n^{\frac{1}{4}}).$$
\end{lemma}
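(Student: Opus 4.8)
The plan is to bound the distinct-partition function $q(n)$ (the number of partitions of $n$ into \emph{distinct} positive parts) from below by relating it to the ordinary partition function $p(n)$, for which Hardy--Ramanujan's estimate $p(n)\geq C\exp(\sqrt{n})$ is already available. The classical identity to exploit is Euler's theorem: the number of partitions into distinct parts equals the number of partitions into odd parts. More convenient here is the crude but sufficient observation that every partition of $m$ into distinct parts gives, after multiplying each part by $2$, a partition of $2m$ into distinct even parts, and conversely; so it is enough to find enough distinct-part partitions, and these are more plentiful than one might fear because one can manufacture them from \emph{arbitrary} partitions.

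Concretely, first I would recall the elementary inequality comparing $q$ and $p$: from an arbitrary partition of $k$ one builds a partition into distinct parts of a controlled larger integer, or more simply one uses the generating-function domination
$$\sum_{n\geq0} q(n)x^n=\prod_{j\geq1}(1+x^j)=\prod_{j\geq1}\frac{1}{1-x^{2j-1}}=\sum_{n\geq0} p_{\mathrm{odd}}(n)x^n,$$
so $q(n)=p_{\mathrm{odd}}(n)$, the number of partitions of $n$ into odd parts. Then I would observe that partitions of $n$ into odd parts are in bijection (multiply each part by, say, keeping parts odd is awkward) — cleaner: a partition of $m$ into arbitrary parts $\lambda_1\geq\cdots\geq\lambda_r$ yields the partition of $m+\binom{r}{2}$ into \emph{distinct} parts $\lambda_1+(r-1)\geq\lambda_2+(r-2)\geq\cdots\geq\lambda_r$. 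Since a partition of $m$ has at most $\sqrt{2m}$ parts (as $1+2+\cdots+r\leq m$ forces $r\leq\sqrt{2m}$), we get $\binom{r}{2}\leq m$, hence every partition of $m$ produces a distinct-part partition of some integer in $[m,2m]$. Counting multiplicities carefully (the map $m\mapsto m+\binom r2$ with the number of parts $r$ as extra data is injective once $r$ is recorded, and $r$ ranges over $O(\sqrt m)$ values), this yields $\sum_{n=m}^{2m} q(n)\geq p(m)/(C'\sqrt m)$ for a constant $C'$, and therefore $q(n_0)\geq p(m)/(C'' m^{3/2})$ for some $n_0\in[m,2m]$.

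To upgrade this from "some $n_0$ in a range" to "for all sufficiently large $n$", I would use the monotonicity-type fact that $q$ does not oscillate too wildly — indeed $q(n+1)\geq q(n)$ fails in general, but $q$ is eventually nondecreasing, or one can simply note $q(n)\geq q(n-1)$ for $n$ large enough, or avoid the issue entirely by choosing $m=\lfloor n/2\rfloor$ and invoking the standard asymptotic $q(n)\sim \frac{1}{4\cdot 3^{1/4}n^{3/4}}\exp\!\bigl(\pi\sqrt{n/3}\bigr)$ from Hardy--Ramanujan's circle method, which instantly gives $q(n)\geq C\exp(\tfrac12 n^{1/4})$ with enormous room to spare (the true exponent is $\sim\sqrt n$, vastly bigger than $\tfrac12 n^{1/4}$). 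Since the paper only needs the weak lower bound $q(n)\geq C\exp(\tfrac12 n^{1/4})$, any of these routes suffices; I would present the generating-function identity $q(n)=p_{\mathrm{odd}}(n)$ together with the parts-shift bijection to get $q(n)\geq p(\lfloor n/2\rfloor)/(C n^{3/2})$ and then apply $p(\lfloor n/2\rfloor)\geq C'\exp(\sqrt{n/2})\geq \exp(\tfrac12 n^{1/4})$ for $n$ large, absorbing the polynomial factor into the constant.

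The main obstacle is purely bookkeeping: controlling the multiplicity in the map "partition of $m$ $\mapsto$ distinct-part partition of $m+\binom r2$", i.e. making sure one does not overcount and that the resulting lower bound on $\sum_{n\in[m,2m]}q(n)$ is genuinely of size $e^{c\sqrt m}$ divided only by a polynomial. This is where a clean choice of bijection (Euler's odd-parts identity, then the standard "staircase shift" $\lambda\mapsto\lambda+\delta$ where $\delta=(r-1,r-2,\dots,0)$, which is a genuine bijection from partitions with exactly $r$ parts onto distinct partitions with exactly $r$ parts) removes all friction, since then $q(n)=\sum_r q_r(n)$ and $q_r(n)=p_r(n-\binom r2)$ with $p_r$ the number of partitions into exactly $r$ parts, so $q(n)\geq p_r(n-\binom r2)$ for every admissible $r$; picking $r\approx\sqrt{n}/2$ so that $\binom r2\approx n/8$ and using $p_r(k)$ large when $r\approx\sqrt k$ closes the estimate. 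Everything else is routine application of Hardy--Ramanujan, which we are permitted to cite via \cite{HR}.
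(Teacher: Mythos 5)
Your core idea --- a staircase shift carrying ordinary partitions to distinct-part partitions, followed by Hardy--Ramanujan for $p(n)$ --- is exactly the paper's route, but your execution has one genuine error and one gap that the paper's cleaner formulation avoids.

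The error: you assert that a partition of $m$ has at most $\sqrt{2m}$ parts because ``$1+2+\cdots+r\leq m$.'' That reasoning applies only to \emph{distinct} partitions; the partition $1+1+\cdots+1$ of $m$ has $m$ parts. Consequently the claim that every partition of $m$ lands on a distinct-part partition of some $n\in[m,2m]$ is false --- $\binom{r}{2}$ can be as large as $\binom{m}{2}$ --- and the derived estimate $\sum_{n=m}^{2m}q(n)\geq p(m)/(C'\sqrt m)$ does not follow. The gap: your fallback $q(n)\geq p_r\bigl(n-\binom r2\bigr)$ for $r\approx\sqrt n/2$ is a correct identity, but you never actually bound $p_r(k)$ from below for $r$ in this range, and doing so rigorously is not a one-liner.

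The paper sidesteps both difficulties with a small trick: it pads an arbitrary partition of $l$ with zeros to exactly $l$ parts \emph{before} adding the staircase $(l-1,l-2,\dots,1,0)$. This gives a single injection $\ca{P}_l\to\ca{Q}_{\frac12 l(l+1)}$ that works uniformly, independent of how many parts the original partition had, so $p(l)\leq q\bigl(\tfrac12 l(l+1)\bigr)\leq q(l^2)$. Combined with the monotonicity of $q$ (which you mention only tentatively; it does hold for all $n\geq 1$, by increasing the largest part by one), this gives $q(n)\geq p(\lfloor\sqrt n\rfloor)$ outright, and then $\cite{HR}$ finishes. If you want to keep your fixed-$r$ bijection $q_r(n)=p_r(n-\binom r2)$, you should either supply a lower bound for $p_r$ or observe that summing it over $r$ with the zero-padding trick collapses to the paper's injection. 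Your alternative suggestion of citing the Hardy--Ramanujan asymptotic for $q(n)$ directly would also work, but it is a different (and stronger) input than the $p(n)$ estimate the paper cites.
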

\begin{proof}
We introduce some notations.
Let $\ca{P}_n$ be the set of partition of $n$, and $\ca{Q}_n$ be the set of distinct partition of $n$. Then, $p(n)=\#\ca{P}_n$ and $q(n)=\#\ca{Q}_n$. We define an injection $\ca{P}_l\to \ca{Q}_{l+\frac{1}{2}l(l-1)}$ by the formula
\begin{eqnarray*}
(1,1,\cdots,1) &\mapsto& (l,l-1,\cdots,1)\\
(\lambda_1,\lambda_2,\cdots\lambda_k) &\mapsto& (\lambda_1+l-1,\lambda_2+l-2,\cdots,\lambda_k+l-k,0+n-k-1,\cdots,0+1,0).
\end{eqnarray*}
Therefore $p(l)\leq q(\frac{1}{2}l(l+1))\leq q(l^2)$. In particular, 
$$q(n)\geq p(\lfloor \sqrt{n} \rfloor)\geq C\exp(\sqrt{\lfloor \sqrt{n} \rfloor})\geq C\exp(\frac{1}{2}n^{\frac{1}{4}}).$$

\end{proof}

\section{Applications}\label{Section Applications}


In this section, we present some additional circumstantial evidences that our construction knows some information of $LT\curvearrowright \ca{M}$. There is no hope to establish Gelfand's theorem for infinite dimensional space, since the topological space obtained by Gelfand transformation is always locally compact. Therefore what we can do is only presenting circumstantial evidences or showing nice examples as many as possible.

We have presented circumstantial evidences so far. For example, the isomorphism $LT\ltimes_\tau C_0(\ca{M})\cong (T\times\Pi_T)\ltimes_\tau C_0(\widetilde{M})\bigotimes \ca{K}(L^2(\Omega T_0))$ can be a $C^*$-algebraic proof of the local equivalence $\ca{M}//LT^\tau\cong \widetilde{M}//(T\times\Pi_T)^\tau$; a group $C^*$-algebra $C^*(LT,\tau)$ obtained by a similar method; $L^2(\Omega T_0)\cong L^2(\bb{R}^\infty)^*\otimes L^2(\bb{R}^\infty)$ can be obtained by two different methods, infinite dimensional Peter-Weyl and inductive limit of finite dimensional Euclidean spaces; $K$-group of $C^*(LT,\tau)$ is also an evidence. We add some such applications.

\begin{thm}
For any character $\lambda$ of $T^\tau$, we can define a $LT^\tau$-equivariant line bundle $\ca{L}_\lambda$ on the flag manifold
$\Omega T=LT^\tau/T^\tau$ by
$$\ca{L}_\lambda:=LT^\tau\times_{T^\tau}\bb{C}_{\lambda},$$
and a Dirac operator $\ca{D}_\lambda$ acting on $L^2(\Omega T,\ca{L}_\lambda\otimes\ca{S})$. Moreover, the index of $\ca{D}_\lambda$ is the irreducible P.E.R. $V_{[\lambda]}$.
\end{thm}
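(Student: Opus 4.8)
The plan is to recognize the flag manifold $\ca{M}:=\Omega T = LT^\tau/T^\tau$, equipped with the left-translation action of $LT$, as a proper $LT$-space and then to identify the two tensor factors of the index produced by the main construction. First I would verify the three defining conditions of a proper $LT$-space. Using $LT = T\times\Pi_T\times\Omega T_0$ and identifying $LT/T$ with $\Pi_T\times\Omega T_0 = \Omega T$, the subgroup $\Omega T$ acts on $LT/T$ by left translation, which is free (indeed simply transitive); hence the compact model is $M = \ca{M}/\Omega T = \{\mathrm{pt}\}$, trivially a $T$-equivariant $Spin^c$ manifold, and the locally compact model is the discrete set $\widetilde{M} = \ca{M}/\Omega T_0 = \Pi_T$, the bundle $\Omega T_0\to\ca{M}\to\Pi_T$ being the evident trivial one. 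The line bundle $\ca{L}_\lambda = LT^\tau\times_{T^\tau}\bb{C}_\lambda$ is a $\tau$-twisted $LT$-equivariant line bundle on which the added central $U(1)$ acts by scalars (since it does so on $\bb{C}_\lambda$) and whose restriction to the simply connected $\Omega T_0$ is trivial; it is therefore of exactly the type treated in Section \ref{Section simplification}. Feeding $(\ca{M},\ca{L}_\lambda)$ into the main theorem yields $\ca{H} = L^2(\Omega T,\ca{L}_\lambda\otimes\ca{S})$, the operator $\ca{D}_\lambda = \cancel{D}\otimes\id + \id\otimes\cancel{\partial}$, and the index $\ind(\ca{D}_\lambda) = \ind(\cancel{D})\otimes\ind(\cancel{\partial})\in R^\tau(LT)$.

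The $\Omega T_0$-factor is already computed in Section \ref{Section Hilbert}: $\ker(\cancel{\partial}) = \ker(\cancel{d})\otimes L^2(\bb{R}^\infty)$ with $\ker(\cancel{d})$ the line through the zero-energy vacuum sitting in the even part, so $\ind(\cancel{\partial})$ is represented by $L^2(\bb{R}^\infty) = V_{\ca{H}}$, the unique irreducible P.E.R. of $\Omega T_0$ at level $\tau$. For the locally compact factor, $\widetilde{M} = \Pi_T$ is $0$-dimensional, so $\cancel{D}$ is the zero operator and $\ind(\cancel{D}) = [L^2(\Pi_T,\ca{L}_\lambda|_{\Pi_T})]\in K_0(C^*((T\times\Pi_T)^\tau))\cong R^\tau(T\times\Pi_T)$. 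It remains to identify this $(T\times\Pi_T)^\tau$-module. Unwinding the associated-bundle construction, the fiber of $\ca{L}_\lambda$ over the coset $[n]\in LT/T$ (for $n\in\Pi_T$) carries the $T$-character $\lambda - \kappa^\tau(n)$, which one reads off from the commutation relation $\tau\bigl((t,0),(0,n)\bigr)\,\tau\bigl((0,n),(t,0)\bigr)^{-1} = \kappa^\tau(n)(t)^{-1}$ furnished by Proposition \ref{Prop cocycle Sect.2}. Thus $L^2(\Pi_T,\ca{L}_\lambda|_{\Pi_T}) = \bigoplus_{n\in\Pi_T}\bb{C}_{\lambda-\kappa^\tau(n)}$ with $\Pi_T$ permuting the summands in the $\tau$-twisted fashion; comparing with the classification of $\tau$-twisted irreducible $(T\times\Pi_T)$-modules in Section \ref{Section Index} (where $\rho(m)\colon V_\lambda\to V_{\lambda-\kappa^\tau(m)}$), this is exactly the $(T\times\Pi_T)$-module $V_{[\lambda]}$.

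Finally I would assemble the two pieces. Under the identification $R^\tau(LT)\cong R^\tau(T\times\Pi_T)\otimes R^\tau(\Omega T_0)$ implicit in the description of P.E.R.s recalled in Section \ref{Section Preliminaries} — an irreducible P.E.R. of $LT$ being $\bigoplus_{n\in\Pi_T}V_{\ca{H}}\otimes\bb{C}_{\lambda+\kappa^\tau(n)}$ — the product $\ind(\cancel{D})\otimes\ind(\cancel{\partial})$ is the class of $\bigoplus_{n\in\Pi_T}V_{\ca{H}}\otimes\bb{C}_{\lambda+\kappa^\tau(n)} = V_{[\lambda]}$ (using that $n\mapsto -n$ permutes $\Pi_T$), which is the claimed irreducible P.E.R. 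As a consistency check this is the genuine infinite-dimensional Borel--Weil picture: the complex structure $J = -d/|d|$ on $\Omega T_0$ makes $\Omega T$ look like $\Pi_T\times\bb{C}^\infty$, $\ca{L}_\lambda$ is the ``positive'' line bundle, $\cancel{\partial}$ is the Dolbeault operator, and the index is the space of holomorphic sections with vanishing higher cohomology; in particular there is no $\rho$-shift because $T$ has no roots and the Spinor vacuum has zero energy, in accordance with the normalizations of Sections \ref{Section Preliminaries} and \ref{Section Hilbert}.

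I expect the main obstacle to be the middle paragraph: giving a rigorous meaning to ``$\ind(\cancel{D})$'' in the degenerate case $\widetilde{M} = \Pi_T$ — one must check that Kasparov's $C^*((T\times\Pi_T)^\tau)$-index of the zero operator on a $0$-dimensional proper cocompact space reduces to the $K$-class of the module of $L^2$-sections — and, more delicately, correctly tracking the $\tau$-twist in the $(T\times\Pi_T)^\tau$-action on that module through the associated-bundle construction, so that the character over $[n]$ comes out as $\lambda-\kappa^\tau(n)$ and not $\lambda$ or some other shift; every sign must be reconciled with the conventions fixed in Sections \ref{Section Preliminaries} and \ref{Section Index}. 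A secondary point requiring care is the compatibility of the external tensor product in $K$-theory with the tensor product of representations under $R^\tau(LT)\cong R^\tau(T\times\Pi_T)\otimes R^\tau(\Omega T_0)$. Everything else is routine bookkeeping with the decompositions already in place.
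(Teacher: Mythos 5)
Your proof follows essentially the same route as the paper: identify $\Omega T = LT^\tau/T^\tau$ as a proper $LT$-space with compact model a point and locally compact model $\Pi_T$, note that $\cancel{D}=0$ on the discrete $\Pi_T$ so its index is simply the module $L^2(\Pi_T,\ca{L}_\lambda|_{\Pi_T})$, read the $T$-weight over each coset $[n]$ off the associated-bundle construction, and tensor with the vacuum factor $\ind(\cancel{\partial})$. Your explicit tracking of the $\kappa^\tau(n)$-shift via the cocycle commutator is in fact slightly more careful than what the paper's trivialization computation writes out, but the overall argument and conclusion coincide.
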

This theorem is completely parallel to well-known Borel-Weil theorem.
\begin{proof}
It is almost clear from the construction of the $L^2$-space. Our Hilbert space is
$$L^2(\Omega T,\ca{L}\otimes\ca{S})=L^2(\Pi_T)\bigotimes\ca{H}_{\WZW,n}\otimes S,$$
since the locally compact model $\Pi_T$ is $0$-dimensional. The index for $\cancel{D}=0$ is the representation space $L^2(\Pi_T)$ itself. 

Let us consider the restriction of $\ca{L}_\lambda$ to $\Pi_T$. Although it is the quotient space $(T\times\Pi_T)^\tau\times_{T^\tau}\bb{C}_{-\lambda}$, we can trivialize as $\Pi_T\times\bb{C}$. We write an element $[(t_0,n_0),v]$ of $(T\times\Pi_T)^\tau\times_{T^\tau}\bb{C}_{-\lambda}$ by using the trivialization $[(1,n_0),\lambda(t_0)^{-1}v]$. Let us calculate the actions.
\begin{eqnarray*}
t.[(1,n_0),v] &=& [(t,n_0),v]\\
&=& [(1,n_0),\lambda(t)v] \\
&=& \lambda(t)[(1,n_0),v], \\
n.[(1,n_0),v] &=& [(1,n+n_0),v]
\end{eqnarray*}
Considering the definition of the action on the functions, we obtain that $L^2(\Pi_T,\ca{L}|_{\Pi_T})$ is a generator corresponding to $V_{[\lambda]}$. 
\end{proof}
The following is just a corollary of our main theorem.
\begin{pro}
Let $\ca{M}$ be an $LT$-Hamiltonian space. 
The symplectic form $\Omega$ determines a $\tau$-twisted $LT$-equivariant pre-quantum line bundle $\ca{L}$. The index for $\ca{L}$ is an element of $R^\tau(LT)$.
\end{pro}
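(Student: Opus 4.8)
The plan is to deduce this from the main construction together with the already-established Borel--Weil computation, by treating the general $LT$-Hamiltonian space as a ``twisted bundle over the flag manifold direction'' in the locally compact variable. First I would recall the standard fact (Kostant--Souriau in the classical setting, carried over to $LT$-spaces in \cite{Son}, \cite{AMM}) that for an $LT$-Hamiltonian space $\ca{M}$ at level $\tau$ the symplectic form $\Omega$ together with the momentum data determines a $\tau$-twisted $LT$-equivariant line bundle $\ca{L}\to\ca{M}$ with a connection whose curvature is $i\Omega$; this is essentially a Chern--Weil / prequantization argument on $\ca{M}$, pulled back from the corresponding prequantum line bundle on the quasi-Hamiltonian compact model $M$ via $Hol:\ca{M}\to M$ as in Proposition~\ref{Prop equivalence Sect.2}. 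The only point to check is that the resulting cocycle for the equivariant structure is exactly the admissible $\tau$ we fixed (so that the added center acts by scalars), which follows because the level of the prequantum bundle is by definition $\tau$.

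Next I would verify that $\ca{M}$ equipped with this $\ca{L}$ satisfies the hypotheses needed to apply the main theorem: $\ca{M}$ is a proper $LT$-space (the restriction of the action to $\Omega T$ is free since $\Phi$ is equivariant and $\Omega T$ acts freely on $L\fra{t}^*$; the quotient $M=\ca{M}/\Omega T$ is compact since $\Phi$ is proper; and $\ca{M}\to\ca{M}/\Omega T_0$ is trivial by pulling back the trivialization of $L\fra{t}^*\to\fra{t}$ given by the parallel connections $d+v$), and $\ca{L}$ is of the admissible $\tau$-twisted type of Definition~\ref{Dfn Spinor Sect.4}'s surrounding definition. One also needs a $T$-equivariant $Spin^c$-structure on $M$; here I would invoke that the $LG$-invariant almost complex structure compatible with $\Omega$ descends to give an $Spin^c$-structure on the compact model, exactly as in the finite-dimensional Bott quantization setup. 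With these checks done, the main theorem directly produces a Spinor bundle $\ca{S}$, a Hilbert space $\ca{H}=L^2(\ca{M},\ca{L}\otimes\ca{S})$, a Dirac operator $\ca{D}=\cancel{D}\otimes\id+\id\otimes\cancel{\partial}$, and an analytic index $\ind(\ca{D})\in R^\tau(LT)$, which we \emph{define} to be the quantization $Q(\ca{M},\Omega)$.

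The main obstacle, and the only genuinely nontrivial point, is the compatibility of this construction with the prior definition of quantization for $LT$-Hamiltonian spaces in \cite{Son}: one would like $Q(\ca{M},\Omega)$ as defined here to agree with the geometric quantization defined there. I would not attempt that comparison in full; for the statement as written (``has a quantization valued in $R^\tau(LG)$'') it suffices to produce \emph{some} well-defined element, and the main theorem does exactly that once the hypotheses are verified. If one does want the comparison, the natural route is to note that both constructions restrict, on the locally compact model $\widetilde{M}$, to Kasparov's $C^*((T\times\Pi_T)^\tau)$-index of a $(T\times\Pi_T)^\tau$-equivariant $Spin^c$-Dirac operator, and that the identification $K_0(C^*(T\times\Pi_T,\tau))\cong R^\tau(T\times\Pi_T)$ of the previous section matches the two bookkeeping conventions; the $\Omega T_0$-factor contributes the same ground-state vacuum representation in both pictures by the explicit kernel computation $\ker(\cancel d)=\bb{C}\Omega\widehat\oplus 0$. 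I expect this reduces to a routine but somewhat lengthy bookkeeping check rather than a new idea, so I would present it as a remark rather than fold it into the proof of the proposition.
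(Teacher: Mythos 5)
Your proposal is correct and takes essentially the same approach as the paper: the paper gives no explicit proof, stating only that the result is ``just a corollary of our main theorem,'' and your verification that an $LT$-Hamiltonian space is a proper $LT$-space (free $\Omega T$-action via equivariance of $\Phi$, compact quotient via properness, trivialization pulled back from $L\fra{t}^*\to\fra{t}$) is exactly the check the paper already records in Example~(3) of Section~\ref{Section simplification}. The closing discussion of compatibility with the quantization in \cite{Son} is a reasonable caveat but, as you note, not needed for the statement as written.
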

It is competely parallel to Bott's quantization.

\section*{Acknowledgements}
I am very grateful to my supervisor Nigel Higson. He gives me many useful suggestions and in particular he stimulated my interest to \cite{Son}. I also thank my supervisor Tsuyoshi Kato, his students and former ones. I benefit from their various back grounds. In particular, I received the paper \cite{Kas} from Yoshiyasu Fukumoto. I am supported by JSPS KAKENHI Grant Number 16J02214 and
the Kyoto Top Global University Project (KTGU).

\end{document}